\documentclass[11pt,reqno,twoside]{amsart}
\usepackage{tikz}
\usepackage{amssymb}
\usepackage{amsmath}
\usepackage{amsthm}
\usepackage{amsfonts}
\usepackage{amsthm,amscd}
\usepackage{amsbsy}
\usepackage{bm}
\usepackage{url} 
\usepackage{setspace}
\usepackage{float}
\usepackage{psfrag}
\usepackage{cite}
\usepackage{tikz}
\usepackage{float}
\usepackage{epstopdf}

\AtBeginDocument{
  \def\MR#1{}
}


\usetikzlibrary{positioning}

\usepackage[colorlinks, linkcolor=blue]{hyperref}

\setcounter{secnumdepth}{2}
\restylefloat{figure}

\newtheorem{Theorem}{Theorem}[section]
\newtheorem{Lemma}[Theorem]{Lemma}

\newtheorem{Proposition}[Theorem]{Proposition}
\newtheorem{Remark}[Theorem]{Remark}

\newtheorem{Definition}[Theorem]{Definition}

\numberwithin{equation}{section}
\usepackage{setspace}
\usepackage{anysize}
\marginsize{0.7in}{0.7in}{0.43in}{0.43in}
\def\be{\begin{equation}}
	\def\ee{\end{equation}}
\def\ben{\begin{eqnarray}}
	\def\een{\end{eqnarray}}

\newcommand{\ncom}{\newcommand}
\ncom{\ov}{\overline}
\ncom{\Nb}{\mathbb N}
\ncom{\Ab}{\mathbb A}
\ncom{\Bb}{\mathbb B}
\ncom{\Mb}{\mathbb M}
\ncom{\Qb}{\mathbb Q}
\ncom{\Zb}{\mathbb Z}
\ncom{\Fb}{\mathbb{F}} 
\ncom{\Hb}{\mathbb{H}}
\ncom{\Cb}{\mathbb{C}} 
\ncom{\Rb}{\mathbb{R}}
\ncom{\Qf}{\mathfrak{q}} 
\ncom{\Mf}{\mathfrak{m}}
\ncom{\Nf}{\mathfrak{n}}
\ncom{\kk}{\mathbf{k}}
\ncom{\Fc}{\mathcal{F}}
\ncom{\Gc}{\mathcal{G}}
\ncom{\Ic}{{\mathcal{I}}}
\ncom{\Jc}{\mathcal{J}} 
\ncom{\Lc}{\mathcal{L}}
\ncom{\Rc}{\mathcal{R}}
\ncom{\Ac}{\mathcal{A}}
\ncom{\Hc}{\mathcal{H}}
\ncom{\Kc}{\mathcal{K}}
\ncom{\Dc}{\mathcal{D}}
\ncom{\Bc}{\mathcal{B}}
\ncom{\Oc}{\mathcal{O}}
\ncom{\II}{\mathscr{I}}
\ncom{\OO}{\mathscr{O}}

\ncom{\Af}{\mathbf{A}}
\ncom{\Bf}{\mathbf{B}}
\ncom{\Hf}{\mathbf{H}}
\ncom{\Gf}{\mathbf{G}}
\ncom{\Uf}{\mathbf{U}}
\ncom{\Yf}{\mathbf{Y}}
\ncom{\If}{\mathbf{I}}
\ncom{\T}{\mathcal{T}}
\ncom{\Yt}{\widetilde{\mathbf{Y}}}
\ncom{\ut}{\widetilde{u}}
\ncom{\Aw}{\mathbf{A}_\omega}
\ncom{\Pf}{\mathbf{P}}
\ncom{\Ff}{\mathbf{K}}
\ncom{\Lf}{\mathbf{L}}
\ncom{\Xf}{\mathbf{X}}
\ncom{\Vf}{\mathbf{V}}
\ncom{\Sf}{\mathbf{S}}
\ncom{\Tf}{\mathbf{T}}
\ncom{\Wf}{\mathbf{W}}

\ncom{\Lt}{L^2(\Omega)}
\ncom{\Hio}{H^1_0(\Omega)}
\ncom{\LL}{L^2(0,\infty;L^2(\Omega))}
\ncom{\LU}{L^2(0,\infty;\Uf)}
\ncom{\Lh}{L^2(0,\infty;\Hf)}
\ncom{\LH}{L^2(0,\infty;H^1(\Omega))}
\ncom{\LHH}{L^2(0,\infty;H^2(\Omega))}
\ncom{\HL}{H^1(0,\infty;L^2(\Omega))}
\ncom{\HH}{H^1(0,\infty;H^2(\Omega))}

\ncom{\ym}{\Yt_h(t)}
\ncom{\um}{\ut_h(t)}
\ncom{\uc}{\ut(t)}
\ncom{\yc}{\Yt(t)}
\ncom{\ap}{\Aw{_{,\Pf}}}
\ncom{\ahp}{\Af{_{{\omega_h},\Pf}}}
\ncom{\aph}{\Aw{_{,\Pf_h}}}
\ncom{\ahph}{\Af{_{\omega_h,\Pf_h}}}
\ncom{\ymm}{\Yt_h(t)}
\ncom{\umm}{\ut_h(t)}
\ncom{\ucc}{\ut(t)}
\ncom{\ycc}{\Yt(t)}

\ncom{\Mcl}{\mathcal{M}_h}
\ncom{\Ach}{\mathcal{A}_{\omega_h}}
\ncom{\Bch}{\mathcal{B}_h}
\ncom{\Yc}{\mathcal{Y}}
\ncom{\Mrm}{\bf\rm{M}}
\ncom{\Krm}{\bf\rm{K}}

\ncom{\nuh}{\widehat{\nu}}
\ncom{\wt}{\widetilde}
\ncom{\wh}{\widehat}
\ncom{\n}{\normalfont}

\newcommand{\vertiii}[1]{{\left\vert\kern-0.25ex\left\vert\kern-0.25ex\left\vert #1 
		\right\vert\kern-0.25ex\right\vert\kern-0.25ex\right\vert}}

\long\def\/*#1*/{}
\title{Feedback stabilization of parabolic coupled system and its numerical study}

\author{{WASIM AKRAM, DEBANJANA MITRA, NEELA NATARAJ,}
\and{MYTHILY RAMASWAMY}}\address{Wasim Akram \newline\indent Mathematics Department, Indian Institute of Technology Bombay, \newline \indent 
Powai, Mumbai - 400076, India, Email- {\normalfont{wakram@math.iitb.ac.in, wakram2k11@gmail.com}} \vspace{0.28cm}
\newline\indent  Debanjana Mitra \newline\indent Mathematics Department, Indian Institute of Technology Bombay, \newline \indent 
Powai, Mumbai - 400076, India, Email- {\normalfont {deban@math.iitb.ac.in, debanjana.math@gmail.com}}
\vspace{0.28cm}
\newline\indent  Neela Nataraj \newline\indent Mathematics Department, Indian Institute of Technology Bombay, \newline \indent 
Powai, Mumbai - 400076, India, Email- {\normalfont {neela@math.iitb.ac.in, nataraj.neela@gmail.com}}
\vspace{0.28cm}
\newline\indent  Mythily Ramaswamy \newline\indent International Centre for Theoretical Sciences, \newline \indent 
Bengaluru - 560 089, India, Email- {\normalfont {mythily.r@icts.res.in, mythily54@gmail.com}}}

\pagestyle{headings}

\begin{document}
	\maketitle
	\pagenumbering{arabic}

	\begin{abstract}
 In the first part of this article, we study feedback stabilization of a parabolic coupled system by using localized interior controls. The system is feedback stabilizable with exponential decay $-\omega<0$ for any $\omega>0$. A stabilizing control is found in feedback form by solving a suitable algebraic Riccati equation. In the second part, a conforming finite element method is employed to approximate the continuous system by a finite dimensional discrete system. The approximated system is also feedback stabilizable (uniformly) with exponential decay $-\omega+\epsilon$, for any $\epsilon>0$ and the feedback control is obtained by solving a discrete algebraic Riccati equation. The error estimate of stabilized solutions as well as stabilizing feedback controls are obtained. We validate the theoretical results by numerical implementations.
	\end{abstract}
\noindent \textbf{2020 MSC:} 35K20, 93D15, 65M12, 65M22, 65M60 \\
	\textbf{Keywords.} Parabolic coupled system, Stabilizability, Algebraic Riccati equation, Error estimates
	\section{Introduction}
\subsection{Model problem}
Coupled parabolic systems extensively arise to model many physical problems, for example, in mathematical biology, chemical reactions, electrophysiology, and so on. 
The control problems for coupled systems are interesting due to the interplay between equations and controls, and obtaining the results with minimum number of controls acting in the system is a challenging issue. We consider $\Omega$, a bounded domain of class $C^2$ in $\mathbb{R}^n$, $n\in \mathbb{N},$ with boundary $\partial\Omega$. Set $Q=\Omega \times(0,+\infty)$. Let $\chi_{\mathcal{O}}$ denote the characteristic function on a non-empty open subset $\mathcal{O}$ of $\Omega$. Consider the control problem for parabolic coupled equations that seeks $(y,z)$ such that 
\begin{equation} \label{PCE_mod} 
		\begin{aligned} 
			&  y_t- \eta_0\Delta y +\nu_0 y +\eta_1 z=u\chi_{\mathcal{O}} \text{ in } Q,\\
			&   z_t-\beta_0 \Delta z +(\kappa+\nu_0) z -y=0 \text{ in } Q,\\
			&  y=0, \; z=0 \text{ on } \partial\Omega\times(0,\infty), \\
			& y(\cdot,0)=y_0 \text{,  }\, z(\cdot,0)= z_0 \text{ in } \Omega.
		\end{aligned}
\end{equation}
Here $y, z$ are the state variables, $u$ is the control variable,  $\eta_0>0,\;\beta_0>0, \; \kappa>0,$   $\eta_1\in \mathbb{R},$ and $\nu_0\in \mathbb{R}$ are given constants.\\[2mm] 
\noindent Our main goal in this article is to study the feedback stabilization of \eqref{PCE_mod} with any exponential decay $-\omega<0$, when control acts only in one equation,
and its numerical analysis with error estimates for the solution and the feedback control.\\[2mm]
\noindent The system \eqref{PCE_mod} is written in the space $\Hf:=\Lt\times\Lt$ defined over the complex field as
\begin{equation} \label{eqn:main_system_PCE}
\Yf'(t)=\Af\Yf(t)+\Bf u(t)  \text{ for all } t>0, \quad  \Yf(0)=\Yf_0,
\end{equation} 
for $\Yf (t):=\left( \begin{matrix}	y(\cdot,t)\\z(\cdot,t)	\end{matrix} \right)$, $\Yf_0=\begin{pmatrix} y_0\\z_0 \end{pmatrix},$ 
where $\Af$ is the linear operator and $\Bf$ is the control operator associated to \eqref{PCE_mod}.

\begin{Definition}[stabilizability]
The pair $\n(\Af,\Bf)$ in \eqref{eqn:main_system_PCE} or equivalently \eqref{eqn:main_system_PCE} is said to be feedback stabilizable with exponential decay $-\omega<0,$ if there exists $\n K\in \Lc(L^2(\Omega),\Hf) $ such that the operator $\Af+\omega\If+\Bf K$ generates an exponentially stable semigroup on $\n\Hf$, that is,
\begin{align}
    \|e^{t(\Af+\omega\If+\Bf K)}\|_{\Lc(\Hf)} \le C e^{-\gamma t}\text{ for all } t>0,
\end{align}
and for some positive constants $\gamma,\, C.$
\end{Definition}


\subsection{Literature survey}
%
%
\noindent We first mention some available results in this context. 
 Feedback stabilization of a system using the solution of an algebraic Riccati equation is well-studied, for example, see \cite{BDDM, Lasiecka1} and references therein. 
 This technique has been used extensively to study the stabilization of parabolic equations, such as incompressible Navier-Stokes equations, coupled parabolic-ODE systems and other few models in \cite{JPR, JPR2010, BreKun17, WKR}. A characterization of the  stabilization of parabolic systems is obtained in \cite{BadTak14}.\\[2.mm]
 \noindent
Some results on controllability of the parabolic equations and coupled equations can be found in \cite{Khod05,Khodja11,CEPC08,Kun21,Kun21EECT,BadTak14} and the references therein.\\[2.mm]
\noindent
Numerical study of parabolic equations is also well-established. Error estimate for parabolic equations using a standard finite element Galerkin method with a standard energy technique and a duality argument can be found in several articles, for example, \cite{ThomBramb77,Thom79,Thomee}. In these articles, the authors obtain an $L^2-$error estimate of order $O(h^2)$ for the semidiscrete solution, when the initial condition is in $L^2(\Omega).$\\[2.mm]
\noindent
In \cite{Lasiecka1}, the numerical theory as the counterpart of the known continuous theory for feedback stabilization has been developed for abstract parabolic systems under certain hypothesis. In this book, the authors provide numerical approximation theory of continuous dynamics and  algebraic Riccati equations. The error estimates for the trajectories and feedback controls have been obtained with the `optimal rate' of convergence in this set up. The application of this theory and related works can be found in \cite{Las-Tr-91, Ls-Tr-87-I,Ls-Tr-87-II} and references therein. \\[2.mm]
\noindent
In \cite{KK91}, the authors consider linear quadratic control problems for parabolic equations with variable coefficients. They provide the approximation of the Riccati equation and obtain the rate of convergence for the optimal controls and optimal trajectories. \\[2.mm]
\noindent
Numerical stabilization for a Boussinesq system is investigated in \cite{CRRS} and for two-dimensional Navier-Stokes equations by the boundary stabilization are established in \cite{JPR6-2017}, where the authors determine the best control location. In this approach, the semi-discrete system is projected onto an unstable finite dimensional system using degree of stabilizaility and a feedback finite dimensional stabilizing control is constructed by solving an algebraic Riccati equation. The control thus obtained stabilizes the whole system.

\subsection{Methodology and Contributions.}
In this article, we study the feedback stabilization of \eqref{PCE_mod} in $\Hf$ with arbitrary exponential decay $-\omega<0$, and the associated numerical analysis. 
For this, the system \eqref{PCE_mod} is written in an equivalent operator form in \eqref{eqn:main_system_PCE}, where $\Af$ and $\Bf$ are the associated linear operator and the control operator defined in \eqref{eqdef-A_PCE}-\eqref{eqcontrol_PCE}. We show that $(\Af, D(\Af))$ forms an analytic semigroup on $\Hf$. Moreover, the spectrum of $\Af$ is inside a sector in the complex plane and the resolvent operators are compact. In fact the spectrum of $\Af$ consists of two sequences of real eigenvalues except finitely many complex eigenvalues and the sequence of real eigenvalues diverges  to $-\infty$. 
 It is shown that $(\Af+\omega \If, \Bf)$, for any $-\omega<0$,  is stabilizable in $\Hf$, by checking the Hautus condition. Next, the standard results give that the system can be stabilized in $\Hf$ using a feedback control which can be constructed by solving an algebraic Riccati equation in the whole space. We finally obtain that the coupled system is feedback stabilizable with any exponential dacay $-\omega<0$, using only one control acting in one parabolic equation. \\[2.mm]
\noindent
Next part of our work is to give the numerical approximation and error estimates of the trajectories and the feedback controls. To do it, we closely follow the technique introduced in 
\cite{Lasiecka1}. However, we execute the method in our case for coupled parabolic equations giving the explicit approximations and error estimates. 
A family of discrete operators $\Af_h,$ $\Bf_h$ on a finite dimensional space $\Hf_h$ from the finite element method lead to a family of discrete system approximating  \eqref{eqn:main_system_PCE} on $\Hf_h$. For all $h>0$, it is shown that the uniform coercivity of the bilinear form associated to $\Af_h$ hold and thus the spectrum of $\Af_h$, for $h>0$ lies in a uniform sector $\Sigma(-\nuh;\theta_0):=\{-\nuh+re^{\pm i\theta}\,|\, r> 0,\, \theta\in (-\pi, \pi], \, |\theta| \ge \theta_0\}$
in the complex plane, for  $\nuh>0$ and $\frac{\pi}{2}<\theta_0<\pi,$ and a uniform resolvent estimate holds in the complement of $\Sigma(-\nuh;\theta_0)$ for all $\Af_h$, for $h>0$. These estimates finally give that the family $\{\Af_h\}_{h>0}$ generates a uniform analytic semigroup on $\Hf_h$. It is also shown that the eigenvalues of $\Af_h$ converge to the corresponding eigenvalues of $\Af$ with a quadratic rate of convergence.
Moreover, the estimate between the resolvent operators for $\Af$ and $\Af_h$, $\|R(-\nuh,\Af)-R(-\nuh,\Af_h)\Pi_h\|_{\Lc(\Hf)}\le Ch^2$ is established for this system. 
This along with the uniform analyticity of $\{\Af_h\}_{h>0}$ is the crucial hypothesis given in \cite{Lasiecka1} to carry out further analysis. We explicitly derive them for this system. 
Next, using some perturbation results from operator theory, we study the feedback stabilization of the approximated system and obtain that if the continuous system is stabilizable with decay rate $-\gamma,$ then the approximated system is stabilizable with decay $-\wh\gamma,$ for any $\wh\gamma\in (0,\gamma)$.  Also, it is obtained that the stabilization of the discrete operators is uniform in $h$, for sufficiently small $h$. It is one of the challenging parts of this theory. Finally, the stabilizing control is determined by solving an algebraic Riccati equation in the discrete space $\Hf_h$. The error estimates of the stabilized solutions as well as the feedback controls are studied. The theoretical results are validated by numerical results. 

\medskip
\noindent The novelty of this work is that this article provides a complete analysis of the feedback stabilization of  a parabolic coupled system using only one control acting in one equation, and a thorough study of its numerical aspects. The approximations, error estimates, construction of feedback controls are given explicitly for this model with a rigorous spectral analysis. The dependency of the constants in estimates on the coefficients of the principal part of the system is also tracked. This may be helpful when we study the stabilization of the degenerate system, for example, $\beta_0=0$ in the second equation of \eqref{PCE_mod}, that is, a parabolic-ODE coupled system.

\subsection{Organisation.} The article is organized as follows. The main results of this article are stated in Section \ref{sec:mainresults-st}. 
Section \ref{sec:cont dyn} deals with the  spectral analysis of $\Af$, the analytic semigroup generated by $\Af$ and the feedback stabilizability of $(\Af,\Bf)$ with exponential decay $-\omega<0,$ for any $\omega>0.$ Section \ref{sec:lapl prop} presents some finite element approximation results while Section \ref{sec:Appopp} presents the uniform analyticity of semigroup generated by the discrete approximation operators $\Af_h$. Some intermediate convergence results needed for the rest of the article are given in Section \ref{sec:convres-unctrl}. Moreover, in this subsection, the error estimates for solutions of the system and its approximated system are established, when there is no control acting in the system. Section \ref{sec:pert} deals with the uniform analyticity and stability in a general set up, for a certain family of operators under a suitable perturbation. Section \ref{sec:disRiccati} contributes to the existence of the solution of a discrete algebraic Riccati equation. Further, the uniform stabilizability of the approximated system is proved using a feedback operator obtained by the discrete Riccati operator. In Section \ref{sec:results}, the proof of the main results on error estimates are established. Finally, the results of numerical experiments that validate the theoretical results are provided in Section \ref{sec:NI}.

 \subsection{Notations.} \label{subsec:notn} Throughout the paper, we denote the inner product and norm in $\Lt$ by $\langle \phi, \psi \rangle:=\int_\Omega \phi \overline{\psi}\, dx$ and $\|\phi\|:=\left( \int_\Omega |\phi|^2\,dx\right)^{1/2}$ for all $\phi,\psi\in \Lt.$  The space $\Hf:=\Lt\times\Lt$ is equipped with complex inner product $\left\langle \begin{pmatrix}u\\v\end{pmatrix},\begin{pmatrix}\phi\\ \psi\end{pmatrix}\right\rangle:=\langle u, \phi\rangle+\langle v,\psi \rangle$ and norm $\left\|\begin{pmatrix}u\\v\end{pmatrix}\right\|:=\left(\|u\|^2+\| v\|^2\right)^{1/2},$ for all $\begin{pmatrix}u\\v\end{pmatrix},\begin{pmatrix}\phi\\ \psi\end{pmatrix}\in\Hf$ and $\Uf:=\Lt$ is equipped with the usual complex inner product and norm. The notation $|\cdot|$ refers to the absolute value of a real number or the modulus of a complex number depending on the context. In the sequel, $\Re(\mu)$ denotes the real part of the complex number $\mu,$ for any operator $\textbf{T},$ $\rho(\textbf{T}):=\left\lbrace \mu\in \Cb\,|\, \mu I-\textbf{T}\text{ is invertible and the inverse is bounded}\right\rbrace$ denotes the resolvent set of $\textbf{T},$ for any $\mu\in \rho(\textbf{T})$, $R(\mu,\textbf{T}):=(\mu I -\textbf{T})^{-1}$ denotes the resolvent operator and $\sigma(\textbf{T})$ denotes the spectrum of $\textbf{T}$. The positive constant $C$ is generic and independent of the discretization parameter $h.$

\section{Main results}\label{sec:mainresults-st}
\noindent The unbounded operator $(\Af, D(\Af))$ on $\Hf=\Lt\times \Lt$ associated to \eqref{PCE_mod} is defined by
\begin{equation}  \label{eqdef-A_PCE}
			\Af:=\left( \begin{matrix}
				\eta_0 \Delta -\nu_0 I & -\eta_1 I \\ I & \beta_0\Delta -(\kappa+\nu_0) I 
			\end{matrix} \right)\text{ and } D(\Af):= \left(H^2(\Omega)\cap H^1_0(\Omega) \right)^2,
\end{equation}
where $I:\Lt\rightarrow \Lt$ is the identity operator. Further, the control operator $\Bf \in \Lc( L^2(\Omega), \Hf)$ is defined by 
\begin{equation}\label{eqcontrol_PCE} 
			\Bf f:=\left( \begin{matrix}
				f\chi_{\mathcal{O}}\\0
			\end{matrix} \right) \text{ for all } f\in L^2(\Omega).
\end{equation}

\noindent We denote $(\Af^*,D(\Af^*))$ and $\Bf^*$ as the adjoint operators corresponding to the  operators $(\Af,D(\Af))$ and $\Bf,$ respectively. 

\medskip
\noindent For any given $\omega>0,$ to study the stabilizability of \eqref{eqn:main_system_PCE} with exponential decay $-\omega<0,$ set $\Yt(t):=e^{\omega t}\Yf(t)$ and $\ut(t):=e^{\omega t}u(t)$. Then $(\Yt(t),\ut(t))$ satisfy 
\begin{equation}\label{eqn: main shifted_PCE}
			\Yt'(t)=\Af_\omega\Yt(t)+\Bf \ut(t)\;\text{ for all }t>0,\quad \Yt(0)=\Yf_0,
\end{equation}
where
\begin{equation} \label{Aw}
	\begin{array}{l}
		\Aw:=\Af+\omega \If \text{ with }D(\Aw)=D(\Af) \text{ and }
		\Aw^*:= \Af^*+\omega \If \text{ with }D(\Aw^*)=D(\Af^*),
	\end{array}
\end{equation}
$\If:\Hf\rightarrow\Hf$  being the identity operator. 

\noindent As studied in \cite{WKR}, if \eqref{eqn: main shifted_PCE} is stabilizable by a control $\ut(t)=K \wt \Yf(t),$ for some $K\in \Lc(\Hf,\Lt),$ then \eqref{eqn:main_system_PCE} is stabilizable with decay $-\omega<0$ by the control $u(t)=e^{-\omega t} \ut (t).$ Therefore, to study the stabilizability of \eqref{eqn:main_system_PCE} with decay $-\omega<0,$ it is enough to study the exponential stabilizability of \eqref{eqn: main shifted_PCE}. Often, the feedback operator $K$ is obtained by studying an optimization problem and by using a Riccati equation. 
To obtain the feedback operator, consider the optimal control problem:
\begin{align}\label{eqoptinf_PCE}
\min_{\ut\in E_{\Yf_0}} J( \Yt,\ut) \text{ subject to \eqref{eqn: main shifted_PCE}},
\end{align}
where
\begin{equation}
J( \Yt,\ut):=\int_0^\infty\big( \|\Yt(t)\|^2+\|\ut(t)\|^2\big) \, dt,
\end{equation}
and $E_{\Yf_0}:=\{ \ut\in L^2(0,\infty; L^2(\Omega))\mid \Yt \text{ solution of \eqref{eqn: main shifted_PCE} with control }\ut \text{ such that } J(\Yt,\ut)<\infty\}.$ 

\medskip
\noindent The next theorem yields the minimizer of \eqref{eqoptinf_PCE} as well as the stabilizing control in the feedback form. The proof of the theorem is provided in Section \ref{subsec:Proof main 1}. 
\begin{Theorem}[stabilization for the continuous case]\label{th:stb cnt}
Let $\omega>0$ be  any real number. Let $\normalfont\Aw$ (resp. $\normalfont\Bf$) be as defined in \eqref{Aw} (resp. \eqref{eqcontrol_PCE}). Then the following results hold: 
	\begin{enumerate}
	\item[(a)] There exists a unique operator $\normalfont\Pf\in \mathcal{L}(\Hf)$ that satisfies the non-degenerate Riccati equation 
				\begin{equation}\label{eqn:ARE}
					\begin{array}{l}
						\normalfont\Aw^*\Pf+\Pf\Aw-\Pf\Bf\Bf^*\Pf+\If=0,\quad \Pf=\Pf^* \geq 0 \text{ on }\Hf.
					\end{array}
				\end{equation}
	\item[(b)] For any $\Yf_0\in \Hf$, there exists a unique optimal pair $\normalfont(\Yf^\sharp,u^\sharp)$ for \eqref{eqoptinf_PCE}, where for all $t>0$, $\n\Yf^\sharp(t)$ satisfies the closed loop system 
		\begin{equation}\label{eqcl-loop}
			\normalfont\Yf{^\sharp}'(t)=(\Aw-\Bf\Bf^*\Pf)\Yf^\sharp(t),\;\; \Yf^\sharp(0)=\Yf_0,
		\end{equation}
		$u^\sharp(t)$ can be expressed in the feedback form as
		\begin{equation}\label{eqoptcntrl}
			\normalfont u^\sharp(t)=-\Bf^*\Pf\Yf^\sharp(t), 
		\end{equation}
		and
		$\displaystyle\min_{\ut\in E_{{\Yf}_0}}\normalfont J( \Yt,\ut)=J(\Yf^\sharp,u^\sharp)=\langle \Pf\Yf_0,\Yf_0\rangle.$
	\item[(c)] The feedback control in \eqref{eqoptcntrl} stabilizes \eqref{eqn: main shifted_PCE}. In particular, let us denote the operator $\normalfont\Af_{\omega,\Pf}:=\Aw-\Bf\Bf^*\Pf,$ with $\normalfont D(\Af_{\omega,\Pf})= D(\Af)$. The semigroup $\n \{e^{t\Af_{\omega,\Pf}}\}_{t\ge 0}$, generated by $(\normalfont\Af_{\omega,\Pf}, \normalfont D(\Af_{\omega,\Pf})),$ on $\n\Hf$ is analytic and exponentially stable, that is, there exist $\gamma>0$ and $M>0$ such that 
	$$\n \|e^{t\Aw{_{,\Pf}}}\|_{\Lc(\Hf)}\leq Me^{-\gamma t} \text{ for all }  t>0.$$
	\end{enumerate}
	\end{Theorem}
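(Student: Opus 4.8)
The plan is to recognize Theorem~\ref{th:stb cnt} as an instance of the classical linear--quadratic regulator (LQR) theory for analytic semigroups with a bounded control operator, and to reduce the three assertions to the two structural hypotheses that drive that theory: \emph{stabilizability} of the pair $(\Aw,\Bf)$ and \emph{detectability} of the state--cost pair. Since the running cost in $J$ penalizes the full state through the term $\|\Yt(t)\|^2$, the observation operator is the identity $\If$ on $\Hf$, so detectability holds trivially. The two facts I would record at the outset are that $\Aw$ generates an analytic semigroup on $\Hf$ (established in Section~\ref{sec:cont dyn} from the coercivity of the associated sesquilinear form and the sectorial location of $\sigma(\Af)$) and that $\Bf\in\Lc(L^2(\Omega),\Hf)$ is bounded with adjoint $\Bf^*$ acting by restriction to $\Oc$. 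These place us squarely in the bounded-control setting of \cite{BDDM,Lasiecka1}.

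The crucial prerequisite is the exponential stabilizability of $(\Aw,\Bf)$. Because $\Aw$ is sectorial with compact resolvent and its real eigenvalues diverge to $-\infty$, only finitely many eigenvalues lie in the half-plane $\{\Re\mu\ge 0\}$, so stabilizability reduces to a finite-dimensional condition on the unstable spectral subspace. I would verify this through the Hautus test: for every eigenvalue $\lambda$ of $\Aw^*$ with $\Re\lambda\ge 0$ and every corresponding eigenfunction $\phi=(\phi_1,\phi_2)\in D(\Aw^*)$, the implication $\Bf^*\phi=0 \Rightarrow \phi=0$ must hold. Here $\Bf^*\phi=\phi_1\chi_{\Oc}$, so the hypothesis forces $\phi_1\equiv 0$ on the subdomain $\Oc$. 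Feeding this into the coupled adjoint eigenvalue system and using the coupling terms to transfer the vanishing from the first component to the second, together with unique continuation for the underlying elliptic eigenvalue problem on $\Omega$, one concludes $\phi_1\equiv\phi_2\equiv 0$. This step---propagating the vanishing on $\Oc$ through the coupling and then through unique continuation so that the single localized control is seen to ``reach'' both states---is the main obstacle of the argument.

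With stabilizability and detectability in hand, I would invoke the abstract Riccati theorem for LQR problems governed by analytic semigroups with bounded $\Bf$ (see \cite{BDDM,Lasiecka1}) to obtain part (a): the existence of a unique self-adjoint $\Pf\ge 0$ solving \eqref{eqn:ARE}, its positivity (non-degeneracy) coming from the coercive state weight $\If$. The same theory yields part (b): the optimal pair is unique, $u^\sharp=-\Bf^*\Pf\Yf^\sharp$ in feedback form, the closed-loop dynamics \eqref{eqcl-loop} hold, and the minimal value equals $\langle\Pf\Yf_0,\Yf_0\rangle$, which follows from a standard completion-of-squares identity applied along the closed-loop trajectory. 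The exponential stability in part (c) is then the classical consequence that, under stabilizability and detectability, the closed-loop generator $\Af_{\omega,\Pf}=\Aw-\Bf\Bf^*\Pf$ is exponentially stable.

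Finally, for the analyticity claim in (c), I would observe that $\Bf\Bf^*\Pf\in\Lc(\Hf)$ is a bounded operator, so $\Af_{\omega,\Pf}$ is a bounded perturbation of the analytic generator $\Aw$; by the standard perturbation result for analytic semigroups, $\Af_{\omega,\Pf}$ again generates an analytic semigroup on $\Hf$ with $D(\Af_{\omega,\Pf})=D(\Af)$. Combining analyticity with the exponential decay already established gives $\|e^{t\Af_{\omega,\Pf}}\|_{\Lc(\Hf)}\le Me^{-\gamma t}$ for all $t>0$, completing the proof.
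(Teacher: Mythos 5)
Your proposal is correct and follows the same overall architecture as the paper's proof: establish exponential stabilizability of $(\Aw,\Bf)$ via the Hautus test (this is the paper's Theorem \ref{th:stbopen}), then invoke the abstract LQR/Riccati results of \cite{BDDM} for parts (a), (b) and the exponential stability in (c), and obtain analyticity of the closed loop by viewing $-\Bf\Bf^*\Pf$ as a bounded perturbation of the analytic generator $\Aw$. The one genuine divergence is inside the Hautus verification. The paper exploits the explicit spectral data of Proposition \ref{pps:spec Af}: because the coupling is by constant multiples of the identity, every eigenfunction of $\Af^*$ has both components proportional to a single Laplacian eigenfunction $\phi_k$, so $\Bf^*\xi=\xi_1\chi_{\Oc}=0$ reduces to the fact that the analytic scalar function $\phi_k$ cannot vanish on the open set $\Oc$. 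You instead keep the eigenfunction abstract: from $\xi_1\equiv 0$ on $\Oc$, the first adjoint equation gives $\xi_2=(\lambda+\nu_0)\xi_1-\eta_0\Delta\xi_1\equiv 0$ on $\Oc$, and you then appeal to unique continuation for the coupled second-order system. That step does go through here --- eliminating $\xi_2$ yields a fourth-order constant-coefficient elliptic equation for $\xi_1$, whose solutions are real-analytic in $\Omega$, so vanishing on $\Oc$ propagates to the connected domain --- but it is exactly the point your argument must justify, whereas the paper's explicit eigenvectors reduce everything to scalar Laplacian eigenfunctions and avoid system-level unique continuation altogether. Your route buys generality (it would survive variable coefficients or couplings with no closed-form eigenfunctions, granted a unique continuation theorem for the system), while the paper's buys brevity and also quietly covers the multiple-eigenvalue case $\Lambda_k^+=\Lambda_k^-$ through the explicit formulas \eqref{eqEigFunAf*-m}. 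Finally, your detectability framing is equivalent in substance to the paper's citation of \cite[Remark 3.1, Part-V, Ch-1]{BDDM}: the state weight in $J$ is the identity, so finiteness of the cost along the closed loop yields the exponential decay.
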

 \noindent In particular, the above theorem gives that $K=-\Bf^*\Pf$, where $\Pf$ is the solution of \eqref{eqn:ARE}, is a feedback operator such that 
 $\Aw+\Bf K$ is stable in $\Hf$.

\noindent Consider the finite dimensional subspace $\Hf_h$ of $\Hf$, projection operator $\Pi_h:\Hf\longrightarrow \Hf_h,$ the discrete operator $\Af_h:\Hf_h\longrightarrow\Hf_h$ that corresponds to $\Af$  and the discrete operator $\Af_{\omega_h}:\Hf_h\longrightarrow\Hf_h$ that corresponds to $\Aw$ defined by 
\begin{equation}\label{app_w}
\Af_{\omega_h}=\Af_h+\omega \If_h,
\end{equation} where $\If_h:\Hf_h\rightarrow \Hf_h$ is the identity operator. Also, define the discrete operator $\Bf_h:\Uf\longrightarrow \Hf_h$ as
\begin{align}\label{eqn:app B}
    \Bf_h=\Pi_h\Bf.
\end{align}
Denote $\Af_{\omega_h}^*$ and $\Bf_h^*$ as the adjoint of the operators $\Af_{\omega_h}$ and $\Bf_h,$ respectively.\\[2mm]
\noindent The approximating system for \eqref{eqn:main_system_PCE} (resp. \eqref{eqn: main shifted_PCE}) is 
		\begin{equation}\label{eq:appY'AYBu}
			\Yf_h'(t)=\Af_h\Yf_h(t)+\Bf_h u_h(t) \text{ for all }t>0,\quad \Yf_h(0)=\Yf_{0_h},
		\end{equation}
		\begin{align} \label{eqn:d-state-2}
			\Big(\text{resp. }\Yt{_h}'(t)=\Af_{\omega_h}\Yt{_h}(t)+\Bf_h\ut_h(t)\text{ for all }t>0, \quad \Yt{_h}(0)=\Yf_{0_h}\Big),
		\end{align}
\noindent where $\Yf_{0_h}\in \Hf_h$ is an approximation of $\Yf_0.$ The details of finite dimensional approximation are presented in Section \ref{sec:Appopp}. To obtain the feedback stabilizing control, an optimal control problem is considered. Define
\begin{align}
J_h(\Yt{_h},\ut_h):=\int_0^\infty\big( \|\Yt{_h}(t)\|^2+\|\ut_h(t)\|^2\big) \, dt,
\end{align}
$$E_{h{\Yf}_{0_h}}:=\{ \ut_h\in L^2(0,\infty; L^2(\Omega))\mid J_h(\Yt_h,\ut_h)<\infty,\text{ where }\Yt_h \text{ is solution of }\eqref{eqn:d-state-2} \},$$
and consider the discrete optimal control problem:
\begin{equation}\label{eqn:d-ocfun-2}
\min_{\ut_h\in E_{h{\Yf}_{0_h}}}J_h(\Yt{_h},\ut_h) \text{ subject to }\eqref{eqn:d-state-2}.
\end{equation}
In the next result, it is established that for each $h,$ the optimal control problem \eqref{eqn:d-ocfun-2} has a unique minimizer and the minimizing control is obtained in feedback form by solving a discrete algebraic Riccati equation posed on $\Hf_h.$ The proof is provided in Section \ref{sec:disRiccati}.

\begin{Theorem}[uniform stabilizability and discrete Riccati operator]\label{th:dro}
Let $\Af_{\omega_h}$ and $\Bf_h$ be as defined in \eqref{app_w} and \eqref{eqn:app B}, respectively. Then there exists $h_0>0,$ such that  for all $0<h<h_0$, the results stated below hold:
 \begin{itemize}
\item[(a)] There exists a unique, non-negative, self-adjoint  Riccati operator $\n\Pf_h\in \Lc(\Hf_h)$ associated with \eqref{eqn:d-state-2} that satisfies the discrete Riccati equation
\begin{align} \label{eqn:d-ARE_PCE}
\n\Af^*_{\omega_h}\Pf{_h}+\Pf{_h}\Af_{\omega_h}-\Pf{_h}\Bf_h\Bf^*_h\Pf{_h}+\If_h=0,\; \Pf{_h}=\Pf{_h}^*\geq 0 \text{ on }\Hf_h.
 \end{align}
 \item[(b)] For any $\Yf_{0_h}\in \Hf_h$, there exists a unique optimal pair $(\Yf^\sharp_h,u^\sharp_h)$ for \eqref{eqn:d-ocfun-2},  
 where $\n\Yf^\sharp_h(t)$ is the solution of the corresponding closed loop system
 \begin{equation}\label{eq-d-cl-loop}
 \n\Yf{^\sharp_h}'(t)=(\Af_{\omega_h}-\Bf_h\Bf_h^*\Pf{_h})\Yf^\sharp_h(t) \text{ for all } t>0,\quad \Yf^\sharp_h(0)=\Yf_{0_h},
 \end{equation}
$u^\sharp_h(t)$ can be expressed in the feedback form as 
\begin{align}\label{eqn:uht}
   u^\sharp_h(t)=-\Bf_h^*\Pf_h\Yf^\sharp_h(t),
\end{align}
and 
 \begin{align} \label{eq:minvalfun-d}
 \displaystyle\n\min_{\ut_h\in E_{h{\Yf}_{0_h}}}J_h(\Yt{_h},\ut_h)=J_h(\Yf^\sharp_h, u^\sharp_h)=\left\langle \Pf_h \Yf_{0_h},\Yf_{0_h}\right\rangle.
\end{align} 
\item[(c)] The operator $\Af_{\omega_h,\Pf_h}:=\Af_{\omega_h}-\Bf_h\Bf_h^*\Pf_h$ generates a uniformly analytic semigroup $\{e^{t\Af_{\omega_h,\Pf_h}}\}_{t\ge 0}$ on $\Hf_h$ satisfying
$$\normalfont\|e^{t\Af_{{\omega_h,\Pf_h}}}\|_{\Lc(\Hf_h)}\leq
M_Pe^{-\omega_P t} \text{ for all } t>0, $$
for some positive constants $\normalfont\omega_P$ and $\normalfont M_P$ independent of $h$. 
\end{itemize}
\end{Theorem}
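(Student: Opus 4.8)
The plan is to treat parts (a)--(b) as the classical finite-dimensional linear-quadratic theory and to concentrate the real work on the uniformity asserted in (c). Since $\Hf_h$ is finite dimensional and the state penalty in $J_h$ is the full identity $\If_h$, the pair $(\Af_{\omega_h},\If_h)$ is trivially detectable, so the only genuine hypothesis needed for the discrete Riccati equation \eqref{eqn:d-ARE_PCE} is stabilizability of $(\Af_{\omega_h},\Bf_h)$. First I would invoke the uniform stabilizability established in the preceding perturbation section: there is $h_0>0$ and a feedback family $\{K_h\}_{0<h<h_0}\subset\Lc(\Hf_h,\Uf)$, uniformly bounded in $h$, such that $\Af_{\omega_h}+\Bf_h K_h$ generates a semigroup on $\Hf_h$ whose decay rate and constant are independent of $h$. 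With stabilizability and detectability in hand, the classical finite-dimensional Riccati theory (e.g.\ \cite{BDDM}) yields, for each such $h$, a unique non-negative self-adjoint $\Pf_h\in\Lc(\Hf_h)$ solving \eqref{eqn:d-ARE_PCE}, which is part (a).

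For part (b) I would use the standard completing-the-square identity. Differentiating $t\mapsto\langle\Pf_h\Yt_h(t),\Yt_h(t)\rangle$ along a trajectory of \eqref{eqn:d-state-2}, substituting \eqref{eqn:d-ARE_PCE} for $\Af^*_{\omega_h}\Pf_h+\Pf_h\Af_{\omega_h}$, and integrating on $(0,\infty)$ (finiteness of $J_h$ forces $\Yt_h(t)\to 0$, killing the boundary term) gives
\[
J_h(\Yt_h,\ut_h)=\langle\Pf_h\Yf_{0_h},\Yf_{0_h}\rangle+\int_0^\infty\big\|\ut_h(t)+\Bf_h^*\Pf_h\Yt_h(t)\big\|^2\,dt.
\]
The integrand is minimized pointwise precisely by $\ut_h=-\Bf_h^*\Pf_h\Yt_h$, which produces the feedback law \eqref{eqn:uht}, the closed-loop system \eqref{eq-d-cl-loop}, the value \eqref{eq:minvalfun-d}, and uniqueness by strict convexity.

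Part (c) is where the difficulty lies, since $\omega_P$ and $M_P$ must be independent of $h$. The key quantitative step is a uniform bound on $\Pf_h$: by optimality and \eqref{eq:minvalfun-d}, $\langle\Pf_h\Yf_{0_h},\Yf_{0_h}\rangle$ is at most the cost $J_h$ evaluated at the admissible stabilizing feedback $\ut_h=K_h\Yt_h$, and this is bounded by $C\|\Yf_{0_h}\|^2$ with $C$ uniform precisely because $K_h$ stabilizes $\Af_{\omega_h}$ uniformly; since $\Pf_h=\Pf_h^*\ge 0$, this gives $\|\Pf_h\|_{\Lc(\Hf_h)}\le C$ uniformly. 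Consequently the perturbation $\Bf_h\Bf_h^*\Pf_h$ is uniformly bounded, so $\Af_{\omega_h,\Pf_h}$ is a uniformly bounded perturbation of the uniformly analytic family $\{\Af_{\omega_h}\}$ established earlier, and the general perturbation result of the preceding section then delivers uniform analyticity of $\{e^{t\Af_{\omega_h,\Pf_h}}\}$.

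It remains to upgrade uniform analyticity to uniform exponential decay. Using $\Pf_h$ as a Lyapunov functional along the closed loop and the Riccati equation yields $\frac{d}{dt}\langle\Pf_h\Yf_h^\sharp,\Yf_h^\sharp\rangle=-\|\Yf_h^\sharp\|^2-\|\Bf_h^*\Pf_h\Yf_h^\sharp\|^2\le-\|\Yf_h^\sharp\|^2$, so after integration $\int_0^\infty\|e^{t\Af_{\omega_h,\Pf_h}}\Yf_{0_h}\|^2\,dt\le\langle\Pf_h\Yf_{0_h},\Yf_{0_h}\rangle\le C\|\Yf_{0_h}\|^2$ with $C$ uniform. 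This uniform square-integrability, combined with the uniform analyticity (hence uniform short-time bounds on $\|e^{t\Af_{\omega_h,\Pf_h}}\|$ and a uniform sector), lets me apply a uniform Datko--Pazy type argument to conclude $\|e^{t\Af_{\omega_h,\Pf_h}}\|_{\Lc(\Hf_h)}\le M_P e^{-\omega_P t}$ with $M_P,\omega_P$ independent of $h$. I expect the principal obstacle to be exactly this last uniformization: guaranteeing that every constant entering the Datko-type estimate (the square-integral bound, the analyticity sector, the short-time semigroup bound) is $h$-independent, which is what the uniform stabilizability and uniform analyticity from the earlier sections are designed to provide.
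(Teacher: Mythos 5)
Your proposal is correct and follows essentially the same route as the paper: uniform stabilizability of $(\Af_{\omega_h},\Bf_h)$ from the perturbation section (in the paper, Lemma \ref{lem:temp-1}, where the uniformly bounded feedback family is concretely $\Ff_h=-\Bf^*\Pf|_{\Hf_h}$, the restriction of the continuous Riccati feedback), classical LQR theory for (a)--(b), the uniform bound $\|\Pf_h\|_{\Lc(\Hf_h)}\le \widetilde C$ by comparing the optimal cost with the cost of that stabilizing feedback, uniform analyticity of $\Af_{\omega_h,\Pf_h}$ via the perturbation lemma with $\Ff_h=-\Bf_h^*\Pf_h$, and a parameter-uniform Datko-type theorem (the paper cites \cite[Theorem 4A.2]{Lasiecka1}) combining the uniform square-integrability $\int_0^\infty\|e^{t\Af_{\omega_h,\Pf_h}}\Yf_{0_h}\|^2\,dt\le\widetilde C\|\Yf_{0_h}\|^2$ with the uniform growth bound. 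Your completing-the-square derivation of (b) and the Lyapunov derivation of the square-integrability are standard equivalents of the steps the paper cites or reads off from the value function \eqref{eq:minvalfun-d}.
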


\noindent The main results of the paper on convergence and error estimates are stated next. The proofs are presented in Section \ref{sec:results}.

\begin{Theorem}[error estimates for Riccati and cost functional]\label{th:main-conv-P}
Let $\n\Pf$ and $(\Yf^\sharp$, $u^\sharp)$, for any $\Yf_0\in \Hf$, be as obtained in Theorem \ref{th:stb cnt}. Let $h_0$, $\Pf_h$, and $(\Yf_h^\sharp$, $u^\sharp_h)$, for $\Yf_{0_h}=\Pi_h \Yf_0$, be as  in Theorem \ref{th:dro}. 
Then there exists $\tilde{h}_0\in (0, h_0)$ such that for any given $0<\epsilon<1,$ and for all $0<h<\tilde{h}_0$ the estimates below hold:
\begin{itemize}
\item[(a)]  $\n\;\|\Pf-\Pf_h\Pi_h\|_{\Lc(\Hf)}\le C h^{2(1-\epsilon)} ,$   {\n (b) } $\n \left\vert  J(\Yf^\sharp,u^\sharp) - J_h(\Yf^\sharp_h,u^\sharp_h)\right\vert\le C h^{2(1-\epsilon)}, $
\item[(c)] $
\|\Bf^*\Pf - \Bf_h^*\Pf_h\Pi_h\|_{\Lc(\Hf,\Uf)}   \le C h^{2(1-\epsilon)} $, and {\n (d)} $\|\Bf^*\Pf-\Bf_h^*\Pf_h\|_{\Lc(\Hf_h,\Uf)} \le  Ch^{2(1-\epsilon)}.$
\end{itemize}
Here, the constant $C>0$ is independent of $h$ but depends on $\epsilon$. 
\end{Theorem}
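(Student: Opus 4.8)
The plan is to establish the four estimates by first proving the operator-norm convergence of the Riccati operators in part (a), and then deriving (b), (c), and (d) as consequences. The central quantity is the difference $\Pf-\Pf_h\Pi_h$, and the strategy is to relate it to the resolvent difference $\|R(-\nuh,\Af)-R(-\nuh,\Af_h)\Pi_h\|_{\Lc(\Hf)}\le Ch^2$ already established in the excerpt, together with the uniform analyticity and uniform exponential stability of the closed-loop semigroups from Theorem \ref{th:dro}(c). Concretely, I would follow the abstract approximation framework of \cite{Lasiecka1}: the two Riccati operators admit the integral representations
\begin{equation*}
\Pf=\int_0^\infty e^{t\Af_{\omega,\Pf}^*}e^{t\Af_{\omega,\Pf}}\,dt,\qquad
\Pf_h=\int_0^\infty e^{t\Af_{\omega_h,\Pf_h}^*}e^{t\Af_{\omega_h,\Pf_h}}\,dt,
\end{equation*}
so that $\Pf-\Pf_h\Pi_h$ is controlled by the difference of the closed-loop semigroups $e^{t\Af_{\omega,\Pf}}-e^{t\Af_{\omega_h,\Pf_h}}\Pi_h$ acting on the state. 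The uniform exponential decay $Me^{-\gamma t}$ (continuous) and $M_Pe^{-\omega_P t}$ (discrete, uniform in $h$) make the time integrals converge and let me split the range into $(0,\tau)$ and $(\tau,\infty)$, absorbing the tail into an arbitrarily small error.

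The heart of part (a) is therefore an error estimate for the closed-loop trajectories. I would write the difference of the semigroups via Duhamel/variation-of-parameters around the open-loop operators $\Aw$ and $\Af_{\omega_h}$, using the feedback perturbation $-\Bf\Bf^*\Pf$ (resp.\ $-\Bf_h\Bf_h^*\Pf_h$) as a bounded perturbation. This reduces matters to (i) the convergence of the uncontrolled semigroups, which by the uniform analyticity of $\{\Af_h\}_{h>0}$ and the $O(h^2)$ resolvent estimate gives $\|(e^{t\Aw}-e^{t\Af_{\omega_h}}\Pi_h)\Yf_0\|\le C(t)h^2$ on compact time intervals through the Dunford contour integral $e^{t\Af}=\tfrac{1}{2\pi i}\int_\Gamma e^{t\mu}R(\mu,\Af)\,d\mu$; and (ii) a bootstrap using the very convergence $\|\Pf-\Pf_h\Pi_h\|$ that we are trying to bound, since the feedback terms involve the Riccati operators themselves. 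This self-referential structure is handled by a Gronwall-type fixed-point argument on the interval $(0,\tau)$: one obtains an inequality of the form $\|\Pf-\Pf_h\Pi_h\|\le Ch^{2(1-\epsilon)}+(\text{small})\cdot\|\Pf-\Pf_h\Pi_h\|$ and absorbs the small term, where the loss of $h^{2\epsilon}$ arises from the singularity of the analytic-semigroup bound $\|\Af_h e^{t\Af_h}\|\le Ct^{-1}$ near $t=0$ when converting spatial regularity into the convergence rate.

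Once (a) is in hand, the remaining parts are comparatively routine. For part (b), since $J(\Yf^\sharp,u^\sharp)=\langle\Pf\Yf_0,\Yf_0\rangle$ and $J_h(\Yf_h^\sharp,u_h^\sharp)=\langle\Pf_h\Yf_{0_h},\Yf_{0_h}\rangle$ with $\Yf_{0_h}=\Pi_h\Yf_0$, I would write
\begin{equation*}
J(\Yf^\sharp,u^\sharp)-J_h(\Yf_h^\sharp,u_h^\sharp)
=\langle(\Pf-\Pf_h\Pi_h)\Yf_0,\Yf_0\rangle+\langle\Pf_h\Pi_h\Yf_0,(I-\Pi_h)\Yf_0\rangle,
\end{equation*}
bound the first term directly by (a), and bound the second using the uniform boundedness of $\Pf_h$ (from Theorem \ref{th:dro}) together with the approximation property $\|(I-\Pi_h)\Yf_0\|\le Ch^2$ of the projection, yielding the claimed rate. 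For parts (c) and (d), I would factor $\Bf^*\Pf-\Bf_h^*\Pf_h\Pi_h=\Bf^*(\Pf-\Pf_h\Pi_h)+(\Bf^*-\Bf_h^*)\Pf_h\Pi_h$ and use the boundedness of $\Bf^*$, the estimate $\Bf_h=\Pi_h\Bf$, and the convergence $\|(\If-\Pi_h)\|$ on the relevant range, so that (c) follows from (a) and (d) is the restriction of (c) to $\Hf_h$ (noting $\Pi_h$ acts as the identity there).

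The main obstacle I anticipate is part (a), specifically the interplay between the self-referential feedback terms and the near-$t=0$ singularity of the analytic semigroup estimates. The difficulty is to obtain the sharp exponent $2(1-\epsilon)$ rather than a cruder rate: one must balance the $O(h^2)$ resolvent convergence against the $t^{-1}$ blow-up of $\|\Af_{\omega_h}e^{t\Af_{\omega_h}}\|$, which forces the $\epsilon$-loss through an optimization of the form $\int_0^\tau t^{-1+\epsilon}\,dt\cdot h^{2}$ balanced against boundary contributions. Establishing the uniform-in-$h$ constants throughout, so that the Gronwall absorption is legitimate for all sufficiently small $h<\tilde h_0$, is where the uniform analyticity and uniform stability hypotheses of \cite{Lasiecka1}, verified for this coupled system earlier in the paper, do the essential work.
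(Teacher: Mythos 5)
Your overall scaffolding (uniform analyticity, uniform stability, contour integrals, interpolation to get the exponent $2(1-\epsilon)$) is the right toolkit, but the mechanism you propose for part (a) has a genuine gap. Comparing $e^{t\Af_{\omega,\Pf}}$ with $e^{t\Af_{\omega_h,\Pf_h}}\Pi_h$ directly is self-referential, as you note, and your proposed fix — a Duhamel expansion around the open-loop operators followed by Gronwall-type absorption of a term $(\text{small})\cdot\|\Pf-\Pf_h\Pi_h\|_{\Lc(\Hf)}$ — does not close: after integrating the perturbation $\Bf\Bf^*\Pf-\Bf_h\Bf_h^*\Pf_h\Pi_h$ against the exponentially stable kernels, the coefficient multiplying $\|\Pf-\Pf_h\Pi_h\|_{\Lc(\Hf)}$ is of order $\|\Bf\|^2MM_P/\wt\gamma$ (products of the uniform stability constants times a time integral of order one), and nothing in the hypotheses makes this less than $1$; restricting to a window $(0,\tau)$ does not help because the tail still couples the two different feedbacks. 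Two further slips: with the cost $\int_0^\infty(\|\Yt\|^2+\|\ut\|^2)\,dt$ the correct representation is $\Pf=\int_0^\infty e^{t\Af_{\omega,\Pf}^*}\bigl(\If+\Pf\Bf\Bf^*\Pf\bigr)e^{t\Af_{\omega,\Pf}}\,dt$, not $\int_0^\infty e^{t\Af_{\omega,\Pf}^*}e^{t\Af_{\omega,\Pf}}\,dt$ (fixable, but symptomatic); and in your part (b) the bound $\|(\If-\Pi_h)\Yf_0\|\le Ch^2$ is false for generic $\Yf_0\in\Hf$ — it requires $\Yf_0\in D(\Af)$ (Lemma \ref{lem:prpty Pi_h}(d)). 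Fortunately your cross term $\langle\Pf_h\Pi_h\Yf_0,(\If-\Pi_h)\Yf_0\rangle$ is identically zero, since $\Pf_h\Pi_h\Yf_0\in\Hf_h$ while $(\If-\Pi_h)\Yf_0\perp\Hf_h$; indeed \eqref{eqn:ph-p-3} shows the quadratic-form difference equals the cost difference exactly, so (a) and (b) are pointwise the same statement rather than one being a consequence of the other.

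The paper removes the self-reference by an optimality cross-comparison instead of a fixed point. Since $(\Yf^\sharp,u^\sharp)$ minimizes $J$, inserting the \emph{discrete} feedback into the \emph{continuous} dynamics (the pair $(\overline{\Yf},\overline{u})$ generated by $\aph=\Aw-\Bf\Bf_h^*\Pf_h\Pi_h$) gives $J(\Yf^\sharp,u^\sharp)\le J(\overline{\Yf},\overline{u})$, and symmetrically, inserting the \emph{continuous} feedback into the \emph{discrete} dynamics (the pair $(\widehat{\Yf}_h,\widehat{u}_h)$ generated by $\ahp=\Af_{\omega_h}-\Bf_h\Bf^*\Pf$) gives $J_h(\Yf^\sharp_h,u^\sharp_h)\le J_h(\widehat{\Yf}_h,\widehat{u}_h)$; see \eqref{fun-comp-11}--\eqref{fun-comp-22}. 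Consequently $|J-J_h|$ is bounded using only semigroup pairs in which the \emph{same} feedback appears in both generators, namely $e^{t\ahp}\Pi_h-e^{t\ap}$ and $e^{t\ahph}\Pi_h-e^{t\aph}$. For such pairs assumption $(\mathcal{A}_4)$ holds trivially with $s=2$ (the discrete operator is the restriction, resp.\ $\wt\Ff_h=\Ff_h\Pi_h$), so the resolvent perturbation results (Lemma \ref{lem:cvg-res-rc}, Proposition \ref{pps-extensionPertb}, Theorem \ref{th:unif_stab_cnvrs}) yield, by contour integration, $\bigl\|e^{t\ahp}\Pi_h-e^{t\ap}\bigr\|_{\Lc(\Hf)}+\bigl\|e^{t\ahph}\Pi_h-e^{t\aph}\bigr\|_{\Lc(\Hf)}\le Ch^2e^{-\wt\gamma t}/t$ (Lemma \ref{lem:fouropt}), and interpolation with the uniform bound $Ce^{-\wt\gamma t}$ gives $Ch^{2(1-\epsilon)}e^{-\wt\gamma t}/t^{1-\epsilon}$, which is integrable on $(0,\infty)$ — this interpolation, not a Gronwall balance, is where the $\epsilon$ is lost. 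Your parts (c) and (d) coincide with the paper's argument (using $\Bf_h=\Pi_h\Bf$ and (a)); to repair the proof, replace the bootstrap in (a) by this cross-comparison, after which the rest of what you assembled is used exactly as intended.
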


\begin{Theorem}[error estimates for stabilized solutions and stabilizing control]\label{th:main-conv-new}
Let $\gamma$, and $(\Yf^\sharp$, $u^\sharp)$, for any $\Yf_0\in \Hf$, be as obtained in Theorem \ref{th:stb cnt}. Let $h_0$, $\omega_P,$ and $(\Yf_h^\sharp$, $u^\sharp_h)$, for $\Yf_{0_h}=\Pi_h \Yf_0$, be as obtained in Theorem \ref{th:dro}. For any $\tilde{\gamma}$ satisfying $0<\tilde{\gamma}<\mathrm{min}\{\gamma, \omega_p\}$, there exists $\tilde{h}_0\in (0, h_0)$
such that for any $0<\epsilon<1$ and for all $0<h<\tilde{h}_0$, the following estimates hold: 
\begin{itemize}
\item[(a)] $\n\|\Yf^\sharp(t) - \Yf^\sharp_h(t)\|\le C h^{2(1-\epsilon)} \frac{e^{-\wt\gamma t}}{t} \|\Yf_0\|  $ for all $t>0,$  {\n (b)} $\n\|\Yf^\sharp(\cdot) - \Yf^\sharp_h(\cdot)\|_{L^2(0,\infty;\Hf)}\le C h^{1-\epsilon} ,$
\item[(c)] $\n\|u^\sharp(t) - u^\sharp_h(t)\|\le C h^{2(1-\epsilon)}  \frac{e^{-\wt\gamma t}}{t} \|\Yf_0\| $ for all $t>0,$ and {\n (d)} $\n\|u^\sharp(\cdot) - u^\sharp_h(\cdot)\|_{L^2(0,\infty;\Uf)}\le C h^{1-\epsilon} .$ 
\end{itemize}
Here, the constant $C>0$ is independent of $h$ but depends on $\epsilon,\gamma$, and $\omega_P.$
\end{Theorem}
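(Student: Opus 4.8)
The plan is to derive the four estimates in Theorem \ref{th:main-conv-new} from the Riccati and cost-functional error estimates already available in Theorem \ref{th:main-conv-P}, combined with the uniform analyticity and uniform exponential stability of the closed-loop semigroups from Theorems \ref{th:stb cnt}(c) and \ref{th:dro}(c). The two semigroups in play are $S(t):=e^{t\Af_{\omega,\Pf}}$ on $\Hf$ and $S_h(t):=e^{t\Af_{\omega_h,\Pf_h}}$ on $\Hf_h$, with $\Yf^\sharp(t)=S(t)\Yf_0$ and $\Yf^\sharp_h(t)=S_h(t)\Pi_h\Yf_0$. The central quantity to control is the difference of semigroups $S(t)-S_h(t)\Pi_h$, and once the pointwise bound (a) is in hand, parts (b), (c), (d) follow quickly.

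First I would write the error $\Yf^\sharp(t)-\Yf^\sharp_h(t)=\bigl(S(t)-S_h(t)\Pi_h\bigr)\Yf_0$ using the resolvent (Dunford) integral representation of each analytic semigroup over a common contour $\Gamma$ lying in the sector of analyticity, $S(t)=\frac{1}{2\pi i}\int_\Gamma e^{\lambda t}R(\lambda,\Af_{\omega,\Pf})\,d\lambda$ and similarly for $S_h$. The difference of semigroups then reduces to the difference of the resolvents $R(\lambda,\Af_{\omega,\Pf})-R(\lambda,\Af_{\omega_h,\Pf_h})\Pi_h$. Writing $\Af_{\omega,\Pf}=\Aw-\Bf\Bf^*\Pf$ and $\Af_{\omega_h,\Pf_h}=\Af_{\omega_h}-\Bf_h\Bf_h^*\Pf_h$, I would split this resolvent difference into a part coming from the (uncontrolled) operators $\Aw,\Af_{\omega_h}$, for which the $O(h^2)$ resolvent estimate $\|R(-\nuh,\Af)-R(-\nuh,\Af_h)\Pi_h\|\le Ch^2$ and the convergence results of Section \ref{sec:convres-unctrl} are already established, and a part coming from the feedback perturbation $\Bf\Bf^*\Pf-\Bf_h\Bf_h^*\Pf_h\Pi_h$, which is controlled by Theorem \ref{th:main-conv-P}(c)--(d) at rate $h^{2(1-\epsilon)}$. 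A second resolvent identity turns these into a single $h^{2(1-\epsilon)}$ bound on the resolvent difference, uniform on $\Gamma$; inserting the factor $e^{\lambda t}$ and exploiting the uniform analyticity (which gives the $1/t$ smoothing and the decay $e^{-\wt\gamma t}$ once $\Gamma$ is shifted to have real part $-\wt\gamma$ with $0<\wt\gamma<\min\{\gamma,\omega_P\}$) yields (a).

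For (b), rather than integrating the pointwise bound in (a), which would give a logarithmically divergent integral near $t=0$ because of the $1/t$ factor, I would split the $L^2(0,\infty;\Hf)$ norm at a fixed time, say $t=1$. On $[1,\infty)$ the bound from (a) is integrable and contributes $O(h^{2(1-\epsilon)})$. On $(0,1]$ I would instead use the cost-functional estimate Theorem \ref{th:main-conv-P}(b) together with the identity $J(\Yf^\sharp,u^\sharp)=\langle\Pf\Yf_0,\Yf_0\rangle$ and $J_h=\langle\Pf_h\Yf_{0_h},\Yf_{0_h}\rangle$: since $\int_0^\infty\|\Yf^\sharp\|^2$ and $\int_0^\infty\|\Yf^\sharp_h\|^2$ are both controlled by the respective cost functionals, and their difference is $O(h^{2(1-\epsilon)})$, a near-$t=0$ $L^2$ estimate of the difference at the half-rate $h^{1-\epsilon}$ emerges from interpolating the uniform boundedness of the trajectories against the $h^{2(1-\epsilon)}$ control on the squared quantities. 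Part (c) follows from (a) via $u^\sharp(t)-u^\sharp_h(t)=-\Bf^*\Pf\Yf^\sharp(t)+\Bf_h^*\Pf_h\Yf^\sharp_h(t)$, adding and subtracting $\Bf_h^*\Pf_h\Pi_h\Yf^\sharp(t)$ so that one term is $\|\Bf^*\Pf-\Bf_h^*\Pf_h\Pi_h\|\,\|\Yf^\sharp(t)\|$ bounded by Theorem \ref{th:main-conv-P}(c) times the decaying trajectory, and the other is $\|\Bf_h^*\Pf_h\|\,\|\Yf^\sharp(t)-\Yf^\sharp_h(t)\|$ bounded by (a); part (d) is the corresponding $L^2$-in-time statement, obtained from (b) by the same splitting.

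The main obstacle I anticipate is the behaviour near $t=0$: the analytic-semigroup difference estimate inevitably carries a $1/t$ singularity, so the pointwise rate $h^{2(1-\epsilon)}$ cannot simply be integrated to yield the $L^2$-in-time rate. Reconciling the $t^{-1}$ pointwise bound with an integrable $L^2$ estimate is exactly where the cost-functional/Riccati identities must be brought in to absorb the singular layer, and getting the rate degradation from $h^{2(1-\epsilon)}$ to $h^{1-\epsilon}$ to come out correctly (and no worse) is the delicate point. A secondary technical difficulty is ensuring all constants in the contour integration and resolvent splitting are genuinely uniform in $h$ for $h<\tilde h_0$; this relies essentially on the uniform analyticity and uniform resolvent estimates of $\{\Af_{\omega_h,\Pf_h}\}$ from Theorem \ref{th:dro}(c), which is why $\tilde h_0$ must be chosen smaller than $h_0$.
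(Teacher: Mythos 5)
Your route to (a) and (c) is, in substance, the paper's: the trajectory error is a Dunford integral of the resolvent difference $R(\mu,\Af_{\omega,\Pf})-R(\mu,\Af_{\omega_h,\Pf_h})\Pi_h$ over a contour around $-\wt\gamma$, and that difference is bounded by $Ch^{2(1-\epsilon)}$ by exactly your splitting: a free-dynamics part at $O(h^2)$ (Lemma \ref{lem:1st compresest}) plus a feedback-perturbation part controlled by Theorem \ref{th:main-conv-P}(c)--(d); this is Lemma \ref{lem:cvg-res-rc} with $\Ff=-\Bf^*\Pf$, $\Ff_h=-\Bf_h^*\Pf_h$ and $s=2(1-\epsilon)$, and your add-and-subtract for (c) is the paper's, using \eqref{equnifbddRiccati} and $\Pi_h\Yf^\sharp_h=\Yf^\sharp_h$. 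One caution on uniformity: carrying the resolvent estimate from the sector $\Sigma^c(\omega';\theta_0)$ down to a contour through $-\wt\gamma$ does not follow from Theorem \ref{th:dro}(c) alone, as your last paragraph suggests; the paper needs the compact-set perturbation machinery (Lemma \ref{lem:main_genconv}, Proposition \ref{pps-extensionPertb}, Theorems \ref{th:ua-us} and \ref{th:unif_stab_cnvrs}) to show $\mathcal{K}_{\wh\gamma}\subset\rho(\Af_{\omega_h,\Pf_h})$ for $h<\wt h_0$ and to extend the $h^{2(1-\epsilon)}$ bound into that region. That machinery is available in Sections \ref{sec:pert}--\ref{sec:disRiccati}, so this is a looseness in attribution, not a fatal flaw.

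The genuine gap is your argument for (b) (and hence (d)). You propose to handle $t\in(0,1]$ via the cost-functional estimate of Theorem \ref{th:main-conv-P}(b), claiming the $L^2$ error of the trajectories ``emerges from interpolating the uniform boundedness of the trajectories against the $h^{2(1-\epsilon)}$ control on the squared quantities.'' This step fails: $\vert J(\Yf^\sharp,u^\sharp)-J_h(\Yf^\sharp_h,u^\sharp_h)\vert\le Ch^{2(1-\epsilon)}$ controls the \emph{difference of the integrals of the squared norms}, whereas $\int_0^\infty\|\Yf^\sharp-\Yf^\sharp_h\|^2\,dt=\int_0^\infty\|\Yf^\sharp\|^2\,dt+\int_0^\infty\|\Yf^\sharp_h\|^2\,dt-2\Re\int_0^\infty\langle\Yf^\sharp,\Yf^\sharp_h\rangle\,dt$, and nothing in the cost estimate constrains the cross term: two trajectories of nearly equal energy can be $L^2$-far apart. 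The strong-convexity identity $J(\Yf,u)=J(\Yf^\sharp,u^\sharp)+\int_0^\infty(\|\Yf-\Yf^\sharp\|^2+\|u-u^\sharp\|^2)\,dt$, which would supply the missing coercivity, holds only for pairs admissible for the \emph{continuous} dynamics, while $(\Yf^\sharp_h,u^\sharp_h)$ solves the discrete one, so it cannot be invoked without an intermediate comparison pair you have not constructed. The paper's proof avoids all of this: it interpolates the pointwise bound (a) (applied with some $\epsilon'<\epsilon$) against the uniform bound $\|\Yf^\sharp(t)-\Yf^\sharp_h(t)\|\le Ce^{-\wt\gamma t}\|\Yf_0\|$, yielding $\|\Yf^\sharp(t)-\Yf^\sharp_h(t)\|\le Ch^{2(1-\epsilon')\theta}\,t^{-\theta}e^{-\wt\gamma t}\|\Yf_0\|$ for $\theta\in(0,\tfrac12)$; since $t^{-2\theta}e^{-2\wt\gamma t}$ is integrable on $(0,\infty)$ for $\theta<\tfrac12$, squaring and integrating gives the rate $h^{2(1-\epsilon')\theta}$, and the choice $\theta=(1-\epsilon)/(2(1-\epsilon'))<\tfrac12$ produces exactly $h^{1-\epsilon}$. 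Your worry about divergence applies only to the borderline exponent $\theta=\tfrac12$; strictly below it the integral converges, and the $\epsilon$ in the statement exists precisely to absorb this loss. Part (d) is then obtained from the pointwise bound (c) by the same interpolation-and-integration, not from (b) by your splitting.
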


\section{Continuous dynamics}\label{sec:cont dyn}
\noindent  In this section, we study the wellposedness and the exponential stabilizability of \eqref{eqn:main_system_PCE}. Recall $\Hf=\Lt\times \Lt$, and $ (\Af,D(\Af))$ from \eqref{eqdef-A_PCE}. The section starts with some preliminaries. In Subsection \ref{sub:sgp&pr}, analytic semigroup and well-posedness of \eqref{eqn:main_system_PCE} is studied while Subsection \ref{subsec:specana} describes the spectral analysis of the operator $\Af$ in $\Hf.$ Finally, the proof of Theorem \ref{th:stb cnt} is presented in Subsection \ref{subsec:Proof main 1}. \\[.2mm]

\noindent\textbf{Poincar\'{e} inequality (\cite{Kes}).} Let $\Omega$ be a bounded open set in $\mathbb{R}^n$, $n\in\mathbb{N}.$  Then there exists a positive constant $C_p=C_p(\Omega)$ such that
		\begin{equation}\label{poincare ineq}
			\|u\|\le C_p \|\nabla u\|\text{ for every }u\in \Hio.
		\end{equation}
\noindent In the next lemma, some results from operators on Banach space that will be used in the later analysis are stated.		
		
\begin{Lemma}[\n \cite{Lasiecka1}]\label{lem:id-3}
(a) Let $\mathfrak{T}$, $\mathfrak{T}_1$ and $\mathfrak{T}_2$ be bounded operators on a Banach space $\Hf$ such that $\|\mathfrak{T}\|_{\Lc(\Hf)}<1,$ $(\If-\mathfrak{T}_1)^{-1}$ and  $(\If-\mathfrak{T}_2)^{-1}$ exist in $\Lc(\Hf).$ Then the following results hold:
\begin{align}\label{id-1}
			\|(\If-\mathfrak{T})^{-1}\|_{\Lc(\Hf)}\le \left(1-\|\mathfrak{T}\|_{\Lc(\Hf)}\right)^{-1}\text{ and } (\If-\mathfrak{T}_1)^{-1}-(\If-\mathfrak{T}_2)^{-1}=(\If-\mathfrak{T}_1)^{-1}(\mathfrak{T}_1-\mathfrak{T}_2)(\If-\mathfrak{T}_2)^{-1}.
\end{align}
(b) Let $\mathfrak{T}_1$ and $\mathfrak{T}_2$ be two closed operators on a Hilbert space $\Hf$ such that 
$\mathfrak{T}_1$ is invertible with $\mathfrak{T}_1^{-1}\in \Lc(\Hf)$ and $\|\mathfrak{T}_1^{-1}(\mathfrak{T}_2-\mathfrak{T}_1)\|_{\Lc(\Hf)}<1.$ Then $\mathfrak{T}_2$ is invertible and $\mathfrak{T}_2^{-1}\in \Lc(\Hf).$
Furthermore, for two invertible operators $\mathfrak{T}_1$ and $\mathfrak{T}_2,$
\begin{align}\label{eqn:id-3}
    \mathfrak{T}_2^{-1}=\left(\If+\mathfrak{T}_1^{-1}(\mathfrak{T}_2-\mathfrak{T}_1) \right)^{-1}\mathfrak{T}_1^{-1}.
\end{align}
\end{Lemma}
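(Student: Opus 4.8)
The plan is to establish all three assertions by elementary operator algebra: the norm bound in (a) from a Neumann series, and the two identities from exact factorizations. None of the steps is deep, so the write-up will be short; the only genuine subtlety lies in the domain bookkeeping for the unbounded operators in (b).

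First, for the estimate in (a), I would appeal to the Neumann series. Since $\|\mathfrak{T}\|_{\Lc(\Hf)}<1$ and $\Hf$ is a Banach space (so $\Lc(\Hf)$ is complete), the series $\sum_{n=0}^{\infty}\mathfrak{T}^{n}$ converges absolutely in $\Lc(\Hf)$; a direct telescoping check shows its sum is a two-sided inverse of $\If-\mathfrak{T}$, so it equals $(\If-\mathfrak{T})^{-1}$. Taking norms and summing the geometric series then gives $\|(\If-\mathfrak{T})^{-1}\|_{\Lc(\Hf)}\le\sum_{n=0}^{\infty}\|\mathfrak{T}\|_{\Lc(\Hf)}^{n}=(1-\|\mathfrak{T}\|_{\Lc(\Hf)})^{-1}$, as claimed. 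For the resolvent-type identity in (a), I would use the purely algebraic identity $A^{-1}-B^{-1}=A^{-1}(B-A)B^{-1}$, valid for any two boundedly invertible operators $A,B$ (verified by expanding the right-hand side). Taking $A=\If-\mathfrak{T}_1$ and $B=\If-\mathfrak{T}_2$, both invertible by hypothesis, and noting that $B-A=\mathfrak{T}_1-\mathfrak{T}_2$, yields exactly the stated identity.

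Finally, for (b), I would factor $\mathfrak{T}_2$ through $\mathfrak{T}_1$. Since $\|\mathfrak{T}_1^{-1}(\mathfrak{T}_2-\mathfrak{T}_1)\|_{\Lc(\Hf)}<1$, applying the first part of (a) with $\mathfrak{T}=-\mathfrak{T}_1^{-1}(\mathfrak{T}_2-\mathfrak{T}_1)$ shows that $\If+\mathfrak{T}_1^{-1}(\mathfrak{T}_2-\mathfrak{T}_1)$ is boundedly invertible. On the appropriate domain one then has the factorization
\[
\mathfrak{T}_2=\mathfrak{T}_1\bigl(\If+\mathfrak{T}_1^{-1}(\mathfrak{T}_2-\mathfrak{T}_1)\bigr),
\]
whence $\mathfrak{T}_2$ is invertible with $\mathfrak{T}_2^{-1}=\bigl(\If+\mathfrak{T}_1^{-1}(\mathfrak{T}_2-\mathfrak{T}_1)\bigr)^{-1}\mathfrak{T}_1^{-1}$, a composition of two bounded operators, so $\mathfrak{T}_2^{-1}\in\Lc(\Hf)$ and the displayed formula follows.

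The only step requiring genuine care, and hence the main obstacle, is the domain bookkeeping in (b): because $\mathfrak{T}_1,\mathfrak{T}_2$ are merely closed and possibly unbounded, I must check that $\mathfrak{T}_2-\mathfrak{T}_1$ is well defined on $D(\mathfrak{T}_1)\cap D(\mathfrak{T}_2)$, that $\mathfrak{T}_1^{-1}(\mathfrak{T}_2-\mathfrak{T}_1)$ genuinely extends to a bounded operator on all of $\Hf$ so that part (a) applies, and that the factorization is read as an identity of operators on the correct domain before passing to inverses. Once this bookkeeping is settled, the invertibility, the boundedness of $\mathfrak{T}_2^{-1}$, and the explicit formula all drop out immediately, and everything else is routine.
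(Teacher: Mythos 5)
Your proof is correct, and it is the standard argument: the paper states this lemma without proof, citing \cite{Lasiecka1}, and the Neumann-series bound, the second resolvent identity $A^{-1}-B^{-1}=A^{-1}(B-A)B^{-1}$, and the factorization $\mathfrak{T}_2=\mathfrak{T}_1\left(\If+\mathfrak{T}_1^{-1}(\mathfrak{T}_2-\mathfrak{T}_1)\right)$ are exactly what the cited reference uses. Your closing caveat about domains in (b) is the right thing to flag, and it is harmless in every application the paper makes of the lemma (Theorem \ref{thm:ua-sem}, Step 3, and Lemma \ref{lem:uah}), since there $\mathfrak{T}_2-\mathfrak{T}_1$ is a bounded operator and $D(\mathfrak{T}_1)=D(\mathfrak{T}_2)$, so $\mathfrak{T}_1^{-1}(\mathfrak{T}_2-\mathfrak{T}_1)$ is bounded on all of $\Hf$ with range in the common domain, and the factorization and passage to inverses go through verbatim.
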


\subsection{Analytic semigroup and its properties} \label{sub:sgp&pr}
\noindent Introduce $\Vf:=H^1_0(\Omega)\times H^1_0(\Omega)$ endowed with the norm $\left\|  \begin{pmatrix} \phi \\ \psi\end{pmatrix}\right\|_\Vf:= $ $\left(\|\nabla \phi\|^2 +\|\nabla\psi\|^2\right)^{1/2}.$ The weak formulation corresponding to \eqref{PCE_mod} seeks $\begin{pmatrix} y(\cdot)\\ z(\cdot)\end{pmatrix}\in\Vf$ such that 
\begin{align*}
    & \left\langle \frac{d}{dt}\begin{pmatrix} y(t)\\ z(t)\end{pmatrix} , \begin{pmatrix} \phi\\ \psi\end{pmatrix}\right\rangle+ a\left(\begin{pmatrix}y(t)\\z(t) \end{pmatrix},\begin{pmatrix}\phi\\ \psi \end{pmatrix}\right)= \left\langle \begin{pmatrix} u(t)\chi_{\mathcal{O}}\\ 0\end{pmatrix}, \begin{pmatrix} \phi\\ \psi\end{pmatrix}\right\rangle \text{ for all } \begin{pmatrix} \phi\\ \psi\end{pmatrix} \in \Vf, \text{ for all }t>0,\\
    &  \left\langle\begin{pmatrix} y(0)\\ z(0)\end{pmatrix} , \begin{pmatrix} \phi\\ \psi\end{pmatrix}\right\rangle=  \left\langle\begin{pmatrix} y_0\\ z_0\end{pmatrix} , \begin{pmatrix} \phi\\ \psi\end{pmatrix}\right\rangle,
\end{align*}
where the sesquilinear form $a(\cdot,\cdot)$ for all $\begin{pmatrix}y\\z \end{pmatrix}\in\Vf$ and $\begin{pmatrix}\phi\\ \psi \end{pmatrix}\in\Vf$ is defined by 
\begin{equation}\label{bilinear_a}
\begin{aligned}
			a\left(\begin{pmatrix}y\\z \end{pmatrix},\begin{pmatrix}\phi\\ \psi \end{pmatrix}\right):= & \eta_0 \langle \nabla y, \nabla \phi\rangle +\eta_1 \langle z, \phi\rangle +\nu_0\langle y, \phi\rangle +\beta_0 \langle\nabla z ,\nabla \psi\rangle +(\kappa+\nu_0)\langle z, \psi\rangle - \langle y, \psi\rangle .
\end{aligned} 
\end{equation}
		
\noindent		
We assume that the coefficients $\nu_0$ and $\eta_1$ in \eqref{PCE_mod} are such that
\begin{align}\label{eqdef:nuhat-nu0}
\nuh:=\nu_0-\frac{|\eta_1|+1}{2}>0.
\end{align}

\noindent Choose $\begin{pmatrix} y \\ z\end{pmatrix}=\begin{pmatrix} \phi \\ \psi\end{pmatrix}$ in \eqref{bilinear_a}, apply a Cauchy-Schwarz inequality and use \eqref{eqdef:nuhat-nu0} to obtain 
 \begin{align}\label{bilinear_acor}
			\Re\left(a\left(\begin{pmatrix}\phi\\ \psi \end{pmatrix},\begin{pmatrix}\phi\\ \psi \end{pmatrix}\right)\right) -\nuh\left\langle\begin{pmatrix}\phi \\ \psi\end{pmatrix}, \begin{pmatrix}\phi \\ \psi\end{pmatrix}\right\rangle\ge \alpha_0 \left(\|\nabla \phi\|^2 +\|\nabla\psi\|^2\right), \text{ for all } \begin{pmatrix}\phi \\\psi\end{pmatrix}\in \Vf,
\end{align}
where $\alpha_0=\min\{ \eta_0,\beta_0\}>0,$ and consequently $a(\cdot, \cdot)$ defined in \eqref{bilinear_a}  is coercive. \\[1mm]
\noindent Note that $a(\cdot,\cdot):\Vf\times \Vf\rightarrow \Cb$ is continuous. That is, there exists $\alpha_1>0$ such that
\begin{align} \label{bilinear_a_bdd}
\left\vert a\left(\begin{pmatrix}y\\ z \end{pmatrix},\begin{pmatrix}\phi\\ \psi \end{pmatrix}\right)\right\vert \le \alpha_1 \left(\|\nabla  y\|^2 +\|\nabla z\|^2\right)^{1/2}\left(\|\nabla \phi\|^2 +\|\nabla\psi\|^2\right)^{1/2}.
\end{align}

\noindent Further, observe that $(\Af, D(\Af))$ defined in \eqref{eqdef-A_PCE} satisfies
\begin{equation}\label{equw1}
			\begin{array}{l}
				D(\Af)=\left\lbrace\begin{pmatrix}y\\ z \end{pmatrix}\in \Vf \mid  \begin{pmatrix}\phi\\ \psi \end{pmatrix} \mapsto a\left(\begin{pmatrix}y\\ z \end{pmatrix},\begin{pmatrix}\phi\\ \psi \end{pmatrix} \right) \text{ is continuous in }  \Hf\right\rbrace, \\[2.mm]
				\Big\langle -\Af\begin{pmatrix}y\\ z \end{pmatrix} ,\begin{pmatrix}\phi\\ \psi \end{pmatrix}\Big\rangle = a\left(\begin{pmatrix}y\\ z \end{pmatrix},\begin{pmatrix}\phi\\ \psi \end{pmatrix}\right), \, \begin{pmatrix}y\\ z \end{pmatrix}\in D(\Af), \, \begin{pmatrix}\phi\\ \psi \end{pmatrix}\in \Vf .
			\end{array}
\end{equation}

\begin{Remark} \label{rem:chc-nu_0}
The condition \eqref{eqdef:nuhat-nu0} is not restrictive. We assume this condition to have the second term in the left hand side of \eqref{bilinear_acor} with a negative coefficient so that the associated operator $\Af$ is stable. 

\noindent
If \eqref{eqdef:nuhat-nu0} is not satisfied, that is $\nuh\le 0$, choosing $\nu'>-\nuh$, and defining the bilinear form 
$$a_{\nu'}\left(\begin{pmatrix}y\\z \end{pmatrix},\begin{pmatrix}\phi\\ \psi \end{pmatrix}\right):=a\left(\begin{pmatrix}y\\z \end{pmatrix},\begin{pmatrix}\phi\\ \psi \end{pmatrix}\right)+\nu'\left\langle\begin{pmatrix}y \\ z\end{pmatrix}, \begin{pmatrix}\phi \\ \psi\end{pmatrix}\right\rangle \text{ for all } \begin{pmatrix}y\\z \end{pmatrix}\in\Vf, \, \begin{pmatrix}\phi\\ \psi \end{pmatrix}\in\Vf,$$
where $a(\cdot, \cdot)$ is as in \eqref{bilinear_a}, we can have 
\begin{align}
			\Re\left(a_{\nu'}\left(\begin{pmatrix}\phi\\ \psi \end{pmatrix},\begin{pmatrix}\phi\\ \psi \end{pmatrix}\right)\right) -(\nu'+\nuh)\left\langle\begin{pmatrix}\phi \\ \psi\end{pmatrix}, \begin{pmatrix}\phi \\ \psi\end{pmatrix}\right\rangle\ge \alpha_0 \left(\|\nabla \phi\|^2 +\|\nabla\psi\|^2\right), \text{ for all } \begin{pmatrix}\phi \\\psi\end{pmatrix}\in \Vf.
\end{align}
Here $\nu'+\nuh>0$ because of the choice of $\nu'$. The operator associated to the bilinear form $a_{\nu'}(\cdot, \cdot)$ is $\widehat{\Af}:=\Af-\nu'\If$ with $D(\widehat{\Af})=D(\Af)$ on $\Hf$ and it can be shown that $\widehat{\Af}$ is stable. 
The case, when $\nuh\le 0$, can be handled by shifting \eqref{PCE_mod} in a way such that the corresponding linear operator is $\widehat{\Af}$
and thus all results of the article can be proved analogously. 
\end{Remark}

\noindent Recall from \cite{BDDM} the definition of an analytic semigroup on a Hilbert space. See {\n\cite[Definition 2.3, Chapter 1, Part II]{BDDM}}.
		
\begin{Definition}[analytic semigroup]\label{def:ana-sgp}
			Let $\n\{e^{t\Af}\}_{t\ge 0}$ be a strongly continuous semigroup on $\n\Hf$, with infinitesimal generator $\n\Af$. The semigroup $\n\{e^{t\Af}\}_{t\ge 0}$ is analytic if 
\begin{itemize}
\item[(i)] there exist $a\in \mathbb{R}$ and a  sector $\n\Sigma (a;\theta_a):=\{ a+re^{i\theta} \in \Cb\,|\, r>0,\, \theta\in (-\pi, \pi], \, |\theta|\ge \theta_a\}$ for some $\frac{\pi}{2}<\theta_a<\pi,$ such that the complement of the  sector $\n\Sigma(a;\theta_a),$ that is, $\n\Sigma^c (a;\theta_a)\subset \rho(\Af)$, and 
\item[(ii)] for all $\n\mu \in \Sigma^c(a;\theta_a),$ $\mu\neq a,$ $\n\|R(\mu,\Af)\|_{\Lc(\Hf)}\le \frac{C}{|\mu - a|},$ for some $C>0$ independent of $\mu.$
\end{itemize} 
\end{Definition}
\noindent In the next theorem, it is shown that $(\Af, D(\Af))$ defined in \eqref{eqdef-A_PCE} generates an analytic semigroup on $\Hf$.

\begin{Theorem}[resolvent estimate and analytic semigroup]\label{thm:ua-sem}
Let $\normalfont (\Af,D(\Af))$ be as defined in \eqref{eqdef-A_PCE} and $\nuh$ be as introduced in \eqref{eqdef:nuhat-nu0}. Then the following results hold.
\begin{itemize}
\item[(a)] There exists $\frac{\pi}{2}<\theta_0<\pi$ such that  $\n\Sigma^c(-\nuh;\theta_0)$  $\n\subset \rho(\Af)$ and
	\begin{equation}\label{eq:resolestA}
					\n\|R(\mu, \Af)\|_{\mathcal{L}(\Hf)} \le \frac{C }{|\mu+\nuh|} \text{ for all }\mu\in \Sigma^c(-\nuh; \theta_0), \, \mu\neq -\nuh,
	\end{equation} 
	for some $C>0$ independent of $\mu$.
\begin{figure}[ht!]
\begin{center}
\begin{tikzpicture}
\path[fill=black!15] (-1,0)--(-7,3.6)--(-7,-3.6)--cycle;
\draw[gray,dashed,thick,<->] (-7,0)--(3,0);
\draw[gray,dashed,thick,<->] (0,-4)--(0,4);
\draw[thick,->] (-1,0)--(-6,3);
\draw[thick,->] (-1,0)--(-6,-3);
\draw[thick,->] (-0.5,0) arc (0:150:0.5cm);
\node[] at (-0.9,0.6) {$\theta_0$};
\node[] at (0.25,-0.25){$O$};
\draw[thick,->] (-5,0) arc (180:150:4cm);
\draw[thick,->] (-5,0) arc (180:210:4cm);
\node[] at (-4.8,0.5){$\n\Sigma(-\nuh; \theta_0)$};
\node[] at (-1,0){$\bullet$};
\node[] at (-1,-0.3){$-\nuh$};

\draw[thick,red, dashed,->] (-1,0.85)--(-5,4);
\draw[thick,red, dashed,->] (-5,-4)--(-1,-0.85);
\draw[thick,red,dashed,thick,->] (-1,-0.85) arc (270:360:0.85cm);
\draw[thick,red,dashed] (-1,0.85) arc (90:0:0.85cm);
\node[red] at (-3.5,-2.7){$\Gamma_-$};
\node[red] at (-3.5,2.7){$\Gamma_+$};
\node[red] at (0,0.4){$\Gamma_0$};
\end{tikzpicture}
\end{center}
\caption{$\n\Sigma(-\nuh; \theta_0)$ and $\color{red}{\Gamma}=\Gamma_+\cup
\Gamma_-\cup\Gamma_0$} \label{fig:spec-t}
\end{figure}

\item[(b)] The operator $\normalfont (\Af,D(\Af))$ generates an analytic semigroup $\normalfont \{e^{t\Af}\}_{t\ge 0}$ on $\n\Hf$ and the operator $\n e^{t\Af}$ in $\n\mathcal{L}(\Hf)$ can be represented by 
	\begin{equation}\label{eqrepsemi}
					\n e^{t\Af}=\frac{1}{2\pi i}\int_\Gamma e^{\mu t} R(\mu ,\Af) \, d\mu \text{ for all } t>0,
	\end{equation}
where $\Gamma$ is any curve from $-\infty$ to $\infty$ and is entirely in $\n\Sigma^c(-\nuh; \theta_0)$.
\item[(c)] Furthermore, the semigroup $\normalfont \{e^{t\Af}\}_{t\ge 0}$ on $\n\Hf$ satisfies 
	\begin{equation} \label{eq-grthcond}
	\n \|e^{t \Af}\|_{\mathcal{L}(\Hf)}\le Ce^{-\nuh t} \text{ for all } t>0,
	\end{equation}
for some $C>0$.
\end{itemize}				
				
\end{Theorem}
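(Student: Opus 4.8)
The plan is to realize $\Af$ through its sesquilinear form and to show that $-\Af$ is m-sectorial with vertex at $-\nuh$, from which (a)--(c) follow by the standard theory of sectorial operators. By \eqref{equw1} the numerical range of $\Af$ is $\{-a(\Phi,\Phi):\Phi\in D(\Af),\ \|\Phi\|=1\}$, so everything reduces to two quantitative facts about $a(\cdot,\cdot)$: a lower bound for its real part and an upper bound for its imaginary part. The first is exactly the shifted coercivity \eqref{bilinear_acor}, which together with the Poincar\'e inequality \eqref{poincare ineq} gives $\Re a(\Phi,\Phi)\ge \nuh\|\Phi\|^2+\alpha_0\|\Phi\|_\Vf^2\ge(\nuh+\alpha_0/C_p^2)\|\Phi\|^2$. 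For the second, inspecting \eqref{bilinear_a} shows that only the coupling terms $\eta_1\langle\psi,\phi\rangle-\langle\phi,\psi\rangle$ contribute to the imaginary part, whence $|\Im a(\Phi,\Phi)|\le\frac{|\eta_1|+1}{2}\|\Phi\|^2$. Consequently $\overline{W(\Af)}\subset\Sigma(-\nuh;\theta_0)$ for $\theta_0=\pi-\theta^\ast$, where $\theta^\ast:=\arctan\!\big(\tfrac{(|\eta_1|+1)C_p^2}{2\alpha_0}\big)\in(0,\pi/2)$, so that $\theta_0\in(\pi/2,\pi)$.

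For part (a) I would first produce the resolvent on the open half-plane $\Re\mu>-\nuh$ by Lax--Milgram. For such $\mu$ the form $b_\mu(\Yf,\Phi):=\mu\langle\Yf,\Phi\rangle+a(\Yf,\Phi)$ is bounded on $\Vf\times\Vf$ by \eqref{bilinear_a_bdd} and coercive, since $\Re b_\mu(\Phi,\Phi)\ge(\Re\mu+\nuh)\|\Phi\|^2+\alpha_0\|\Phi\|_\Vf^2\ge\alpha_0\|\Phi\|_\Vf^2$; elliptic regularity then places the solution in $D(\Af)$, so $\{\Re\mu>-\nuh\}\subset\rho(\Af)$. This half-plane meets the complement of $\overline{\Sigma(-\nuh;\theta_0)}\supset\overline{W(\Af)}$, hence by the standard fact that $\rho(\Af)$ contains the unbounded component of $\Cb\setminus\overline{W(\Af)}$ with $\|R(\mu,\Af)\|\le 1/\mathrm{dist}(\mu,\overline{W(\Af)})$, we get $\Sigma^c(-\nuh;\theta_0)\subset\rho(\Af)$. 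A purely geometric estimate then gives $\mathrm{dist}(\mu,\Sigma(-\nuh;\theta_0))\ge c\,|\mu+\nuh|$ for $\mu\in\Sigma^c(-\nuh;\theta_0)$ (after shrinking $\theta_0$ slightly below $\pi-\theta^\ast$), which yields \eqref{eq:resolestA}. Alternatively one may bypass the numerical-range statement and rerun Lax--Milgram directly for $\mu$ in the sector, using $\Re g\ge 0$ and $|\Im g|\le\frac{|\eta_1|+1}{2}\|\Phi\|^2$ for $g:=a(\Phi,\Phi)-\nuh\|\Phi\|^2$ to bound $|b_\mu(\Phi,\Phi)|$ below by $c\,|\mu+\nuh|\,\|\Phi\|^2$, obtaining invertibility and the estimate in one stroke.

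Parts (b) and (c) are then the standard consequences of (a) matching Definition \ref{def:ana-sgp}. Given $\Sigma^c(-\nuh;\theta_0)\subset\rho(\Af)$ and the decay \eqref{eq:resolestA}, the Dunford--Riesz integral \eqref{eqrepsemi} along the contour $\Gamma=\Gamma_+\cup\Gamma_0\cup\Gamma_-$ of Figure \ref{fig:spec-t} defines a bounded operator for each $t>0$, and the usual resolvent-calculus manipulations show it is a semigroup with generator $\Af$; analyticity is immediate from the same representation. For the growth bound \eqref{eq-grthcond}, I would parametrize $\Gamma_\pm$ as $\mu=-\nuh+re^{\pm i\beta}$ with a fixed $\beta\in(\pi/2,\theta_0)$ (so $\cos\beta<0$) and take the arc $\Gamma_0$ at radius $r=1/t$ about $-\nuh$; using $|e^{\mu t}|=e^{-\nuh t}e^{rt\cos\beta}$ on the rays and $\|R(\mu,\Af)\|\le C/r$, the ray integrals are dominated by $e^{-\nuh t}\int_{1/t}^{\infty}\tfrac{C}{r}e^{-rt|\cos\beta|}\,dr$ and the arc by $Ce^{-\nuh t}$, both giving $Ce^{-\nuh t}$ after the substitution $s=rt$.

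I expect the crux to be part (a): pinning the half-angle $\theta^\ast$ strictly below $\pi/2$, i.e.\ controlling $\Im a(\Phi,\Phi)$ by a fixed multiple of the coercive real part uniformly over $\Vf$, and then converting ``distance to the sector'' into the sharp $1/|\mu+\nuh|$ decay uniformly up to (but excluding) the vertex. Once this resolvent estimate is secured, the semigroup representation and the exponential decay in (b)--(c) are routine contour estimates.
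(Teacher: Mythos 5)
Your proposal is correct, but it reaches the crucial part (a) by a genuinely different route than the paper. You trap the numerical range in one stroke: the only non-real contribution to $a(\Phi,\Phi)$ comes from the coupling terms $\eta_1\langle z,y\rangle-\langle y,z\rangle$, so $|\Im\, a(\Phi,\Phi)|\le \tfrac{|\eta_1|+1}{2}\|\Phi\|^2$, which combined with \eqref{bilinear_acor} and Poincar\'e \eqref{poincare ineq} places $W(\Af)$ in a sector of half-angle $\arctan\bigl(\tfrac{(|\eta_1|+1)C_p^2}{2\alpha_0}\bigr)$ about the negative real axis, in fact inside the half-plane $\Re\lambda\le-\nuh-\alpha_0/C_p^2$, uniformly away from the vertex; Lax--Milgram on $\Re\mu>-\nuh$ then supplies a resolvent point in the relevant component of $\Cb\setminus\overline{W(\Af)}$, and the standard numerical-range theorem $\|R(\mu,\Af)\|_{\Lc(\Hf)}\le 1/\mathrm{dist}(\mu,\overline{W(\Af)})$ plus elementary geometry (after slightly shrinking $\theta_0$) yields \eqref{eq:resolestA}. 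The paper instead argues in three steps: Lax--Milgram on the closed half-plane $\Re\mu\ge-\nuh$; the rotated test function $e^{i\theta/2}(y_\mu,z_\mu)$ to extract the $1/|\mu+\nuh|$ decay there; and then finitely many Neumann-series perturbation steps via Lemma \ref{lem:id-3}, sweeping the estimate into the sector in angular increments $\delta_1$ at the cost of constants growing like $2^{n_0}C$. Your route is shorter and gives an explicit sector angle tied to $|\eta_1|$, $C_p$, $\alpha_0$ (versus the paper's $\theta_0=\pi-\arctan(\alpha_1/\alpha_0)$); the paper's is more elementary, avoiding the numerical-range machinery, and since it uses only the $h$-independent constants $\alpha_0,\alpha_1$ it is repeated verbatim for the discrete operators $\Af_h$ in Theorem \ref{th-unianaly-approx} --- though your form-based bounds are equally $h$-uniform, so the difference there is expository rather than substantive. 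For (b)--(c), your contour computation (arc of radius $1/t$, substitution $s=rt$) is the same standard estimate as the paper's rescaled contour $\mu_1=(\mu+\nuh)t$; just record explicitly, as the paper does, that $\Af$ is densely defined and closed (immediate from your m-sectorial realization) before invoking the generation theorem.
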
 

\begin{proof}
We identify a sector $\n\Sigma(-\nuh; \theta_0):=\{-\nuh+re^{i\theta}\,|\, r>0,\, \theta\in (-\pi, \pi], \, |\theta|\ge\theta_0\}$ (see Figure \ref{fig:spec-t}), for a suitable $\theta_0$ so that (a) holds. 
Set 
$ \theta_0 = \pi- \tan^{-1}\left( \frac{\alpha_1}{\alpha_0}\right),$
where $\alpha_0$ and $ \alpha_1$ are the positive constants from \eqref{bilinear_acor} and \eqref{bilinear_a_bdd}, respectively. Note that $\theta_0\in (\frac{\pi}{2},\pi)$, as $0<\tan^{-1}\left( \frac{\alpha_1}{\alpha_0}\right)<\frac{\pi}{2}$.\\
 \noindent (a) The proof is established in three steps. In the first step it is shown that $\{\mu\in \Cb\, |\, \Re(\mu)\ge -\nuh\}\subset \rho(\Af).$ The resolvent estimate in $\{\mu\in \Cb\, |\, \Re(\mu)\ge -\nuh\}$ is derived in Step 2. In Step 3, the estimates in Steps 1 \& 2 are extended in $\Sigma^c(-\nuh;\theta_0).$

\medskip
\noindent {\bf  Step 1. Resolvent for $ \Re (\mu) \ge -\nuh$, $\mu\neq -\nuh$.} Let $\mu\in \Cb$ with $\Re\, \mu\ge -\nuh$. 
First we show that $(\mu\If-\Af):D(\Af)\longrightarrow \Hf$ is invertible, that is, for any given $\begin{pmatrix}f\\g\end{pmatrix}\in \Hf$ there exists a unique $\begin{pmatrix}y_\mu \\ z_\mu\end{pmatrix}\in D(\Af)$ such that 
\begin{equation} \label{eqn:invert_PCE}
(\mu \If-\Af)\begin{pmatrix}y_\mu\\z_\mu\end{pmatrix}=\begin{pmatrix}f\\g\end{pmatrix} \text{ for all }\mu\in \Cb \text{ with } \Re(\mu)\ge -\nuh.
\end{equation}
The weak formulation that corresponds to \eqref{eqn:invert_PCE} is 
\begin{align} \label{eqWeakform}
a\left( \begin{pmatrix} y_\mu \\ z_\mu \end{pmatrix} , \begin{pmatrix} \phi \\ \psi \end{pmatrix}\right)+\mu \left\langle \begin{pmatrix}  y_\mu \\ z_\mu\end{pmatrix}, \begin{pmatrix} \phi \\ \psi \end{pmatrix}\right\rangle = \left\langle \begin{pmatrix}  f \\ g\end{pmatrix}, \begin{pmatrix} \phi \\ \psi \end{pmatrix}\right\rangle \text{ for all } \begin{pmatrix} \phi \\ \psi \end{pmatrix}\in \Vf.
\end{align}
Since, $\Re(\mu)\ge -\nuh,$ \eqref{bilinear_acor} and Lax-Milgram theorem \cite{RDJLL} imply the existence of a unique $\begin{pmatrix} y_\mu \\ z_\mu \end{pmatrix} \in \Vf$ such that \eqref{eqWeakform} holds.
From the $H^2$-regularity result for elliptic equations, it follows that $\begin{pmatrix} y_\mu \\ z_\mu \end{pmatrix} \in D(\Af).$

\noindent 
Now, for $\mu$ with $\Re(\mu)\ge -\nuh,$ the choice $\begin{pmatrix}
\phi\\ \psi\end{pmatrix}=\begin{pmatrix}
y_\mu \\ z_\mu \end{pmatrix}$ in \eqref{eqWeakform}, \eqref{bilinear_acor}, and \eqref{poincare ineq} show
\begin{equation} \label{eqValreltn-yzfg}
(\|y_\mu\|^2+\|z_\mu\|^2)^{1/2} \le C \left( \|f\|^2+\|g\|^2\right)^{1/2},
\end{equation}
for some positive constant $C=C(\alpha_0,C_p).$

\medskip
\noindent 
{\bf Step 2. Resolvent estimate for $\Re\mu \ge -\nuh, \, \mu\neq -\nuh$.}
Let $\mu=-\nuh+\rho e^{i\theta},$ $(\rho\neq 0)$ where $-\frac{\pi}{2}\le \theta\le \frac{ \pi}{2}$  and choose $\begin{pmatrix} \phi \\ \psi \end{pmatrix}= e^{i\frac{\theta}{2}} \begin{pmatrix} y_\mu \\ z_\mu \end{pmatrix}$ in \eqref{eqWeakform} to obtain
\begin{align} \label{eqn:bil_con}
a\left( \begin{pmatrix} y_\mu \\ z_\mu \end{pmatrix} , e^{i\frac{\theta}{2}} \begin{pmatrix} y_\mu \\ z_\mu \end{pmatrix}\right)+(-\nuh+\rho e^{i\theta}) \left\langle \begin{pmatrix}  y_\mu \\ z_\mu\end{pmatrix}, e^{i\frac{\theta}{2}} \begin{pmatrix} y_\mu \\ z_\mu \end{pmatrix}\right\rangle = \left\langle \begin{pmatrix}  f \\ g\end{pmatrix}, e^{i\frac{\theta}{2}} \begin{pmatrix} y_\mu \\ z_\mu \end{pmatrix}\right\rangle.
\end{align}
Note that the definition of $a(\cdot,\cdot)$ from \eqref{bilinear_a} shows
\begin{align*}
   &  \Re\left( a\left( \begin{pmatrix} y_\mu \\ z_\mu \end{pmatrix} , e^{i\frac{\theta}{2}} \begin{pmatrix} y_\mu \\ z_\mu \end{pmatrix}\right)+(-\nuh+\rho e^{i\theta}) \left\langle \begin{pmatrix}  y_\mu \\ z_\mu\end{pmatrix}, e^{i\frac{\theta}{2}} \begin{pmatrix} y_\mu \\ z_\mu \end{pmatrix}\right\rangle \right)\\
   & \qquad = \cos(\theta/2) \left( \eta_0\|\nabla y_\mu\|^2+(\nu_0-\nuh+\rho)\|y_\mu\|^2+\beta_0\|\nabla z_\mu\|^2 +(\kappa+\nu_0-\nuh+\rho)\|z_\mu\|^2\right) \\
   & \qquad \qquad +\Re\left(  e^{-i\frac{\theta}{2}} (\eta_1 \langle z_\mu, y_\mu\rangle -\langle y_\mu, z_\mu\rangle)\right).
\end{align*}
This, \eqref{eqn:bil_con}, and a Cauchy-Schwarz inequality followed by a use of \eqref{eqValreltn-yzfg} lead to 
\begin{align*}
     \cos(\theta/2) \left( (\nu_0-\nuh+\rho)\|y_\mu\|^2 +(\kappa+\nu_0-\nuh+\rho)\|z_\mu\|^2\right) 
    \le C (\|f\|^2+\|g\|^2)^{1/2}(\|y_\mu\|^2+\|z_\mu\|^2)^{1/2},
\end{align*}
for some $C=(\eta_1, \alpha_0, C_p)>0.$ Since $\nu_0>\nuh$ (see \eqref{eqdef:nuhat-nu0}) and $\cos(\theta/2)\ge \cos(\pi/4)>0$ for all $-\frac{\pi}{2}\le \theta\le \frac{\pi}{2}$,
\begin{align*}
     (\|y_\mu\|^2+\|z_\mu\|^2)^{1/2} \le \frac{C}{\rho \cos(\pi/4)} (\|f\|^2+\|g\|^2)^{1/2},
\end{align*}
holds and thus, noting $\rho=|\mu+\nuh|,$ it follows that for all $\Re\mu \ge -\nuh, \, \mu\neq -\nuh$,
\begin{align*}
    \left\|R(\mu,\Af)\begin{pmatrix} f \\ g\end{pmatrix} \right\|=(\|y_\mu\|^2+\|z_\mu\|^2) ^{1/2} \le \frac{C}{|\mu+\nuh|} (\|f\|^2+\|g\|^2)^{1/2}.
\end{align*}

\medskip
\noindent {\bf Step 3. Case of any $\mu\in\Sigma^c(-\nuh;\theta_0)$ with $\Re \mu<-\nuh$.} 
		Using the fact that $\displaystyle\lim_{\phi\rightarrow \frac{\pi}{2}}e^{i (\phi-\frac{\pi}{2})}=1$, there exists a $\delta_1>0$ such that 
		\begin{equation}\label{eq:resol4}
			\vert1-e^{i(\phi-\frac{\pi}{2})}\vert\le \frac{1}{2C} \text{ for all } \frac{\pi}{2}\le \phi\le \frac{\pi}{2}+\delta_1,
		\end{equation}
		where $C$ is the constant obtained in Step 2. For any $\mu$ such that  $\mu+\nuh=|\mu+\nuh|e^{i\theta}$ with $\frac{\pi}{2}<\theta \le \frac{\pi}{2}+\delta_1,$
		choosing $\mu_0+\nuh=i|\mu+\nuh|$, we obtain from Step 2 that $\mu_0\in \rho(\Af)$ and 
		$\|R(\mu_0,\Af)\|_{\Lc(\Hf)}\le \frac{C}{|\mu_0+\nuh|}=\frac{C}{|\mu+\nuh|}$.
		
		\noindent Note that $|\mu-\mu_0|=|\mu+\nuh||1-e^{i(\theta-\frac{\pi}{2})}|$ and  
		\begin{align*}
			\|(\mu-\mu_0)R(\mu_0,\Af)\|_{\Lc(\Hf)}\le |\mu-\mu_0|\frac{C}{|\mu_0+\nuh|}=C|1-e^{i(\theta_\mu-\frac{\pi}{2})}|\le \frac{1}{2}.
		\end{align*}
		Now, choosing $\mathfrak{T}_1=\mu_0\If-\Af$ and $\mathfrak{T}_2=\mu\If-\Af$, Lemma \ref{lem:id-3}(b) yields that $R(\mu,\Af)$ exists in $\Lc(\Hf)$ for each $\mu=-\nuh+r e^{\pm i\theta},$ for any $r>0$ and $\frac{\pi}{2}<\theta\le \frac{\pi}{2}+\delta_1,$ and \eqref{id-1} with $\mathfrak{T}=(\mu-\mu_0)R(\mu_0,\Af)$ leads to
		$$\|R(\mu,\Af)\|_{\Lc(\Hf)}\le \frac{1}{1-\|(\mu-\mu_0)R(\mu_0,\Af)\|_{\Lc(\Hf)}} \left\|R(\mu_0,\Af)\right\|_{\Lc(\Hf)}\le \frac{2C}{|\mu+\nuh|}.$$

\noindent Let $\theta_0=\frac{\pi}{2}+\delta_0$ and $n_0$ be the largest natural number such that $n_0\delta_1\le \delta_0< (n_0+1)\delta_1$. Now repeating the same argument for $\mu \in \Sigma^c(-\nuh;\theta_0)$ with 
$\mu=-\nuh+r e^{\pm i\theta}$ for $r>0$ and $\frac{\pi}{2}+n \delta_1 < \theta \le \frac{\pi}{2}+(n+1)\delta_1$, for each $n=1, \cdots n_0-1$, we obtain $\|R(\mu,\Af)\|_{\Lc(\Hf)}\le  \frac{2^{n}C}{|\mu+\nuh|}$ and  finally for  $\mu=-\nuh+r e^{\pm i\theta}$ with $\frac{\pi}{2}+n_0 \delta_1 < \theta \le \frac{\pi}{2}+\delta_0$,  
$\|R(\mu,\Af)\|_{\Lc(\Hf)}\le  \frac{2^{n_0}C}{|\mu+\nuh|}$ holds.
 Thus, for $C_0:=\max\{C, 2^n C, n=1, \cdots, n_0\}$, where $C$ is the positive constant obtained in Step 2, for all $\mu \in \Sigma^c(-\nuh; \theta_0)$, the estimate \eqref{eq:resolestA} holds: 
$$ \|R(\mu,\Af)\|_{\Lc(\Hf)}\le  \frac{C_0}{|\mu+\nuh|}, \quad \mu \in \Sigma^c(-\nuh; \theta_0), \quad \mu\neq -\nuh.$$

\noindent (b) Using the fact that 
$-\Delta:D(-\Delta)\subset L^2(\Omega)\longrightarrow L^2(\Omega)$, where $D(-\Delta)=H^2(\Omega)\cap H^1_0(\Omega),$ is a closed and densely defined operator in $L^2(\Omega)$, it can be obtained that  $(\Af,D(\Af))$ is densely defined and closed operator on $\Hf$. Thus, using this along with part $(a)$, it follows that $(\Af, D(\Af))$ generates an analytic semigroup $\{e^{t\Af}\}_{t\ge 0}$ on $\Hf$ with the representation \eqref{eqrepsemi} \cite[Theorem 2.10, Chapter 1, Part II]{BDDM}. 

\medskip
\noindent (c) Choose $\Gamma=\Gamma_{\pm}\cup\Gamma_0$ with $\Gamma_{\pm}=\{-\nuh+re^{\pm i\phi_0}\,| \, r\ge r_0\} $ and $\Gamma_0=\{-\nuh+ r_0 e^{\pm i \vartheta}\,|\, |\vartheta|\le \phi_0\}$ for some $r_0>0$ and $\frac{\pi}{2}<\phi_0<\theta_0 < \pi.$ 
Then \eqref{eqrepsemi} yields
\begin{align*}
e^{t\Af}=\frac{1}{2\pi i}\int_\Gamma e^{\mu t} R(\mu,\Af) \, d\mu = \frac{1}{2\pi  i}\int_{t\Gamma^1}e^{\mu_1-\nuh t}R(\frac{\mu_1}{t}-\nuh,\Af)\, \frac{d\mu_1}{t},
\end{align*}
where the last equality is obtained using the substitution $\mu_1=(\mu+\nuh)t$ and $\Gamma^1=\Gamma^1_\pm\cup\Gamma^1_0$ with $\Gamma^1_\pm=\{re^{\pm i\phi_0}\,| \, r\ge r_0\}$
 and $\Gamma^1_0=\{r_0 e^{i\vartheta},\, |\vartheta|\le \phi_0\}.$ Utilizing the fact that the above integral is independent of such path $\Gamma$, we obtain
\begin{equation}\label{eqint}
 \begin{aligned}
e^{t\Af}  =\frac{e^{-\nuh t}}{2\pi t i}\int_{\Gamma^1}e^{\mu_1}R(\frac{\mu_1}{t}-\nuh,\Af)\, d\mu_1 
\end{aligned}.
\end{equation}
We first estimate it over $\Gamma_+^1$ (then similarly on $\Gamma_-^1$) using (a) and observe that $\cos{\phi_0}<0$ to obtain
\begin{align*}
   \left\| \int_{\Gamma^1_+}e^{\mu_1}R(\frac{\mu_1}{t}-\nuh,\Af)\, d\mu_1 \right\|_{\Lc(\Hf)}  = \left\| \int_{r_0}^\infty e^{re^{\pm i \phi_0}}R(\frac{re^{\pm i \phi_0}}{t}-\nuh,\Af)\, e^{\pm i \phi_0} \, dr  \right\|_{\Lc(\Hf)} \le 
   -\frac{ Ct e^{r_0\cos{\phi_0}}}{r_0\cos(\phi_0)}. 
\end{align*}
To estimate over $\Gamma^1_0,$ where $\mu_1=r_0e^{i\vartheta},$ again use (a) to obtain
\begin{align*}
   \left\| \int_{\Gamma^1_0}e^{\mu_1}R(\frac{\mu_1}{t}-\nuh,\Af)\, d\mu_1 \right\|_{\Lc(\Hf)}  = \left\| \int_{-\phi_0}^{\phi_0} e^{r_0e^{i\vartheta}}R(\frac{r_0e^{i\vartheta}}{t}-\nuh,\Af)\, r_0e^{ i\vartheta} i \,d\vartheta  \right\|_{\Lc(\Hf)} \le 
   \frac{2C t \phi_0 e^{r_0}}{r_0}. 
\end{align*}
Use the last two inequalities in \eqref{eqint} to deduce \eqref{eq-grthcond} and to conclude the proof.
\end{proof}

\noindent Since, $(\Af,D(\Af))$ generates an analytic semigroup $\{e^{t\Af}\}_{t\ge 0}$ of negative type on $\Hf,$ the well-posedness of \eqref{eqn:main_system_PCE} follows. 
 
\begin{Lemma}[well-posedness]\label{lem:cont-1} For any given $\n\Yf_0\in \Hf$ and $\n F\in L^2(0,\infty; \Hf)$, the system  $\n\Yf'(t)=\Af\Yf(t)+F(t)$ for all $t>0,$ $\Yf(0)=\Yf_0$
			admits  a unique solution  $\n\Yf(\cdot)\in C([0,\infty); \Hf)$ with the representation $\n\Yf(t)=e^{t\Af}\Yf_0+\int_0^te^{(t-s)\Af}F(s)\,ds\text{ for all } t>0.$
\end{Lemma}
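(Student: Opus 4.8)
The plan is to recognize this as the standard inhomogeneous abstract Cauchy problem governed by the generator of an analytic semigroup, and to construct the solution directly through the variation-of-constants (Duhamel) formula. By Theorem \ref{thm:ua-sem}, $(\Af,D(\Af))$ generates an analytic, hence strongly continuous, semigroup $\{e^{t\Af}\}_{t\ge0}$ on $\Hf$ obeying $\|e^{t\Af}\|_{\Lc(\Hf)}\le Ce^{-\nuh t}$ with $\nuh>0$; because $\nuh>0$, this yields a finite uniform bound $M:=\sup_{t\ge0}\|e^{t\Af}\|_{\Lc(\Hf)}<\infty$. I would take the function $\Yf$ defined by the claimed representation as the candidate and verify in turn that the formula is well defined, that $\Yf\in C([0,\infty);\Hf)$, and that $\Yf$ is the unique solution.

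First I would check that the convolution term is a genuine $\Hf$-valued Bochner integral. For fixed $t>0$ the integrand $s\mapsto e^{(t-s)\Af}F(s)$ is strongly measurable on $(0,t)$, and the semigroup bound gives $\|e^{(t-s)\Af}F(s)\|\le M\|F(s)\|$. Since $F\in L^2(0,\infty;\Hf)$, the Cauchy--Schwarz inequality on the bounded interval $(0,t)$ yields $\int_0^t\|F(s)\|\,ds\le \sqrt{t}\,\|F\|_{L^2(0,t;\Hf)}<\infty$, so the integrand is Bochner integrable and the integral converges in $\Hf$.

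Next comes the continuity, which I expect to be the only delicate point precisely because $F$ is merely $L^2$ in time. The term $e^{t\Af}\Yf_0$ is continuous on $[0,\infty)$ by strong continuity of the semigroup, with value $\Yf_0$ at $t=0$. For $v(t):=\int_0^t e^{(t-s)\Af}F(s)\,ds$ and $\delta>0$ I would split $v(t+\delta)-v(t)=\int_0^t\big(e^{(t+\delta-s)\Af}-e^{(t-s)\Af}\big)F(s)\,ds+\int_t^{t+\delta}e^{(t+\delta-s)\Af}F(s)\,ds$. The second integral is bounded by $M\int_t^{t+\delta}\|F(s)\|\,ds$, which tends to $0$ as $\delta\to0$ by absolute continuity of the Lebesgue integral of the locally integrable function $\|F(\cdot)\|$. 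In the first integral, strong continuity gives $\big(e^{(t+\delta-s)\Af}-e^{(t-s)\Af}\big)F(s)\to0$ for a.e.\ $s$, while $2M\|F(s)\|$ serves as an $L^1(0,t)$ dominating function, so the dominated convergence theorem sends it to $0$. Treating $\delta<0$ analogously gives $v\in C([0,\infty);\Hf)$, whence $\Yf\in C([0,\infty);\Hf)$ with $\Yf(0)=\Yf_0$. That $\Yf$ satisfies the integral identity (the mild formulation) is immediate from the formula, and the analyticity of the semigroup supplies the extra time regularity needed for the differential equation to hold for a.e.\ $t>0$.

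Finally, for uniqueness I would take two solutions $\Yf_1,\Yf_2$, set $W:=\Yf_1-\Yf_2$, and note that $W$ solves the homogeneous problem $W'(t)=\Af W(t)$, $W(0)=0$. Differentiating $s\mapsto e^{(t-s)\Af}W(s)$ on $(0,t)$, which is legitimate since $W(s)\in D(\Af)$ and $e^{(t-s)\Af}$ commutes with $\Af$ there, shows this map is constant, so $W(t)=e^{t\Af}W(0)=0$ for every $t$, giving $W\equiv0$. All the steps beyond the continuity of $v$ are routine consequences of the analytic semigroup generation established in Theorem \ref{thm:ua-sem}, and the exponential decay $\|e^{t\Af}\|_{\Lc(\Hf)}\le Ce^{-\nuh t}$ guarantees that the integrals converge globally in $t$, not merely on bounded intervals.
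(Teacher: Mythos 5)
Your proof is correct and is precisely the standard variation-of-constants argument that the paper itself does not write out but invokes by citation (``The proof is standard, for example, see \cite[Prop.~3.1, Ch-1, Part-II]{BDDM}''): the Bochner integrability check, the continuity of the convolution term via splitting plus dominated convergence, and the uniqueness via differentiating $s\mapsto e^{(t-s)\Af}W(s)$ are exactly the classical steps behind that reference. The one place you defer to the same level of detail as the citation is the assertion that analyticity of $\{e^{t\Af}\}_{t\ge 0}$ upgrades the mild solution to an a.e.\ strong one for $F\in L^2(0,\infty;\Hf)$, which is the maximal-regularity content of the cited result, so nothing is missing relative to what the paper provides.
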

\noindent The proof is standard, for example, see \cite[Prop. 3.1, Ch-1, Part-II]{BDDM}.\\[1mm]
\noindent In the next remark, a regularity result for $R(-\nuh,\Af)$ is studied. 
\begin{Remark}[regularity result] \label{rem:RegResult}
For $\mu=-\nuh,$ from Step 1 of the proof of Theorem \ref{thm:ua-sem} and the $H^2$-regularity result for elliptic equations, we have $R(-\nuh,\Af)\in\Lc(\Hf,  D(\Af)) $ and 
\begin{align}\label{eqRegEsr-yzinH2}
\left\|R(-\nuh,\Af)\begin{pmatrix}f\\ g\end{pmatrix}\right\|_{H^2(\Omega)\times H^2(\Omega)} \le C (\|f\|+\|g\|),
\end{align}
for some $C>0$ and for all $\begin{pmatrix}f\\ g\end{pmatrix}\in \Hf.$
\end{Remark}

\noindent The adjoint operator $\n (\Af^*,D(\Af^*))$ corresponding to $\n (\Af, D(\Af))$ is defined as 
\begin{equation}\label{eqadjopt_PCE}
			\Af^*:=\left( \begin{matrix}
				\eta_0 \Delta -\nu_0 I & I \\ -\eta_1 I  & \beta_0 \Delta-(\kappa+\nu_0) I 
			\end{matrix} \right) \text{ and } D(\Af^*):=\left( H^2(\Omega)\cap \Hio\right)^2.
\end{equation}

\noindent Note that $(\Af^*,D(\Af^*))$ generates a strongly continuous semigroup, $\rho(\Af)=\rho(\Af^*)$ and $\|R(\mu,\Af)\|_{\Lc(\Hf)}=\|R(\overline{\mu},\Af^*)\|_{\Lc(\Hf)}$ for all $\mu\in \rho(\Af^*)$ \cite[Proposition 2.4, Ch. 1, Part II]{BDDM}. Therefore, using Theorem \ref{thm:ua-sem}, the next lemma shows that the adjoint operator $(\Af^*,D(\Af^*))$ generates an analytic semigroup on $\Hf.$

\begin{Lemma}[analytic semigroup by $(\Af^*,D(\Af^*))$] \label{lem:RegResult-Af*}
Let $\n (\Af^*,D(\Af^*))$ be as defined in \eqref{eqadjopt_PCE}. Then the results below hold:
\begin{itemize}
\item[(a)] The set $\n \Sigma^c(-\nuh;\theta_0)$ is contained in the resolvent set $\n \rho(\Af^*),$ and for all $\n\mu\in \Sigma^c(-\nuh;\theta_0),$ the resolvent satisfies
$
\n\|R(\mu,\Af^*)\|_{\Lc(\Hf)} \le \frac{C}{|\mu+\nuh|}, \, \mu\neq -\nuh,
$
for some $C>0$ independent of $\mu$. 
Therefore, $\n\Af^*$ generates an analytic semigroup $\n\{e^{t\Af^*}\}_{t\ge 0}$ on $\n\Hf$ satisfying $\|\n e^{t\Af^*}\|_{\Lc(\Hf)}\le C e^{-\nuh t}$ for all $t>0,$ for some $C>0$. 
\item[(b)] For $\mu=-\nuh,$ $R(-\nuh,\Af^*)\in \Lc(\Hf,D(\Af^*))$ and for any  $\n\begin{pmatrix} p \\q \end{pmatrix}\in \Hf,$ there exists $C>0$ such that 
\begin{align*}
\left\| R(-\nuh,\Af^*)\begin{pmatrix} p \\q \end{pmatrix}\right\|_{H^2(\Omega)\times H^2(\Omega)} \le C \left( \|p\| + \|q\|\right).
\end{align*}
\end{itemize}
\end{Lemma}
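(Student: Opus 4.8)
The plan is to transfer every assertion about $\Af^*$ from the corresponding results for $\Af$ proved in Theorem~\ref{thm:ua-sem} and Remark~\ref{rem:RegResult}, exploiting the Hilbert-space adjoint duality $R(\mu,\Af^*)=R(\overline{\mu},\Af)^*$ together with the invariance of the sector $\Sigma^c(-\nuh;\theta_0)$ under complex conjugation. Thus no new analytic input is required; the argument is a bookkeeping exercise in duality.

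For part (a), the first observation is that, because $\nuh$ is real and the sector $\Sigma(-\nuh;\theta_0)$ is defined symmetrically through the angles $\pm\theta$ about its real vertex $-\nuh$, its complement satisfies $\mu\in\Sigma^c(-\nuh;\theta_0)$ if and only if $\overline{\mu}\in\Sigma^c(-\nuh;\theta_0)$. Fixing such a $\mu$, Theorem~\ref{thm:ua-sem}(a) gives $\overline{\mu}\in\rho(\Af)$ with $\|R(\overline{\mu},\Af)\|_{\Lc(\Hf)}\le C/|\overline{\mu}+\nuh|$. The stated adjoint relations $\rho(\Af)=\rho(\Af^*)$ and $\|R(\mu,\Af^*)\|_{\Lc(\Hf)}=\|R(\overline{\mu},\Af)\|_{\Lc(\Hf)}$, combined with $|\overline{\mu}+\nuh|=|\mu+\nuh|$ (again since $\nuh\in\Rb$), then yield $\mu\in\rho(\Af^*)$ and the claimed bound $\|R(\mu,\Af^*)\|_{\Lc(\Hf)}\le C/|\mu+\nuh|$. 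Since $\Af^*$ is densely defined and closed, being the Hilbert-space adjoint of the densely defined closed operator $\Af$, Definition~\ref{def:ana-sgp} and \cite{BDDM} give that $\Af^*$ generates an analytic semigroup. The growth estimate follows most cleanly from $e^{t\Af^*}=(e^{t\Af})^*$, so that $\|e^{t\Af^*}\|_{\Lc(\Hf)}=\|e^{t\Af}\|_{\Lc(\Hf)}\le Ce^{-\nuh t}$ by Theorem~\ref{thm:ua-sem}(c); alternatively one repeats the contour argument leading to \eqref{eqint} verbatim with $R(\mu,\Af)$ replaced by $R(\mu,\Af^*)$.

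For part (b), I would mirror Step~1 of the proof of Theorem~\ref{thm:ua-sem} and Remark~\ref{rem:RegResult}, now for the sesquilinear form associated with $\Af^*$. Setting $b(\Psi,\Phi):=\langle-\Af^*\Psi,\Phi\rangle$, the adjointness $\langle-\Af^*\Psi,\Phi\rangle=\langle\Psi,-\Af\Phi\rangle=\overline{a(\Phi,\Psi)}$ shows, after extension by density to all $\Phi,\Psi\in\Vf$, that $b$ inherits from $a$ both the continuity \eqref{bilinear_a_bdd} and the shifted coercivity \eqref{bilinear_acor} with the same constants $\alpha_0,\alpha_1$, since passing to real parts is unaffected by the conjugation. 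Hence, for $\mu=-\nuh$ and any datum in $\Hf$, the Lax--Milgram theorem applied to the coercive continuous form $b(\cdot,\cdot)-\nuh\langle\cdot,\cdot\rangle$ furnishes a unique weak solution in $\Vf$, and the $H^2$-regularity theory for the associated elliptic system lifts it to $D(\Af^*)=(H^2(\Omega)\cap\Hio)^2$ with the stated bound. Because this solution is precisely $R(-\nuh,\Af^*)$ applied to the datum, the estimate of (b) follows.

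I do not expect a genuine obstacle here: the lemma is essentially a transcription of the already-established results for $\Af$ through the adjoint duality. The only two points needing (minor) care are verifying the conjugation symmetry of the sector in (a) and confirming that the $H^2$-elliptic regularity still applies in (b); the latter holds because $\Af^*$, by \eqref{eqadjopt_PCE}, has exactly the same second-order principal part $\mathrm{diag}(\eta_0\Delta,\beta_0\Delta)$ as $\Af$ and differs only through the bounded zeroth-order coupling terms.
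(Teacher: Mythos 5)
Your proposal is correct and follows the paper's own route: the paper derives (a) from exactly the duality facts you invoke --- $\rho(\Af)=\rho(\Af^*)$, $\|R(\mu,\Af^*)\|_{\Lc(\Hf)}=\|R(\overline{\mu},\Af)\|_{\Lc(\Hf)}$ (citing \cite[Proposition 2.4, Ch.\ 1, Part II]{BDDM}), the conjugation symmetry of $\Sigma^c(-\nuh;\theta_0)$, and Theorem \ref{thm:ua-sem} --- and obtains (b) just as you do, by repeating the Lax--Milgram and $H^2$-elliptic-regularity argument of Step 1 of Theorem \ref{thm:ua-sem} and Remark \ref{rem:RegResult} for $\Af^*$, which shares the principal part $\mathrm{diag}(\eta_0\Delta,\beta_0\Delta)$. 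Your added verifications (sector symmetry, coercivity of the adjoint form with the same constants) are exactly the minor details the paper leaves implicit.
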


\noindent Using the regularity result for an analytic semigroup with negative type (for details, see \cite[Proposition 3.13, Section 3.6, Chapter I,Part II]{BDDM}), we have the next result. It justifies the equivalency of the semigroup formulation and weak formulation of \eqref{eqn:main_system_PCE}. The weak formulation will be used in Section \ref{sec:Appopp} to define the finite dimensional approximation of the system. 

\begin{Lemma}[solution regularity] \label{lem:semidiscre-C}
Let $u\in L^2(0,\infty;\Lt)$ and $\n\Yf_0=\begin{pmatrix}y_0\\z_0\end{pmatrix}\in \Hf$ be given. Then the solution $\n\Yf(t)=\begin{pmatrix}y(t)\\z(t)\end{pmatrix}$ of \eqref{eqn:main_system_PCE} obtained in Lemma \ref{lem:cont-1} belongs to $\n C([0,\infty);\Hf)\cap H^1(\epsilon,\infty;\Hf)\cap  L^2(\epsilon,\infty; D(\Af))$ for all $\epsilon>0$ and for all $\n \begin{pmatrix} \phi \\ \psi \end{pmatrix}\in \Vf,$ $\Yf(t)$ satisfies
\begin{equation}\label{eq-weak-form}
    \begin{aligned}
        \frac{d}{dt}\langle y(t), \phi\rangle & = -\eta_0 \langle \nabla y(t),\nabla \phi\rangle - \eta_1\langle  z(t), \phi\rangle - \nu_0\langle y(t), \phi\rangle +  \langle u(t)\chi_{\mathcal{O}},\phi\rangle \text{ a.e. }t\in(0,\infty)\\
        \frac{d}{dt}\langle z(t), \psi\rangle  & = -\beta_0\langle \nabla z(t), \nabla \psi\rangle - (\kappa+\nu_0) \langle z(t), \psi\rangle + \langle y(t), \psi\rangle \text{ a.e. }t\in(0,\infty)\\
        \langle y(0),\phi \rangle & =\langle y_0,\phi\rangle, \quad \langle z(0),\psi \rangle = \langle  z_0, \psi\rangle.
    \end{aligned}
\end{equation}
\end{Lemma}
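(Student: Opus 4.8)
The plan is to establish Lemma \ref{lem:semidiscre-C} by combining the abstract regularity theory for analytic semigroups of negative type with the already-proven properties of $\Af$. The statement has two conceptually separate assertions: first, the \emph{temporal and spatial regularity} of the mild solution $\Yf(\cdot)$ beyond mere continuity, namely $\Yf\in H^1(\epsilon,\infty;\Hf)\cap L^2(\epsilon,\infty;D(\Af))$ for every $\epsilon>0$; and second, the \emph{equivalence} of the semigroup (mild) formulation of \eqref{eqn:main_system_PCE} with the weak formulation \eqref{eq-weak-form}. I would treat these in that order.

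For the regularity assertion, I would invoke the maximal-regularity / smoothing property of analytic semigroups directly. Theorem \ref{thm:ua-sem} gives that $(\Af,D(\Af))$ generates an analytic semigroup of negative type on $\Hf$ (the bound \eqref{eq-grthcond} with exponential decay $e^{-\nuh t}$), and Lemma \ref{lem:cont-1} furnishes the mild representation $\Yf(t)=e^{t\Af}\Yf_0+\int_0^t e^{(t-s)\Af}F(s)\,ds$ with $F(t)=\Bf u(t)\in L^2(0,\infty;\Hf)$ (since $\Bf\in\Lc(\Lt,\Hf)$ and $u\in L^2(0,\infty;\Lt)$). The quoted result \cite[Proposition 3.13, Section 3.6, Chapter I, Part II]{BDDM} is precisely the tool: for an analytic semigroup of negative type with an $L^2$ forcing term, the convolution picks up a full derivative and lands in the domain of the generator away from $t=0$. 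The only subtlety is the initial datum $\Yf_0\in\Hf$ rather than $D(\Af)$; this is exactly why the conclusions are stated on $(\epsilon,\infty)$ instead of $(0,\infty)$, since the analytic smoothing $e^{t\Af}\Yf_0\in D(\Af)$ for $t>0$ together with the decay bound gives $L^2(\epsilon,\infty;D(\Af))$ control, and differentiating the representation formula yields the $H^1(\epsilon,\infty;\Hf)$ bound. So I would cite the abstract proposition, verify its hypotheses (analyticity, negative type, $F\in L^2$), and read off the regularity.

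For the equivalence with \eqref{eq-weak-form}, once we know $\Yf(t)\in D(\Af)$ for a.e.\ $t$ and $\Yf'(t)=\Af\Yf(t)+F(t)$ holds in $\Hf$ for a.e.\ $t$ (which is the strong-form content of the regularity just obtained), I would simply pair the strong equation against a test function $\begin{pmatrix}\phi\\\psi\end{pmatrix}\in\Vf$ and unfold using \eqref{equw1}, which identifies $\langle-\Af(\cdot),(\cdot)\rangle$ with the sesquilinear form $a(\cdot,\cdot)$ of \eqref{bilinear_a}. Expanding the two components of $a$ and of $\Bf u=\begin{pmatrix}u\chi_{\mathcal{O}}\\0\end{pmatrix}$ gives exactly the two scalar identities in \eqref{eq-weak-form}, with the time derivative moving onto the $\Lt$-inner product. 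The initial conditions transfer from $\Yf\in C([0,\infty);\Hf)$ and $\Yf(0)=\Yf_0$.

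The main obstacle is not any single deep estimate but rather the careful bookkeeping at $t=0$: because $\Yf_0$ is only in $\Hf$, one cannot expect $\Yf'\in L^2$ or $\Yf\in D(\Af)$ up to $t=0$, and the role of $\epsilon>0$ must be handled honestly. I would isolate the two terms of the mild representation, bound $e^{t\Af}\Yf_0$ using the analytic smoothing estimate $\|\Af e^{t\Af}\|_{\Lc(\Hf)}\le Ct^{-1}e^{-\nuh t}$ (a standard consequence of analyticity, obtainable by differentiating \eqref{eqrepsemi}), and bound the convolution via the cited proposition; integrating the resulting $t^{-1}$ singularity over $(\epsilon,\infty)$ rather than $(0,\infty)$ is what keeps everything finite. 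Apart from this localization away from the origin, the argument is a routine specialization of standard parabolic semigroup theory, so I would keep the proof short and lean on \cite{BDDM}.
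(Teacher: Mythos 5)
Your proposal is correct and follows essentially the same route as the paper, which likewise derives the regularity from the smoothing property of the analytic semigroup of negative type via \cite[Proposition 3.13, Section 3.6, Chapter I, Part II]{BDDM} applied to the mild representation of Lemma \ref{lem:cont-1}, and treats the equivalence with \eqref{eq-weak-form} as a routine consequence of the identification \eqref{equw1} of $\langle -\Af\,\cdot\,,\cdot\rangle$ with the form $a(\cdot,\cdot)$. Your explicit handling of the $t^{-1}$ singularity of $\Af e^{t\Af}\Yf_0$ for $\Yf_0\in\Hf$ is precisely why the conclusions are stated on $(\epsilon,\infty)$ rather than $(0,\infty)$, and fills in bookkeeping the paper leaves implicit.
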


\subsection{Spectral analysis} \label{subsec:specana}
In this subsection, the spectral analysis of the operator $\Af$ on $\Hf$ is discussed. Note that Theorem \ref{thm:ua-sem}(a) implies that $\sigma(\Af)$, the spectrum of $\Af$, is a subset of $\Sigma(-\nuh;\theta_0).$ Moreover, Remark \ref{rem:RegResult} gives that $(-\nuh\If-\Af)^{-1}\in \mathcal{L}(\Hf, D(\Af))$, is a linear, bounded, compact operator in $\Hf$. Thus, using 
\cite[Theorem 6.26 and Theorem 6.29, Chapter 3]{Kato}, we obtain the next result. 
\begin{Theorem}[properties of spectrum of $\Af$] \label{th:spec Af}
Let $\n (\Af,D(\Af))$ be as defined in \eqref{eqdef-A_PCE} and $\Sigma(-\nuh;\theta_0)$ be as in Theorem \ref{thm:ua-sem}. Then
\begin{itemize}
    \item[(a)] the spectrum of $\Af,$ $\sigma(\Af) \subset \Sigma(-\nuh;\theta_0),$ 
    \item[(b)] the set $\sigma(\Af)$ contains only isolated eigenvalues of $\Af$ and if there exists a convergent sequence $\{\Lambda_k\}_{k\in\Nb}\subset \sigma(\Af),$ then $\Lambda_k\rightarrow -\infty$ as $n\rightarrow \infty.$
\end{itemize}
\end{Theorem}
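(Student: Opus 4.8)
The plan is to read off both parts from the sector resolvent estimate of Theorem \ref{thm:ua-sem}(a) together with the compactness of the resolvent recorded in Remark \ref{rem:RegResult}, and then to transport the spectral structure of a compact operator back to $\Af$.

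For part (a) the argument is immediate: Theorem \ref{thm:ua-sem}(a) gives $\Sigma^c(-\nuh;\theta_0)\subset\rho(\Af)$, so taking complements in $\Cb$ yields $\sigma(\Af)=\Cb\setminus\rho(\Af)\subset\Cb\setminus\Sigma^c(-\nuh;\theta_0)=\Sigma(-\nuh;\theta_0)$.

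For part (b) I would first show that $\Af$ has compact resolvent. By Remark \ref{rem:RegResult} we have $-\nuh\in\rho(\Af)$ and $R(-\nuh,\Af)\in\Lc(\Hf,D(\Af))$ with $D(\Af)=(H^2(\Omega)\cap\Hio)^2$. Since $\Omega$ is a bounded $C^2$ domain, the embedding $H^2(\Omega)\hookrightarrow L^2(\Omega)$ is compact by the Rellich--Kondrachov theorem, hence so is $D(\Af)\hookrightarrow\Hf$. Composing the bounded map $R(-\nuh,\Af)\colon\Hf\to D(\Af)$ with this compact inclusion shows that $R(-\nuh,\Af)\colon\Hf\to\Hf$ is compact. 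I would then invoke the Riesz--Schauder theory of compact operators, exactly \cite[Theorem 6.26 and Theorem 6.29, Chapter 3]{Kato}: the spectrum $\sigma(R(-\nuh,\Af))$ is at most countable, its only possible accumulation point is $0$, and each of its nonzero points is an eigenvalue of finite algebraic multiplicity. Via the spectral correspondence $\Lambda\in\sigma(\Af)\iff(\Lambda+\nuh)^{-1}\in\sigma(R(-\nuh,\Af))\setminus\{0\}$, this transfers to $\Af$ and shows that $\sigma(\Af)$ consists solely of isolated eigenvalues of finite algebraic multiplicity.

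Finally I would establish the divergence. A sequence of distinct eigenvalues $\{\Lambda_k\}_{k\in\Nb}\subset\sigma(\Af)$ corresponds to a sequence of distinct nonzero eigenvalues $(\Lambda_k+\nuh)^{-1}$ of the compact operator $R(-\nuh,\Af)$, which must converge to $0$; hence $|\Lambda_k+\nuh|\to\infty$, i.e. $|\Lambda_k|\to\infty$. Writing $\Lambda_k=-\nuh+r_ke^{i\theta_k}$ with $r_k\to\infty$, part (a) forces $|\theta_k|\ge\theta_0>\tfrac{\pi}{2}$, so $\cos\theta_k\le\cos\theta_0<0$ and therefore $\Re(\Lambda_k)=-\nuh+r_k\cos\theta_k\to-\infty$, which is the stated conclusion. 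The only points worth spelling out carefully are the spectral correspondence used in the last two steps (that the nonzero spectrum of the resolvent is precisely the image of $\sigma(\Af)$ under $\Lambda\mapsto(\Lambda+\nuh)^{-1}$) and the sector geometry that upgrades $|\Lambda_k|\to\infty$ to $\Re(\Lambda_k)\to-\infty$; the compactness of the resolvent and the appeal to Kato's theorems are then routine.
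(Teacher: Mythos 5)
Your proposal is correct and takes essentially the same route as the paper, which likewise reads off (a) from the sector resolvent estimate of Theorem \ref{thm:ua-sem}(a) and obtains (b) from the compactness of $R(-\nuh,\Af)$ (via Remark \ref{rem:RegResult} and the compact embedding $H^2(\Omega)\hookrightarrow L^2(\Omega)$) together with \cite[Theorems 6.26 and 6.29, Chapter 3]{Kato}; you merely make explicit the spectral correspondence and the sector-geometry step upgrading $|\Lambda_k|\to\infty$ to $\Re(\Lambda_k)\to-\infty$, which the paper leaves implicit. One cosmetic point: with the paper's convention $R(-\nuh,\Af)=(-\nuh\If-\Af)^{-1}$, the correspondence is $\Lambda\in\sigma(\Af)\iff(-\nuh-\Lambda)^{-1}\in\sigma(R(-\nuh,\Af))\setminus\{0\}$, so your $(\Lambda+\nuh)^{-1}$ is off by a sign, but since only moduli enter your accumulation argument, nothing is affected.
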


\noindent We mention that $\sigma(\Af^*)$, the spectrum of the adjoint operator $\Af^*$, is the same as $\sigma(\Af)$.

\medskip
\noindent In the next proposition, we provide the expression of eigenvalues and eigenvectors of the operators $(\Af,D(\Af))$ and $(\Af^*,D(\Af^*))$ utilizing the eigenvalue problem for the Laplace operator \cite{Kes}:\\
 There exists an orthonormal basis $\{\phi_k\}_{k\in\Nb}$ of $\Lt$ and a sequence of positive real numbers $\{\lambda_k\}_{k\in\Nb}$ with $\lambda_k \rightarrow \infty $ as $k\rightarrow \infty$ such that
\begin{equation}\label{eigvalLapl}
\begin{aligned}
& 0<\lambda_1\le \lambda_2\le \cdots \le \lambda_k\le \cdots, \\
& -\Delta\phi_k =\lambda_k \phi_k \text{ in }\Omega,\\
& \phi_k \in \Hio\cap C^\infty(\Omega).
\end{aligned}
\end{equation} 
Note that $\Hf=\mathrm{span}_{k\in \mathbb{N}}\,\left\lbrace \begin{pmatrix}\phi_k \\ 0\end{pmatrix}, \begin{pmatrix}0 \\ \phi_k
\end{pmatrix} \right\rbrace$, and for all $k\in \Nb,$  $\mathrm{span}\,\left\lbrace \begin{pmatrix}\phi_k \\ 0\end{pmatrix}, \begin{pmatrix}0 \\ \phi_k
\end{pmatrix}\right\rbrace$ is invariant under $\Af.$ Restricting $\Af$ on $\mathrm{span}\,\left\lbrace \begin{pmatrix}\phi_k \\ 0\end{pmatrix}, \begin{pmatrix}0 \\ \phi_k
\end{pmatrix}\right\rbrace,$ for each ${k\in \Nb}$, we derive the characterstic polynomial of $\Af$ as 
\begin{align*}
\Lambda_k^2+\left( \eta_0\lambda_k +\nu_0+\beta_0\lambda_k+(\kappa+\nu_0) \right)\Lambda_k + \left( \eta_0\lambda_k+\nu_0\right)\left( \beta_0\lambda_k+(\kappa+\nu_0)\right) +\eta_1=0.
\end{align*}
Denoting the roots of the above equation by $\Lambda_k^\pm,$ the eigenvalues of $\Af$ are obtained.
 \begin{Proposition}[eigenpairs for $\Af$ and $\Af^*${\n\cite[Proposition 3.3]{WKR}}] \label{pps:spec Af}
  Let $\n (\Af,D(\Af))$ and $(\Af^*,D(\Af^*))$ be as defined in \eqref{eqdef-A_PCE} and \eqref{eqadjopt_PCE}, respectively. Then the results below hold:
\begin{itemize}
    \item[(a)] The eigenvalues of $\n\Af$ consist of two sequences $\{\Lambda_k^+\}_{k\in \Nb}$ and $\{\Lambda_k^-\}_{k\in \Nb}$ with the expressions
    \begin{align} \label{eqExpEigA}
    \Lambda_k^\pm = -\frac{1}{2}\left((\eta_0+\beta_0)\lambda_k+\kappa+2\nu_0 \right)\pm \frac{1}{2} \sqrt{\left( (\beta_0-\eta_0)\lambda_k+\kappa  \right)^2-4\eta_1},
    \end{align}
    where $\{\lambda_k\}_{k\in\Nb}$ is the family of eigenvalues of $-\Delta$ shown in  \eqref{eigvalLapl}. The eigenvalues of $\Af^*$ also consist of two sequences $\{\overline{\Lambda_k^+}\}_{k\in \Nb}$ and $\{\overline{\Lambda_k^-}\}_{k\in \Nb}$ with $\Lambda_k^\pm$ from \eqref{eqExpEigA}.
    \item[(b)] For the case of simple eigenvalues and $\Lambda_k^+\neq \Lambda_k^-;$ the eigenfunctions denoted by $\xi_k^\pm$ corresponding to $\Lambda_k^\pm$ of $\Af$ and $\xi_k^{\pm*}$ corresponding to $\overline{\Lambda_k^\pm}$ of $\Af^*$ are 
    \begin{align} \label{eqEigFunAf*}
        \xi_k^\pm=\begin{pmatrix} 1 \\ \frac{1}{\Lambda_k^\pm+\beta_0\lambda_k+\kappa+\nu_0} \end{pmatrix} \phi_k \text{ and } \xi_k^{\pm*}=\begin{pmatrix} 1 \\-\frac{\eta_1}{\overline{\Lambda_k^\pm}+\beta_0\lambda_k+\kappa+\nu_0} \end{pmatrix} \phi_k \text{ for all }k\in \Nb.
    \end{align}
    \item[(c)] For the case of multiple eigenvalues and $\Lambda_k^+= \Lambda_k^-=\Lambda_k;$ the eigenfunctions denoted by $\xi_k^\pm$ corresponding to $\Lambda_k^+=\Lambda_k^-$ of $\Af$ are 
    \begin{align*}
         \xi_k^+=\begin{pmatrix} 1 \\ \frac{1}{\Lambda_k+\beta_0\lambda_k+\kappa+\nu_0} \end{pmatrix} \phi_k, \text{ and } \xi_k^-=\begin{pmatrix} 1 \\ 0 \end{pmatrix} \phi_k ,
    \end{align*}
    and the eigenfunctions denoted by $\xi_k^{\pm*}$ corresponding to $\overline{\Lambda_k^+}=\overline{\Lambda_k^-}$ of $\Af^*$ are 
    \begin{align} \label{eqEigFunAf*-m}
         \xi_k^{+*}=\begin{pmatrix} 1 \\-\frac{\eta_1}{\overline{\Lambda_k}+\beta_0\lambda_k+\kappa+\nu_0} \end{pmatrix} \phi_k \text{ and } \xi_k^{-*}=\begin{pmatrix} 1 \\0 \end{pmatrix} \phi_k.
    \end{align}
\end{itemize}
 \end{Proposition}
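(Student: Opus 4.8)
The plan is to exploit the block decomposition prepared just before the statement. Since $\{\phi_k\}_{k\in\Nb}$ is a complete orthonormal basis of $\Lt$ consisting of smooth Dirichlet eigenfunctions of $-\Delta$, every $\phi_k\in H^2(\Omega)\cap H^1_0(\Omega)$, so the two-dimensional spaces $W_k:=\mathrm{span}\{(\phi_k,0)^{\top},(0,\phi_k)^{\top}\}$ lie in $D(\Af)\cap D(\Af^*)$ and, as already observed, are invariant under both $\Af$ and $\Af^*$. First I would make the reduction precise: writing an eigenfunction $\Xi$ of $\Af$ as $\Xi=\sum_k\Xi_k$ with $\Xi_k\in W_k$ and using that the $W_k$ are mutually orthogonal and $\Af$-invariant, the identity $\Af\Xi=\Lambda\Xi$ splits into $\Af|_{W_k}\Xi_k=\Lambda\Xi_k$ for every $k$. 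Together with the compact-resolvent conclusion of Theorem~\ref{th:spec Af}, this shows $\sigma(\Af)=\bigcup_k\sigma(\Af|_{W_k})$ and that each eigenfunction lives on the blocks, reducing everything to $2\times2$ linear algebra.

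Next I would write the matrix of $\Af|_{W_k}$ in the orthonormal basis $\{(\phi_k,0)^{\top},(0,\phi_k)^{\top}\}$ using $-\Delta\phi_k=\lambda_k\phi_k$, namely
\[
M_k=\begin{pmatrix} -(\eta_0\lambda_k+\nu_0) & -\eta_1 \\ 1 & -(\beta_0\lambda_k+\kappa+\nu_0)\end{pmatrix}.
\]
Its characteristic polynomial is exactly the one displayed before the statement; setting $a:=\eta_0\lambda_k+\nu_0$ and $b:=\beta_0\lambda_k+\kappa+\nu_0$, the discriminant collapses through $(a+b)^2-4ab=(a-b)^2$ to $\big((\beta_0-\eta_0)\lambda_k+\kappa\big)^2-4\eta_1$, which gives the closed form \eqref{eqExpEigA} and settles part~(a) for $\Af$. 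For $\Af^*$, whose blocks are the real transposes $M_k^{\top}$, the eigenvalues are the conjugates $\overline{\Lambda_k^\pm}$ (equivalently $\sigma(\Af^*)=\overline{\sigma(\Af)}$, which coincides with $\sigma(\Af)$ since the blocks are real).

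For the simple case (part~(b)) I would solve $(M_k-\Lambda_k^\pm I)v=0$: the second row yields $v_1=(\Lambda_k^\pm+b)\,v_2$, and normalizing $v_1=1$ produces $\xi_k^\pm$ as in \eqref{eqEigFunAf*}. The denominator $\Lambda_k^\pm+b$ is nonzero under the coupling hypothesis $\eta_1\neq0$, because substituting $\Lambda=-b$ into $(\Lambda+a)(\Lambda+b)+\eta_1=0$ would force $\eta_1=0$. The adjoint eigenvectors come out identically from the second row of $M_k^{\top}-\overline{\Lambda_k^\pm}I$, giving $w_2=-\eta_1 w_1/(\overline{\Lambda_k^\pm}+b)$ and hence $\xi_k^{\pm*}$.

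The step I expect to be the main obstacle is part~(c), the defective case. When the discriminant vanishes one has a double root $\Lambda_k=-\tfrac12(a+b)$, and since the off-diagonal entries $-\eta_1$ and $1$ of $M_k$ are never both zero, $M_k\neq\Lambda_k I$; hence the geometric multiplicity is one and $\xi_k^+$ is still the only genuine eigenvector. I would then check that $\xi_k^-=(1,0)^{\top}\phi_k$ completes a Jordan basis of $W_k$: a double root forces $\Lambda_k+b=\tfrac12\big((\beta_0-\eta_0)\lambda_k+\kappa\big)=\pm\sqrt{\eta_1}$, so a direct computation gives $(M_k-\Lambda_k I)(1,0)^{\top}=\pm\sqrt{\eta_1}\,\xi_k^+$, a nonzero multiple of the eigenvector; thus $\xi_k^-$ is a generalized eigenvector and the term ``eigenfunction'' in the statement is to be read in this root-vector sense, with the adjoint vectors \eqref{eqEigFunAf*-m} obtained from the same computation for $M_k^{\top}$. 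The one subcase deserving a separate line is $\eta_1=0$ with a double root, where $a=b$, the formula for $\xi_k^+$ degenerates because $\Lambda_k+b=0$, and one instead takes the explicit Jordan basis $(0,1)^{\top}\phi_k,\,(1,0)^{\top}\phi_k$ of $W_k$.
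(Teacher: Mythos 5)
Your proposal is correct and follows essentially the same route as the paper: the paper's own derivation (sketched immediately before the statement, with details deferred to \cite[Proposition 3.3]{WKR}) is precisely your reduction of $\Af$ and $\Af^*$ to the invariant blocks $\mathrm{span}\{(\phi_k,0)^\top,(0,\phi_k)^\top\}$, the resulting $2\times 2$ characteristic polynomial, and the closed-form roots \eqref{eqExpEigA} with the eigenvectors read off row by row. Your extra care in part (c) --- noting that the block is defective at a double root, so $\xi_k^-=(1,0)^\top\phi_k$ must be understood as a generalized (root) vector, and isolating the degenerate subcase $\eta_1=0$ where the denominators vanish --- is a correct sharpening of what the statement leaves implicit.
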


\subsection{Proof of Theorem \ref{th:stb cnt}}  \label{subsec:Proof main 1}
Let $\omega>0$ be any given number. Let $\Aw$ and $\Bf$ be as defined in \eqref{Aw} and \eqref{eqcontrol_PCE}, respectively. The pair $(\Aw,\Bf)$  is said to be {\it open loop stabilizable} if there exists $\ut\in L^2(0,\infty;\Uf)$ such that the corresponding solution $\Yt(t)$ of \eqref{eqn: main shifted_PCE} with $\ut$  satisfies $\Yt\in L^2(0,\infty;\Hf).$ To prove the open loop stabilizability of $(\Aw,\Bf),$ it is enough to prove Hautus condition given in  \eqref{eq:Hautus:PCE} \cite[Proposition 3.1, Ch. 1, Part - V]{BDDM}. This result is instrumental in the proof of Theorem \ref{th:stb cnt}. 

\medskip
\noindent The spectrum of $\Aw,$ $\sigma(\Aw):= \{\Lambda_n^\pm+\omega\, |\, \Lambda_n^\pm \in \sigma(\Af),\, n\in \Nb\}.$ Since $\sigma(\Af)\subset \Sigma(-\nuh;\theta_0)$, for any large $\omega>0,$ the spectrum of $\Aw$ has only finitely many eigenvalues with positive real part. Hence there exists $n_\omega\in \Nb$ such that 
\begin{align} \label{eq-counteigval}
\Re(\Lambda_n^\pm+\omega )>0 \text{ for all }1\le n \le n_\omega \text{ and } \Re(\Lambda_n^\pm+\omega) <0 \text{ for all }n> n_\omega.
\end{align}
Denote the set of positive elements in $\sigma(\Aw)$ by $
\sigma_+(\Aw)=\{\Lambda_n^\pm+\omega\, |\, 1\le n\le n_\omega\}$
and set of negative elements by $\sigma_-(\Aw)=\sigma(\Aw)\diagdown \sigma_+(\Aw).$ Let $\pi_s$ be the projector on $\sigma_-(\Aw)$ defined by 
 \begin{align*}
 \pi_s=\frac{1}{2\pi i}\int_{\Gamma_s} R(\mu,\Aw) \, d\mu,
 \end{align*}
 where $\Gamma_s$ is a simple Jordan curve around $\sigma_-(\Aw).$ The adjoint operator $\Bf^*\in \Lc(\Hf,\Uf)$ corresponding to $\Bf\in \Lc(\Uf,\Hf)$ is defined by 
\begin{align} \label{eqadjopt-B*}
\Bf^* \begin{pmatrix} \phi \\ \psi \end{pmatrix}=\phi \chi_{\mathcal{O}} \text{ for all }\begin{pmatrix} \phi \\ \psi \end{pmatrix}\in\Hf.
\end{align}

\noindent The next theorem shows that the pair $(\Aw, \Bf),$ equivalently, \eqref{eqn: main shifted_PCE} is open loop stabilizable in $\Hf.$

\begin{Theorem}[open loop stabilizability of $(\Aw,\Bf)$] \label{th:stbopen}
Let $\omega>0$ be arbitrary and $\n (\Aw, D(\Aw))$ be as defined in \eqref{Aw}. Let $\Bf$ be as defined in \eqref{eqcontrol_PCE}. Then $\n (\Aw,\Bf)$ is open loop stabilizable in $\n\Hf.$
\end{Theorem}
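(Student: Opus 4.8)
The plan is to verify the Hautus (Fattorini) criterion \eqref{eq:Hautus:PCE} cited from \cite{BDDM} at each unstable eigenvalue of $\Aw$, exploiting that only finitely many eigenvalues carry nonnegative real part. By Theorem \ref{thm:ua-sem}(a) together with \eqref{eq-counteigval}, the unstable set $\sigma_+(\Aw)=\{\Lambda_n^\pm+\omega\mid 1\le n\le n_\omega\}$ is finite, so it suffices to show that for each such $\mu=\Lambda+\omega$ there is no nonzero $\Phi\in\ker\big((\overline{\mu})\If-\Aw^*\big)$ with $\Bf^*\Phi=0$. The first reduction is to pass from $\Aw^*$ to $\Af^*$: since $\Aw^*=\Af^*+\omega\If$, the eigenrelation $\Aw^*\Phi=\overline{\mu}\Phi$ is equivalent to $\Af^*\Phi=\overline{\Lambda}\Phi$ with $\Lambda\in\sigma(\Af)$. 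Thus I would reduce the whole check to the genuine eigenspaces of $\Af^*$ described in Proposition \ref{pps:spec Af}.

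Next I would describe these eigenspaces explicitly. For a fixed target eigenvalue $\Lambda$, the space $\ker(\overline{\Lambda}\If-\Af^*)$ is spanned by those eigenfunctions $\xi_k^{\pm*}$ from \eqref{eqEigFunAf*} (or \eqref{eqEigFunAf*-m}) whose eigenvalue $\overline{\Lambda_k^{\pm}}$ equals $\overline{\Lambda}$, i.e.\ $\Lambda_k^\pm=\Lambda$, ranging over all Laplacian eigenfunctions $\phi_k$ at every level $\lambda_k$ producing $\Lambda$. The crucial structural fact, read off from Proposition \ref{pps:spec Af}, is that \emph{every genuine eigenfunction} of $\Af^*$ entering this span has first component exactly $\phi_k$ (with coefficient $1$). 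In the simple case $\Lambda_k^+\neq\Lambda_k^-$ this is immediate from \eqref{eqEigFunAf*}. In the degenerate case $\Lambda_k^+=\Lambda_k^-$ one must be careful: the block matrix $M_k$ obtained by restricting $\Af^*$ to $\mathrm{span}\{(\phi_k,0),(0,\phi_k)\}$ cannot be scalar (its lower-left entry is $1\neq0$), so it carries a nontrivial Jordan block; hence only $\xi_k^{+*}$ is a genuine eigenvector while $\xi_k^{-*}=\begin{pmatrix}1\\0\end{pmatrix}\phi_k$ is merely a rank-two generalized eigenvector and does not enter the Fattorini kernel. In all cases, writing a generic genuine eigenvector as $\Phi=\sum_j c_j\,\xi_{k_j}^{s_j*}$ and using \eqref{eqadjopt-B*}, I obtain $\Bf^*\Phi=\big(\sum_j c_j\phi_{k_j}\big)\chi_{\mathcal{O}}$, where the $\phi_{k_j}$ are distinct orthonormal Laplacian eigenfunctions.

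Then I would show $\Bf^*\Phi=0$ forces $\Phi=0$. Setting $w:=\sum_j c_j\phi_{k_j}$, the hypothesis gives $w=0$ a.e.\ on the open set $\mathcal{O}$. Grouping the $\phi_{k_j}$ according to the distinct Laplacian eigenvalues $\mu_\ell$ they belong to, write $w=\sum_\ell\Psi_\ell$ with $-\Delta\Psi_\ell=\mu_\ell\Psi_\ell$. Since $w\equiv0$ on $\mathcal{O}$, all its derivatives vanish there, so applying $-\Delta$ repeatedly yields $\sum_\ell\mu_\ell^{\,p}\Psi_\ell=0$ on $\mathcal{O}$ for every $p\ge0$; the Vandermonde structure of the distinct $\mu_\ell$ then gives $\Psi_\ell=0$ on $\mathcal{O}$ for each $\ell$. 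By unique continuation for the real-analytic eigenfunctions $\Psi_\ell$ (each solves $-\Delta\Psi_\ell=\mu_\ell\Psi_\ell$ with $\Psi_\ell\in C^\infty(\Omega)$ from \eqref{eigvalLapl}), each $\Psi_\ell\equiv0$ on $\Omega$, and orthonormality of $\{\phi_k\}$ forces all $c_j=0$. Hence the Hautus condition holds at every element of $\sigma_+(\Aw)$, and $(\Aw,\Bf)$ is open loop stabilizable.

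The step I expect to be the main obstacle is the eigenspace bookkeeping rather than the analysis: one must correctly isolate the \emph{genuine} eigenvectors (discarding the generalized eigenvector $\xi_k^{-*}$ in the degenerate branch of Proposition \ref{pps:spec Af}(c), whose vanishing first component would otherwise appear to violate the test), and one must allow for an eigenvalue $\Lambda$ to be produced both by several Laplacian eigenfunctions sharing a level $\lambda_k$ and, a priori, by distinct $\lambda_k$ that happen to coincide through \eqref{eqExpEigA}. The unique-continuation-plus-Vandermonde argument is precisely what makes the conclusion robust across all these degeneracies, since in every case the first component of each genuine eigenfunction is a nonzero multiple of $\phi_k$, so $\Bf^*$ reduces to restricting a finite combination of Laplacian eigenfunctions to $\mathcal{O}$.
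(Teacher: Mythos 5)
Your proposal is correct and follows the same route as the paper: both reduce open-loop stabilizability to the Hautus condition \eqref{eq:Hautus:PCE} of \cite{BDDM} (the side conditions --- analyticity, finitely many unstable eigenvalues, uniform decay on the stable part --- coming from Theorem \ref{thm:ua-sem} and \eqref{eq-counteigval}), and both kill the kernel using the explicit adjoint eigenfunctions of Proposition \ref{pps:spec Af} together with real-analyticity and unique continuation of Laplacian eigenfunctions on the connected domain $\Omega$. The difference is in the multiplicity bookkeeping, and there your version is actually more complete than the paper's: the paper writes a generic element of $\mathrm{Ker}(\Lambda\If-\Aw^*)$ as a scalar multiple of a single $\xi_k^{+*}$ or $\xi_k^{-*}$ and concludes from $C_k\phi_k\chi_{\mathcal{O}}=0$, which tacitly treats the eigenspace as one-dimensional; for $n\ge 2$ the Laplacian eigenvalues $\lambda_k$ are typically multiple, and distinct levels $\lambda_k\neq\lambda_j$ can a priori produce the same $\Lambda$ through the two branches of \eqref{eqExpEigA}, so a genuine eigenvector is in general a combination $\sum_j c_j\xi_{k_j}^{s_j*}$. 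Your grouping by distinct Laplacian eigenvalues followed by the iterated-$\Delta$/Vandermonde step plus unique continuation handles exactly these degeneracies, and your point that in the degenerate branch the $2\times 2$ block is non-scalar, hence carries a Jordan block with only one genuine eigenvector, is also right (the paper conservatively tests both $\xi_k^{+*}$ and $\xi_k^{-*}$, which is harmless since both have first component $\phi_k$). Two small slips in your write-up, neither of which damages the argument: the restricted block of $\Af^*$ has lower-left entry $-\eta_1$, and its non-scalarity follows from the upper-right entry $1\neq 0$ (the entry $1$ you quote sits in the block of $\Af$, not $\Af^*$); and in the degenerate sub-case $\eta_1=0$ (possible only if $\beta_0<\eta_0$, since it requires $(\beta_0-\eta_0)\lambda_k+\kappa=0$) the formula \eqref{eqEigFunAf*-m} for $\xi_k^{+*}$ degenerates to $0/0$ and the genuine eigenvector is $\xi_k^{-*}$ rather than $\xi_k^{+*}$. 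Both are cosmetic because, as you note in your closing summary, the upper-right entry $1$ of every $\Af^*$-block forces any genuine eigenvector to have first component a nonzero multiple of $\phi_k$, which is all that the restriction-to-$\mathcal{O}$ argument needs.
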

\begin{proof}
For a given $\omega>0$, 
\begin{itemize}
    \item[(a)] utilizing Theorem \ref{thm:ua-sem}, $(\Aw , D(\Aw))$ generates an analytic semigroup $\{e^{ t\Aw }\}_{t\ge 0}$ on $\Hf$ with the control operator $\Bf\in \Lc(\Uf,\Hf),$
    \item[(b)] $\Aw$ has only finitely many eigenvalues with non-negative real part, as stated in \eqref{eq-counteigval},
    \item[(c)] there exist $M>0,\, \epsilon>0$ such that
\begin{align*}
\sup_{\Lambda\in \sigma_-(\Aw)} \Re(\Lambda)<-\epsilon
 \text{ and } \|e^{t\Aw}\pi_s\|_{\Lc(\Hf)} \le M e^{-\epsilon t} \text{ for all } t>0.
 \end{align*}
\end{itemize}
 
\noindent To show the stabilizability of $(\Aw, \Bf),$ it is enough to show the Hautus condition \cite[Proposition 3.3, Ch. 1, Part-V]{BDDM}
 \begin{align}\label{eq:Hautus:PCE}
 \text{Ker }(\Lambda \If-\Aw^*)\cap \text{Ker }(\Bf^*)=\{0\} \text{ for all }\Lambda \in \sigma(\Aw^*) \text{ with } \Re(\Lambda)\ge 0.
 \end{align}
 For $\xi \in \text{Ker }(\Lambda \If-\Aw^*)\cap \text{Ker }(\Bf^*),$ $\Aw^*\xi=\Lambda \xi$ implies that $\xi$ is an eigenfunction of $\Aw^*$ corresponding to the eigenvalue $\Lambda.$ Therefore, $\xi$ is of the form $\xi=C_k\xi_k^{+*}$ or $\xi=C_k\xi_k^{-*}$ for some $k\in \{ 1,2,\cdots, n_\omega\},$ where $C_k$ is any scalar constant and $\xi_k^{+*}$ and $\xi_k^{-*}$ are eigenfunctions of $\Aw^*$ for eigenvalues $\overline{\Lambda_k^+}+\omega$ and $\overline{\Lambda_k^-}+\omega,$ respectively.  Also, $\xi\in \text{Ker }\Bf^*$. Thus, \eqref{eqEigFunAf*} and \eqref{eqEigFunAf*-m} imply $C_k\phi_k\chi_{\mathcal{O}}=0$ for all $k\in \{1,\cdots, n_\omega\},$ where $\phi_k$ is an eigenfunction of $-\Delta$ for eigenvalue $\lambda_k$ in $\Lt.$ Since, $\phi_k$ is an analytic function in $\Omega,$ an open connected domain in $\mathbb{R}^d,$ $\phi_k$ cannot vanish in $\mathcal{O}.$  Therefore, $C_k=0$ for all $k\in \{1,\cdots, n_\omega\},$ and hence $\xi=0.$  Thus the Hautus condition holds and $(\Aw, \Bf)$ is open loop stabilizable in $\Hf.$
\end{proof}

 \noindent {\it Proof of Theorem \ref{th:stb cnt}.} Since Theorem \ref{th:stbopen} holds, the existence of a solution of Riccati equation \eqref{eqn:ARE} in $(a)$
 and the result in $(b)$ follow from \cite[Proposition 2.3, Theorem 3.1, Part-V, Ch-1]{BDDM}. 
 Next, since $-\Bf\Bf^*\Pf\in \Lc(\Hf)$ and $(\Aw, D(\Aw))$ generates an analytic semigroup in $\Hf$, \cite[Theorem 12.37]{RROG} gives that $(\Aw{_{,\Pf}}, D(\Aw{_{,\Pf}}))$ also generates an analytic semigroup on $\Hf$. Further, the exponential stability of $\Aw{_{,\Pf}}$ follows from \cite[Remark 3.1, Part-V, Ch-1]{BDDM} and hence the results in $(c)$ is proved. Finally, \cite[Corollary 4.2, Part-V, Ch-1]{BDDM} gives the uniqueness of the solution of \eqref{eqn:ARE} and thus we conclude the proof of the theorem. \qed


\medskip
\noindent Since $(\Aw{_{,\Pf}}, D(\Aw{_{,\Pf}}))$ generates an analytic and exponentially stable semigroup on $\Hf$ as obtained in Theorem \ref{th:stb cnt}, \cite[Theorems 12.31]{RROG} leads to the next proposition.

\begin{Proposition}\label{propcontstab}
The operator $\n\Aw{_{,\Pf}}$ has the spectrum in $\n\Sigma(-\gamma;\theta_\Pf)=\{-\gamma+re^{i\theta}\,|\, r> 0,\, |\theta|\ge \theta_\Pf\}$ for some $\n\theta_\Pf\in (\frac{\pi}{2},\pi)$ (see Figure \ref{fig:spec-3}).
\end{Proposition}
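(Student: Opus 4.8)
The plan is to read off the spectral localization purely from the two qualitative facts already recorded in Theorem \ref{th:stb cnt}(c): that $(\Aw{_{,\Pf}}, D(\Aw{_{,\Pf}}))$ generates an analytic semigroup on $\Hf$, and that this semigroup is exponentially stable with $\|e^{t\Aw{_{,\Pf}}}\|_{\Lc(\Hf)}\le M e^{-\gamma t}$. The first fact fixes the \emph{shape} (the opening half-angle) of the spectral sector through Definition \ref{def:ana-sgp}, while the second fixes its \emph{vertex} at $-\gamma$; the task is to merge the two into a single sector of the stated form.

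First I would pass to a rescaled semigroup. Set $\Gf:=\Aw{_{,\Pf}}+\gamma\If$ with $D(\Gf)=D(\Aw{_{,\Pf}})$. Since shifting a generator by a scalar multiple of the identity preserves analyticity (the same bounded-perturbation principle used for Theorem \ref{th:stb cnt}(c)), $\Gf$ generates an analytic semigroup $e^{t\Gf}=e^{\gamma t}e^{t\Aw{_{,\Pf}}}$, and the stability estimate upgrades this to $\|e^{t\Gf}\|_{\Lc(\Hf)}\le M$ for all $t>0$; that is, $\Gf$ generates a \emph{bounded} analytic semigroup. I would then invoke the characterization of generators of bounded analytic semigroups \cite[Theorem 12.31]{RROG}: such a generator is sectorial with spectrum contained in a proper subsector of the open left half-plane, i.e. there exists $\theta_\Pf\in(\frac{\pi}{2},\pi)$ with $\sigma(\Gf)\subset\Sigma(0;\theta_\Pf)$ together with the attendant resolvent bound off the sector. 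The elementary spectral identity $\sigma(\Aw{_{,\Pf}})=\sigma(\Gf)-\gamma$ then translates this to $\sigma(\Aw{_{,\Pf}})\subset\Sigma(-\gamma;\theta_\Pf)$, which is the assertion.

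The step I expect to require the most care is verifying that the sector obtained is \emph{proper}, i.e. that the opening half-angle can be taken strictly larger than $\frac{\pi}{2}$, so that the spectrum does not approach the line $\Re\mu=-\gamma$ tangentially. This is precisely the feature distinguishing a bounded analytic semigroup from a merely bounded $C_0$-semigroup (a skew-adjoint generator yields a bounded group whose spectrum lies on the imaginary axis yet is not analytic), and it is supplied by \cite[Theorem 12.31]{RROG} through the uniform resolvent estimate of Definition \ref{def:ana-sgp}. An equivalent self-contained route, avoiding the rescaling, would combine the sector $\Sigma(a;\theta_\Pf)$ furnished directly by Definition \ref{def:ana-sgp} with the bound $\Re\mu\le -\gamma$ for every $\mu\in\sigma(\Aw{_{,\Pf}})$ (from $\sup_{\mu\in\sigma}\Re\mu\le \omega_0(\Aw{_{,\Pf}})\le -\gamma$), and then check the purely planar inclusion $\Sigma(a;\theta_\Pf)\cap\{\Re\mu\le -\gamma\}\subset\Sigma(-\gamma;\theta_\Pf)$; here one must keep $\gamma$ at or below the spectral bound (or slightly enlarge $\theta_\Pf$) so that the intersection meets the line $\Re\mu=-\gamma$ only at the excluded vertex, consistent with the $r>0$ convention in the definition of $\Sigma(-\gamma;\theta_\Pf)$.
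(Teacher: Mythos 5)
Your proposal is correct and takes essentially the same route as the paper: the paper's entire proof is the one-line observation that, by Theorem \ref{th:stb cnt}(c), $\ap$ generates an analytic and exponentially stable semigroup on $\Hf$, followed by a citation of \cite[Theorem 12.31]{RROG}, and your rescaling $\Gf=\ap+\gamma\If$ to a bounded analytic semigroup plus the translation $\sigma(\ap)=\sigma(\Gf)-\gamma$ is exactly the standard way of applying that characterization. The one caveat you already flag, that the sectoriality characterization leaves the vertex (here $-\gamma$) possibly in the spectrum since $\Sigma(-\gamma;\theta_\Pf)$ requires $r>0$, is glossed over identically in the paper and is harmless because the constant $\gamma$ in Theorem \ref{th:stb cnt}(c) is not sharp and may be decreased slightly.
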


\begin{figure} [ht!]
	\begin{center}
		\begin{tikzpicture}
			\draw[gray,dashed,thick,<->] (-7,0)--(3,0);
			\draw[gray,dashed,thick,<->] (-1,-3)--(-1,3);
			\draw[->] (0,0)--(-5,3);
			\draw[->] (0,0)--(-5,-3);
			\draw[->] (1,0) arc (0:150:1cm);
			\node[] at (-0.3,1.2) {$\theta_0$};
			\node[] at (0,-0.3){$-\nuh+\omega$};
			\draw[->] (-4,0) arc (180:150:4cm);
			\draw[->] (-4,0) arc (180:210:4cm);
			\node[] at (-4.3,1.2){$\Sigma(-\nuh+\omega;\theta_0)$};
			\node[] at (-1,0){$\bullet$};
			\node[] at (-1.3,-0.3){$O$};
					
			\draw[blue,->] (-2,0)--(-7,1.5);
			\draw[blue,->] (-2,0)--(-7,-1.5);
			\draw[blue,->] (-1.5,0) arc (0:165:0.5cm);
			\node[blue] at (-2,0.7) {$\theta_\Pf$};
			\node[blue] at (-2,-0.3){$-\gamma$};
			\draw[blue,->] (-5.5,0) arc (180:165:3.5cm);
			\draw[blue,->] (-5.5,0) arc (180:195:3.5cm);
			\node[blue] at (-6.1,0.2){$\Sigma(-\gamma;\theta_\Pf)$};
		\end{tikzpicture}
	\end{center}
	\caption{$\Sigma(-\nuh+\omega;\theta_0)$ and $\color{blue}{\Sigma(-\gamma;\theta_\Pf)}$ in $\mathbb{C}$} \label{fig:spec-3}
\end{figure}

\section{Approximation of continuous dynamics}\label{sec:approx}
\noindent A framework to study the approximation of the stabilization problem \eqref{eqn: main shifted_PCE} and error estimate are presented. 

\medskip
\noindent Let $\T_h$ be a shape regular quasi-uniform triangulation of $\overline{\Omega}$ \cite{Thomee} into closed triangles with discretization parameter $\displaystyle h:=\max_{ T\in \T_h}\text{diam}(T)$. Let $V_h=\{v_h\in C^0(\overline{\Omega})\,: v_h|_T\in P_1(T)\, \text{ for all } T\in \T_h,\, v_h|_\Gamma=0\}\subset H^1_0(\Omega)$ be a finite dimensional subspace of $\Lt$, with complex field and $\Hf_h:=V_{h}\times V_{h}$ be a finite dimensional subspace of $\Hf$, with complex field, with the inner-product $\langle \cdot,\cdot\rangle$ and norm $\|\cdot\|$ as per notations in Section \ref{subsec:notn}.

\subsection{Projection operators and their approximation properties} \label{sec:lapl prop}
		
Let $\{\phi_j\}_{j=1}^{n_h}$ denote the canonical nodal basis functions for $V_h$, formed by pyramid functions that take value 1 at the interior vertices $P_j$ of triangulation $\T_h$ and vanishes at the boundary. Here $n_h$ denotes the cardinality of the interior nodes of $\T_h$. A given smooth function $v$ on $\Omega$ that vanishes on $\partial \Omega$ may be approximated by $\mathcal{I}_hv(x)=\sum_{j=1}^{n_h}v(P_j)\phi_j(x),$
where $P_j$'s denote the interior vertices of $\T_h$. 
For all $v\in\Lt$, let $\pi_h: L^2(\Omega)\longrightarrow V_h$ be the orthogonal projection defined by
\begin{align} \label{eqn:def of pi_h}
    \langle \pi_h v,\phi_h\rangle & =\langle v,\phi_h\rangle\; \text{ for all } \phi_h\in V_h.
\end{align}
		
\medskip
\noindent For any $v\in\Lt$, the definition of $\pi_h$ in \eqref{eqn:def of pi_h} yields
\begin{align} \label{def:proj}
	\|\pi_h v-v\|=\inf_{\phi_h\in V_h}\|v-\phi_h\|.
\end{align}
\noindent
Next we define the discrete operator corresponding to $(\Delta, H^2(\Omega)\cap H_0^1(\Omega))$ on $L^2(\Omega)$. 

\begin{Definition}[discrete Laplace operator {\n \cite{Thomee}}] \label{def:Delta_h}
For each $h>0$, the discrete operator $\Delta_h$ on $V_h$ corresponding to $(\Delta, H^2(\Omega)\cap H_0^1(\Omega))$ on $L^2(\Omega)$  is defined by $\langle \Delta_h u_h,v_h\rangle =-\langle \nabla u_h, \nabla v_h\rangle \text{ for all } u_h, v_h\in V_h.$
\end{Definition}

\begin{Lemma}[interpolation estimates {\n\cite{Thomee}}]\label{lem:intpolerr}
    For any $v\in H^2(\Omega)\cap H_0^1(\Omega)$, it holds that $
    (a)\;  \|\mathcal{I}_hv-v\|\le C h^2\|v\|_{H^2(\Omega)} \text{ and } (b)\; \|\nabla(\mathcal{I}_hv-v)\|\le C h\|v\|_{H^2(\Omega)} $ for some $C>0$ independent of $h.$
\end{Lemma}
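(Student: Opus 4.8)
The plan is to prove both estimates locally on each element and then assemble the global bound by summing over the triangulation. The key structural fact is that the nodal interpolant reproduces affine functions: since $\mathcal{I}_h$ matches the vertex values and the pyramid basis spans $P_1$ on each triangle, we have $\mathcal{I}_h p = p$ for all $p\in P_1(T)$ on every $T\in\T_h$. Hence the local error operator $v\mapsto (v-\mathcal{I}_h v)|_T$ annihilates $P_1(T)$, which is precisely what lets us gain two derivatives and extract the factor $|v|_{H^2(T)}$.

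First I would fix $T\in\T_h$ and pull the problem back to a fixed reference triangle $\hat T$ via the affine map $F_T(\hat x)=B_T\hat x + b_T$ with $T=F_T(\hat T)$, setting $\hat v = v\circ F_T$. Because the vertices of $T$ are the images of those of $\hat T$, the interpolation intertwines with this pullback, so the reference error $\hat v - \hat{\mathcal{I}}\hat v$ is a bounded linear function of $\hat v\in H^2(\hat T)$ that vanishes on $P_1(\hat T)$. I would then apply the Bramble--Hilbert lemma on $\hat T$ to the operators $\hat v\mapsto \hat v-\hat{\mathcal{I}}\hat v$ regarded as maps into $L^2(\hat T)$ and into $H^1(\hat T)$, obtaining a constant $\hat C$ depending only on $\hat T$ with
\[
\|\hat v-\hat{\mathcal{I}}\hat v\|_{L^2(\hat T)}\le \hat C\,|\hat v|_{H^2(\hat T)},\qquad |\hat v-\hat{\mathcal{I}}\hat v|_{H^1(\hat T)}\le \hat C\,|\hat v|_{H^2(\hat T)}.
\]

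Next I would transfer these back to $T$ through the standard affine scaling inequalities for Sobolev seminorms, which introduce the factors $\|B_T\|$, $\|B_T^{-1}\|$ and $|\det B_T|^{\pm 1/2}$. Shape regularity and quasi-uniformity of $\T_h$ bound $\|B_T\|\le C h_T$ and $\|B_T^{-1}\|\le C h_T^{-1}$, with $h_T=\mathrm{diam}(T)$ and $C$ depending only on the shape-regularity constant, yielding the local estimates
\[
\|v-\mathcal{I}_h v\|_{L^2(T)}\le C h_T^2\,|v|_{H^2(T)},\qquad |v-\mathcal{I}_h v|_{H^1(T)}\le C h_T\,|v|_{H^2(T)}.
\]
Squaring and summing over all $T\in\T_h$, using $h_T\le h$ together with $\sum_T|v|_{H^2(T)}^2=|v|_{H^2(\Omega)}^2\le\|v\|_{H^2(\Omega)}^2$, then produces exactly (a) and (b).

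The scaling algebra is routine; the delicate points I would watch are twofold. The first is ensuring that the constant emerging from the scaling inequalities is genuinely independent of $h$, which is guaranteed only through the shape-regularity (quasi-uniformity) hypothesis on $\T_h$ that keeps $\|B_T\|\,\|B_T^{-1}\|$ uniformly bounded. The second is ensuring that the nodal values defining $\mathcal{I}_h v$ are meaningful for $v\in H^2(\Omega)$, which relies on the Sobolev embedding $H^2(\Omega)\hookrightarrow C^0(\overline\Omega)$; in space dimensions where this embedding fails one would instead replace $\mathcal{I}_h$ by a Cl\'ement- or Scott--Zhang-type quasi-interpolant and argue in the same fashion.
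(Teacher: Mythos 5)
Your proof is correct and follows exactly the standard route that the paper itself does not spell out: the lemma is stated without proof, with a citation to Thom\'ee, and the argument there is precisely your combination of $P_1$-reproduction, pullback to a reference triangle, the Bramble--Hilbert lemma, affine scaling controlled by shape regularity, and summation over elements. Your closing remarks are also on point: the nodal interpolant is well defined here because the triangulation is two-dimensional (so $H^2(\Omega)\hookrightarrow C^0(\overline\Omega)$ holds), and the $h$-independence of the constant rests exactly on the shape-regularity hypothesis the paper imposes on $\T_h$.
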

	
\begin{Lemma}[inverse inequality {\n\cite{Crlt}}]\label{lem:inv-ineq}
	For any $v_h\in V_h$, it holds that $\|\nabla v_h\|\le C h^{-1}\|v_h\|$ for some positive $C$ independent of $h.$
\end{Lemma}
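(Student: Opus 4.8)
The plan is to establish the estimate locally on each element by an affine scaling argument to a fixed reference triangle, exploiting the equivalence of norms on the finite-dimensional polynomial space $P_1$, and then to sum over all elements using the shape regularity and quasi-uniformity of $\T_h$.

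First I would fix a reference triangle $\widehat{T}$ and note that, since $P_1(\widehat{T})$ is finite-dimensional, the $H^1$-seminorm and the $L^2$-norm are equivalent on it; in particular there is a constant $\widehat{C}>0$ depending only on $\widehat{T}$ with
\[
|\widehat{v}|_{H^1(\widehat{T})} \le \widehat{C}\, \|\widehat{v}\|_{L^2(\widehat{T})} \quad \text{for all } \widehat{v}\in P_1(\widehat{T}).
\]
Next, for each $T\in\T_h$ I would introduce the affine map $F_T:\widehat{T}\to T$, $F_T(\widehat{x})=B_T\widehat{x}+b_T$, and transfer this inequality to $T$. Writing $v_h|_T=\widehat{v}\circ F_T^{-1}$, the change-of-variables relations
\[
\|v_h\|_{L^2(T)}=|\det B_T|^{1/2}\,\|\widehat{v}\|_{L^2(\widehat{T})},\qquad |v_h|_{H^1(T)}\le |\det B_T|^{1/2}\,\|B_T^{-1}\|\,|\widehat{v}|_{H^1(\widehat{T})}
\]
combine with the reference estimate to yield $|v_h|_{H^1(T)}\le \widehat{C}\,\|B_T^{-1}\|\,\|v_h\|_{L^2(T)}$. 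Shape regularity bounds $\|B_T^{-1}\|$ by $C/\rho_T$, where $\rho_T$ is the inradius of $T$; together with the uniform shape-regularity bound $\mathrm{diam}(T)/\rho_T\le\sigma$ and quasi-uniformity, which guarantees $\mathrm{diam}(T)\ge c\,h$ for every $T$, this gives $\|B_T^{-1}\|\le C h^{-1}$ with a constant independent of $T$ and $h$.

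Finally I would sum the local estimates over all $T\in\T_h$:
\[
\|\nabla v_h\|^2=\sum_{T\in\T_h}|v_h|_{H^1(T)}^2\le C^2 h^{-2}\sum_{T\in\T_h}\|v_h\|_{L^2(T)}^2=C^2 h^{-2}\|v_h\|^2,
\]
and take square roots to conclude. The only delicate point, and the main obstacle, is the uniformity of the constant: the reference estimate is obtained once and for all on $\widehat{T}$, so the entire burden rests on controlling $\|B_T^{-1}\|$ uniformly in $T$. It is precisely here that both shape regularity (to bound $\|B_T^{-1}\|$ by the inverse inradius) and quasi-uniformity (to replace the local mesh size $\mathrm{diam}(T)$ by the global $h$) are indispensable; without quasi-uniformity one would recover only the local inequality with $\mathrm{diam}(T)^{-1}$ in place of $h^{-1}$.
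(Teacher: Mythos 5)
Your proof is correct and matches the intended argument: the paper states this inverse inequality without proof, citing Ciarlet, and your scaling argument (norm equivalence on the finite-dimensional space $P_1(\widehat{T})$, the affine change-of-variables estimates, shape regularity to bound $\|B_T^{-1}\|\le C/\rho_T$, and quasi-uniformity to replace $\mathrm{diam}(T)$ by the global $h$) is exactly the standard proof in that reference. You also correctly identify the one delicate point, the uniformity of the constant over all $T\in\mathcal{T}_h$ and all $h$, and handle it properly, so there is nothing to add.
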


\begin{Lemma}[properties of orthogonal projection] \label{lem:projerr}
	Let $\pi_h$ be the orthogonal projection from $\Lt$  onto $V_h$ as defined in \eqref{eqn:def of pi_h}. Then for some $C>0$ independent of $h,$ the estimates below hold:
	\begin{itemize}
	 \item[(a)] $ \|\pi_h v\|\le \|v\| \text{ for all } v\in \Lt, \,    \, \pi_h^2=\pi_h \text{ and } \, \pi_h (I-\pi_h)=(I-\pi_h)\pi_h=0, $
	\item[(b)] for $v\in H^2(\Omega)\cap H^1_0(\Omega)$, $\|\pi_hv-v\|\le C h^2\|v\|_{H^2(\Omega)} \text{ and }
	\|\nabla(v-\pi_h v)\|\le C h \|v\|_{H^2(\Omega)},$
	\item[(c)] for $v\in\Lt$, $\displaystyle \lim_{h\rightarrow 0}\|\pi_h v-v\|_{\Lt}\longrightarrow 0.$
	\end{itemize}
\end{Lemma}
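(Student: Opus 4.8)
The plan is to handle the three parts separately, since (a) and (c) are generic Hilbert-space facts while (b) is where the finite element structure genuinely enters. For part (a), I would work directly from the defining orthogonality relation \eqref{eqn:def of pi_h}. The contractivity $\|\pi_h v\|\le \|v\|$ follows from the orthogonal splitting $v=\pi_h v+(v-\pi_h v)$: since $\pi_h v\in V_h$ while $v-\pi_h v$ is orthogonal to $V_h$ by \eqref{eqn:def of pi_h}, the Pythagorean identity gives $\|v\|^2=\|\pi_h v\|^2+\|v-\pi_h v\|^2\ge \|\pi_h v\|^2$. Idempotency $\pi_h^2=\pi_h$ is immediate because any element of $V_h$ is its own best $V_h$-approximation, and then $\pi_h(I-\pi_h)=\pi_h-\pi_h^2=0=(I-\pi_h)\pi_h$ follows algebraically.

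For the $L^2$ bound in part (b), I would use the best-approximation characterization \eqref{def:proj} and test it against the nodal interpolant: $\|\pi_h v-v\|=\inf_{\phi_h\in V_h}\|v-\phi_h\|\le \|v-\mathcal{I}_h v\|\le Ch^2\|v\|_{H^2(\Omega)}$ by Lemma \ref{lem:intpolerr}(a). The gradient bound is the crux and requires combining interpolation with the inverse inequality. The idea is to split $\nabla(v-\pi_h v)=\nabla(v-\mathcal{I}_h v)+\nabla(\mathcal{I}_h v-\pi_h v)$; the first term is controlled by $Ch\|v\|_{H^2(\Omega)}$ via Lemma \ref{lem:intpolerr}(b). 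For the second term, since $\mathcal{I}_h v-\pi_h v\in V_h$, the inverse inequality (Lemma \ref{lem:inv-ineq}) trades the gradient for an $L^2$ norm at the cost of a factor $h^{-1}$, and the triangle inequality together with the two $L^2$ estimates (for the interpolant and for the projection) bounds $\|\mathcal{I}_h v-\pi_h v\|$ by $Ch^2\|v\|_{H^2(\Omega)}$. The two powers of $h$ then combine to give the asserted $Ch\|v\|_{H^2(\Omega)}$.

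For part (c), I would run a density argument. Given $v\in L^2(\Omega)$ and $\varepsilon>0$, choose $w\in H^2(\Omega)\cap H^1_0(\Omega)$ with $\|v-w\|<\varepsilon$, which is possible since $H^2(\Omega)\cap H^1_0(\Omega)$ is dense in $L^2(\Omega)$. The contractivity from (a) yields $\|\pi_h v-v\|\le \|\pi_h(v-w)\|+\|\pi_h w-w\|+\|w-v\|\le 2\|v-w\|+\|\pi_h w-w\|$, and the middle term tends to $0$ as $h\to 0$ by the $L^2$ estimate in part (b). Hence $\limsup_{h\to 0}\|\pi_h v-v\|\le 2\varepsilon$, and letting $\varepsilon\to 0$ gives the claimed convergence.

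The main obstacle is the gradient estimate in part (b): the $L^2$ projection is not $H^1$-stable on arbitrary meshes, so the bound cannot be obtained by estimating $\nabla\pi_h v$ directly. It relies essentially on the quasi-uniformity of $\T_h$ through the inverse inequality, and the gradient must be routed through the interpolant $\mathcal{I}_h v$ rather than the projection itself. The remaining assertions are routine once the orthogonality relation and this single estimate are in place.
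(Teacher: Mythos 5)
Your proposal is correct and takes essentially the same route as the paper: the $L^2$ bound via the best-approximation property tested against the nodal interpolant $\mathcal{I}_h v$, the gradient bound obtained by splitting through $\mathcal{I}_h v$ and combining the inverse inequality with the $O(h^2)$ bound on $\|\mathcal{I}_h v-\pi_h v\|$, and a density argument for (c). The only cosmetic difference is in (a), where the paper cites standard references while you spell out the Pythagorean and idempotency arguments explicitly.
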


\begin{proof}
(a) Since $\pi_h$ is an orthogonal projection, the estimate follows using \eqref{eqn:def of pi_h}, \eqref{def:proj} and \cite[Corollary 7.1.3, Remark 7.1.8, and Example 7.2.1]{KesFun}.

\medskip
\noindent (b) As $\pi_h$ is an orthogonal projection of $\Lt$ on $V_h$ and $\mathcal{I}_h v\in V_h,$  for any $v\in H^2(\Omega),$ Lemma \ref{lem:intpolerr}(a)  yields 
\begin{align}\label{eq:intest1}
  \|\pi_h v-v\|  =\inf_{\phi_h\in V_h}\|v-\phi_h\| \le \|v-\mathcal{I}_hv\|\le C h^2 \|v\|_{H^2(\Omega)}.
\end{align}
Add and subtract $\nabla(\mathcal{I}_hv)$, utilize Lemma \ref{lem:intpolerr}(b) and  Lemma \ref{lem:inv-ineq}  to obtain
\begin{align*}
	\|\nabla(v-\pi_h v)\|  \le \|\nabla(\mathcal{I}_hv-v)\| +\|\nabla(\pi_h v-\mathcal{I}_hv)\| \le C h\|v\|_{H^2(\Omega)}+h^{-1}\|\pi_h v-\mathcal{I}_hv\|.
\end{align*}
A triangle inequality followed by Lemma \ref{lem:intpolerr}(a) and \eqref{eq:intest1} leads to 
$$ \|\mathcal{I}_hv-\pi_h v\|\le \|\mathcal{I}_hv- v\|+\|v-\pi_h v\|\le C h^2\|v\|_{H^2(\Omega)}.$$
A combination of the last two inequalities concludes the proof.
		
\medskip
\noindent (c) Since $H^2(\Omega)\cap H_0^1(\Omega)$ is dense in $\Lt$, for any $v\in\Lt$ and for any given $\epsilon>0$, there exists $w\in H^2(\Omega)\cap H_0^1(\Omega)$ such that
\begin{equation} \label{eqn:denseee}
	\|w-v\|<\epsilon.
\end{equation} 
Note that $\pi_h v-v=\pi_h w-w+(\pi_h(v-w)-(v-w))$. A triangle inequality with Lemma \ref{lem:projerr}(a)-(b), \eqref{def:proj}, and \eqref{eqn:denseee} concludes the proof.
\end{proof}

\noindent Using $\pi_h$  defined in \eqref{eqn:def of pi_h}, for each $h>0,$ let the  projection operator 
\begin{align}\label{eqdef:Pi_h}
\Pi_h:\Hf\longrightarrow \Hf \text{ with Range }(\Pi_h)=\Hf_h \text{ be defined by } \Pi_h:=\begin{pmatrix}
			\pi_h & 0\\
			0 & \pi_h\end{pmatrix}.
\end{align}
Utilizing Lemma \ref{lem:projerr}(a)-(b), for each $h>0,$  $\Pi_h$ satisfies the properties stated in the lemma below.
\begin{Lemma}[properties of $\Pi_h$]\label{lem:prpty Pi_h}
Let for each $h>0,$ $\Pi_h$ be as defined in \eqref{eqdef:Pi_h}. Then $\Pi_h$ satisfies
\begin{itemize}
    \item[(a)] $\Pi_h^2=\Pi_h$ and $\Pi_h(\If-\Pi_h)=0=(\If-\Pi_h)\Pi_h ,$
    \item[(b)] $\Pi_h$ is self adjoint, that is, $\Pi_h^*=\Pi_h,$
    \item[(c)] $\|\Pi_h \xi\|\le \|\xi\| $ and $\displaystyle \lim_{h\rightarrow 0}\|\xi-\Pi_h \xi\|_{\Hf}\longrightarrow 0$ for all $\xi\in \Hf,$
    \item[(d)] $\|\If-\Pi_h\|_{\Lc(D(\Af),\Hf)}\le Ch^2$ for some $C>0$ independent of $h.$
\end{itemize}
\end{Lemma}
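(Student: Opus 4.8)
The plan is to reduce every assertion to the scalar statements about $\pi_h$ in Lemma \ref{lem:projerr} by exploiting the block-diagonal structure of $\Pi_h=\begin{pmatrix}\pi_h & 0\\ 0 & \pi_h\end{pmatrix}$, which acts componentwise. Throughout I would write a generic element of $\Hf$ as $\xi=\begin{pmatrix}u\\ v\end{pmatrix}$ and a second one as $\zeta=\begin{pmatrix}\phi\\ \psi\end{pmatrix}$.

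Parts (a)--(c) are immediate bookkeeping. For (a), block multiplication gives $\Pi_h^2=\begin{pmatrix}\pi_h^2 & 0\\ 0 & \pi_h^2\end{pmatrix}=\Pi_h$ by Lemma \ref{lem:projerr}(a), and the identities $\Pi_h(\If-\Pi_h)=(\If-\Pi_h)\Pi_h=0$ follow at once. For (b), I would first record that $\pi_h$ is self-adjoint on $\Lt$: since $\phi-\pi_h\phi\perp V_h$ and $\pi_h u\in V_h$, the definition \eqref{eqn:def of pi_h} yields $\langle\pi_h u,\phi\rangle=\langle\pi_h u,\pi_h\phi\rangle=\langle u,\pi_h\phi\rangle$; expanding the $\Hf$ inner product then gives $\langle\Pi_h\xi,\zeta\rangle=\langle\pi_h u,\phi\rangle+\langle\pi_h v,\psi\rangle=\langle u,\pi_h\phi\rangle+\langle v,\pi_h\psi\rangle=\langle\xi,\Pi_h\zeta\rangle$, so $\Pi_h^*=\Pi_h$. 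For (c), the bound $\|\Pi_h\xi\|^2=\|\pi_h u\|^2+\|\pi_h v\|^2\le\|u\|^2+\|v\|^2=\|\xi\|^2$ uses Lemma \ref{lem:projerr}(a), while $\|\xi-\Pi_h\xi\|^2=\|u-\pi_h u\|^2+\|v-\pi_h v\|^2\to0$ follows by applying Lemma \ref{lem:projerr}(c) to each component.

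Part (d) is the only step carrying any content. For $\xi=\begin{pmatrix}y\\ z\end{pmatrix}\in D(\Af)=(H^2(\Omega)\cap H^1_0(\Omega))^2$, Lemma \ref{lem:projerr}(b) gives $\|(\If-\Pi_h)\xi\|^2=\|y-\pi_h y\|^2+\|z-\pi_h z\|^2\le Ch^4\big(\|y\|^2_{H^2(\Omega)}+\|z\|^2_{H^2(\Omega)}\big)$, so after taking square roots $\|(\If-\Pi_h)\xi\|_\Hf\le Ch^2\big(\|y\|^2_{H^2(\Omega)}+\|z\|^2_{H^2(\Omega)}\big)^{1/2}$. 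It then remains to control the right-hand side by the norm of $D(\Af)$. I expect this to be the main (mild) obstacle: one must observe that the graph norm of $\Af$ is equivalent to the $H^2(\Omega)\times H^2(\Omega)$ norm on $D(\Af)$, which is precisely the elliptic $H^2$-regularity already invoked in Remark \ref{rem:RegResult}. With this equivalence, $\|(\If-\Pi_h)\xi\|_\Hf\le Ch^2\|\xi\|_{D(\Af)}$ for all $\xi\in D(\Af)$, and taking the supremum over the unit ball of $D(\Af)$ yields the operator-norm estimate $\|\If-\Pi_h\|_{\Lc(D(\Af),\Hf)}\le Ch^2$.
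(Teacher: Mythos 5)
Your proposal is correct and follows the same route the paper intends: the paper proves this lemma simply by noting that $\Pi_h$ acts componentwise and citing Lemma \ref{lem:projerr}(a)--(b), exactly your reduction. Your only added content is making explicit, for part (d), that the graph norm of $\Af$ is equivalent to the $H^2(\Omega)\times H^2(\Omega)$ norm on $D(\Af)$ via the elliptic regularity of Remark \ref{rem:RegResult} (e.g.\ writing $\xi=R(-\nuh,\Af)(-\nuh\If-\Af)\xi$), a step the paper leaves implicit but which is precisely what is needed.
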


\subsection{Approximation operators and their properties}\label{sec:Appopp}
		
\noindent  This subsection is devoted to construct the approximation operators on finite dimensional space $\Hf_h$ corresponding to the operators $\Af$ and $\Bf.$ Also, it is established that the approximated operator $\Af_h$ has similar properties to $\Af$ in the context of spectral analysis and analytic semigroup.

\medskip
\noindent The discrete operator $\Af_h:\Hf_h\longrightarrow\Hf_h$ that corresponds to $\Af$ is defined by 

\begin{equation}
\begin{aligned} \label{eqn:app A}
\left\langle - \Af_h \begin{pmatrix}y_h \\ z_h\end{pmatrix}, \begin{pmatrix}\phi_h \\ \psi_h\end{pmatrix}\right \rangle:= a\left(\begin{pmatrix}y_h \\ z_h\end{pmatrix}, \begin{pmatrix}\phi_h \\ \psi_h\end{pmatrix}\right) & =\eta_0 \langle \nabla y_h, \nabla \phi_h\rangle +\eta_1 \langle z_h, \phi_h\rangle +\nu_0\langle y_h, \phi_h\rangle +\beta_0\langle\nabla z_h ,\nabla \psi_h\rangle\\
    & \quad +(\kappa+\nu_0)\langle z_h, \psi_h\rangle - \langle y_h, \psi_h\rangle  \text{ for all } \begin{pmatrix}y_h \\ z_h\end{pmatrix}, \begin{pmatrix}\phi_h \\ \psi_h\end{pmatrix} \in \Hf_h,
\end{aligned}
\end{equation}
where the sesquilinear form $a(\cdot, \cdot)$ is introduced in \eqref{bilinear_a}. The adjoint operator $\Af_h^*:\Hf_h\longrightarrow\Hf_h$ is defined by
    $$\displaystyle\left\langle\Af_h^* \begin{pmatrix}
	\varphi_h \\ \varkappa_h
	\end{pmatrix},\begin{pmatrix} \phi_h \\ \psi_h \end{pmatrix}\right\rangle  =\left\langle \begin{pmatrix}
	\varphi_h \\ \varkappa_h \end{pmatrix},\Af_h  \begin{pmatrix} \phi_h \\ \psi_h
	\end{pmatrix} \right\rangle \text{ for all }  \begin{pmatrix}	\varphi_h \\ \varkappa_h
	\end{pmatrix},\; \begin{pmatrix} \phi_h \\ \psi_h
	\end{pmatrix}\in \Hf_h .$$

\noindent From the definition, it is clear that for all $h>0$, $\Af_h$ generates an analytic semigroup on $\Hf_h$. However, for our analysis, it is needed that for all $h>0$, $\Af_h$ generates a uniformly (with respect to $h$) analytic semigroup $\{e^{t \Af_h}\}_{t\ge 0}$ on $\Hf_h$, that is,  all constants and parameters in Definition \ref{def:ana-sgp} are independent of $h$, for all $h>0$. We show it in the following theorem. 

\begin{Theorem}[uniform analyticity and resolvent estimate]\label{th-unianaly-approx}
Let the finite dimensional operator $\n\Af_h$ on $\n\Hf_h$ be as defined in \eqref{eqn:app A}. Then for all $h>0$, the results below hold.
    \begin{itemize}
		\item[(a)] The sector $\Sigma(-\nuh;\theta_0)$ as in $(a)$ of Theorem \ref{thm:ua-sem} contains the spectrum $\sigma(\Af_h)$, and for all $\mu\in \Sigma^c(-\nuh; \theta_0),$ the resolvent operator $\n R(\mu,\Af_h):= (\mu \If_h-\Af_h)^{-1}$ satisfies 
			\begin{equation}\label{resolest-approx}
			\n \|R(\mu,\Af_h)\|_{\Lc(\Hf_h)} \le \frac{C}{|\mu+\nuh|}, \quad \mu \neq -\nuh,
			\end{equation}
		for some $C>0$ independent of $\mu$ and $h$. For $\mu=-\nuh,$ the uniform bound below holds
		\begin{align} \label{eq-uniboundresol}
		    \|\n R(-\nuh,\Af_h)\|_{\Lc(\Hf)}\le C,
		\end{align}
		for some $C>0$ independent of $h$. 

        \item[(b)] The operator $\n e^{t\Af_h}\in \Lc(\Hf_h)$ can be represented by 
			\begin{align} \label{eqrepsemiaopprox}
			\n e^{t\Af_h}=\frac{1}{2\pi i}\int_\Gamma e^{\mu t} R(\mu,\Af_h)\, d\mu , \, \text{ for all }t>0,
			\end{align}
		where $\Gamma$ is any curve from $-\infty$ to $\infty$ and is entirely in $\n\Sigma^c(-\nuh;\theta_0).$
		
		\item[(c)] The operator $\n\Af_h$ generates a uniformly (in $h$) analytic semigroup $\normalfont \{e^{t\Af_h}\}_{t\ge 0}$ on $\n\Hf_h$ satisfying 
			\begin{equation*} 
			\n \|e^{t \Af_h}\|_{\mathcal{L}(\Hf_h)}\le Ce^{-\nuh t} \text{ for all } t>0,
			\end{equation*}
			for some $C>0$ independent of $h$.
	\end{itemize}
\end{Theorem}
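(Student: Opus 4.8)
The plan is to reproduce, on the finite-dimensional space $\Hf_h$, the three-step argument from the proof of Theorem \ref{thm:ua-sem}(a), the crucial observation being that every constant involved is inherited from the continuous problem and is therefore independent of $h$. Indeed, since $\Hf_h\subset\Vf$ and $V_h\subset H^1_0(\Omega)$, the coercivity estimate \eqref{bilinear_acor}, the continuity estimate \eqref{bilinear_a_bdd}, and the Poincar\'e inequality \eqref{poincare ineq} all hold verbatim for elements of $\Hf_h$ with the very same constants $\alpha_0$, $\alpha_1$, and $C_p$. Consequently the angle $\theta_0=\pi-\tan^{-1}(\alpha_1/\alpha_0)$ fixed in Theorem \ref{thm:ua-sem} is unchanged and serves for all $h$ simultaneously.

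For part (a), I would first treat $\Re(\mu)\ge-\nuh$. Given $F_h\in\Hf_h$, the discrete resolvent equation $(\mu\If_h-\Af_h)\Yf_{h,\mu}=F_h$ is equivalent, by definition \eqref{eqn:app A}, to the discrete weak formulation $a(\Yf_{h,\mu},\cdot)+\mu\langle\Yf_{h,\mu},\cdot\rangle=\langle F_h,\cdot\rangle$ on $\Hf_h$. Since $\Hf_h$ is finite-dimensional, invertibility of $\mu\If_h-\Af_h$ reduces to injectivity, which follows directly from \eqref{bilinear_acor} (testing with $\Yf_{h,\mu}$ and taking real parts forces $\Yf_{h,\mu}=0$ when $F_h=0$); no $H^2$-regularity argument is needed here, unlike the continuous case. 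Testing with $\Yf_{h,\mu}$ and using \eqref{bilinear_acor} and \eqref{poincare ineq} yields the uniform energy bound $\|\Yf_{h,\mu}\|\le C\|F_h\|$ with $C=C(\alpha_0,C_p)$, which at $\mu=-\nuh$ gives \eqref{eq-uniboundresol}. Next, writing $\mu=-\nuh+\rho e^{i\theta}$ with $|\theta|\le\pi/2$ and testing with $e^{i\theta/2}\Yf_{h,\mu}$ exactly as in Step 2 of Theorem \ref{thm:ua-sem} produces $\|R(\mu,\Af_h)\|_{\Lc(\Hf_h)}\le C/|\mu+\nuh|$ with $C$ depending only on $\eta_1,\alpha_0,C_p$. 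Finally, the extension to all $\mu\in\Sigma^c(-\nuh;\theta_0)$ with $\Re\mu<-\nuh$ is carried out by the same perturbation step (Lemma \ref{lem:id-3}(b)) with the identical $\delta_1$ from \eqref{eq:resol4} and the identical number $n_0$ of angular subdivisions, so the resulting constant $C_0=\max\{2^nC:0\le n\le n_0\}$ is again independent of $h$. This establishes \eqref{resolest-approx} uniformly.

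Part (b) is then immediate: the uniform resolvent estimate of (a) places $\Af_h$ within the class of generators of analytic semigroups, and the Dunford--Taylor representation \eqref{eqrepsemiaopprox} over any contour $\Gamma\subset\Sigma^c(-\nuh;\theta_0)$ follows from the same reference invoked for Theorem \ref{thm:ua-sem}(b); the integral is well-defined and its form is $h$-independent because $\Gamma$ avoids $\sigma(\Af_h)$ uniformly in $h$. For part (c), I would repeat the contour estimate of Theorem \ref{thm:ua-sem}(c) essentially word for word: after the substitution $\mu_1=(\mu+\nuh)t$, the bounds over $\Gamma^1_\pm$ and $\Gamma^1_0$ depend only on the fixed data $r_0,\phi_0$ and on the uniform constant $C$ from (a), yielding $\|e^{t\Af_h}\|_{\Lc(\Hf_h)}\le Ce^{-\nuh t}$ with $C$ independent of $h$.

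I expect the only genuine point requiring care---and the actual content of the theorem---to be the bookkeeping that guarantees $h$-independence at every stage; this is not a deep difficulty but must be verified, and it rests entirely on the inclusion $\Hf_h\subset\Vf$, which transports the coercivity, continuity, and Poincar\'e constants, and hence the derived parameters $\theta_0$, $\delta_1$, $n_0$, $r_0$, $\phi_0$, to the discrete setting without any deterioration as $h\to0$.
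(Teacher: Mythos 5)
Your proposal is correct and follows essentially the same route as the paper: the paper's proof likewise verifies that the coercivity and continuity constants $\alpha_0,\alpha_1$ of $a(\cdot,\cdot)$ hold on $\Hf_h\subset\Vf$ independently of $h$, repeats the three steps of Theorem \ref{thm:ua-sem}(a) verbatim to get \eqref{resolest-approx} and \eqref{eq-uniboundresol}, and then transfers parts (b) and (c) by the same contour arguments. Your added remark that invertibility of $\mu\If_h-\Af_h$ reduces to injectivity in finite dimensions (so no $H^2$-regularity is needed) is a correct minor simplification, not a different method.
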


\begin{proof}
Let $\begin{pmatrix}f_h \\ g_h\end{pmatrix}\in\Hf_h$ be arbitrary. Then our first aim is to find a unique $\begin{pmatrix}y_h \\ z_h\end{pmatrix}\in \Hf_h$ such that for all  $\begin{pmatrix}\phi_h \\ \psi_h\end{pmatrix} \in \Hf_h,$
$$\left\langle (\mu \If_h- \Af_h)\begin{pmatrix}y_h \\ z_h\end{pmatrix}, \begin{pmatrix}\phi_h \\ \psi_h\end{pmatrix} \right\rangle=a\left( \begin{pmatrix}y_h \\ z_h\end{pmatrix}, \begin{pmatrix}\phi_h \\ \psi_h\end{pmatrix}\right)+\mu \left\langle \begin{pmatrix}y_h \\ z_h\end{pmatrix}, \begin{pmatrix}\phi_h \\ \psi_h\end{pmatrix} \right\rangle=\left\langle\begin{pmatrix}f_h \\ g_h\end{pmatrix}, \begin{pmatrix}\phi_h \\ \psi_h\end{pmatrix}\right\rangle,$$   
where $a(\cdot,\cdot)$ is defined in \eqref{bilinear_a}. For all $h>0,$ we have the coercivity and boundedness of $a(\cdot,\cdot)+\mu\langle \cdot, \cdot \rangle$ with constants $\alpha_0$ and $\alpha_1$ (both independent of $h$), respectively (see \eqref{bilinear_acor} and \eqref{bilinear_a_bdd}). Therefore, as in Theorem \ref{thm:ua-sem}(a), for all $h>0,$  $\sigma(\Af_h)\subset \Sigma(-\nuh;\theta_0)$ and there exists a unique $\begin{pmatrix}y_h \\ z_h\end{pmatrix}\in \Hf_h$ such that the last displayed equality holds. Now, proceed as in the proof of Theorem \ref{thm:ua-sem}(a) to obtain
\begin{align*}
\left\| R(\mu,\Af_h)\begin{pmatrix} f_h \\ g_h\end{pmatrix}\right\|=(\|y_h\|^2+\|z_h\|^2)^{1/2} \le \frac{C}{|\mu+\nuh|}\left( \|f_h\|^2+\|g_h\|^2\right)^{1/2},
\end{align*} 
for some  $C=C(\alpha_1,\alpha_0, C_p)$ independent of $\mu$ and $h.$ An analogous argument to establish \eqref{eqValreltn-yzfg} leads to \eqref{eq-uniboundresol}.

\medskip
\noindent (b) Since, for all $h>0,$ $\sigma(\Af_h)$ is contained in the uniform (in $h$) sector $\Sigma(-\nuh;\theta_0)$ and the constants appearing in \eqref{resolest-approx} are independent of $h,$ a similar argument as in Theorem \ref{thm:ua-sem}(b) concludes that for all $h>0$, $\n\Af_h$ generates a uniformly (in $h$) analytic semigroup $\normalfont \{e^{t\Af_h}\}_{t\ge 0}$ on $\n\Hf_h$ with the representation \eqref{eqrepsemiaopprox}. 

\medskip
\noindent (c) The proof is analogous to the proof of Theorem \ref{thm:ua-sem}(c).
\end{proof}
			
\begin{figure} [ht!]
	\begin{center}
		\begin{tikzpicture}
			\draw[gray,dashed,thick,<->] (-7,0)--(3,0);
			\draw[gray,dashed,thick,<->] (0,-3.5)--(0,3.5);
			\draw[->] (0,0)--(-3,3);
			\draw[->] (0,0)--(-3,-3);
			\draw[->] (0,1) arc (55:150:0.5cm);
			\node[] at (-0.6,1.3) {$\delta_0$};
			\node[] at (0,-0.3){O};
			\node[] at (-1,0){$\bullet$};
			\draw[] (0,0)--(0,1.5);
			\node[blue] at (-2,1.5){$\bullet$};
			\draw[red,dashed](-2,1.5) circle(0.3cm);
			\node[blue] at (-2,-1.5){$\bullet$};
			\draw[red,dashed](-2,-1.5) circle(0.3cm);
					
			\node[blue] at (-1.5,0.5){$\bullet$};
			\draw[red,dashed](-1.5,0.5) circle(0.3cm);
			\node[blue] at (-1.5,-0.5){$\bullet$};
			\draw[red,dashed](-1.5,-0.5) circle(0.3cm);
					
			\node[blue] at (-4,0.8){$\bullet$};
			\draw[red,dashed](-4,0.8) circle(0.3cm);
			\node[blue] at (-4,-0.8){$\bullet$};
			\draw[red,dashed](-4,-0.8) circle(0.3cm);
					
			\node[blue] at (-3.8,1.1){$\bullet$};
			\draw[red,dashed](-3.8,1.1) circle(0.3cm);
			\node[blue] at (-3.8,-1.1){$\bullet$};
			\draw[red,dashed](-3.8,-1.1) circle(0.3cm);
					
			\node[blue] at (-3,2.5){$\bullet$};
			\draw[red,dashed](-3,2.5) circle(0.3cm);
			\node[blue] at (-3,-2.5){$\bullet$};
			\draw[red,dashed](-3,-2.5) circle(0.3cm);
			
			\node[blue] at (-1.8,0){$\bullet$};
			\draw[red,dashed](-1.8,0) circle(0.3cm);
			
			\node[blue] at (-2.8,0){$\bullet$};
			\draw[red,dashed](-2.8,0) circle(0.3cm);
			
			\node[blue] at (-4.5,0){$\bullet$};
			\draw[red,dashed](-4.5,0) circle(0.3cm);
			
			\node[blue] at (-4.7,0){$\bullet$};
			\draw[red,dashed](-4.7,0) circle(0.3cm);
			
			\node[blue] at (-5.5,0){$\bullet$};
			\draw[red,dashed](-5.5,0) circle(0.3cm);
					
			\end{tikzpicture}
		\end{center}
		\caption{Finitely many complex eigenvalues of $\Af$ plotted with blue dots} \label{fig:ua}
\end{figure}
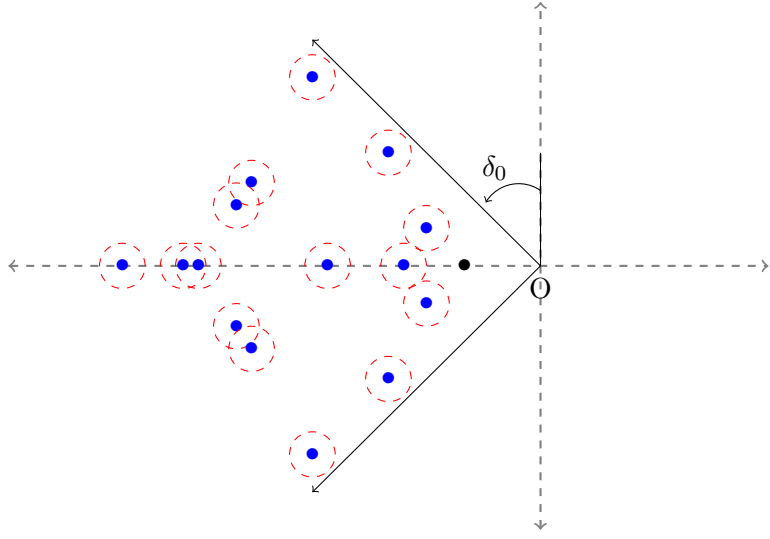
\noindent In the next lemma, it is established that the eigenvalues of $\n\Af_h$ converge to the corresponding eigenvalues of $\n\Af$ with quadratic rate of convergence and this is verified numerically in Section \ref{sec:NI} (see Table \ref{tab:roc_eig_2}).		
		
\begin{Lemma}[convergence of eigenvalues]\label{lem:eig-conv}
Let $\{\Lambda_k^\pm\}_{k\in \Nb}$ be the eigenvalues of $\Af$ as in Proposition \ref{pps:spec Af}. Let $\{\Lambda_{k,h}^\pm\}_{k=1}^{n_h}$ be the corresponding eigenvalues of $\Af_h,$ where $2n_h$ denotes the dimension of $\Hf_h.$ Then for all $k=1,\ldots,n_h$ and for all $h>0,$ there exist positive constants $C(k)$ such that 
\begin{align*}
    |\Lambda_{k,h}^\pm-\Lambda_k^\pm|\le C(k) \left(\beta_0+\eta_0+|\beta_0-\eta_0| \right) h^2.
\end{align*}
\end{Lemma}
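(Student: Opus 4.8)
The plan is to reduce the matrix eigenvalue problem for $\Af_h$ to the scalar eigenvalue problem for the discrete Laplacian $\Delta_h$, exactly mirroring the derivation of the explicit eigenvalues in Proposition \ref{pps:spec Af}, and then to transport the classical quadratic convergence of the discrete Laplacian eigenvalues through the explicit formula \eqref{eqExpEigA}. The operator $\Delta_h$ of Definition \ref{def:Delta_h} is self-adjoint and negative definite on $V_h$, so it admits an orthonormal basis $\{\phi_{k,h}\}_{k=1}^{n_h}$ of $V_h$ with $-\Delta_h\phi_{k,h}=\lambda_{k,h}\phi_{k,h}$ and $0<\lambda_{1,h}\le\cdots\le\lambda_{n_h,h}$. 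From the definition \eqref{eqn:app A} of $\Af_h$ one reads off, using Definition \ref{def:Delta_h},
\[
\Af_h\begin{pmatrix} y_h \\ z_h\end{pmatrix}=\begin{pmatrix} \eta_0\Delta_h y_h-\nu_0 y_h-\eta_1 z_h \\ \beta_0\Delta_h z_h-(\kappa+\nu_0)z_h+y_h\end{pmatrix},
\]
which has the same form as $\Af$ in \eqref{eqdef-A_PCE} with $\Delta$ replaced by $\Delta_h$. Hence each two-dimensional subspace $\mathrm{span}\big\{(\phi_{k,h},0),\,(0,\phi_{k,h})\big\}$ is invariant under $\Af_h$, and the restriction is represented by the same $2\times2$ matrix as in the continuous case with $\lambda_k$ replaced by $\lambda_{k,h}$. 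Consequently the eigenvalues of $\Af_h$ are given by \eqref{eqExpEigA} with $\lambda_k$ replaced by $\lambda_{k,h}$, namely
\[
\Lambda_{k,h}^\pm=-\tfrac12\big((\eta_0+\beta_0)\lambda_{k,h}+\kappa+2\nu_0\big)\pm\tfrac12\sqrt{\big((\beta_0-\eta_0)\lambda_{k,h}+\kappa\big)^2-4\eta_1}.
\]

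Next I would invoke the classical error estimate for Laplacian eigenvalues under conforming $P_1$ approximation. Since each Dirichlet eigenfunction $\phi_k$ lies in $H^2(\Omega)$ (in fact in $C^\infty(\Omega)$ by \eqref{eigvalLapl}), the min–max characterization of $\lambda_{k,h}$ together with the interpolation estimate of Lemma \ref{lem:intpolerr} yields a constant $C(k)>0$, depending on $k$ through $\|\phi_k\|_{H^2(\Omega)}$ and the spectral gap, such that $|\lambda_{k,h}-\lambda_k|\le C(k)h^2$ for all $k=1,\dots,n_h$ and all $h>0$.

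Finally I would propagate this bound through the explicit formula. Writing $g(\lambda):=(\beta_0-\eta_0)\lambda+\kappa$ and splitting
\[
\Lambda_{k,h}^\pm-\Lambda_k^\pm=-\tfrac12(\eta_0+\beta_0)(\lambda_{k,h}-\lambda_k)\pm\tfrac12\Big(\sqrt{g(\lambda_{k,h})^2-4\eta_1}-\sqrt{g(\lambda_k)^2-4\eta_1}\Big),
\]
the linear part is bounded by $\tfrac12(\eta_0+\beta_0)|\lambda_{k,h}-\lambda_k|$, while for the radical I would use the factorization $g(\lambda_{k,h})^2-g(\lambda_k)^2=(\beta_0-\eta_0)(\lambda_{k,h}-\lambda_k)\big(g(\lambda_{k,h})+g(\lambda_k)\big)$ together with the identity $|\sqrt A-\sqrt B|=|A-B|/|\sqrt A+\sqrt B|$ (with $A,B$ the two radicands) to extract the factor $|\beta_0-\eta_0|$ and absorb the remaining $\lambda$-dependent quantities into $C(k)$. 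Combining with $|\lambda_{k,h}-\lambda_k|\le C(k)h^2$ then produces $|\Lambda_{k,h}^\pm-\Lambda_k^\pm|\le C(k)\big(\beta_0+\eta_0+|\beta_0-\eta_0|\big)h^2$, as claimed.

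The step I expect to be the main obstacle is the control of the radical term. The denominator $|\sqrt{g(\lambda_{k,h})^2-4\eta_1}+\sqrt{g(\lambda_k)^2-4\eta_1}|$ degenerates precisely at a double eigenvalue $\Lambda_k^+=\Lambda_k^-$ (zero discriminant, the case of \eqref{eqEigFunAf*-m}), where the two branches $\lambda\mapsto\Lambda^\pm(\lambda)$ meet and cease to be individually smooth. Away from such points, for each fixed $k$ the discriminant is bounded below and $C(k)$ absorbs $1/|\sqrt A+\sqrt B|$; this is exactly where the $k$-dependence of the constant is genuinely needed, and where extra care is required when $\eta_1>0$ (so that the discriminant can actually vanish for some $\lambda$).
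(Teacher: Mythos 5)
Your proposal is correct and follows essentially the same route as the paper's proof: the paper likewise reduces to the discrete Laplacian by observing that $\Af_h$ has the same block structure as $\Af$ with $\Delta$ replaced by $\Delta_h$, quotes the classical estimate $|\lambda_{k,h}-\lambda_k|\le C(k)h^2$ for the $P_1$ approximation of Dirichlet eigenvalues, and transports it through the explicit formula \eqref{eqExpEigA} with $\lambda_k$ replaced by $\lambda_{k,h}$ --- the splitting into the linear part and the radical that you spell out is exactly the step the paper leaves implicit behind ``we deduce''. The degenerate-discriminant obstacle you flag is genuine and is passed over silently in the paper as well: at an exact double eigenvalue with $\beta_0\neq\eta_0$ the radical only yields $O(h)$, so the argument (and the stated $h^2$ rate) really does require either a $k$-dependent lower bound on the discriminant at $\lambda_k$ or the observation that for $\beta_0=\eta_0$ the discriminant is constant in $\lambda$ and the radical term cancels identically.
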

		
\begin{proof}
Let us first recall the convergence of eigenvalues of discrete Lapalacian to those of the Laplacian operator $-\Delta_h.$ For any $h>0$, let $\mathrm{dim}\,(V_h)= n_h\in \Nb$ and let $\{\lambda_{k,h}\mid k=1, \cdots, n_h\}$ be the set of eigenvalues of $-\Delta_h$ defined on $V_h$. It is well-known that for all $h>0,$ and for all $k=1, \ldots, n_h$, $\lambda_k\leq \lambda_{k,h},$ 
and $\lambda_{k,h}\le \lambda_k+ C (k) h^2$, for some $C(k)>0$ independent of $h$ \cite{Bof}. Combining the above results, for each $k=1, \ldots, n_h$, $|\lambda_{k,h}- \lambda_k|\le C(k)h^2\rightarrow 0$ as $h\downarrow 0$. 

\noindent Note that for $h>0$, the dimension of the Hilbert space, $\Hf_h= 2n_h$ and the discrete linear operator $\Af{_h}$ defined on $\Hf_h$ is of order $2n_h$. It can be checked that the eigenvalues of $\Af{_h}$ are 
\begin{equation}\label{eqapproxev}
\Lambda_{k,h}^\pm= -\frac{1}{2}\left((\eta_0+\beta_0)\lambda_{k,h}+\kappa+2\nu_0 \right)\pm \frac{1}{2} \sqrt{\left( (\beta_0-\eta_0)\lambda_{k,h}+\kappa  \right)^2-4\eta_1} \text{ for all } k=1, \cdots, n_h,
\end{equation}
(see Figure~\ref{fig:ua}). Using the expression of $\Lambda_k^\pm$ given in Proposition \ref{pps:spec Af}, \eqref{eqapproxev} and $|\lambda_{k,h}- \lambda_k|\le C(k)h^2$, we deduce that $ | \Lambda_{k,h}^\pm - \Lambda_k^\pm |  \leq C(k) \left(\beta_0+\eta_0+|\beta_0-\eta_0| \right) h^2$ for  any positive integer $k=1,\ldots,n_h.$ This concludes the proof.
\end{proof}

\noindent Since, for all $h>0,$ $\Af_h$ generates uniformly analytic semigroup $\{e^{t\Af_h}\}_{t\ge 0}$ of negative type, the well-posedness of \eqref{eq:appY'AYBu} follows. The proof is standard, for example, see \cite[Prop. 3.1, Ch-1, Part-II]{BDDM}.

\begin{Lemma}\label{cns:dis-1}
For any $\n\Yf_{0_h}\in \Hf_h$ and any $\n F_h\in L^2(0,\infty; \Hf_h)$,  system  $\n\Yf_h'(t)=\Af_h\Yf_h(t)+F_h(t),\;t>0, \;  \Yf_h(0)=\Yf_{0_h}$
admits  a unique solution  $\n\Yf_h(\cdot)\in C([0,\infty); \Hf_h)$ with the representation $\n\Yf_h(t)=e^{t\Af_h}\Yf_{0_h}+\int_0^te^{(t-s)\Af_h}F_h(s) \,ds \text{ for all } t>0.$
\end{Lemma}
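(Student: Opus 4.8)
The plan is to follow the classical variation-of-constants (mild solution) argument, which here is especially transparent because $\Hf_h$ is finite dimensional and $\Af_h\in\Lc(\Hf_h)$ is therefore bounded, with $\{e^{t\Af_h}\}_{t\ge 0}$ the uniformly analytic semigroup of negative type produced by Theorem \ref{th-unianaly-approx}. First I would take the Duhamel formula
$$\Yf_h(t)=e^{t\Af_h}\Yf_{0_h}+\int_0^t e^{(t-s)\Af_h}F_h(s)\,ds$$
as the \emph{definition} of the candidate solution and check that it is well posed. Using the uniform bound $\|e^{t\Af_h}\|_{\Lc(\Hf_h)}\le Ce^{-\nuh t}$ from Theorem \ref{th-unianaly-approx}(c) together with the Cauchy--Schwarz inequality, the map $s\mapsto e^{(t-s)\Af_h}F_h(s)$ lies in $L^1(0,t;\Hf_h)$ for every finite $t$ (since $F_h\in L^2$ and $L^2(0,t)\hookrightarrow L^1(0,t)$ on a bounded interval), so the Bochner integral exists and $\Yf_h(t)$ is a well-defined element of $\Hf_h$.

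Next I would establish the two qualitative properties. For continuity $\Yf_h\in C([0,\infty);\Hf_h)$, the first term is continuous by strong continuity of the semigroup, while the convolution term is continuous after splitting an increment into a piece supported on the new interval $[t,t']$ (which is small by $L^1$ integrability) and a piece controlled by the strong continuity of $e^{\cdot\,\Af_h}$ via dominated convergence. For the initial condition, evaluating at $t=0$ makes the integral vanish and leaves $e^{0\cdot\Af_h}\Yf_{0_h}=\Yf_{0_h}$. To see that $\Yf_h$ solves the equation, I would use that in finite dimensions $t\mapsto e^{t\Af_h}$ is differentiable with $\tfrac{d}{dt}e^{t\Af_h}=\Af_h e^{t\Af_h}$; differentiating the Duhamel formula and applying the Lebesgue differentiation theorem to the convolution term yields $\Yf_h'(t)=\Af_h\Yf_h(t)+F_h(t)$ for almost every $t>0$, which is the appropriate sense of solution given that $F_h$ is only $L^2$.

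Finally, for uniqueness I would let $W=\Yf_h^{(1)}-\Yf_h^{(2)}$ be the difference of two solutions, so that $W$ is absolutely continuous with $W'=\Af_h W$ a.e.\ and $W(0)=0$. Computing $\tfrac{d}{dt}\big(e^{-t\Af_h}W(t)\big)=e^{-t\Af_h}\big(W'-\Af_h W\big)=0$ for a.e.\ $t$ shows that $e^{-t\Af_h}W(t)$ is constant, hence identically equal to $e^{0}W(0)=0$, so $W\equiv 0$.

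I expect no serious obstacle here: the only mild subtlety is the reduced regularity of $F_h$ (merely $L^2$, not continuous), which prevents classical differentiability at every point and forces the solution to be read in the almost-everywhere (Carath\'eodory/mild) sense. This is precisely what the Lebesgue differentiation theorem handles, and the whole argument is entirely standard, exactly as in \cite[Prop. 3.1, Ch-1, Part-II]{BDDM}; the finite-dimensionality of $\Hf_h$ only makes it simpler, since $\Af_h$ is bounded and the semigroup is norm-continuous.
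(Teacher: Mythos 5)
Your proposal is correct and coincides with the paper's treatment: the paper gives no written proof, simply declaring the result standard and citing \cite[Prop.\ 3.1, Ch-1, Part-II]{BDDM}, which is exactly the variation-of-constants argument you spell out (with the mild/a.e.\ solution notion correctly identified given that $F_h$ is only $L^2$). Your use of the finite-dimensionality of $\Hf_h$, making $\Af_h$ bounded so that $e^{-t\Af_h}$ exists and the uniqueness computation is elementary, is a legitimate simplification of the cited semigroup argument.
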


\subsection{Convergence results for approximation operators}\label{sec:convres-unctrl}
In this subsection, an error estimate for the system without control is established. This result is essential to establish the convergence results for the stabilized system.

\medskip
\noindent Consider the continuous dynamics 
\begin{align}\label{eq:unctrl}
\Yf'(t)=\Af\Yf(t)\text{ for all }t>0,\,\Yf(0)=\Yf_0 \in \Hf,
\end{align}
and its approximation dynamics 
\begin{align}\label{eq:d-unctrl}
\Yf_h'(t)=\Af_h\Yf_h(t) \text{ for all }t>0,\, \Yf_h(0)=\Pi_h\Yf_0. 
\end{align}
Note that the solution of \eqref{eq:unctrl} and \eqref{eq:d-unctrl}, respectively, can be represented by 
$$ \Yf(t)=\begin{pmatrix} y(t) \\ z(t)\end{pmatrix}=e^{t\Af}\Yf_0, \quad \Yf_h(t)=\begin{pmatrix} y_h(t)\\z_h(t)\end{pmatrix}=e^{t\Af_h}\Pi_h\Yf_0 \text{ for all } t>0.$$
Now to study the convergence of $\Yf_h(t)$ to $ \Yf(t)$ in a suitable norm, we use the expression of the semigroups \eqref{eqrepsemi} and \eqref{eqrepsemiaopprox} and hence we need suitable estimates of the resolvent operators that are established in the next lemma.

\begin{Lemma}[error in resolvent]\label{lem:1st compresest}
Let $\n\Af$ and $\n\Af_h$ be as defined in \eqref{eqdef-A_PCE} and \eqref{eqn:app A}, respectively. Let $\n\Sigma^c(-\nuh;\theta_0)$ be as introduced in Theorem \ref{thm:ua-sem}. Then for some $C>0,$ independent of $h,$ the resolvent operators satisfy

\begin{itemize}
\item[(a)] $\n\|R(-\nuh,\Af)-R(-\nuh,\Af_h)\Pi_h\|_{\Lc(\Hf)}\le Ch^2$ for all $h>0$,
\item[(b)] $\n \displaystyle\sup_{\mu\in \Sigma^c(-\nuh;\theta_0)} \|R(\mu,\Af)-R(\mu,\Af_h)\Pi_h\|_{\Lc(\Hf)}\le Ch^2$ for all $h>0$. 
\end{itemize}

 \end{Lemma}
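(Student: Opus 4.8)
The plan is to handle (a) by a standard finite element duality (Aubin--Nitsche) argument at the single point $\mu=-\nuh$, and then to reduce (b) to (a) through an algebraic resolvent representation, so that no separate duality argument is needed for general $\mu$.

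For part (a), fix $\xi\in\Hf$ and set $\Phi:=R(-\nuh,\Af)\xi$ and $\Phi_h:=R(-\nuh,\Af_h)\Pi_h\xi$, with error $e:=\Phi-\Phi_h$. First I would write both as weak solutions of the sesquilinear form $a_{-\nuh}(\cdot,\cdot):=a(\cdot,\cdot)-\nuh\langle\cdot,\cdot\rangle$; since $\Pi_h$ is self-adjoint and restricts to the identity on $\Hf_h$, the discrete right-hand side $\langle\Pi_h\xi,\cdot\rangle$ agrees with $\langle\xi,\cdot\rangle$ on $\Hf_h$, which yields the Galerkin orthogonality $a_{-\nuh}(e,\phi_h)=0$ for all $\phi_h\in\Hf_h$. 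By the coercivity \eqref{bilinear_acor} and continuity \eqref{bilinear_a_bdd} of $a_{-\nuh}$, C\'ea's lemma with $\phi_h=\Pi_h\Phi$, the interpolation estimate of Lemma \ref{lem:projerr}(b), and the elliptic regularity bound \eqref{eqRegEsr-yzinH2} of Remark \ref{rem:RegResult} give the energy estimate $\|e\|_\Vf\le C h\,\|\Phi\|_{H^2(\Omega)\times H^2(\Omega)}\le C h\,\|\xi\|$.

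To upgrade this to $O(h^2)$ in $\Hf$, I would use duality. Let $w:=R(-\nuh,\Af^*)e$ solve the adjoint problem $a_{-\nuh}(\phi,w)=\langle\phi,e\rangle$ for all $\phi\in\Vf$; by Lemma \ref{lem:RegResult-Af*}(b) one has $\|w\|_{H^2(\Omega)\times H^2(\Omega)}\le C\|e\|$. Taking $\phi=e$ and subtracting $\Pi_h w\in\Hf_h$ (permissible by Galerkin orthogonality), $\|e\|^2=a_{-\nuh}(e,w-\Pi_h w)\le C\|e\|_\Vf\,\|w-\Pi_h w\|_\Vf\le C h\,\|e\|_\Vf\,\|e\|$, so that $\|e\|\le C h\,\|e\|_\Vf\le C h^2\|\xi\|$. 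This is (a). For part (b), write $G:=R(-\nuh,\Af)$, $G_h:=R(-\nuh,\Af_h)\Pi_h$, and use the representations $R(\mu,\Af)=M^{-1}G$ with $M:=\If+(\mu+\nuh)G$, and $R(\mu,\Af_h)\Pi_h=M_h^{-1}G_h$ with $M_h:=\If+(\mu+\nuh)G_h$, both viewed as operators on $\Hf$ (here $G_h$ has range in $\Hf_h$ and $M_h$ acts as the identity on $\Hf_h^{\perp}$). A short computation with the first resolvent identity gives $M^{-1}=\If-(\mu+\nuh)R(\mu,\Af)$ and $M_h^{-1}=\If-(\mu+\nuh)R(\mu,\Af_h)\Pi_h$, so the uniform resolvent bounds \eqref{eq:resolestA} and \eqref{resolest-approx} yield $\|M^{-1}\|_{\Lc(\Hf)}\le 1+C$ and $\|M_h^{-1}\|_{\Lc(\Hf)}\le 1+C$ with $C$ independent of $\mu$ and $h$.

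Setting $W:=R(\mu,\Af)-R(\mu,\Af_h)\Pi_h$ and $D_0:=G-G_h$, I would then establish the identity $W=M^{-1}D_0\,M_h^{-1}$: writing $W=M^{-1}(G-G_h)+(M^{-1}-M_h^{-1})G_h$, using $M^{-1}-M_h^{-1}=-(\mu+\nuh)W$, and solving the resulting relation $W M_h=M^{-1}D_0$. Taking norms and invoking part (a), namely $\|D_0\|_{\Lc(\Hf)}\le C h^2$, gives $\|W\|_{\Lc(\Hf)}\le\|M^{-1}\|\,\|D_0\|\,\|M_h^{-1}\|\le C h^2$ uniformly over $\mu\in\Sigma^c(-\nuh;\theta_0)$, which is (b). The main obstacle is precisely the uniformity in $\mu$ in (b): a direct duality argument at each $\mu$ would force one to cope with the loss of coercivity of $a(\cdot,\cdot)+\mu\langle\cdot,\cdot\rangle$ off the real axis (needing the rotation device of Theorem \ref{thm:ua-sem}) and with the blow-up of the $H^2$-regularity constant as $\mu\to-\nuh$. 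The representation $W=M^{-1}D_0 M_h^{-1}$ circumvents both, since the dangerous factor $(\mu+\nuh)$ in $M^{-1},M_h^{-1}$ is exactly cancelled by the $1/|\mu+\nuh|$ decay in the resolvent estimates of Theorems \ref{thm:ua-sem} and \ref{th-unianaly-approx}; identifying this cancellation is the crux of the argument.
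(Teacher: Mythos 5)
Your proposal is correct and takes essentially the same route as the paper: part (a) is the same Galerkin-orthogonality/C\'ea energy estimate upgraded to $O(h^2)$ by the Aubin--Nitsche duality argument with the adjoint $H^2$-regularity of Lemma \ref{lem:RegResult-Af*}(b), and your factorization $W=M^{-1}D_0M_h^{-1}$ is precisely the paper's identity \eqref{estar}, namely $R(\mu,\Af)-R(\mu,\Af_h)\Pi_h=\left(\If-(\mu+\nuh)R(\mu,\Af)\right)\left(R(-\nuh,\Af)-R(-\nuh,\Af_h)\Pi_h\right)\left(\If-(\mu+\nuh)R(\mu,\Af_h)\Pi_h\right)$, obtained by the same resolvent-identity algebra merely organized through the operators $M$ and $M_h$. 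The concluding step — the uniform bounds on the outer factors via \eqref{eq:resolestA} and \eqref{resolest-approx}, where the factor $(\mu+\nuh)$ cancels against the $1/|\mu+\nuh|$ resolvent decay — is also exactly the paper's.
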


\begin{proof}
(a) For any $\begin{pmatrix}f\\g\end{pmatrix}\in \Hf,$ let $\begin{pmatrix}u\\ v\end{pmatrix}\in D(\Af)$ and $\begin{pmatrix}u_h\\ v_h\end{pmatrix}\in \Hf_h$ be such that $R(-\nuh,\Af)\begin{pmatrix}f\\g\end{pmatrix}=\begin{pmatrix}u\\ v\end{pmatrix}$ and $R(-\nuh,\Af_h)\Pi_h\begin{pmatrix}f\\g\end{pmatrix}=\begin{pmatrix}u_h\\v_h\end{pmatrix}.$ That is,
\begin{align}\label{eqn:cont-wk}
a\left( \begin{pmatrix}u\\ v\end{pmatrix}, \begin{pmatrix} \phi \\ \psi\end{pmatrix} \right)-\nuh \left\langle \begin{pmatrix}u\\ v\end{pmatrix}, \begin{pmatrix} \phi \\ \psi\end{pmatrix}\right\rangle = \left\langle \begin{pmatrix}f\\g\end{pmatrix}, \begin{pmatrix} \phi \\ \psi\end{pmatrix} \right\rangle \text{ for all } \begin{pmatrix} \phi \\ \psi\end{pmatrix} \in \Vf, \text{ and }
\end{align}
\begin{align}\label{eqn:disc-wk}
 a\left( \begin{pmatrix}u_h\\ v_h\end{pmatrix}, \begin{pmatrix} \phi_h \\ \psi_h\end{pmatrix} \right)-\nuh \left\langle \begin{pmatrix}u_h\\ v_h\end{pmatrix}, \begin{pmatrix} \phi_h \\ \psi_h\end{pmatrix}\right\rangle= \left\langle \Pi_h\begin{pmatrix}f\\g\end{pmatrix}, \begin{pmatrix} \phi_h \\ \psi_h\end{pmatrix} \right\rangle \text{ for all } \begin{pmatrix} \phi_h \\ \psi_h\end{pmatrix} \in \Hf_h.
\end{align}
Subtract \eqref{eqn:disc-wk} from \eqref{eqn:cont-wk} and use \eqref{eqn:def of pi_h}, that is, $\left\langle \begin{pmatrix}f-\pi_h f \\ g-\pi_h g \end{pmatrix}, \begin{pmatrix} \phi_h \\ \psi_h\end{pmatrix} \right\rangle =0,$  for all $\begin{pmatrix} \phi_h \\ \psi_h\end{pmatrix} \in \Hf_h\subset \Vf$ to obtain 
\begin{align} \label{eqOrthogoality_a}
a\left( \begin{pmatrix}u-u_h\\ v-v_h\end{pmatrix}, \begin{pmatrix} \phi_h \\ \psi_h\end{pmatrix} \right)-\nuh\left\langle \begin{pmatrix}u-u_h\\ v-v_h\end{pmatrix}, \begin{pmatrix} \phi_h \\ \psi_h\end{pmatrix} \right\rangle  =0.
\end{align}
This implies
$$
\begin{array}{l}
a\left( \begin{pmatrix}u-u_h\\ v-v_h\end{pmatrix}, \begin{pmatrix} u-u_h \\ v-v_h\end{pmatrix} \right)  -\nuh \left\langle \begin{pmatrix}u-u_h\\ v-v_h\end{pmatrix}, \begin{pmatrix} u- u_h\\ v- v_h\end{pmatrix} \right\rangle  = \\
\hspace{3cm} a\left( \begin{pmatrix}u-u_h\\ v-v_h\end{pmatrix}, \begin{pmatrix} u-\pi_h u  \\ v-\pi_h v \end{pmatrix} \right)
-\nuh \left\langle \begin{pmatrix}u-u_h\\ v-v_h\end{pmatrix}, \begin{pmatrix} u - \pi_h u  \\ v - \pi_h v \end{pmatrix} \right\rangle .
\end{array}
$$
The continuity from \eqref{bilinear_a_bdd}, coercivity from \eqref{bilinear_acor} followed by Lemma \ref{lem:projerr}(b) and \eqref{eqRegEsr-yzinH2}, and the last displayed equality lead to 
\begin{equation} \label{eqEnergEstu-uh v-vh}
\begin{aligned}
\left(\|\nabla(u-u_h)\|^2 +\|\nabla(v-v_h)\|^2 \right)^{1/2} & \le \frac{\alpha_1}{\alpha_0} \left(\|\nabla(u-\pi_h u)\|^2 +\|\nabla(v-\pi_h v)\|^2 \right)^{1/2} \\
&\le \frac{\alpha_1}{\alpha_0} h \left( \|u\|_{H^2(\Omega)}^2+\|v\|_{H^2(\Omega)}^2\right)^{1/2}  \le C\frac{\alpha_1}{\alpha_0} h \left( \|f\|^2+\|g\|^2\right)^{1/2}. 
\end{aligned}
\end{equation}
 
\noindent To employ a duality argument, consider a dual problem: for given $\begin{pmatrix}p\\ q\end{pmatrix}\in \Hf,$ seek $\begin{pmatrix} \Phi \\ \Psi\end{pmatrix}\in D(\Af^*)$ such that
\begin{align}\label{eqDualProb}
(-\nuh \If-\Af)^*\begin{pmatrix} \Phi \\ \Psi\end{pmatrix}=\begin{pmatrix}p\\ q\end{pmatrix} \text{ in }\Omega, \quad \begin{pmatrix} \Phi \\ \Psi\end{pmatrix}=0 \text{ on }\partial\Omega.
\end{align}
Then from Lemma \ref{lem:RegResult-Af*}(b), we have the existence of $\begin{pmatrix} \Phi \\ \Psi\end{pmatrix} \in D(\Af^*)$ and with   \eqref{eqOrthogoality_a}, we obtain
\begin{align*}
\left\langle \begin{pmatrix}u- u_h \\ v-v_h\end{pmatrix}, \begin{pmatrix}p\\ q\end{pmatrix}\right\rangle & =\left\langle \begin{pmatrix}u- u_h \\ v-v_h\end{pmatrix}, (-\nuh \If-\Af)^*\begin{pmatrix} \Phi \\ \Psi\end{pmatrix}\right\rangle =a\left(  \begin{pmatrix}u- u_h \\ v-v_h\end{pmatrix}, \begin{pmatrix} \Phi \\ \Psi\end{pmatrix}  \right) - \nuh \left\langle  \begin{pmatrix}u- u_h \\ v-v_h\end{pmatrix}, \begin{pmatrix} \Phi \\ \Psi\end{pmatrix}  \right\rangle\\
& = a\left(  \begin{pmatrix}u- u_h \\ v-v_h\end{pmatrix}, \begin{pmatrix} \Phi-\pi_h \Phi \\ \Psi-\pi_h \Psi\end{pmatrix}  \right) - \nuh \left\langle  \begin{pmatrix}u- u_h \\ v-v_h\end{pmatrix}, \begin{pmatrix} \Phi-\pi_h \Phi \\ \Psi-\pi_h \Psi\end{pmatrix}  \right\rangle.
\end{align*}
Thus, a use of \eqref{bilinear_a_bdd} in above equality followed by \eqref{eqEnergEstu-uh v-vh}, Lemma \ref{lem:projerr}(b), and Lemma \ref{lem:RegResult-Af*}(b) leads to
\begin{align*}
\left\vert \left\langle \begin{pmatrix}u- u_h \\ v-v_h\end{pmatrix}, \begin{pmatrix}p\\ q\end{pmatrix}\right\rangle \right\vert &\le \alpha_1 \left(\|\nabla(u- u_h)\|^2+\|\nabla( v-v_h)\|^2\right)^{1/2} \left( \|\nabla( \Phi- \pi_h\Phi)\|^2+\|\nabla(\Psi-\pi_h\Psi)\|^2\right)^{1/2} \\
& \le C\frac{\alpha_1^2}{\alpha_0} h^2 \left( \|f\|^2+\|g\|^2\right)^{1/2}\left(\|\Phi\|_{H^2(\Omega)}^2+\|\Psi\|_{H^2(\Omega)}^2\right)^{1/2}\\
& \le C\frac{\alpha_1^2}{\alpha_0} h^2 \left( \|f\|^2+\|g\|^2\right)^{1/2} \left(\|p\|^2+\|q\|^2\right)^{1/2}.
\end{align*}
Choose $\begin{pmatrix}p\\ q\end{pmatrix}=\begin{pmatrix}u-u_h\\ v-v_h\end{pmatrix}$ in the last displayed inequality to obtain
\begin{align*}
\left(\|u-u_h\|^2 +\|v-v_h\|^2 \right)^{1/2}\le C\frac{\alpha_1^2}{\alpha_0} h^2 \left( \|f\|^2+\|g\|^2\right)^{1/2}, 
\end{align*}
and thus 
\begin{align*}
\left\|\left( R(-\nuh,\Af) - R(-\nuh,\Af_h)\Pi_h\right) \begin{pmatrix}f\\g\end{pmatrix} \right\| & =\left(\|u-u_h\|^2 +\|v-v_h\|^2\right)^{1/2} \le C h^2\left( \|f\|^2+\|g\|^2\right)^{1/2}.
\end{align*}
This completes the proof of (a).

\medskip
\noindent (b) We first derive some useful identities which will be used to obtain the estimates. 
The definition of the resolvent operator shows that for any $\mu\in \rho(\Af)$, $(\mu \If-\Af)[\If-(\mu+\nuh) R(\mu,\Af)]R(-\nuh,\Af)=\If$ and 
$(\mu \If_h-\Af_h)[\If_h-(\mu+\nuh) R(\mu,\Af_h)]R(-\nuh,\Af_h)=\If_h$, and thus 
\begin{equation*}
\begin{aligned}
&R(\mu,\Af)=R(-\nuh,\Af)-(\mu+\nuh) R(\mu,\Af)R(-\nuh,\Af)
\text{ and } \\
&R(\mu,\Af_h)\Pi_h=R(-\nuh,\Af_h)\Pi_h-(\mu+\nuh) R(\mu,\Af_h)R(-\nuh,\Af_h)\Pi_h.
\end{aligned}
\end{equation*}
An addition and subtraction of $(\mu+\nuh) R(\mu,\Af) R(-\nuh,\Af_h)\Pi_h$ after subtracting the two identities above and elementary algebra leads to
\begin{equation}\label{eq:diff resol}
\begin{aligned}
\big(R(\mu,\Af)- & R(\mu,\Af_h)\Pi_h\big)  \left(\If+(\mu+\nuh)R(-\nuh,\Af_h)\Pi_h\right)\\
&=\left(\If-(\mu+\nuh) R(\mu,\Af)\right)\left(R(-\nuh,\Af)-R(-\nuh,\Af_h)\Pi_h\right). 
\end{aligned}
\end{equation}
Elementary algebra shows
\begin{align*}
    & \left(\If+(\mu+\nuh)R(-\nuh,\Af_h)\Pi_h\right)\left(\If-\Pi_h+(-\nuh\If_h-\Af_h)R(\mu,\Af_h)\Pi_h\right) ={}\\ & \hspace{3cm} =\If-\Pi_h+(\mu\If_h+\nuh\If_h-\nuh\If_h-\Af_h)R(\mu,\Af_h)\Pi_h=\If \text{ and } \\
    & \If-\Pi_h+(-\nuh\If_h-\Af_h)R(\mu,\Af_h)\Pi_h=\If-(\mu+\nuh)R(\mu,\Af_h)\Pi_h.
\end{align*}
The last displayed estimates and \eqref{eq:diff resol} lead to 
\begin{equation}\label{estar}
\begin{aligned}
R(\mu,\Af)-R(\mu,\Af_h)\Pi_h=\left(\If-(\mu+\nuh) R(\mu,\Af)\right)\left(R(-\nuh,\Af) - R(-\nuh,\Af_h)\Pi_h\right)\left(\If-(\mu+\nuh) R(\mu, \Af_h)\Pi_h\right). 
\end{aligned}
\end{equation}
Utilize \eqref{eq:resolestA}, \eqref{resolest-approx} and (a) in \eqref{estar} to obtain
\begin{align*}
\left\|R(\mu,\Af)-R(\mu,\Af_h)\Pi_h\right\|_{\Lc(\Hf)} 
\le C h^2, \quad \forall\, \mu\in \Sigma^c(-\nuh;\theta_0),
\end{align*}
where the positive constant $C$ is independent of $\mu$ and $h$. 

\noindent  This concludes the proof.
\end{proof}

\ncom{\yk}{\mathfrak{y}}
\ncom{\zk}{\mathfrak{z}}

\begin{Theorem}[error estimate for the system without control] \label{pps-estSGest}
For any $\n\Yf_0\in \Hf,$ the operators $\n\Af$ and $\n\Af_h$ defined in \eqref{eqdef-A_PCE} and \eqref{eqn:app A}, respectively, satisfy 
\begin{itemize}
    \item[(a)]  $\left\|(e^{t\Af}-e^{t\Af_h}\Pi_h)\Yf_0\right\| \le  C h^2 \frac{e^{-\nuh t}}{t}\|\Yf_0\|$ for all $t>0$ and for all $h>0$,
\item[(b)] $\n \left\|(e^{t\Af}-e^{t\Af_h}\Pi_h)\Yf_0\right\|_{L^2(0,\infty;\Hf)} \le C_\theta h^{2\theta}\|\Yf_0\|$ for all $h>0$ and for any $0<\theta<\frac{1}{2},$ 
\end{itemize}
for some positive constants $C$ and $C_\theta$ independent of $h.$  
\end{Theorem}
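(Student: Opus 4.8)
The plan is to represent the operator difference as a single contour integral and then exploit the uniform $O(h^2)$ bound on the resolvent difference from Lemma~\ref{lem:1st compresest}. Since both $\{e^{t\Af}\}_{t\ge0}$ and $\{e^{t\Af_h}\}_{t\ge0}$ admit the representations \eqref{eqrepsemi} and \eqref{eqrepsemiaopprox} over the \emph{same} curve $\Gamma\subset\Sigma^c(-\nuh;\theta_0)$ (both $\sigma(\Af)$ and $\sigma(\Af_h)$ lie in $\Sigma(-\nuh;\theta_0)$), and $\Pi_h$, being a fixed bounded operator, can be moved inside the integral, one has
\begin{equation*}
(e^{t\Af}-e^{t\Af_h}\Pi_h)\Yf_0=\frac{1}{2\pi i}\int_\Gamma e^{\mu t}\big(R(\mu,\Af)-R(\mu,\Af_h)\Pi_h\big)\Yf_0\,d\mu .
\end{equation*}
Estimate (a) then reduces to bounding this integral, where the integrand is controlled in $\Lc(\Hf)$ by $Ch^2$, uniformly in $\mu\in\Sigma^c(-\nuh;\theta_0)$ and in $h$, by Lemma~\ref{lem:1st compresest}(b).

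For (a) I would follow verbatim the scaling argument used in the proof of Theorem~\ref{thm:ua-sem}(c). Choosing $\Gamma=\Gamma_\pm\cup\Gamma_0$ with $\Gamma_\pm=\{-\nuh+re^{\pm i\phi_0}:r\ge r_0\}$ and $\Gamma_0=\{-\nuh+r_0e^{i\vartheta}:|\vartheta|\le\phi_0\}$ for some $\tfrac{\pi}{2}<\phi_0<\theta_0$, and substituting $\mu_1=(\mu+\nuh)t$ gives
\begin{equation*}
(e^{t\Af}-e^{t\Af_h}\Pi_h)\Yf_0=\frac{e^{-\nuh t}}{2\pi t\,i}\int_{\Gamma^1}e^{\mu_1}\big(R(\tfrac{\mu_1}{t}-\nuh,\Af)-R(\tfrac{\mu_1}{t}-\nuh,\Af_h)\Pi_h\big)\Yf_0\,d\mu_1 ,
\end{equation*}
with $\Gamma^1$ the $t$-independent image of $\Gamma$. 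On $\Gamma^1_\pm$ one has $|e^{\mu_1}|=e^{r\cos\phi_0}$ with $\cos\phi_0<0$, and on $\Gamma^1_0$ one has $|e^{\mu_1}|\le e^{r_0}$; inserting the uniform bound $Ch^2$ for the resolvent difference, the integrals $\int_{r_0}^\infty e^{r\cos\phi_0}\,dr$ and $\int_{-\phi_0}^{\phi_0}r_0 e^{r_0}\,d\vartheta$ are finite constants independent of $t$ and $h$. The prefactor $e^{-\nuh t}/t$ then yields exactly $\|(e^{t\Af}-e^{t\Af_h}\Pi_h)\Yf_0\|\le Ch^2\,\tfrac{e^{-\nuh t}}{t}\|\Yf_0\|$. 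The only difference from Theorem~\ref{thm:ua-sem}(c) is that the $O(h^2)$ resolvent-difference bound replaces the $O(1/|\mu+\nuh|)$ resolvent bound; since the former does not decay in $\mu$, it is essential that $\cos\phi_0<0$ forces absolute convergence of the $r$-integral.

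For (b) I would interpolate the pointwise bound (a) against the uniform growth bounds $\|e^{t\Af}\|_{\Lc(\Hf)}\le Ce^{-\nuh t}$ and $\|e^{t\Af_h}\|_{\Lc(\Hf_h)}\le Ce^{-\nuh t}$ (Theorem~\ref{thm:ua-sem}(c) and Theorem~\ref{th-unianaly-approx}(c), the latter uniform in $h$), together with $\|\Pi_h\|\le 1$ from Lemma~\ref{lem:prpty Pi_h}(c), which give the competing estimate $\|(e^{t\Af}-e^{t\Af_h}\Pi_h)\Yf_0\|\le Ce^{-\nuh t}\|\Yf_0\|$. Taking, for $0<\theta<\tfrac12$, the geometric mean of this bound to the power $1-\theta$ and of (a) to the power $\theta$, one obtains
\begin{equation*}
\|(e^{t\Af}-e^{t\Af_h}\Pi_h)\Yf_0\|\le C\,h^{2\theta}\,e^{-\nuh t}\,t^{-\theta}\,\|\Yf_0\| .
\end{equation*}
Squaring and integrating in $t$ leaves $\int_0^\infty e^{-2\nuh t}t^{-2\theta}\,dt$, which converges precisely because the singularity $t^{-2\theta}$ at the origin is integrable when $2\theta<1$ and the exponential controls the tail (its value is $\Gamma(1-2\theta)(2\nuh)^{2\theta-1}$). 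Hence $\|(e^{t\Af}-e^{t\Af_h}\Pi_h)\Yf_0\|_{L^2(0,\infty;\Hf)}\le C_\theta h^{2\theta}\|\Yf_0\|$. The main obstacle is conceptual rather than computational: it is the uniform-in-$\mu$ resolvent-difference estimate of Lemma~\ref{lem:1st compresest}(b), already supplied, that makes the contour argument go through, while the restriction $\theta<\tfrac12$ in (b) is dictated exactly by the integrability of $t^{-2\theta}$ near $t=0$, a direct consequence of the $1/t$ blow-up in (a). I would only take care that the constants in (a) are genuinely independent of $t$ (so the geometric interpolation is legitimate) and that the two growth bounds used in (b) are the $h$-uniform ones.
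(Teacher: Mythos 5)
Your proposal is correct and follows essentially the same route as the paper's proof: the same contour representation of $e^{t\Af}-e^{t\Af_h}\Pi_h$ over $\Gamma=\Gamma_\pm\cup\Gamma_0$ with the scaling $\mu_1=(\mu+\nuh)t$, the same use of the uniform $O(h^2)$ resolvent-difference bound of Lemma \ref{lem:1st compresest}(b) on the rescaled contour, and for (b) the same interpolation between the $h$-uniform decay bound $Ce^{-\nuh t}$ and the pointwise estimate (a), with the restriction $\theta<\tfrac12$ arising exactly as you say from the integrability of $t^{-2\theta}$ at the origin. No gaps to report.
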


\begin{figure}[ht!]
\begin{center}
\begin{tikzpicture}
\path[fill=black!15] (-1,0)--(-7,3.6)--(-7,-3.6)--cycle;
\draw[gray,dashed,thick,<->] (-7,0)--(3,0);
\draw[gray,dashed,thick,<->] (0,-4)--(0,4);
\draw[thick,->] (-1,0)--(-6,3);
\draw[thick,->] (-1,0)--(-6,-3);
\draw[thick,->] (-0.5,0) arc (0:150:0.5cm);
\node[] at (-0.9,0.6) {$\theta_0$};
\node[] at (0.25,-0.25){$O$};
\draw[thick,->] (-5,0) arc (180:150:4cm);
\draw[thick,->] (-5,0) arc (180:210:4cm);
\node[] at (-4.8,0.5){$\Sigma(-\nuh; \theta_0)$};
\node[] at (-1,0){$\bullet$};
\node[] at (-1,-0.3){$-\nuh$};

\draw[thick,red, dashed,->] (-1,0.85)--(-5,4);
\draw[thick,red, dashed,->] (-5,-4)--(-1,-0.85);
\draw[thick,red,dashed,thick,->] (-1,-0.85) arc (270:360:0.85cm);
\draw[thick,red,dashed] (-1,0.85) arc (90:0:0.85cm);
\node[red] at (-3.5,-2.7){$\Gamma_-$};
\node[red] at (-3.5,2.7){$\Gamma_+$};
\node[red] at (0,0.4){$\Gamma_0$};
\end{tikzpicture}
\end{center}
\caption{$\Sigma(-\nuh; \theta_0)$ and $\color{red}{\Gamma}=\Gamma_+\cup
\Gamma_-\cup\Gamma_0$} \label{fig:spec-x}
\end{figure}

\begin{proof}
\noindent (a) Let $\Gamma$ be a path (refer Figure \ref{fig:spec-x}) in $\Sigma^c(-\nuh; \theta_0)$ such that $\Gamma = \Gamma_\pm\cup\Gamma_0,$ where $\Gamma_\pm=\{-\nuh+re^{\pm i\phi_0}, r\ge r_0 \}$ and $\Gamma_0=\{-\nuh+r_0 e^{i\vartheta},\, |\vartheta|\le \phi_0\}$ for some $\frac{\pi}{2}<\phi_0<\theta_0 <\pi$ and for some $r_0>0$. Theorems \ref{thm:ua-sem}(b) and \ref{th-unianaly-approx}(b) show
\begin{align*}
		e^{t\Af}\Yf_0 - e^{t\Af_h}\Pi_h\Yf_0 & =\frac{1}{2\pi i}\int_{\Gamma}e^{\mu t}( R(\mu,\Af) - R(\mu,\Af_h)\Pi_h)\Yf_0\, d\mu \\
		&=\frac{1}{2\pi i}\int_{t\Gamma^1}e^{\mu_1-\nuh t}\left( R\left(\frac{\mu_1}{t}-\nuh,\Af\right) - R\left(\frac{\mu_1}{t}-\nuh,\Af_h\right)\Pi_h \right)\Yf_0\, \frac{d\mu_1}{t},
\end{align*}
where the last equality is obtained using the change of variable $\mu_1=(\mu+\nuh)t$ and $\Gamma^1=\Gamma^1_\pm\cup\Gamma^1_0,$ with $\Gamma^1_\pm=\{re^{\pm i\phi_0}, r\ge r_0 \}$ and $\Gamma^1_0=\{r_0 e^{i\vartheta},\, |\vartheta|\le \phi_0\}.$ Since, the above displayed integral is independent of such paths $\Gamma$, we obtain
\begin{equation}\label{eqintdiff}
 \begin{aligned}
e^{t\Af}\Yf_0 - e^{t\Af_h}\Pi_h\Yf_0  =\frac{e^{-\nuh t}}{2\pi t i}\int_{\Gamma^1}e^{\mu_1}\left( R\left(\frac{\mu_1}{t}-\nuh,\Af\right) - R\left(\frac{\mu_1}{t}-\nuh,\Af_h\right)\Pi_h \right)\Yf_0\, d\mu_1. 
\end{aligned}
\end{equation}

\noindent Evaluate the integral over $\Gamma^1_+$ (similarly over $\Gamma_1^-$) with $\mu_1=re^{+ i\phi_0}$ and note that $-\cos(\phi_0)>0$ to obtain
\begin{align*}
  & \left\| \int_{\Gamma^1_\pm}e^{\mu_1}\left( R\left(\frac{\mu_1}{t}-\nuh,\Af\right) - R\left(\frac{\mu_1}{t}-\nuh,\Af_h\right)\Pi_h\right)\Yf_0\, d\mu_1 \right\| \\
  & \qquad \le \sup_{r\ge r_0}\left\|\left( R\left(\frac{re^{\pm i\phi_0}}{t}-\nuh,\Af\right) - R\left(\frac{re^{\pm i\phi_0}}{t}-\nuh,\Af_h\right)\Pi_h \right)\Yf_0\right\| \int_{r_0}^\infty e^{r \cos \phi_0} \, dr \le Ch^2\|\Yf_0\| \frac{e^{r_0}}{(-\cos\phi_0)},
\end{align*}
where in the last inequality, Lemma \ref{lem:1st compresest}(b) is used.
To estimate the integral in \eqref{eqintdiff} over $\Gamma^1_0$, a substitution of $\mu_1=r_0e^{\pm i\phi}$ for $-\phi_0\le \phi\le \phi_0$ leads to
\begin{align*}
  & \left\| \int_{\Gamma^1_0}e^{\mu_1}\left(R\left(\frac{\mu_1}{t}-\nuh,\Af\right) - R\left(\frac{\mu_1}{t}-\nuh,\Af_h\right)\Pi_h \right)\Yf_0\, d\mu_1 \right\| \\
  & \qquad \le \sup_{-\phi_0 \le \phi\le \phi_0}\left\|\left(R\left(\frac{r_0e^{\pm i\phi}}{t}-\nuh,\Af\right) - R\left(\frac{r_0e^{\pm i\phi}}{t}-\nuh,\Af_h\right)\Pi_h \right)\Yf_0\right\| r_0\int_{-\phi_0}^{\phi_0}  e^{r_0 \cos \phi}\, d\phi\\
  & \qquad \le Ch^2\|\Yf_0\| 2r_0e^{r_0}\phi_0,
\end{align*}
where the last inequality is obtained utilizing Lemma \ref{lem:1st compresest}(b). A combination of the last two estimates and \eqref{eqintdiff} conclude the proof.

\medskip
\noindent (b) For all $t>0,$ Theorems \ref{thm:ua-sem}(c) and \ref{th-unianaly-approx}(c) show
\begin{align*}
    \left\|(e^{t\Af} - e^{t\Af_h}\Pi_h)\Yf_0\right\|\le 2C e^{-\nuh t}\|\Yf_0\|.
\end{align*}
For any $0<\theta<\frac{1}{2}$, taking interpolation between the above inequality and (a), we obtain
\begin{align*}
   \left\|(e^{t\Af} - e^{t\Af_h}\Pi_h)\Yf_0\right\| \le \widetilde{C}_\theta \frac{h^{2\theta} e^{-\nuh t}}{t^\theta}\|\Yf_0\|,
\end{align*}
for some positive constant $\widetilde{C}_\theta$, depending on $\theta$.
Integrate the above relation over $[0,\infty)$ with respect to $t$ using the fact that $\theta\in (0, \frac{1}{2})$ to obtain
\begin{align*}
    \left\|(e^{t\Af} - e^{t\Af_h}\Pi_h)\Yf_0\right\|_{L^2(0,\infty;\Hf)}^2 \le C_\theta h^{4\theta}\|\Yf_0\|^2,
\end{align*}
for some constant $C_\theta>0$ depending on $\theta$ and $\nuh$.
\end{proof}

\noindent The above result gives that for any $T>0$, the error estimate between the trajectories corresponding to the continuous system and the discrete system holds uniformly in $t$ for all $t\ge T$. In the next theorem, for small time $t\in [0,T]$, the convergence result is obtained. 
\begin{Theorem}[Convergence result for system without control in finite time interval] \label{th:res-sg-est}
Let for any $\n\Yf_0=\begin{pmatrix}y_0 \\ z_0\end{pmatrix}\in\Hf$, $\n\Yf(t)=e^{t\Af}\Yf_0$ (resp $\n\Yf_h(t)=e^{t\Af_h}\Pi_h\Yf_0$) solve \eqref{eq:unctrl} (resp. \eqref{eq:d-unctrl}). Then 
for any fixed $T>0,$ 
 $\displaystyle \n \sup_{[0,T]}\left\|\left(e^{t\Af} - e^{t\Af_h}\Pi_h \right)\Yf_0\right\| \longrightarrow 0 \text{ as }h\downarrow 0.$
\end{Theorem}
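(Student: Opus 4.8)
The obstacle in promoting Theorem~\ref{pps-estSGest}(a) to a uniform statement on all of $[0,T]$ is the factor $1/t$, which blows up as $t\downarrow 0$; away from the origin that estimate already does the job. The plan is therefore to split $[0,T]=[0,\delta]\cup[\delta,T]$ and to absorb the boundary layer near $t=0$ by a density argument. Fix $\Yf_0\in\Hf$ and $\varepsilon>0$. Since $D(\Af)$ is dense in $\Hf$, choose $W\in D(\Af)$ with $\|\Yf_0-W\|<\varepsilon$. Using the uniform semigroup bounds $\|e^{t\Af}\|_{\Lc(\Hf)},\|e^{t\Af_h}\|_{\Lc(\Hf_h)}\le Ce^{-\nuh t}\le C$ from Theorems~\ref{thm:ua-sem}(c) and~\ref{th-unianaly-approx}(c) together with $\|\Pi_h\|\le 1$ (Lemma~\ref{lem:prpty Pi_h}(c)), the contribution of $\Yf_0-W$ is controlled uniformly in $t$ and $h$ by
$$\|(e^{t\Af}-e^{t\Af_h}\Pi_h)(\Yf_0-W)\|\le 2C\varepsilon,$$
so it suffices to prove $\sup_{[0,T]}\|(e^{t\Af}-e^{t\Af_h}\Pi_h)W\|\to 0$ for the smoother datum $W$.

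On the region $[\delta,T]$, Theorem~\ref{pps-estSGest}(a) gives $\sup_{t\in[\delta,T]}\|(e^{t\Af}-e^{t\Af_h}\Pi_h)W\|\le (Ch^2/\delta)\|W\|$, which tends to $0$ as $h\downarrow 0$ once $\delta$ is fixed. For the boundary layer $[0,\delta]$ I would write, for $W\in D(\Af)$,
$$(e^{t\Af}-e^{t\Af_h}\Pi_h)W=(e^{t\Af}W-W)-(e^{t\Af_h}\Pi_h W-\Pi_h W)+(W-\Pi_h W),$$
and estimate the three pieces separately. Since $\Hf_h$ is finite dimensional and $\Af_h$ bounded, $e^{t\Af_h}\Pi_h W-\Pi_h W=\int_0^t e^{s\Af_h}\Af_h\Pi_h W\,ds$ and likewise $e^{t\Af}W-W=\int_0^t e^{s\Af}\Af W\,ds$; with the uniform bound on the semigroups these give $\|e^{t\Af}W-W\|\le Ct\|\Af W\|$ and $\|e^{t\Af_h}\Pi_h W-\Pi_h W\|\le Ct\|\Af_h\Pi_h W\|$. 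The third term $\|W-\Pi_h W\|\le Ch^2\|W\|_{H^2(\Omega)\times H^2(\Omega)}$ is handled by Lemma~\ref{lem:prpty Pi_h}(d) and is $t$-independent.

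The crux is the uniform-in-$h$ bound $\|\Af_h\Pi_h W\|\le C\|W\|_{H^2(\Omega)\times H^2(\Omega)}$, and this is where the only real subtlety lies. From \eqref{eqn:app A}, $\|\Af_h\Pi_h W\|=\sup_{0\ne V_h\in\Hf_h}|a(\Pi_h W,V_h)|/\|V_h\|$, and splitting $a(\Pi_h W,V_h)=a(W,V_h)+a(\Pi_h W-W,V_h)$, the first term equals $\langle-\Af W,V_h\rangle$ by \eqref{equw1} and is bounded by $\|\Af W\|\,\|V_h\|$. In the second term the dangerous contribution is the gradient part $\langle\nabla(\pi_h W_i-W_i),\nabla V_{h,i}\rangle$; here the approximation estimate $\|\nabla(\pi_h W_i-W_i)\|\le Ch\|W_i\|_{H^2(\Omega)}$ from Lemma~\ref{lem:projerr}(b) cancels the factor $h^{-1}$ from the inverse inequality $\|\nabla V_{h,i}\|\le Ch^{-1}\|V_{h,i}\|$ of Lemma~\ref{lem:inv-ineq}, leaving a bound $C\|W\|_{H^2(\Omega)\times H^2(\Omega)}\|V_h\|$ that is uniform in $h$ (the zeroth-order terms being even $O(h^2)$).

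Granting this, I would choose $\delta$ small enough that $C\delta(\|\Af W\|+\|W\|_{H^2(\Omega)\times H^2(\Omega)})<\varepsilon/2$, then $h$ small enough that both $Ch^2\|W\|_{H^2(\Omega)\times H^2(\Omega)}<\varepsilon/2$ and $(Ch^2/\delta)\|W\|<\varepsilon$; combining the two regions with the density reduction gives $\limsup_{h\downarrow 0}\sup_{[0,T]}\|(e^{t\Af}-e^{t\Af_h}\Pi_h)\Yf_0\|\le(2C+1)\varepsilon$, and letting $\varepsilon\downarrow 0$ finishes the proof. I expect the uniform bound on $\|\Af_h\Pi_h W\|$ to be the main obstacle; an alternative, less explicit route is to invoke the Trotter--Kato theorem, whose hypotheses (norm resolvent convergence from Lemma~\ref{lem:1st compresest}(a) and uniform boundedness of the semigroups) are exactly what the preceding sections supply.
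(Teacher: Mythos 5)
Your proof is correct, but it takes a genuinely different route from the paper. The paper's proof is essentially two lines: it invokes an adaptation of the Trotter--Kato theorem (\cite[Theorem 4.2, Section 3.4, Chapter 3]{Pazy}), with the hypothesis of resolvent convergence supplied by Lemma \ref{lem:1st compresest}(b) and uniform stability by Theorem \ref{th-unianaly-approx}(c) --- precisely the ``alternative, less explicit route'' you mention at the end (note the paper cites part (b), uniform convergence on $\Sigma^c(-\nuh;\theta_0)$, rather than part (a): Trotter--Kato needs the resolvents at a point $\mu$ with $\Re\mu$ strictly above the common growth bound $-\nuh$, and $\mu=-\nuh$ itself sits on the boundary, though one could pass from (a) to nearby points via the resolvent identity). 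What you do instead is re-prove the relevant sufficiency half of Trotter--Kato by hand in this concrete setting: density reduction to $W\in D(\Af)$, the split $[0,T]=[0,\delta]\cup[\delta,T]$ with Theorem \ref{pps-estSGest}(a) on $[\delta,T]$, and near $t=0$ the uniform-in-$h$ bound $\|\Af_h\Pi_h W\|\le C\|W\|_{H^2(\Omega)\times H^2(\Omega)}$, which you correctly identify as the crux and correctly establish: the inverse inequality of Lemma \ref{lem:inv-ineq} cancels the single factor of $h$ from Lemma \ref{lem:projerr}(b) in the gradient terms, and the zeroth-order terms are in fact exactly zero (not merely $O(h^2)$), since $\langle \pi_h W_i - W_i, V_{h,j}\rangle=0$ for $V_{h,j}\in V_h$ by the very definition \eqref{eqn:def of pi_h} of the $L^2$-orthogonal projection. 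What each approach buys: the paper's is shorter and leans on standard semigroup approximation theory, at the cost of the (unstated) adaptation of Trotter--Kato to the variable-space setting $(\Hf_h,\Pi_h)$; yours is self-contained, avoids that adaptation entirely, and is quantitative for smooth data --- for $W\in D(\Af)$ your two regional bounds $C\delta$ and $Ch^2/\delta$ balance at $\delta\sim h$, giving an explicit $O(h)$ rate uniformly on $[0,T]$, which the paper's qualitative argument does not record.
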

\begin{proof} Let $T>0$ be fixed. The proof relies on an adaptation of the Trotter-Kato's theorem. Using Lemma \ref{lem:1st compresest}(b), the result follows from \cite[Theorem 4.2, Section 3.4, Chapter 3]{Pazy}.
\end{proof}

\section{Feedback operator and perturbation results} \label{sec:pert}
\noindent In this section, we analyze the sufficient conditions on the perturbation under which uniform analyticity and uniform stability properties of a linear operator still hold. These results are key to carry out our analysis further. We closely follow the approach given in \n\cite[Section 4.4]{Lasiecka1}. Here we get the estimates and track the dependency of the constants in our set up.

\medskip
\noindent In view of \eqref{eqcontrol_PCE} and \eqref{eqn:app B} and Theorems \ref{thm:ua-sem}, \ref{th-unianaly-approx}, we summarize the properties satisfied by the control operators and linear operators introduced in \eqref{Aw} and \eqref{app_w} in $(\mathcal{A}_1)$:\\[1mm]
\textbf{Property $(\mathcal{A}_1).$}
\begin{itemize}
\item[(a)] The control operators $\Bf$ and $\Bf_h$ are given in \eqref{eqcontrol_PCE} and \eqref{eqn:app B} satisfy
\begin{align*}
\|\Bf\|_{\Lc(\Uf,\Hf)}\le C_B \text{ and } \|\Bf_h\|_{\Lc(\Uf,\Hf_h)}\le C_B,
\end{align*}
for some positive constant $C_B$ independent of $h$.
\item[(b)] The operator $(\Aw,D(\Aw))$ defined in \eqref{Aw} generates an analytic semigroup on $\Hf$ with $\Sigma^c(-\nuh+\omega;\theta_0)\subset \rho(\Af_{\omega})$ and satisfies the resolvent estimate 
\begin{align*}
\|R(\mu,\Aw)\|_{\Lc(\Hf)} \le \frac{C_1}{|\mu+\nuh-\omega|} \text{ for all }\mu\in \Sigma^c(-\nuh+\omega;\theta_0),\, \mu\neq -\nuh+\omega,
\end{align*}
for some positive constant $C_1$ independent of $\mu$.
\item[(c)] For all $h>0,$ $\Af_{\omega_h}$ defined in \eqref{app_w} generates a uniformly analytic semigroup on $\Hf_h$ with $\Sigma^c(-\nuh+\omega;\theta_0)\subset \cap_{h>0}\rho(\Af_{\omega_h})$ and satisfies the resolvent estimate
\begin{align*}
\|R(\mu,\Af_{\omega_h})\|_{\Lc(\Hf_h)} \le \frac{C_1}{|\mu+\nuh-\omega|} \; \text{ for all } \mu \in \Sigma^c(-\nuh+\omega;\theta_0), \, \mu \neq -\nuh+\omega,
\end{align*}
 for some positive constant $C_1$ independent of $\mu$ and $h.$ 
\item[(d)] The operators $(\Aw,D(\Aw))$ and $\Af_{\omega_h}$ defined in \eqref{Aw} and  \eqref{app_w}, respectively, satisfy 
$$\displaystyle \sup_{\mu \in \Sigma^c(-\nuh+\omega;\theta_0)} \| R(\mu,\Aw)-R(\mu,\Af_{\omega_h})\Pi_h\|_{\Lc(\Hf)} \le Ch^2,$$
for some positive $C$ independent of $\mu$ and $h.$ 
\end{itemize} 
In $(\mathcal{A}_1);$ (b), (c), and (d) hold as consequences of Theorems \ref{thm:ua-sem}, \ref{th-unianaly-approx}, and Lemma \ref{lem:1st compresest}, respectively.\\
Next, we assume uniform boundedness of perturbed operators $\wt\Ff_h\in \Lc(\Hf,\Uf)$ and $\Ff_h\in\Lc(\Hf_h,\Uf) $ in   $(\mathcal{A}_2)-(\mathcal{A}_4)$ below.

\medskip 

\noindent \textbf{Assumptions.}
\begin{itemize}
\item[($\mathcal{A}_2$).] For all $h>0$, let $\wt \Ff_h\in \Lc(\Hf,\Uf)$ be a family of operators such that $\|\wt \Ff_h\|_{\Lc(\Hf,\Uf)} \le C_2$ for some positive constant $C_2$ independent of $h.$
\item[($\mathcal{A}_3$).] For all $h>0,$ let $\Ff_h \in \Lc(\Hf_h,\Uf)$ be a family of operators such that $\|\Ff_h\|_{\Lc(\Hf_h,\Uf)} \le C_3$ for some positive constant $C_3$ independent of $h.$
\item[$(\mathcal{A}_4)$.] Let $\Ff\in \Lc(\Hf,\Uf)$ and for all $h>0$, the operator $\Ff_h\in \Lc(\Hf_h,\Uf)$ satisfy $\|\Ff-\Ff_h\|_{\Lc(\Hf_h,\Uf)}\le C h^s,$ for some $s\in (0, 2]$ and $C>0$ independent of $h.$
\end{itemize}
These assumptions will be verified in Sections \ref{sec:disRiccati}-\ref{sec:results} in our set up.
\medskip
 \noindent 
For all $h>0$, set 
\begin{align}\label{eqAwpAwhph}
\Af_{\omega,\wt\Ff_h}:=\Aw+\Bf\wt \Ff_h \text{ and } \Af_{\omega_h,\Ff_h}:=\Af_{\omega_h}+\Bf_h\Ff_h,
\end{align}
where $\Aw, \Af_{\omega_h}, \Bf, \Bf_h$ satisfy $(\mathcal{A}_1)$.
The next subsections establish the uniform analyticity and then uniform stability of the above perturbed operators provided $\wt\Ff_h$ and $\Ff_h$ satisfy suitable conditions.

\subsection{Uniform analyticity of perturbed operators}\label{sec:unianalytpert}


\noindent Let $\theta_0\in (\frac{\pi}{2},\pi)$ be as in Theorem \ref{thm:ua-sem} and 
set $\omega'\in \mathbb{R}$ such that 
\begin{align} \label{eqdef:omega'}
\omega'>-\nuh+\omega, \quad \mathrm{and}\quad |\omega'+\nuh-\omega|> \max \left\lbrace \frac{C_1 C_2C_B}{\sin(\theta_0)},  \frac{C_1C_3 C_B}{\sin(\theta_0)} \right\rbrace,
\end{align}
where the constants $C_B, C_1, C_2, C_3$ appear in $(\mathcal{A}_1)-(\mathcal{A}_3)$ (see Figure~\ref{fig:d-spec region}). 

\medskip
\noindent
Since $\omega'>-\nuh+\omega$ and $\theta_0\in (\frac{\pi}{2},\pi)$,  $(-\nuh+\omega)\notin \Sigma^c(\omega';\theta_0)$ and the definition of distance between the set $\Sigma^c(\omega';\theta_0)$ from $-\nuh+\omega$ implies
\begin{align}\label{eq:trigres}
 |\mu+\nuh-\omega| \ge \sin(\theta_0)|\omega'+\nuh-\omega| \text{ for all }\mu\in \Sigma^c(\omega';\theta_0).
\end{align}
The definition of distance from point $\mu$ to the real axis and the angle of the vector joining points $\mu$ and $\omega'$ with the real axis lead to
\begin{align}\label{eq:trigres-2}
|\mu+\nuh-\omega| \ge \sin(\theta_0)|\mu-\omega'| \text{ for all }\mu\in \Sigma^c(\omega';\theta_0).
\end{align}


\begin{Lemma}[uniform analyticity of $e^{t\Af{_{\omega_h,\Ff_h}}}$] \label{lem:uah}
Let (a) and (c) of $(\mathcal{A}_1)$ hold. Let $\omega'$ be as given in \eqref{eqdef:omega'} and $\theta_0$ be as introduced in Theorem \ref{thm:ua-sem}. Let $ \normalfont \Ff_h\in \Lc(\Hf_h,\Uf)$ be such that $(\mathcal{A}_3)$ holds and $\n \Af{_{\omega_h,\Ff_h}}$ be as defined in \eqref{eqAwpAwhph}. Then for all $h>0$, the following holds:
\begin{itemize}
\item[(a)] $\n\Sigma^c(\omega'; \theta_0)\subset \rho(\Af_{\omega_h,\Ff_h}),$ and
\item[(b)] for some $C>0$ independent of $h,$ $
\n \|R(\mu,\Af_{\omega_h,\Ff_h})\|_{\Lc(\Hf_h)}\le \frac{C}{|\mu-\omega'|}\text{ for all }\mu\in \Sigma^c(\omega'; \theta_0),\, \mu\neq \omega'.$
\end{itemize}
The operator $\n\Af_{\omega_h,\Ff_h}$ generates a uniformly analytic semigroup $\n\{e^{t\Af_{\omega_h,\Ff_h}}\}_{t\ge 0}$ on $\n\Hf_h$ satisfying 
\begin{align*}
\|  e^{t\Af_{\omega_h,\Ff_h}}\|_{\Lc(\Hf_h)}\le C e^{\omega ' t} \text{ for all } t>0, \, h>0.
\end{align*}
\end{Lemma}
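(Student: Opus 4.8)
The plan is to treat $\Af_{\omega_h,\Ff_h}=\Af_{\omega_h}+\Bf_h\Ff_h$ as a bounded perturbation of $\Af_{\omega_h}$ and to construct the resolvent of the perturbed operator by a Neumann series, keeping every constant independent of $h$. First I would fix $\mu\in\Sigma^c(\omega';\theta_0)$ with $\mu\neq\omega'$ and factor
\[
\mu\If_h - \Af_{\omega_h,\Ff_h} = (\mu\If_h - \Af_{\omega_h})\bigl(\If_h - R(\mu,\Af_{\omega_h})\Bf_h\Ff_h\bigr).
\]
This factorization is legitimate because $\omega'>-\nuh+\omega$ forces $\Sigma(-\nuh+\omega;\theta_0)\subseteq\Sigma(\omega';\theta_0)$, hence $\Sigma^c(\omega';\theta_0)\subseteq\Sigma^c(-\nuh+\omega;\theta_0)\subseteq\rho(\Af_{\omega_h})$ by $(\mathcal{A}_1)$(c), so $R(\mu,\Af_{\omega_h})$ exists on the whole sector. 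Invertibility of $\mu\If_h-\Af_{\omega_h,\Ff_h}$ then reduces to invertibility of the bracketed factor.

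The key estimate is that the perturbation term is a strict contraction, uniformly in $h$ and $\mu$. Using the resolvent bound in $(\mathcal{A}_1)$(c), the control bound $\|\Bf_h\|\le C_B$ from $(\mathcal{A}_1)$(a), and $\|\Ff_h\|\le C_3$ from $(\mathcal{A}_3)$,
\[
\|R(\mu,\Af_{\omega_h})\Bf_h\Ff_h\|_{\Lc(\Hf_h)} \le \frac{C_1 C_B C_3}{|\mu+\nuh-\omega|}.
\]
I would then invoke the geometric inequality \eqref{eq:trigres}, namely $|\mu+\nuh-\omega|\ge\sin(\theta_0)|\omega'+\nuh-\omega|$ on $\Sigma^c(\omega';\theta_0)$, together with the defining property $\sin(\theta_0)|\omega'+\nuh-\omega|>C_1C_BC_3$ built into \eqref{eqdef:omega'}, to conclude that the norm above is bounded by $q:=\frac{C_1 C_B C_3}{\sin(\theta_0)|\omega'+\nuh-\omega|}<1$, with $q$ independent of $h$ and $\mu$. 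Lemma \ref{lem:id-3}(a) then yields that the bracketed factor is invertible with inverse of norm at most $(1-q)^{-1}$, so $\mu\in\rho(\Af_{\omega_h,\Ff_h})$, proving (a), and
\[
R(\mu,\Af_{\omega_h,\Ff_h}) = \bigl(\If_h - R(\mu,\Af_{\omega_h})\Bf_h\Ff_h\bigr)^{-1}R(\mu,\Af_{\omega_h}).
\]

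For part (b) I would bound $\|R(\mu,\Af_{\omega_h,\Ff_h})\|\le(1-q)^{-1}\frac{C_1}{|\mu+\nuh-\omega|}$ and convert the denominator via the second geometric inequality \eqref{eq:trigres-2}, $|\mu+\nuh-\omega|\ge\sin(\theta_0)|\mu-\omega'|$, obtaining the claimed bound with $C:=\frac{C_1}{(1-q)\sin(\theta_0)}$, independent of $h$ and $\mu$. Since $\Hf_h$ is finite dimensional, $\Af_{\omega_h,\Ff_h}$ is automatically closed and densely defined, so (a)–(b) verify Definition \ref{def:ana-sgp} with vertex $\omega'$ and half-angle $\theta_0$ uniformly in $h$, and generation of a uniformly analytic semigroup follows exactly as in Theorem \ref{thm:ua-sem}(b). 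Finally, the growth bound $\|e^{t\Af_{\omega_h,\Ff_h}}\|\le Ce^{\omega' t}$ is obtained by repeating the contour argument of Theorem \ref{thm:ua-sem}(c): write $e^{t\Af_{\omega_h,\Ff_h}}$ as the Dunford integral over a path $\Gamma\subset\Sigma^c(\omega';\theta_0)$, substitute $\mu_1=(\mu-\omega')t$, and use the uniform estimate from (b), which factors out $e^{\omega' t}$ and leaves an absolutely convergent integral bounded independently of $h$ and $t$.

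The argument is essentially routine bounded-perturbation theory; the only genuine subtlety, and where I expect to spend the most care, is the \emph{uniformity} in $h$. This is not automatic: it hinges on the resolvent constant $C_1$ in $(\mathcal{A}_1)$(c), the control bound $C_B$ in $(\mathcal{A}_1)$(a), and the perturbation bound $C_3$ in $(\mathcal{A}_3)$ all being $h$-independent, and on the fact that $\omega'$ was chosen in \eqref{eqdef:omega'} precisely so that the contraction constant $q$ stays strictly below $1$ uniformly. I would therefore make explicit at each step that $C_1,C_B,C_3,\theta_0,\omega'$ do not depend on $h$, so that $q$ and the final constant $C$ inherit this independence.
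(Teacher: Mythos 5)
Your proposal is correct and takes essentially the same route as the paper's proof: the same factorization through the perturbation term $R(\mu,\Af_{\omega_h})\Bf_h\Ff_h$, the same uniform contraction bound (your $q$ is the paper's $\delta_0'$) obtained from $(\mathcal{A}_1)$, $(\mathcal{A}_3)$, \eqref{eq:trigres} and the choice of $\omega'$ in \eqref{eqdef:omega'}, the same conversion of the denominator via \eqref{eq:trigres-2}, and the same contour argument from Theorem \ref{thm:ua-sem} for uniform analyticity and the growth bound $Ce^{\omega' t}$. The only cosmetic difference is that you apply the Neumann-series statement Lemma \ref{lem:id-3}(a) directly to your explicit factorization, whereas the paper routes the invertibility step through Lemma \ref{lem:id-3}(b) with $\mathfrak{T}_1=\mu\If_h-\Af_{\omega_h}$ and $\mathfrak{T}_2=\mu\If_h-\Af_{\omega_h,\Ff_h}$; these are the same argument.
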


\begin{proof}
Note that $\Sigma^c(\omega';\theta_0)\subset \Sigma^c(-\nuh+\omega;\theta_0)\subset \rho(\Af_{\omega_h})$ for all $h>0$ (see Figure \ref{fig:d-spec region}). Thus, for any $\mu  \in \Sigma^c(\omega';\theta_0)$, using $(a),$ $(c)$ of $(\mathcal{A}_1)$, $(\mathcal{A}_3)$, \eqref{eq:trigres}, and  \eqref{eqdef:omega'}, we have
\begin{equation} \label{eqn:estR(m,Awhkh)BhKh}
\begin{aligned}
\|R(\mu,\Af_{\omega_h})\Bf_h\Ff_h\|_{\Lc(\Hf_h)} & \leq \|R(\mu,\Af_{\omega_h})\|_{\Lc(\Hf_h)} \|\Bf_h\|_{\Lc(\Uf,\Hf_h)} \|\Ff_h\|_{\Lc(\Hf_h,\Uf)} \\
& \leq \frac{C_1 C_BC_3}{\vert \mu+\nuh-\omega\vert} \le \frac{C_1  C_B C_3}{\sin(\theta_0)\vert \omega'+\nuh-\omega\vert}=:\delta_0'<1\; \text{ for all } \mu\in \Sigma^c(\omega';\theta_0).
\end{aligned}
\end{equation}
Therefore, for all $\mu\in \Sigma^c(\omega';\theta_0),$ with $\mathfrak{T}_1=\mu\If_h-\Af_{\omega_h}$ and $\mathfrak{T}_2=(\mu\If_h-\Af_{\omega_h,\Ff_h}),$ Lemma \ref{lem:id-3}(b) leads to the existence of $R(\mu,\Af_{\omega_h,\Ff_h})$ in $\Lc(\Hf_h)$ with
\begin{align*}
R(\mu,\Af_{\omega_h,\Ff_h})=(\If_h -  R(\mu,\Af_{\omega_h})\Bf_h\Ff_h)^{-1} R(\mu,\Af_{\omega_h}),
\end{align*}
and hence $ \Sigma^c(\omega';\theta_0)\subset \rho(\Af_{\omega_h,\Ff_h})$ for all $h>0.$ Applying \eqref{id-1} for $\mathfrak{T} = R(\mu,\Af_{\omega_h})\Bf_h\Ff_h$, $(c)$ of $(\mathcal{A}_1)$, \eqref{eqn:estR(m,Awhkh)BhKh}, and \eqref{eq:trigres-2}, for all $h>0,$ we obtain
\begin{align*}
\|R(\mu,\Af{_{\omega_h,\Ff_h}})\|_{\Lc(\Hf_h)} & \leq \frac{1}{1-\|R(\mu,\Af_{\omega_h})\Bf_h\Ff_h\|_{\Lc(\Hf_h)}} \|R(\mu,\Af_{\omega_h})\|_{\Lc(\Hf_h)} \\
& \leq \frac{1}{1-\delta_0'} \frac{C_1}{|\mu+\nuh-\omega|} \le \frac{1}{1-\delta_0'} \frac{C_1}{\sin(\theta_0)|\mu-\omega'|} \text{ for all } \mu \;(\neq \omega') \in \Sigma^c(\omega';\theta_0).
\end{align*}
Since, the constants appearing in the last displayed estimates are independent of $\mu$ and $h,$ Definition \ref{def:ana-sgp} and arguments in the proof of Theorem \ref{thm:ua-sem}(b) conclude the proof.
\end{proof}


\begin{figure}[ht!]
	\begin{center}
		\begin{tikzpicture}
		    \path[draw, fill=gray!20]         (-2.8,-5)--(-1.7,0)--(-2.8,5)--(6,0)--cycle;
		    
		    \draw[thick,->,dashed] (-2.8,-5)--(-3.3,-5.3);
			\draw[thick,->,dashed] (-2.8,5)--(-3.3,5.3);
		    
		    \draw[thick,<->,dashed] (-7,0)--(7.5,0);
			\draw[thick,<->,dashed] (0,-5.5)--(0,5.5);
			
			\draw[thick,dashed,blue,->] (2,0)--(-4,1.8);
			\draw[thick,dashed,blue,->] (2,0)--(-4,-1.8);
			
			\draw[thick,dashed,blue,->] (6,0)--(-3.5,4.3);
			\draw[thick,dashed,blue,->] (6,0)--(-3.5,-4.3);
			
			\draw[thick,dashed,orange,->] (-2.5,0)--(-6,1.2);
			\draw[thick,dashed,orange,->] (-2.5,0)--(-6,-1.2);
			\draw[orange] (-2,0) arc (0:153:0.5cm);
			\node[orange] at (-2.2,0.7){$\theta_P$};
			
			
			
			\draw[<->] (-2.8,3) arc (135:230:4cm);
			\node[] at (-2.1,-1.3){{\huge$\Sigma(\omega';\theta_0)$}};
			
			\draw[dashed,blue] (2.5,0) arc (0:150:0.5cm);
			\node[blue] at (2.6,0.4){$\theta_0$};			
			\draw[thick,->] (4.5,0)--(-4,3.5);
			\draw[thick,,->] (4.5,0)--(-4,-3.5);
			\node[] at (4.5,-0.2){$\omega'$};
			\node[] at (6,-0.3){$\omega'+\ell$};
			
			\draw[] (5,0) arc (0:153:0.5cm);
			\node[] at (5.1,0.4){$\theta_0$};
			
			\draw[dashed,blue] (6.5,0) arc (0:153:0.5cm);
			\node[blue] at (6.6,0.4){$\theta_0$};
			
			\draw[] (7,0) arc (0:148:1cm);
			\node[] at (6.8,1){$\wt\theta_0$};
			
			\draw[] (-1,0) arc (0:133:0.5cm);
			\node[] at (-0.9,0.4){$\phi_0$};
			
			\node[] at (-1.7,0.2){$-\wh\gamma$};
			\node[] at (-1.7,0){$\bullet$};
			\node[red] at (-2.5,-0.3){$-\gamma$};
			\node[red] at (-2.5,0){$\bullet$};
			
			\node[] at (-0.2,-0.2){O};
			\node[] at (-2.4,3.1){$P$};
			\node[] at (-2.4,-3.1){$Q$};
			\node[] at (-1.6,-0.2){$R$};
			\node[blue] at (2.2,-0.2){$-\nuh+\omega$};
			
			\node[] at (-0.3,2){{\Huge$\mathcal{K}_{\wh\gamma}$}};
\end{tikzpicture}
\end{center}
\caption{Spectrum region for $\Aw{_{,\Ff}}$ and $\Af{_{\omega_h,\Ff_h}}$}  \label{fig:d-spec region}
\end{figure}



\begin{Lemma}[uniform analyticity of $e^{t\Af{_{\omega,\wt \Ff_h}}}$]\label{lem:uah-AwKh}
Let (a) and (b) of $(\mathcal{A}_1)$ hold. Let $\omega'$ be as given in \eqref{eqdef:omega'} and $\theta_0$ be as in Theorem \ref{thm:ua-sem}. Let  $ \wt \Ff_h\in \Lc(\Hf,\Uf)$ be a family of bounded operators such that $(\mathcal{A}_2)$ holds  and  $\Af_{\omega,\wt \Ff_h}$ be as defined in \eqref{eqAwpAwhph}.  Then for all $h>0,$ the following holds:
\begin{itemize}
    \item[(a)]  $\Sigma^c(\omega';\theta_0)\subset \rho(\Af_{\omega,{\wt\Ff_h}}),$
    \item[(b)] for some $C>0$ independent of $h,$ $\|R(\mu, \Af_{\omega,{\wt \Ff_h}})\|_{\Lc(\Hf)} \le \frac{C}{|\mu-\omega'|} \text{ for all } \mu (\neq \omega')\in \Sigma^c(\omega';\theta_0).$
\end{itemize}
The operator $\n\Af_{\omega,\wt \Ff_h}$ generates 
a uniformly analytic semigroup $\n\{e^{t\Af_{\omega,\wt \Ff_h}}\}_{t\ge 0}$ on $\n\Hf$ satisfying 
\begin{align*}
\|  e^{t\Af_{\omega,\wt \Ff_h}}\|_{\Lc(\Hf)}\le C e^{\omega' t} \text{ for all } t>0, \, h>0.
\end{align*}
\end{Lemma}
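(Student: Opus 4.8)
The plan is to mirror the perturbation argument already carried out for Lemma \ref{lem:uah}, now executed in the continuous space $\Hf$ with the roles of $(\mathcal{A}_1)$(c) and $(\mathcal{A}_3)$ replaced by $(\mathcal{A}_1)$(b) and $(\mathcal{A}_2)$. First I would observe that, since $\omega'>-\nuh+\omega$ and $\theta_0\in(\frac{\pi}{2},\pi)$, one has the inclusion $\Sigma^c(\omega';\theta_0)\subset \Sigma^c(-\nuh+\omega;\theta_0)\subset\rho(\Aw)$ (as depicted in Figure \ref{fig:d-spec region}), so that $R(\mu,\Aw)$ is available and obeys the resolvent bound of $(\mathcal{A}_1)$(b) for every $\mu\in\Sigma^c(\omega';\theta_0)$.

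The key step is to show that the bounded perturbation $\Bf\wt\Ff_h$ is small relative to the resolvent, uniformly in $h$. Combining $(\mathcal{A}_1)$(a), $(\mathcal{A}_1)$(b), $(\mathcal{A}_2)$, and the geometric inequality \eqref{eq:trigres}, I would estimate
\[
\|R(\mu,\Aw)\Bf\wt\Ff_h\|_{\Lc(\Hf)} \le \frac{C_1 C_B C_2}{|\mu+\nuh-\omega|} \le \frac{C_1 C_B C_2}{\sin(\theta_0)\,|\omega'+\nuh-\omega|}=:\delta_0,
\]
and the defining inequality \eqref{eqdef:omega'} for $\omega'$ forces $\delta_0<1$; this bound holds uniformly for $\mu\in\Sigma^c(\omega';\theta_0)$ and for all $h>0$, because $C_1$ is $\mu$-independent while $C_2$ and $C_B$ are $h$-independent.

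With this in hand I would invoke Lemma \ref{lem:id-3}(b) with $\mathfrak{T}_1=\mu\If-\Aw$ and $\mathfrak{T}_2=\mu\If-\Af_{\omega,\wt\Ff_h}$; since $\mathfrak{T}_2-\mathfrak{T}_1=-\Bf\wt\Ff_h$ and $\|\mathfrak{T}_1^{-1}(\mathfrak{T}_2-\mathfrak{T}_1)\|_{\Lc(\Hf)}=\delta_0<1$, the operator $\mu\If-\Af_{\omega,\wt\Ff_h}$ is boundedly invertible, which gives part (a) together with the representation $R(\mu,\Af_{\omega,\wt\Ff_h})=(\If-R(\mu,\Aw)\Bf\wt\Ff_h)^{-1}R(\mu,\Aw)$. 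Then the first identity in \eqref{id-1} applied with $\mathfrak{T}=R(\mu,\Aw)\Bf\wt\Ff_h$, followed by $(\mathcal{A}_1)$(b) and the second geometric inequality \eqref{eq:trigres-2}, yields
\[
\|R(\mu,\Af_{\omega,\wt\Ff_h})\|_{\Lc(\Hf)} \le \frac{1}{1-\delta_0}\,\frac{C_1}{|\mu+\nuh-\omega|}\le \frac{C_1}{(1-\delta_0)\sin(\theta_0)\,|\mu-\omega'|},
\]
which is part (b) with a constant independent of $h$. Finally, since the sector $\Sigma(\omega';\theta_0)$ and all constants above are $h$-independent, Definition \ref{def:ana-sgp} together with the integral-representation argument of Theorem \ref{thm:ua-sem}(b)--(c) produces a uniformly analytic semigroup with the stated growth $\|e^{t\Af_{\omega,\wt\Ff_h}}\|_{\Lc(\Hf)}\le Ce^{\omega't}$.

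Because the whole argument reduces to the Neumann-series perturbation already performed in Lemma \ref{lem:uah}, I do not expect a genuine obstacle here. The only point demanding care is preserving uniformity in $h$, and this is guaranteed precisely because $\omega'$ and $\theta_0$ are fixed once and for all in \eqref{eqdef:omega'} through the $h$-independent constants supplied by $(\mathcal{A}_1)$--$(\mathcal{A}_2)$, so that $\delta_0$ and the final resolvent constant never degenerate as $h\downarrow 0$.
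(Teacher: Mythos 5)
Your proposal is correct and is exactly the argument the paper intends: the paper skips this proof, stating only that it is ``analogous to that of Lemma \ref{lem:uah} using (b) of $(\mathcal{A}_1)$,'' and your write-up carries out precisely that analogy --- the smallness estimate $\delta_0<1$ from \eqref{eqdef:omega'}, \eqref{eq:trigres}, and $(\mathcal{A}_1)$(a)--(b), $(\mathcal{A}_2)$, then Lemma \ref{lem:id-3}(b) and \eqref{id-1} with \eqref{eq:trigres-2} for the resolvent bound, and the contour-integral argument of Theorem \ref{thm:ua-sem} for uniform analyticity. Your attention to the $h$-uniformity of $\delta_0$ and of the final constants is exactly the point the paper relies on.
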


\noindent The proof is analogous to that of Lemma \ref{lem:uah} using (b) of $(\mathcal{A}_1)$ and is skipped.\qed

\medskip
\noindent In view of Lemmas \ref{lem:uah} and \ref{lem:uah-AwKh}, note that 
\begin{align}\label{eqn:allResol}
\Sigma^c(\omega'; \theta_0)\subset \rho(\Af_{\omega,\wt \Ff_h}) \cap\big\lbrace \displaystyle\cap_{h>0}\rho(\Af{_{\omega_h,\Ff_h}})\big\rbrace\cap \rho(\Aw)\cap \big\lbrace \displaystyle\cap_{h>0}\rho(\Af_{\omega_h})\big\rbrace,
\end{align}
(see Figure \ref{fig:d-spec region}).

\medskip
\noindent Next, we establish the convergence of the resolvent operators of $\Af_{\omega,\wt \Ff_h}$ and $\Af_{\omega_h,\Ff_h}$ under the hypothesis $(\mathcal{A}_4).$ This result is crucial to establish the stabilizability of the discrete system and error estimates. 
 
\begin{Lemma}[convergence of the perturbed operator] \label{lem:cvg-res-rc}
Let $(\mathcal{A}_1)$ hold, and $\omega'$ and $\theta_0$ be as defined in \eqref{eqdef:omega'} and Theorem \ref{thm:ua-sem}, respectively. 
Let for all $h>0,$ $\Ff_h\in \Lc(\Hf_h,\Uf)$ satisfy $(\mathcal{A}_3)$ and $\n\Af{_{\omega_h,\Ff_h}}$ be as defined in \eqref{eqAwpAwhph}. For all $h>0$, there exists a positive constant $C$, independent of $h$ and $\mu,$ such that the following results hold:\\
 (a) Let $\Ff\in \Lc(\Hf,\Uf)$ such that $\|\Ff\|_{\Lc(\Hf, \Uf)}\le C_2$ and $(\mathcal{A}_4)$ hold. Then for  $\Af_{\omega,\Ff}:=\Aw+\Bf\Ff,$  
\begin{equation}
\n\sup_{\mu\in \Sigma^c(\omega'; \theta_0)} \left\|R(\mu,\Af_{\omega,\Ff})-R(\mu,\Af_{\omega_h,\Ff_h})\Pi_h\right\|_{\Lc(\Hf)}\le Ch^s,
\end{equation}
where $s\in (0, 2]$ is the same as in $(\mathcal{A}_4)$. \\
(b) Let $\wt \Ff_h=\Ff_h\Pi_h$ and $\n\Aw{_{,\wt \Ff_h}}$ be as defined in \eqref{eqAwpAwhph}. Then 
\begin{equation}
\n\sup_{\mu\in \Sigma^c(\omega'; \theta_0)} \left\|R(\mu,\Af_{\omega,\wt \Ff_h})-R(\mu,\Af_{\omega_h,\Ff_h})\Pi_h\right\|_{\Lc(\Hf)}\le Ch^2.
\end{equation}
\end{Lemma}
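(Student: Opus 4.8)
The plan is to establish both estimates from a single algebraic identity obtained from the second resolvent formula, rearranged so that the error operator is recovered by inverting a uniform contraction. Throughout write $S:=R(\mu,\Aw)$ and $S_h:=R(\mu,\Af_{\omega_h})$, and recall that the only two sources of smallness available are $(\mathcal{A}_1)(d)$, giving $\|S-S_h\Pi_h\|_{\Lc(\Hf)}\le Ch^2$, and $(\mathcal{A}_4)$, giving $\|(\Ff-\Ff_h)\|_{\Lc(\Hf_h,\Uf)}\le Ch^s$. First I would record uniform bounds on $\Sigma^c(\omega';\theta_0)$: by \eqref{eq:trigres} one has $|\mu+\nuh-\omega|\ge\sin(\theta_0)|\omega'+\nuh-\omega|$, so $(\mathcal{A}_1)(b)$--$(c)$ bound $\|S\|,\|S_h\|$ by $M:=C_1/(\sin(\theta_0)|\omega'+\nuh-\omega|)$ uniformly in $\mu$ and $h$; the same route through Lemmas~\ref{lem:uah} and \ref{lem:uah-AwKh} bounds $\|R(\mu,\Af_{\omega_h,\Ff_h})\|$, $\|R(\mu,\Af_{\omega,\Ff})\|$ and $\|R(\mu,\Af_{\omega,\wt\Ff_h})\|$ by a common constant $M_T$ (finite even at $\mu=\omega'$, since one estimates through \eqref{eq:trigres} rather than the decaying bound \eqref{eq:trigres-2}). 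Finally, by the choice \eqref{eqdef:omega'} of $\omega'$ and the computation in \eqref{eqn:estR(m,Awhkh)BhKh}, each of $S\Bf\Ff$, $S\Bf\Ff_h\Pi_h$ and $S_h\Bf_h\Ff_h$ has norm $\le\delta<1$ uniformly, so the factors $(\If-S\Bf\Ff)$ and $(\If-S\Bf\Ff_h\Pi_h)$ are boundedly invertible.

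I would do part (b) first, as it is cleaner. Put $T:=R(\mu,\Af_{\omega,\wt\Ff_h})$ with $\wt\Ff_h=\Ff_h\Pi_h$ and $T_h:=R(\mu,\Af_{\omega_h,\Ff_h})$. Using $T=S+S\Bf\wt\Ff_h T$ and $T_h=S_h+S_h\Bf_h\Ff_h T_h$, together with $\Bf_h=\Pi_h\Bf$ and $\Pi_hT_h\Pi_h=T_h\Pi_h$, inserting and subtracting $S\Bf\Ff_h\Pi_h T_h\Pi_h$ yields
\[
(\If-S\Bf\Ff_h\Pi_h)\big(T-T_h\Pi_h\big)=\big(S-S_h\Pi_h\big)\big(\If+\Bf\Ff_h T_h\Pi_h\big).
\]
By $(\mathcal{A}_1)(a),(d)$ and $(\mathcal{A}_3)$ the right-hand side has norm $\le Ch^2(1+C_BC_3M_T)$, and inverting the contraction on the left gives $\|T-T_h\Pi_h\|_{\Lc(\Hf)}\le Ch^2$ uniformly, which is (b). The point is that no $\Ff-\Ff_h$ term survives, so the full rate $h^2$ is attained.

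For (a) take $T:=R(\mu,\Af_{\omega,\Ff})$ and the same $T_h$. Repeating the manipulation, now inserting $S\Bf\Ff\,T_h\Pi_h$ and then $S\Bf\Ff_h T_h\Pi_h$, produces
\[
(\If-S\Bf\Ff)\big(T-T_h\Pi_h\big)=\big(S-S_h\Pi_h\big)+\big[S\Bf(\Ff-\Ff_h)+(S-S_h\Pi_h)\Bf\Ff_h\big]T_h\Pi_h.
\]
The decisive feature is that $\Ff-\Ff_h$ is multiplied on the right by $T_h\Pi_h$, whose range lies in $\Hf_h$; hence $\|(\Ff-\Ff_h)T_h\Pi_h\|\le\|(\Ff-\Ff_h)\|_{\Lc(\Hf_h,\Uf)}M_T\le Ch^sM_T$ by $(\mathcal{A}_4)$. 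Together with the two $O(h^2)$ terms and $s\le 2$, the right-hand side is $O(h^s)$; inverting the uniform contraction $\If-S\Bf\Ff$ then gives $\|T-T_h\Pi_h\|_{\Lc(\Hf)}\le Ch^s$, proving (a).

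The main obstacle is the operator bookkeeping that confines $\Ff-\Ff_h$ to act only on $\Hf_h$. The tempting shortcut of comparing $\Af_{\omega,\Ff}$ directly with $\Af_{\omega,\Ff_h\Pi_h}$ and bounding $\|\Ff-\Ff_h\Pi_h\|_{\Lc(\Hf,\Uf)}$ fails: since $\Ff-\Ff_h\Pi_h=\Ff(\If-\Pi_h)+(\Ff-\Ff_h)\Pi_h$ and $\If-\Pi_h$ converges to $0$ only strongly on $\Hf$ (with rate $h^2$ only on $D(\Af)$, by Lemma~\ref{lem:prpty Pi_h}(d)), the summand $\Ff(\If-\Pi_h)$ does not tend to $0$ in $\Lc(\Hf,\Uf)$. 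The rearrangements above are engineered precisely to keep $\Ff-\Ff_h$ paired with an operator of range in $\Hf_h$, and the only remaining care is to keep all resolvent bounds uniform up to $\mu=\omega'$ through \eqref{eq:trigres}, which is what makes the suprema over $\Sigma^c(\omega';\theta_0)$ finite.
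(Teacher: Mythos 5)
Your proposal is correct and takes essentially the same route as the paper: both are Neumann-series perturbation arguments resting on the factorization of $R(\mu,\Af_{\omega,\Ff})$ and $R(\mu,\Af_{\omega_h,\Ff_h})$ through $\bigl(\If-R(\mu,\Aw)\Bf\Ff\bigr)^{-1}$-type contractions (uniformly invertible by \eqref{eqdef:omega'} and \eqref{eq:trigres}), on the same two smallness inputs ($(\mathcal{A}_1)$(d) for the resolvent error and $(\mathcal{A}_4)$ for the feedback error), and on the same decisive bookkeeping that $\Ff-\Ff_h$ is only ever applied to operators with range in $\Hf_h$. Your single pre-multiplied identity is an algebraic repackaging of the paper's $T_1+T_2$ splitting (which instead invokes the difference-of-inverses identity of Lemma \ref{lem:id-3} and bounds $(\If_h-R(\mu,\Af_{\omega_h})\Bf_h\Ff_h)^{-1}$ and $R(\mu,\Af_{\omega_h})\Pi_h$ separately, exactly the content of your uniform bound $M_T$ on $R(\mu,\Af_{\omega_h,\Ff_h})$), and your treatment of (b), where no $\Ff-\Ff_h$ term survives, matches the paper's observation that $\wt\Ff_h=\Ff_h$ on $\Hf_h$ makes $(\mathcal{A}_4)$ hold with $s=2$.
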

\begin{proof}
(a) The proof is established in the following three steps.\\[1mm]
\textbf{Step 1.} Let $\mu \in \Sigma^c(\omega'; \theta_0)$ be arbitrary. Apply \eqref{eqn:allResol} for $\wt\Ff_h=\Ff,$ Lemma \ref{lem:id-3}(b) with $\mathfrak{T}_1=\mu\If-\Af_{\omega}$ and $\mathfrak{T}_2=\mu\If-\Af_{\omega,\Ff}$ to obtain
$R(\mu,\Af_{\omega,\Ff})=(\If-R(\mu,\Aw)\Bf\Ff)^{-1}$ $R(\mu,\Aw) $ and similarly $
		R(\mu,\Af{_{\omega_h,\Ff_h}})=(\If_h-R(\mu,\Af_{\omega_h})\Bf_h\Ff_h)^{-1}R(\mu,\Af_{\omega_h})$. Thus, an addition and  subtraction of $(\If-R(\mu,\Aw)\Bf\Ff)^{-1}R(\mu,\Af_{\omega_h})\Pi_h$ yields
\begin{align*}
R(\mu,\Af_{\omega,\Ff})-R(\mu,\Af{_{\omega_h,\Ff_h}})\Pi_h &  =: T_1+T_2,
\end{align*}
where
	\begin{align*}
	T_1& :=\left(\If-R(\mu,\Aw)\Bf\Ff)\right)^{-1}\left(R(\mu,\Aw)-R(\mu,\Af_{\omega_h})\Pi_h\right) \text{ and }\\
	T_2 & :=\Big(\big(\If-R(\mu,\Aw)\Bf\Ff\big)^{-1}-\big(\If_h-R(\mu,\Af_{\omega_h})\Bf_h\Ff_h\big)^{-1}\Big)R(\mu,\Af_{\omega_h})\Pi_h.
	\end{align*}

\noindent The aim is to show  $\|T_i\|_{\Lc(\Hf)}\le Ch^s,$ for each $i=1,2$ and for any $0<s\le 2.$\\[1mm]
\textbf{Step 2.} Using $(a)$ and $(b)$ of $(\mathcal{A}_1),$ $\|\Ff\|_{\Lc(\Hf,\Uf)}\le C_2$, \eqref{eq:trigres}, and \eqref{eqdef:omega'}, we have
\begin{equation} \label{eqn:estR(m,Awk)BK}
\begin{aligned}
\|R(\mu,\Af_{\omega})\Bf\Ff\|_{\Lc(\Hf)} & \leq \|R(\mu,\Af_{\omega})\|_{\Lc(\Hf)} \|\Bf\|_{\Lc(\Uf,\Hf)} \|\Ff\|_{\Lc(\Hf,\Uf)} \\
& \leq \frac{C_1 C_BC_2}{\vert \mu+\nuh-\omega\vert} \le \frac{C_1  C_B C_2}{\sin(\theta_0)\vert \omega'+\nuh-\omega\vert}=:\delta_0'<1\; \text{ for all } \mu\in \Sigma^c(\omega';\theta_0).
\end{aligned}
\end{equation}
Apply \eqref{id-1} with $\mathfrak{T}=R(\mu,\Aw)\Bf\Ff$ and  use \eqref{eqn:estR(m,Awk)BK} to obtain,
\begin{align}\label{unibound41}
\|\left(\If-R(\mu,\Aw)\Bf\Ff\right)^{-1}\|_{\Lc(\Hf)} \le \frac{1}{1-\delta_0'}\, \text{ for all }\mu\in \Sigma^c(\omega';\theta_0).
\end{align}
Thus, $(d)$ of $(\mathcal{A}_1)$ and \eqref{unibound41} leads to $\|T_1\|_{\Lc(\Hf)}\le Ch^2 $ uniform for all $ \mu\in \Sigma^c(\omega';\theta_0).$\\[1mm]
\textbf{Step 3.} Re-write $T_2$ as
\begin{equation} \label{eqn:T_2}
\begin{aligned}
    T_2 & =\Big(\big(\If-R(\mu,\Aw)\Bf\Ff\big)^{-1} \left( \big(\If_h-R(\mu,\Af_{\omega_h})\Bf_h\Ff_h\big) - \big(\If-R(\mu,\Aw)\Bf\Ff\big) \right) \big(\If_h-R(\mu,\Af_{\omega_h})\Bf_h\Ff_h\big)^{-1}\Big)\\
    & \qquad \qquad \times R(\mu,\Af_{\omega_h})\Pi_h \\
    & = \Big(\big(\If-R(\mu,\Aw)\Bf\Ff\big)^{-1} \left( R(\mu,\Aw)\Bf\Ff-R(\mu,\Af_{\omega_h})\Bf_h\Ff_h \right) \big(\If_h-R(\mu,\Af_{\omega_h})\Bf_h\Ff_h\big)^{-1}\Big)R(\mu,\Af_{\omega_h})\Pi_h.
\end{aligned}
\end{equation}
Utilize \eqref{eqn:estR(m,Awhkh)BhKh} and \eqref{id-1} with $\mathfrak{T}=R(\mu,\Af_{\omega_h})\Bf_h\Ff_h$ to obtain
\begin{align}\label{eqn:req-1_T2}
    \|\left(\If_h-R(\mu,\Af_{\omega_h})\Bf_h\Ff_h\right)^{-1}\|_{\Lc(\Hf_h)} \le \frac{1}{1-\delta_0'} \text{ for all }\mu\in \Sigma^c(\omega';\theta_0).
\end{align}
For all $\mu\in \Sigma^c(\omega';\theta_0),$ $(c)$ of $(\mathcal{A}_1)$ and \eqref{eq:trigres}  lead to
\begin{align}\label{eqn:req-2_T2}
    \|R(\mu,\Af_{\omega_h})\Pi_h\|_{\Lc(\Hf)}\le \frac{C_1}{|\mu+\nuh-\omega|}\le \frac{C_1}{\sin(\theta_0)|\omega'+\nuh-\omega|}.
\end{align}
An addition and subtraction of $R(\mu,\Aw)\Bf\Ff_h$ followed by a triangle inequality yields
\begin{align*}
\|R(\mu,\Aw)\Bf\Ff-R(\mu,\Af_{\omega_h})\Bf_h\Ff_h\|_{\Lc(\Hf_h,\Hf)} 		& \leq \|R(\mu,\Aw)\Bf(\Ff_h-\Ff)\|_{\Lc(\Hf_h,\Hf)} \\
			& \quad + \|\big(R(\mu,\Aw)\Bf-R(\mu,\Af_{\omega_h})\Pi_h\Bf\big)\Ff_h\|_{\Lc(\Hf_h)}.
\end{align*}
A use of $(a)-(b)$ of $(\mathcal{A}_1),$ \eqref{eq:trigres}, and $(\mathcal{A}_3)$ in the above inequality yield
\begin{equation} \label{eqn:req-3_T3}
\begin{aligned}
\|R(\mu,\Aw)\Bf\Ff-R(\mu,\Af_{\omega_h})\Bf_h\Ff_h\|_{\Lc(\Hf_h,\Hf)} 		&  \le \frac{C_1C_B}{\sin(\theta_0)|\omega'+\nuh-\omega|}\|\Ff_h-\Ff\|_{\Lc(\Hf_h,\Uf)}  \\
			& \quad+ C_BC_3\|R(\mu,\Aw)-R(\mu,\Af_{\omega_h})\Pi_h\|_{\Lc(\Hf)}.
\end{aligned}
\end{equation} 
Utilize \eqref{unibound41}, \eqref{eqn:req-1_T2}, \eqref{eqn:req-2_T2}, and \eqref{eqn:req-3_T3} with $(\mathcal{A}_4)$ and $(d)$ of $(\mathcal{A}_1)$ in \eqref{eqn:T_2} to obtain the estimate  $\|T_2\|_{\Lc(\Hf)}\le Ch^s.$ A combination of Steps (1) - (3) concludes the proof of (a). \\[2mm]
\noindent (b) Since $\wt\Ff_h=\Ff_h\Pi_h$ and $\Ff_h$ satisfies $(\mathcal{A}_3),$ $\wt\Ff_h$ satisfies $(\mathcal{A}_2)$ with constant $C_3$ and $\wt\Ff_h=\Ff_h$ on $\Hf_h$. Consequently, $(\mathcal{A}_4),$
holds for $s=2$. Hence, we conclude the proof by arguments analogous to (a).
\end{proof}

\subsection{Uniform Stability of perturbed operators}\label{sec:pertstab}

\noindent 
Here, under additional assumption $(\mathcal{A}_5)$ below and for suitable perturbation, we establish a sharper estimate for the upper bound of the spectrum of the perturbed operators.\\[1mm] 
\noindent \textbf{Assumption.}
\begin{itemize}
\item[$(\mathcal{A}_5)$.] Let $\Ff\in \Lc(\Hf, \Uf)$ be such that $(\Af_\omega+\Bf \Ff)$ is exponentially stable in $\Hf$. In particular, denoting $\Af_{\omega,\Ff}:=\Aw+\Bf \Ff$, there exist positive constants $C$ and $\gamma$ such that $\|e^{t\Af_{\omega,\Ff}}\|_{\Lc(\Hf)}\le Ce^{-\gamma t} \text{ for all } t>0,$
and 
$\displaystyle\sup_{\Lambda\in \sigma(\Af_{\omega,\Ff})} \Re(\Lambda) \le -\gamma.$ There exists $\theta_P \in (\frac{\pi}{2}, \pi)$ such that $\Sigma^c(-\gamma;\theta_P)\subset \rho(\Af_{\omega,\Ff}).$
 \end{itemize}
Let $(\mathcal{A}_1)$ and $(\mathcal{A}_3)-(\mathcal{A}_5)$ hold. For any $\wh\gamma\in (0, \gamma),$ we want to show that there exists $h_{\wh\gamma}>0$ such that
$$\displaystyle \sup_{\Lambda\in \sigma(\Af_{\omega_h,\Ff_h})} \Re(\Lambda) \le -\wh\gamma, \text{ for all } 0<h<h_{\wh\gamma},$$
and that the uniform stability  estimate for $e^{t\Af_{\omega_h,\Ff_h}}$ holds with the exponential decay $-\wh\gamma.$
In particular, we want to determine a sector $\Sigma(-\wh\gamma;\phi_0)$ for some $\frac{\pi}{2}<\phi_0<\pi$ such that $\Sigma^c(-\wh\gamma;\phi_0)\subset \rho(\Af_{\omega_h,\Ff_h})$, for all $0<h<h_{\wh\gamma}$, and a uniform resolvent estimate holds. Let $\omega',$ $\theta_0,$ and $\theta_P$ be as in \eqref{eqdef:omega'}, Lemma \ref{lem:uah}, and $(\mathcal{A}_5),$ respectively. Let $\wt\theta_0:=\min\{\theta_0,\theta_P\}$ and fix 
\begin{align}\label{eqn:phi_0}
    \frac{\pi}{2}<\phi_0< \wt\theta_0,
\end{align}
(see Figure \ref{fig:d-spec region}).
Now, for any $\wh\gamma\in(0,\gamma)$ and $\phi_0$ as above, we consider $\Sigma^c(-\wh\gamma;\phi_0)$. Because of the choice of $\phi_0$, for any $\wh\gamma\in (0, \gamma)$, 
$ \Sigma^c(-\wh\gamma;\phi_0)\subset \overline{\Sigma^c(-\wh\gamma;\phi_0)}\subset \Sigma^c(-\gamma;\theta_P)\subset \rho(\Af_{\omega,\Ff}).$
Next, to prove $\Sigma^c(-\wh\gamma;\phi_0)\subset \rho(\Af_{\omega_h,\Ff_h})$, for all $0<h<h_{\wh\gamma}$, we set 
\begin{equation}\label{eqcompactset}
\mathcal{K}_{\wh\gamma}=\overline{\Sigma(\omega'+\ell;\wt\theta_0)\cap  \Sigma^c(-\wh\gamma;\phi_0)},
\end{equation}
(shaded region in Figure \ref{fig:d-spec region}) for any $\ell>0$, where $\phi_0$ is as in \eqref{eqn:phi_0}. Note that $\Sigma^c(-\wh\gamma;\phi_0) \subset \mathcal{K}_{\wh\gamma}\cup \Sigma^c(\omega'+\ell;\wt\theta_0)$ and 
$\Sigma^c(\omega'+\ell;\wt\theta_0) \subset \Sigma^c(\omega'+\ell;\theta_0)\subset \Sigma^c(\omega';\theta_0),$ since $\frac{\pi}{2}< \wt\theta_0\le \theta_0$ and $\ell>0$. Therefore, for all $h>0,$ Lemma \ref{lem:uah} yields  $\Sigma^c(\omega'+\ell;\wt\theta_0)\subset \rho(\Af_{\omega_h,\Ff_h})$ and the resolvent estimate for $R(\mu,\Af_{\omega_h,\Ff_h})$ holds for all $\mu\in \Sigma^c(\omega'+\ell;\wt\theta_0).$ 
Next, we show that there exists a $h_{\wh\gamma}>0$ such that for all $0<h<h_{\wh\gamma}$, $\mathcal{K}_{\wh\gamma}\subset \rho(\Af_{\omega_h,\Ff_h}),$ and the resolvent operator $R(\mu,\Af_{\omega_h,\Ff_h})$ is uniformly (in $\mu$ and $h$) bounded for all $\mu\in \mathcal{K}_{\wh\gamma}$ and for all $0<h<h_{\wh\gamma}$. This will give us the required estimate on the spectrum of $\Af_{\omega_h,\Ff_h}$ and the uniform stability estimate with the decay $-\wh\gamma.$

\medskip
\noindent Note that because of the choice of $\phi_0$ in \eqref{eqn:phi_0}, the lines $\{\omega'+\ell+re^{i\wt\theta_0}\mid r>0\}$ and $\{-\wh\gamma+\rho e^{i\phi_0}\mid \rho>0\}$ intersect at the point $\omega'+\ell+r_0e^{i\wt\theta_0}$, where $r_0=\frac{(\omega'+\ell+\wh\gamma)\sin\phi_0}{\sin(\wt\theta_0-\phi_0)}$ and it can be shown that for any $\mu\in \mathcal{K}_{\wh\gamma}$, $|\mu-\omega'-\ell|\le r_0$. 
Thus, $\mathcal{K}_{\wh\gamma}$, defined in \eqref{eqcompactset} is a compact set in $\mathbb{C}$ (see Figure \ref{fig:d-spec region}). To obtain our required result, we use the results on the spectrum of the operators under perturbation. For details, see \cite[Chapter 4]{Kato}.
Here, we mention the result applicable to our context.

\begin{Lemma}[invertibility of perturbed operator{\n  \cite[Lemma 6.7.4]{Badra-ths}}]\label{lem:main_genconv}
Let $\Xf$ and $\Wf$ be two Banach spaces and $E$ be a compact subset in $\Cb$. Let $S:E\rightarrow \Lc(\Xf,\Wf)$ be a bounded map. Assume that $\Sf:D(\Sf)\subset \Xf\rightarrow \Wf$ and $\Tf:D(\Tf)\subset \Xf\rightarrow \Wf$ are linear operators such that $\Sf^{-1},$ $\Tf^{-1}$ and $(\Sf+S(\mu))^{-1}$ belongs to $\Lc(\Wf,\Xf)$ for all $\mu\in E.$  Let for all $\mu\in E,$ 
\begin{align}\label{eqn:invT-invS}
    \|\Tf^{-1}-\Sf^{-1}\|_{\Lc(\Wf,\Xf)}< \frac{1}{C(\mu)},
\end{align}
where  
$C(\mu) = 2(1+\|S(\mu)\|_{\Lc(\Xf,\Wf)})^2\left\|\left( \Sf+S(\mu)  \right)^{-1}\right\|_{\Lc(\Wf,\Xf)}.$
Then $\left( \Tf+S(\mu)  \right)^{-1}$ exists and 
\begin{align*}
\left\| \left( \Tf+S(\mu)  \right)^{-1} - \left( \Sf+S(\mu)  \right)^{-1}\right\| \le C(\mu) \|\Tf^{-1}-\Sf^{-1}\|_{\Lc(\Wf,\Xf)}.
\end{align*}
\end{Lemma}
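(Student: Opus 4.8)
The plan is to treat this as a quantitative Neumann–series perturbation result, carried out in two stages: first prove that $\Tf+S(\mu)$ is boundedly invertible for every $\mu\in E$, and then derive the quantitative difference estimate from a resolvent-type identity. The structural difficulty throughout is that the difference $\Tf-\Sf$ of the two (a priori unbounded) operators need not be bounded, so one cannot directly regard $\Tf+S(\mu)$ as a bounded perturbation of $\Sf+S(\mu)$. The device that circumvents this is the algebraic identity $\Sf-\Tf=\Tf(\Tf^{-1}-\Sf^{-1})\Sf$, valid on $D(\Sf)\cap D(\Tf)$, which re-expresses the unbounded difference entirely through the bounded operator $\Tf^{-1}-\Sf^{-1}$ that is controlled by the hypothesis \eqref{eqn:invT-invS}.

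For the invertibility, first I would observe that since $\Tf^{-1}\in\Lc(\Wf,\Xf)$ is a bijection, $\Tf+S(\mu)=(\If+S(\mu)\Tf^{-1})\Tf$, so $\Tf+S(\mu)$ is boundedly invertible if and only if the bounded operator $\If+S(\mu)\Tf^{-1}$ is invertible on $\Wf$, with $(\Tf+S(\mu))^{-1}=\Tf^{-1}(\If+S(\mu)\Tf^{-1})^{-1}$. Writing $\If+S(\mu)\Tf^{-1}=(\If+S(\mu)\Sf^{-1})+S(\mu)(\Tf^{-1}-\Sf^{-1})$ exhibits it as a bounded perturbation of $\If+S(\mu)\Sf^{-1}$, which is invertible because $\Sf$ and $\Sf+S(\mu)$ are, with $(\If+S(\mu)\Sf^{-1})^{-1}=\Sf(\Sf+S(\mu))^{-1}=\If-S(\mu)(\Sf+S(\mu))^{-1}$; in particular its inverse is bounded by $1+\|S(\mu)\|\,\|(\Sf+S(\mu))^{-1}\|$. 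Applying Lemma \ref{lem:id-3}(b) with $\mathfrak{T}_1=\If+S(\mu)\Sf^{-1}$ and $\mathfrak{T}_2=\If+S(\mu)\Tf^{-1}$, the perturbation term $\mathfrak{T}_1^{-1}(\mathfrak{T}_2-\mathfrak{T}_1)=\Sf(\Sf+S(\mu))^{-1}S(\mu)(\Tf^{-1}-\Sf^{-1})$ has norm dominated by $\|S(\mu)\|\,\big(1+\|S(\mu)\|\,\|(\Sf+S(\mu))^{-1}\|\big)\,\|\Tf^{-1}-\Sf^{-1}\|$, and the smallness condition \eqref{eqn:invT-invS}, with the factor $2$ and the square $(1+\|S(\mu)\|)^2$ built into $C(\mu)$, is exactly designed to force this below $1$. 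Lemma \ref{lem:id-3}(b) then yields the existence of $(\Tf+S(\mu))^{-1}$ together with a norm bound that is uniform in $\mu$.

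For the difference estimate I would start from the resolvent identity $(\Tf+S(\mu))^{-1}-(\Sf+S(\mu))^{-1}=(\Tf+S(\mu))^{-1}(\Sf-\Tf)(\Sf+S(\mu))^{-1}$ and insert $\Sf-\Tf=\Tf(\Tf^{-1}-\Sf^{-1})\Sf$. Using $(\Tf+S(\mu))^{-1}\Tf=\If-(\Tf+S(\mu))^{-1}S(\mu)$ and $\Sf(\Sf+S(\mu))^{-1}=\If-S(\mu)(\Sf+S(\mu))^{-1}$ to absorb the two unbounded factors, this collapses to
\[
(\Tf+S(\mu))^{-1}-(\Sf+S(\mu))^{-1}=\big(\If-(\Tf+S(\mu))^{-1}S(\mu)\big)(\Tf^{-1}-\Sf^{-1})\big(\If-S(\mu)(\Sf+S(\mu))^{-1}\big),
\]
all three factors now being bounded. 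Estimating the two outer factors by $1+\|S(\mu)\|\,\|(\Tf+S(\mu))^{-1}\|$ and $1+\|S(\mu)\|\,\|(\Sf+S(\mu))^{-1}\|$, the former via the uniform bound from the invertibility step, and collecting constants yields the asserted inequality with prefactor $C(\mu)$. Compactness of $E$ and boundedness of $\mu\mapsto S(\mu)$ guarantee that all constants are uniform over $\mu\in E$.

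The hard part will be the constant bookkeeping: arranging $C(\mu)$ so that the single smallness hypothesis \eqref{eqn:invT-invS} simultaneously drives the perturbation norm in the Neumann step strictly below $1$ and produces exactly the prefactor $2(1+\|S(\mu)\|)^2\|(\Sf+S(\mu))^{-1}\|$ in the final estimate. The conceptual crux, which must be set up before any norm is taken, is the unboundedness of $\Tf-\Sf$; this is resolved by the identity $\Sf-\Tf=\Tf(\Tf^{-1}-\Sf^{-1})\Sf$ together with the two cancellation identities above, which convert every occurrence of the unbounded difference into compositions of the bounded operators $S(\mu)$, $\Tf^{-1}-\Sf^{-1}$, and the resolvents.
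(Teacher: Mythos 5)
The paper never proves this lemma---it is quoted from \cite[Lemma 6.7.4]{Badra-ths} without proof---so your proposal can only be judged on its own terms, and there it has a decisive gap at the smallness step. Write $s=\|S(\mu)\|_{\Lc(\Xf,\Wf)}$, $r=\|(\Sf+S(\mu))^{-1}\|_{\Lc(\Wf,\Xf)}$, $d=\|\Tf^{-1}-\Sf^{-1}\|_{\Lc(\Wf,\Xf)}$. Your Neumann perturbation term is bounded by $(1+sr)s\,d$, and you assert that the hypothesis $d<1/C(\mu)$ with $C(\mu)=2(1+s)^2r$ is ``exactly designed'' to force this below $1$. It is not: the hypothesis only gives $(1+sr)s\,d<\frac{s+s^2r}{2(1+s)^2r}$, which is below $1$ only when $s<r(2+4s+s^2)$ and blows up as $r\downarrow 0$. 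The defect lies in the statement as printed, not merely in your estimate: take $\Xf=\Wf=\Cb$, $E=\{\mu_0\}$, $S(\mu_0)=1$, $\Sf=n$, $\Tf=-1$ with $n\ge 15$. Then all three inverses exist, $d=1+\frac1n<\frac{n+1}{8}=\frac{1}{C(\mu_0)}$, yet $\Tf+S(\mu_0)=0$ is not invertible. Hence no amount of bookkeeping can close your argument for the lemma as transcribed; the constant must dominate powers of $1+sr$, e.g. $C(\mu)=2(1+s)(1+sr)^2$ or $C(\mu)=2(1+s)^2\max\{1,r\}^2$, and with such a constant your two identities do yield a complete proof. (The transcription of Badra's constant here is presumably garbled; in the paper's application, Proposition \ref{pps-extensionPertb}, the quantities $r$ are resolvent norms bounded below on the compact set $\mathcal{K}_{\wh\gamma}$, so a corrected constant serves the same purpose.)

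Two further repairs are needed even once the constant is fixed. First, your bound on the outer factor $\If-(\Tf+S(\mu))^{-1}S(\mu)\in\Lc(\Xf)$ cannot come from ``the uniform bound from the invertibility step'': that step controls $\|(\If+S(\mu)\Tf^{-1})^{-1}\|_{\Lc(\Wf)}$, and passing through $(\Tf+S(\mu))^{-1}=\Tf^{-1}(\If+S(\mu)\Tf^{-1})^{-1}$ drags in $\|\Tf^{-1}\|\le\|\Sf^{-1}\|+d$, a quantity that does not occur in $C(\mu)$. The correct device is the identity $\If-(\Tf+S(\mu))^{-1}S(\mu)=(\If+\Tf^{-1}S(\mu))^{-1}$, with the Neumann comparison run on the $\Xf$ side against $(\If+\Sf^{-1}S(\mu))^{-1}=\If-(\Sf+S(\mu))^{-1}S(\mu)$, whose norm is at most $1+sr$; this gives $\|\If-(\Tf+S(\mu))^{-1}S(\mu)\|_{\Lc(\Xf)}\le 2(1+sr)$ and the final bound $2(1+sr)^2d$, consistent with the corrected constant. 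Second, your derivation of the three-factor identity passes through $(\Tf+S(\mu))^{-1}(\Sf-\Tf)(\Sf+S(\mu))^{-1}$, which is domain-illegal in general: $(\Sf+S(\mu))^{-1}$ maps into $D(\Sf)$, and nothing places $D(\Sf)$ inside $D(\Tf)$. This is harmless only because the collapsed identity, involving bounded operators alone, can be verified directly by expanding with $\bigl(\If-(\Tf+S(\mu))^{-1}S(\mu)\bigr)\Tf^{-1}=(\Tf+S(\mu))^{-1}$ and $\Sf^{-1}\bigl(\If-S(\mu)(\Sf+S(\mu))^{-1}\bigr)=(\Sf+S(\mu))^{-1}$; it should be stated and checked that way, rather than through the unbounded factorization $\Sf-\Tf=\Tf(\Tf^{-1}-\Sf^{-1})\Sf$.
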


\noindent We establish a uniform resolvent estimate in the next proposition and extend the result obtained in Lemma \ref{lem:cvg-res-rc} for the compact set $\mathcal{K}_{\wh\gamma}$ in \eqref{eqcompactset}.

\begin{Proposition}[resolvent estimate on $\mathcal{K}_{\wh\gamma}$] \label{pps-extensionPertb}
Let $(\mathcal{A}_1)$ and $(\mathcal{A}_3)-(\mathcal{A}_5)$ hold. Let $\Af_{\omega,\Ff}$ and $\Af_{\omega_h,\Ff_h}$ be as defined in $(\mathcal{A}_5)$ and \eqref{eqAwpAwhph}, respectively. 
Then for any $\wh\gamma\in(0,\gamma)$ and $\mathcal{K}_{\wh\gamma}$ as in \eqref{eqcompactset}, there exists $h_{\wh\gamma}>0$ such that for all $0<h<h_{\wh\gamma}$ and for some $C>0$ independent of $h$ and $\mu$, the results stated below hold: 
\begin{itemize}
\item[(a)] $\mathcal{K}_{\wh\gamma}\subset \rho(\Af_{\omega_h,\Ff_h})$  and $\displaystyle \n\sup_{\mu\in \mathcal{K}_{\wh\gamma}} \left\|R(\mu,\Af_{\omega,\Ff})-R(\mu,\Af_{\omega_h,\Ff_h})\Pi_h\right\|_{\Lc(\Hf)}\le Ch^s,$ 
where $s\in (0,2]$ is same as in $(\mathcal{A}_4)$,
\item[(b)] $\|R(\mu,\Af_{\omega_h,\Ff_h})\|_{\Lc(\Hf_h)} \le C$ for all $\mu \in \mathcal{K}_{\wh\gamma}.$
\end{itemize}
\end{Proposition}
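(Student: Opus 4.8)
The plan is to promote the resolvent convergence of Lemma~\ref{lem:cvg-res-rc}, which holds on the unbounded sector $\Sigma^c(\omega';\theta_0)$, to the compact set $\mathcal{K}_{\wh\gamma}$ by a pointwise perturbation argument based on Lemma~\ref{lem:main_genconv}. The conceptual point is that $\mathcal{K}_{\wh\gamma}$ is not contained in $\Sigma^c(\omega';\theta_0)$ (it is precisely the bounded region lying to the left, near the stable spectrum), so the invertibility of $\mu\If_h-\Af_{\omega_h,\Ff_h}$ there is not yet available from Lemma~\ref{lem:uah} and must be transferred from the continuous operator once $h$ is small.

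First I would record the continuous bound on $\mathcal{K}_{\wh\gamma}$. By the geometric construction preceding the proposition, $\mathcal{K}_{\wh\gamma}\subset\overline{\Sigma^c(-\wh\gamma;\phi_0)}\subset\Sigma^c(-\gamma;\theta_P)\subset\rho(\Af_{\omega,\Ff})$, so $\mu\mapsto R(\mu,\Af_{\omega,\Ff})$ is analytic, hence continuous, on a neighbourhood of the compact set $\mathcal{K}_{\wh\gamma}$; consequently $M_0:=\sup_{\mu\in\mathcal{K}_{\wh\gamma}}\|R(\mu,\Af_{\omega,\Ff})\|_{\Lc(\Hf)}<\infty$. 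Next I would apply Lemma~\ref{lem:main_genconv} for each $\mu\in\mathcal{K}_{\wh\gamma}$, anchored at the real point $\mu_*:=\omega'+\ell\in\Sigma^c(\omega';\theta_0)$. Taking as base operators the continuous $\Sf:=\mu_*\If-\Af_{\omega,\Ff}$ and the discrete $\Tf:=\mu_*\If_h-\Af_{\omega_h,\Ff_h}$ (both invertible, the former by $(\mathcal{A}_5)$ and the latter by Lemma~\ref{lem:uah}, with the comparison understood after composition with $\Pi_h$ as throughout the paper), and the $\mu$-dependent map $S(\mu):=(\mu-\mu_*)\If$, one has $\Sf+S(\mu)=\mu\If-\Af_{\omega,\Ff}$ and $\Tf+S(\mu)=\mu\If_h-\Af_{\omega_h,\Ff_h}$. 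The smallness hypothesis \eqref{eqn:invT-invS} then reduces to $\|R(\mu_*,\Af_{\omega,\Ff})-R(\mu_*,\Af_{\omega_h,\Ff_h})\Pi_h\|_{\Lc(\Hf)}<1/C(\mu)$, and since $\mu_*\in\Sigma^c(\omega';\theta_0)$, Lemma~\ref{lem:cvg-res-rc}(a) bounds the left-hand side by $Ch^s$.

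The main obstacle is to make this smallness uniform over $\mathcal{K}_{\wh\gamma}$: the constant $C(\mu)=2(1+\|S(\mu)\|)^2\|(\Sf+S(\mu))^{-1}\|=2(1+|\mu-\mu_*|)^2\|R(\mu,\Af_{\omega,\Ff})\|$ must be controlled independently of $\mu$. This is exactly where compactness of $\mathcal{K}_{\wh\gamma}$ enters, and it is the crux of the whole construction: $|\mu-\mu_*|$ is bounded on $\mathcal{K}_{\wh\gamma}$ and $\|R(\mu,\Af_{\omega,\Ff})\|\le M_0$ by the previous step, so $C_*:=\sup_{\mu\in\mathcal{K}_{\wh\gamma}}C(\mu)<\infty$. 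I would then fix $h_{\wh\gamma}>0$ so small that $Ch^s<1/C_*$ for all $0<h<h_{\wh\gamma}$.

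With this choice, Lemma~\ref{lem:main_genconv} applies simultaneously for every such $h$ and every $\mu\in\mathcal{K}_{\wh\gamma}$: it yields invertibility of $\mu\If_h-\Af_{\omega_h,\Ff_h}$, hence $\mathcal{K}_{\wh\gamma}\subset\rho(\Af_{\omega_h,\Ff_h})$, together with $\|R(\mu,\Af_{\omega,\Ff})-R(\mu,\Af_{\omega_h,\Ff_h})\Pi_h\|_{\Lc(\Hf)}\le C(\mu)\,Ch^s\le Ch^s$, which is exactly (a). Finally, (b) follows from (a) by a triangle inequality and the fact that $\Pi_h$ acts as the identity on $\Hf_h$: for $\mu\in\mathcal{K}_{\wh\gamma}$ and $h<h_{\wh\gamma}$, $\|R(\mu,\Af_{\omega_h,\Ff_h})\|_{\Lc(\Hf_h)}=\|R(\mu,\Af_{\omega_h,\Ff_h})\Pi_h\|_{\Lc(\Hf_h)}\le\|R(\mu,\Af_{\omega,\Ff})\|_{\Lc(\Hf)}+Ch^s\le M_0+Ch^s\le C$.
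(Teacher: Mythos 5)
Your overall strategy coincides with the paper's: anchor the perturbation argument at a real point of $\Sigma^c(\omega';\theta_0)$ where Lemma \ref{lem:cvg-res-rc}(a) supplies the $O(h^s)$ resolvent error, exploit compactness of $\mathcal{K}_{\wh\gamma}$ to bound $C(\mu)=2(1+\|S(\mu)\|)^2\|(\Sf+S(\mu))^{-1}\|$ uniformly, shrink $h$ until the error beats $1/C(\mu)$, read off invertibility and the estimate from Lemma \ref{lem:main_genconv}, and obtain (b) by a triangle inequality once $\sup_{\mu\in\mathcal{K}_{\wh\gamma}}\|R(\mu,\Af_{\omega,\Ff})\|_{\Lc(\Hf)}<\infty$ (the paper cites Kato for this supremum; your continuity-of-the-resolvent argument is equivalent). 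Two harmless variations: you anchor at $\mu_*=\omega'+\ell$, which is the sector vertex and actually belongs to $\mathcal{K}_{\wh\gamma}$, whereas the paper takes $\mu_0>\omega'+\ell$ strictly; this causes no trouble in your formulation because you bound $C_*=\sup_\mu C(\mu)$ directly rather than dividing by $|\mu-\mu_0|$ and invoking $d(\mu_0,\mathcal{K}_{\wh\gamma})>0$ as the paper does in \eqref{eq-h_0-C2}.

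There is, however, one genuine gap: as you set it up, Lemma \ref{lem:main_genconv} cannot be applied. The lemma requires $\Sf$ and $\Tf$ to map (domains in) one fixed Banach space $\Xf$ into a second fixed space $\Wf$, with \emph{both} inverses in $\Lc(\Wf,\Xf)$ and their difference measured in that common norm. Your $\Tf=\mu_*\If_h-\Af_{\omega_h,\Ff_h}$ acts only on $\Hf_h$, and the parenthetical ``comparison understood after composition with $\Pi_h$'' does not repair this: $R(\mu_*,\Af_{\omega_h,\Ff_h})\Pi_h$ annihilates $\Hf_h^{\perp}$, hence is not the inverse of any operator from a subspace of $\Hf$ onto $\Hf$, so the hypothesis \eqref{eqn:invT-invS} is not a statement about genuine inverses in your setup. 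The paper's Step 1 exists precisely to fix this: it takes $\Xf=\Hf\times D(\Af)'$, $D(\Sf)=D(\Af)\times\{0\}$ with $\Sf(\xi,0)=(\mu_0\If-\Af_{\omega,\Ff})\xi$, and augments the discrete operator to $\Tf_h(\xi_h,\zeta_h)=(\mu_0\If_h-\Af_{\omega_h,\Ff_h})\xi_h+\zeta_h$ on $D(\Tf_h)=\Hf_h\times\Hf_h^{\perp}$, so that $\Tf_h^{-1}=\left(R(\mu_0,\Af_{\omega_h,\Ff_h})\Pi_h,\ \If-\Pi_h\right)\in\Lc(\Hf,\Xf)$ is a bona fide inverse; the price is the extra term $\|\If-\Pi_h\|_{\Lc(\Hf,D(\Af)')}$ in $\|\Sf^{-1}-\Tf_h^{-1}\|$, controlled by $Ch^2$ via Lemma \ref{lem:prpty Pi_h}(d) — an ingredient entirely absent from your proof, and the reason the dual space $D(\Af)'$ appears at all. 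Once this augmentation is inserted, the rest of your argument (uniform $C(\mu)$ by compactness, the choice of $h_{\wh\gamma}$, and the concluding triangle inequality for (b)) goes through exactly as you wrote it.
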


\begin{proof} 
Applying Lemma \ref{lem:main_genconv}, we establish the proof in the two steps below. In first step, we construct all the required tools needed to apply Lemma \ref{lem:main_genconv} and then the conclusion is made in Step 2.\\[1mm]
\textbf{Step 1.} Fix a real number $\mu_0 >\omega'+\ell.$ Note that $\mu_0 \in \Sigma^c(\omega';\theta_0)\subset \rho(\Af_{\omega,\Ff})\cap_{h>0} \rho(\Af_{\omega_h,\Ff_h})$ and Lemma \ref{lem:cvg-res-rc}(a) yields 
\begin{align}\label{eqn:est for mu_0}
\|R(\mu_0,\Af_{\omega,\Ff})-R(\mu_0, \Af_{\omega_h,\Ff_h})\Pi_h\|_{\Lc(\Hf)} \le C_0 h^s, \quad 0<s\le 2,
\end{align}
for some $C_0>0$ independent of $\mu_0$ and $h.$ 

\noindent Set $\Xf=\Hf\times D(\Af)'.$ Define $(\Sf, D(\Sf))$ (with $D(\Sf)= D(\Af)\times \{0\}$) and for each $h>0,$ define $(\Tf_h, D(\Tf_h))$ (with $D(\Tf_h)=\Hf_h\times \Hf_h^\perp,$ where $\Hf_h^\perp$ is orthogonal complement of $\Hf_h$ in $\Hf$) as 
\begin{align*}
\Sf(\xi,0)=(\mu_0 \If -\Af_{\omega,\Ff}) \xi \text{ and } \Tf_h(\xi_h, \zeta_h)=(\mu_0\If_h-\Af_{\omega_h,\Ff_h})\xi_h+\zeta_h.
\end{align*}  
Since $\mu_0\in  \rho(\Af_{\omega,\Ff})\cap_{h>0} \rho(\Af_{\omega_h,\Ff_h}),$ $\Sf$ and $\Tf_h$ are invertible operators for each $h>0.$ Note that $\Hf_h^\perp$ is orthogonal complement of $\Hf_h$ in $D(\Tf_h)$ and therefore
\begin{align*}
    \Sf^{-1}=\left( R(\mu_0,\Af_{\omega,\Ff}), 0\right)\in \Lc(\Hf, \Xf) \text{ and } \Tf_h^{-1}=\left( R(\mu_0,\Af_{\omega_h,\Ff_h})\Pi_h, (\If-\Pi_h) \right)\in \Lc(\Hf, \Xf).
\end{align*}

\noindent  For all $h>0,$ \eqref{eqn:est for mu_0} and the fact that $\|\If-\Pi_h\|_{\Lc(\Hf, D(\Af)')}=\|\If-\Pi_h\|_{\Lc(D(\Af), \Hf)}\le Ch^2$ (see Lemma \ref{lem:prpty Pi_h}(d)) lead to  
\begin{equation} \label{eqn:est Th-T-1}
\begin{aligned}
\|\Sf^{-1}-\Tf_h^{-1}\|_{\Lc(\Hf, \Xf)}  = \| R(\mu_0,\Af_{\omega,\Ff}) - R(\mu_0, \Af_{\omega_h,\Ff_h})\Pi_h\|_{\Lc(\Hf)} +\|\If-\Pi_h\|_{\Lc(\Hf, D(\Af)')} \le (C_0+1) h^s. 
\end{aligned}
\end{equation}

\noindent For all $\mu\in \mathcal{K}_{\wh\gamma},$ define $S(\mu)\in \Lc(\Xf, \Hf)$ by 
\begin{align*}
S(\mu)(\xi,\zeta)=(\mu-\mu_0) \xi.
\end{align*}

\noindent From $(\mathcal{A}_5)$ and \eqref{eqcompactset}, note that $\mathcal{K}_{\wh\gamma}\subset \overline{\Sigma^c(-\wh\gamma;\phi_0)}\subset \rho(\Af_{\omega,\Ff}).$ Thus for all $\mu\in \mathcal{K}_{\wh\gamma},$ the operator $S(\mu)+\Sf : D(\Sf)\subset \Xf\rightarrow \Hf$ satisfying $(S(\mu)+\Sf)(\xi,0)=(\mu\If-\Af_{\omega,\Ff})\xi$ is invertible and $$(S(\mu)+\Sf)^{-1}=\left( R(\mu,\Af_{\omega,\Ff}), 0\right)\in\Lc(\Hf,\Xf).$$
Our aim is to show the existence of $(\Tf_h+S(\mu))^{-1}$ for all $\mu\in \mathcal{K}_{\wh\gamma}$ by using Lemma \ref{lem:main_genconv} and we verify \eqref{eqn:invT-invS}. For this purpose, define  $C(\mu) = 2(1+\|S(\mu)\|_{\Lc(\Xf,\Wf)})^2\left\|\left( \Sf+S(\mu)  \right)^{-1}\right\|_{\Lc(\Wf,\Xf)},$ 
\begin{align} \label{eq-h_0-C2}
\wh C:=\displaystyle \sup_{\mu\in \mathcal{K}_{\wh\gamma}} \{ (1+C_0)|\mu-\mu_0| C(\mu)\}, \text{ and } h_{\wh\gamma}^s:=\frac{d(\mu_0,\mathcal{K}_{\wh\gamma})}{\wh C}>0.
\end{align}
Therefore, for all $0<h<h_{\wh\gamma}$ and for all $\mu\in\mathcal{K}_{\wh\gamma}$, \eqref{eqn:est Th-T-1}  and \eqref{eq-h_0-C2} lead to 
\begin{equation} \label{eqn:est Th-T}
\begin{aligned}
\|\Tf_h^{-1}-\Sf^{-1}\|_{\Lc(\Hf, \Xf)}\le (C_0+1) h^s < (C_0+1)h_{\wh\gamma}^s =  (C_0+1) \frac{d(\mu_0, \mathcal{K}_{\wh\gamma})}{\wh C} \le (C_0+1) \frac{|\mu-\mu_0|}{\wh C} \le C(\mu)^{-1}.
\end{aligned}
\end{equation}
Hence, all the assumptions in Lemma \ref{lem:main_genconv} are verified and Lemma \ref{lem:main_genconv} leads to the existence of $(\Tf_h+S(\mu))^{-1}$ and 
\begin{align} \label{eqn:invTh+S -S+S}
   \|(\Tf_h+S(\mu))^{-1}-(\Sf+S(\mu))^{-1}\|_{\Lc(\Hf,\Xf)}  \le C(\mu) \|\Tf_h^{-1}-\Sf^{-1}\|_{\Lc(\Xf,\Hf)}  
\end{align}
for all $\mu\in\mathcal{K}_{\wh\gamma}.$\\[1mm]
\noindent \textbf{Step 2.} For each $0<h<h_{\wh\gamma}$ and for all $\mu\in \mathcal{K}_{\wh\gamma},$ note that $\Tf_h+S(\mu):D(\Tf_h)\subset \Xf\rightarrow\Hf$ is such that $(\Tf_h+S(\mu))(\xi_h,\zeta_h)= (\mu\If_h-\Af_{\omega_h,\Ff_h})\xi_h+\zeta_h$ and $D(\Tf_h)=\Hf_h\times \Hf_h^\perp.$ Therefore, for all $\mu\in \mathcal{K}_{\wh\gamma}$ and for all $0<h<h_{\wh\gamma},$ $(\mu\If_h-\Af_{\omega_h,\Ff_h})$ is invertible and hence $\mathcal{K}_{\wh\gamma} \subset \rho(\Af_{\omega_h,\Ff_h}).$ Furthermore, $$(\Tf_h+S(\mu))^{-1}= \left( (\mu\If_h-\Af_{\omega_h,\Ff_h})^{-1}\Pi_h, \If-\Pi_h\right) \in \Lc(\Hf, \Xf).$$ 

\noindent Also, for all $\mu\in \mathcal{K}_{\wh\gamma},$ \eqref{eqn:invTh+S -S+S}, \eqref{eqn:est Th-T}, and \eqref{eq-h_0-C2} yield
\begin{equation} \label{eqn:Th+S^-1-T+s^-1}
\begin{aligned}
\|R(\mu,\Af_{\omega,\Ff}) - R(\mu,\Af_{\omega_h,\Ff_h})\Pi_h\|_{\Lc(\Hf)}  +\|\If-\Pi_h\|_{\Lc(\Hf,D(\Af)')} & = \|(\Tf_h+S(\mu))^{-1}-(\Sf+S(\mu))^{-1}\|_{\Lc(\Hf,\Xf)} \\
& \le C(\mu)(C_0+1)h^s \le \frac{\wh C}{|\mu-\mu_0|}h^s\le  C h^s,
\end{aligned}
\end{equation}
where $ C=\displaystyle \sup_{\mu\in \mathcal{K}_{\wh\gamma}}\frac{\wh C}{|\mu-\mu_0|}<\infty$ is independent of $h$ and $\mu,$ as $\mu_0\not\in\mathcal{K}_{\wh\gamma}.$ 
The proof of (a) is complete. Now, to prove (b), apply \cite[Theorem 3.15, P-212]{Kato} for $R(\mu,\Af_{\omega,\Ff})$ on the compact set $\mathcal{K}_{\wh\gamma}$ to obtain  
\begin{align*}
\|R(\mu,\Af_{\omega,\Ff})\|_{\Lc(\Hf)} \le C \text{ for all }\mu\in \mathcal{K}_{\wh\gamma}. 
\end{align*}
A combination of this and the estimate in (a)
concludes (b).
\end{proof}


\noindent In the next theorem, the uniform exponential stability of $\{e^{t\Af_{\omega_h,\Ff_h}}\}_{t\ge 0}$ under assumptions as in Proposition \ref{pps-extensionPertb} is established.

\begin{Theorem}[exponential stability] \label{th:ua-us}
Let $(\mathcal{A}_1)$, $(\mathcal{A}_3)-(\mathcal{A}_5)$ hold. Let $\Af_{\omega_h,\Ff_h}$ be as defined in \eqref{eqAwpAwhph} and $\phi_0$ be as introduced in \eqref{eqn:phi_0}. Then, for any given $\wh\gamma\in (0, \gamma)$, there exist $h_{\wh\gamma}>0$ and  $C>0$ independent of $h$ such that for all $0<h<h_{\wh\gamma},$
\begin{itemize}
\item[(a)]  $\Sigma^c(-\wh\gamma;\phi_0)\subset \rho(\Af_{\omega_h,\Ff_h})$ and 
\begin{align*}
\|R(\mu,\Af_{\omega_h,\Ff_h})\|_{\Lc(\Hf_h)} \le \frac{C}{|\mu+\wh\gamma|} \text{ for all }\mu \in   \Sigma^c(-\wh\gamma;\phi_0),\, \mu\neq \wh\gamma,
\end{align*}
\item[(b)] the semigroup $\{e^{t\Af_{\omega_h,\Ff_h}}\}_{t\ge 0}$ generated by $\Af_{\omega_h,\Ff_h}$ satisfies
\begin{align*}
\|e^{t\Af_{\omega_h,\Ff_h}}\|_{\Lc(\Hf_h)} \le C e^{-\wh\gamma t} \text{ for all }t>0.
\end{align*}
\end{itemize}
\end{Theorem}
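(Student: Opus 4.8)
The plan is to prove part (a) by splitting the sector $\Sigma^c(-\wh\gamma;\phi_0)$ into the compact piece $\mathcal{K}_{\wh\gamma}$ from \eqref{eqcompactset} and the complementary far region $\Sigma^c(\omega'+\ell;\wt\theta_0)$, handling each with a result already available, and then to deduce part (b) from (a) via the Dunford representation of the semigroup. First I would record the set inclusions noted just before Proposition \ref{pps-extensionPertb}, namely
\[
\Sigma^c(-\wh\gamma;\phi_0)\subset \mathcal{K}_{\wh\gamma}\cup \Sigma^c(\omega'+\ell;\wt\theta_0),\qquad \Sigma^c(\omega'+\ell;\wt\theta_0)\subset \Sigma^c(\omega';\theta_0),
\]
the second inclusion following from $\tfrac{\pi}{2}<\wt\theta_0\le\theta_0$ and $\ell>0$.

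On the far region $\Sigma^c(\omega'+\ell;\wt\theta_0)$, Lemma \ref{lem:uah} applies for every $h>0$ and gives $\Sigma^c(\omega'+\ell;\wt\theta_0)\subset\rho(\Af_{\omega_h,\Ff_h})$ together with $\|R(\mu,\Af_{\omega_h,\Ff_h})\|_{\Lc(\Hf_h)}\le C/|\mu-\omega'|$. Since $\omega'$ lies strictly to the left of the vertex $\omega'+\ell$ and $\wt\theta_0<\pi$, the point $\omega'$ corresponds to angle $\pi>\wt\theta_0$ from that vertex, hence $\omega'\notin\overline{\Sigma^c(\omega'+\ell;\wt\theta_0)}$ and $d(\omega',\Sigma^c(\omega'+\ell;\wt\theta_0))=:\delta_1>0$. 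Combining $|\mu-\omega'|\ge\delta_1$ with the triangle inequality $|\mu+\wh\gamma|\le|\mu-\omega'|+|\omega'+\wh\gamma|$ converts the bound into $\|R(\mu,\Af_{\omega_h,\Ff_h})\|_{\Lc(\Hf_h)}\le C/|\mu+\wh\gamma|$ on this region, with $C$ independent of $h$. For the compact piece I would invoke Proposition \ref{pps-extensionPertb}: for $0<h<h_{\wh\gamma}$ one has $\mathcal{K}_{\wh\gamma}\subset\rho(\Af_{\omega_h,\Ff_h})$ and the uniform bound $\|R(\mu,\Af_{\omega_h,\Ff_h})\|_{\Lc(\Hf_h)}\le C$ on $\mathcal{K}_{\wh\gamma}$. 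As $\mathcal{K}_{\wh\gamma}$ is compact, $M:=\sup_{\mu\in\mathcal{K}_{\wh\gamma}}|\mu+\wh\gamma|<\infty$, so for $\mu\in\mathcal{K}_{\wh\gamma}$ with $\mu\neq-\wh\gamma$ the same bound reads $\|R(\mu,\Af_{\omega_h,\Ff_h})\|_{\Lc(\Hf_h)}\le C\le CM/|\mu+\wh\gamma|$. Merging the two regions, and noting both constants are $h$-independent for $0<h<h_{\wh\gamma}$, yields $\Sigma^c(-\wh\gamma;\phi_0)\subset\rho(\Af_{\omega_h,\Ff_h})$ and the estimate of (a) for all $\mu\in\Sigma^c(-\wh\gamma;\phi_0)$, $\mu\neq-\wh\gamma$.

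Part (b) would then follow from (a) exactly as in Theorem \ref{thm:ua-sem}(b)--(c). The resolvent bound of (a) is precisely the analytic-semigroup estimate of Definition \ref{def:ana-sgp} with vertex $-\wh\gamma$, so $\Af_{\omega_h,\Ff_h}$ generates a uniformly (in $h$) analytic semigroup representable by $e^{t\Af_{\omega_h,\Ff_h}}=\frac{1}{2\pi i}\int_\Gamma e^{\mu t}R(\mu,\Af_{\omega_h,\Ff_h})\,d\mu$ over a contour $\Gamma\subset\Sigma^c(-\wh\gamma;\phi_0)$ running from $-\infty$ to $\infty$. After the substitution $\mu_1=(\mu+\wh\gamma)t$ and the same contour estimates used to derive \eqref{eq-grthcond}, the factor $e^{-\wh\gamma t}$ is extracted and one gets $\|e^{t\Af_{\omega_h,\Ff_h}}\|_{\Lc(\Hf_h)}\le Ce^{-\wh\gamma t}$ with $C$ independent of $h$, since the resolvent estimate feeding the integral is uniform in $h$.

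The main obstacle is the bookkeeping in (a): reconciling the two qualitatively different bounds — the decaying resolvent estimate $C/|\mu-\omega'|$ on the far sector and the merely bounded estimate $C$ on the compact set $\mathcal{K}_{\wh\gamma}$ — into a single clean $C/|\mu+\wh\gamma|$ estimate valid on all of $\Sigma^c(-\wh\gamma;\phi_0)$. This hinges on the two elementary geometric facts that $|\mu-\omega'|$ is bounded below on the far sector and $|\mu+\wh\gamma|$ is bounded above on the compact set, and on verifying that the threshold $h_{\wh\gamma}$ from Proposition \ref{pps-extensionPertb} simultaneously makes both regional estimates hold with $h$-independent constants.
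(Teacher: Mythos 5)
Your proposal is correct and follows essentially the same route as the paper's proof: the same covering $\Sigma^c(-\wh\gamma;\phi_0)\subset \mathcal{K}_{\wh\gamma}\cup \Sigma^c(\omega'+\ell;\wt\theta_0)$, with Lemma \ref{lem:uah} supplying the resolvent bound on the far sector (your triangle-inequality argument with $\delta_1$ is just an explicit form of the paper's bound $\sup_{\mu}\,|\mu+\wh\gamma|/|\mu-\omega'|\le C_\ell$), Proposition \ref{pps-extensionPertb} handling the compact piece, and the bound $\sup_{\mu\in\mathcal{K}_{\wh\gamma}}|\mu+\wh\gamma|<\infty$ converting it to the sectorial form. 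Part (b) via the Dunford contour representation, exactly as in Theorem \ref{thm:ua-sem}(c), matches the paper as well.
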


\begin{proof}
(a) Note that $\Sigma^c(-\wh\gamma;\phi_0) \subset \mathcal{K}_{\wh\gamma}\cup \Sigma^c(\omega'+\ell;\wt\theta_0)$ and $\Sigma^c(\omega'+\ell;\wt\theta_0)\subset \Sigma^c(\omega';\theta_0).$ This, Proposition \ref{pps-extensionPertb} and Lemma \ref{lem:uah}(a) lead to 
\begin{align*}
\Sigma^c(-\wh\gamma;\phi_0)\subset \rho(\Af_{\omega_h,\Ff_h}) \text{ for all }0<h<h_{\wh\gamma}.
\end{align*}
Now, observe that  for all $\mu\in \Sigma^c(\omega'+\ell;\wt\theta_0),$
\begin{align*}
\sup_{\mu\in\Sigma^c(\omega'+\ell;\theta_0)} \frac{|\mu+\wh\gamma|}{|\mu-\omega'|} < C_\ell<\infty,
\end{align*}
for some $C_\ell>0$ independent of $\mu$ and $h.$ Utilizing this and Lemma \ref{lem:uah} with the fact $\Sigma^c(\omega'+\ell;\wt\theta_0)\subset \Sigma^c(\omega';\theta_0),$ we have
\begin{align} \label{eqn:resolest outside}
\|R(\mu,\Af_{\omega_h,\Ff_h})\|_{\Lc(\Hf_h)} \le \frac{C}{|\mu-\omega'|} \le \frac{C}{|\mu+\wh\gamma|}\frac{|\mu+\wh\gamma|}{|\mu-\omega'|} \le \frac{CC_\ell}{|\mu+\wh\gamma|} \text{ for all } \mu \in \Sigma^c(\omega'+\ell;\wt\theta_0).
\end{align}
On $\mathcal{K}_{\wh\gamma},$ we have 
\begin{align} \label{eqn:resolest insidecopact}
\|R(\mu,\Af_{\omega_h,\Ff_h})\|_{\Lc(\Hf_h)} \le C =\frac{\wh C}{|\mu+\wh\gamma|} \text{ for all }\mu(\neq -\wh\gamma)\in \mathcal{K}_{\wh\gamma},
\end{align}
where $\displaystyle\wh C=C\sup_{\mu\in\mathcal{K}_{\wh\gamma}}|\mu+\wh\gamma|>0$ is independent of $\mu$ and $h.$ Combine \eqref{eqn:resolest outside} and \eqref{eqn:resolest insidecopact} to obtain 
\begin{align*}
\|R(\mu,\Af_{\omega_h,\Ff_h})\|_{\Lc(\Hf_h)} \le \frac{ C}{|\mu+\wh\gamma|} \text{ for all }\mu(\neq -\wh\gamma)\in \Sigma^c(-\wh\gamma;\theta_0),
\end{align*}
for some positive $C$ independent of $\mu$ and $h.$ This concludes (a).\\[1mm]
(b) Since, the constants appearing in (a) are independent of $h,$ an analogous argument to Theorem \ref{thm:ua-sem}(c) leads to (b).
\end{proof}

\noindent In the next theorem, we establish the converse of Proposition \ref{pps-extensionPertb}. That is, if the discrete system \eqref{eqn:d-state-2} is uniformly stabilizable by a feedback control operator $\Ff_h$ that satisfies $(\mathcal{A}_3),$ then the continuous system \eqref{eqn: main shifted_PCE} is also stabilizable by the feedback operator $\Ff_h\Pi_h.$ 

\begin{Theorem}[intermediate stability] \label{th:unif_stab_cnvrs}
Let $\Ff_h\in \Lc(\Hf_h,\Uf)$ satisfying $(\mathcal{A}_3)$ and $\Af_{\omega_h,\Ff_h}$ as in \eqref{eqAwpAwhph} be such that the semigroup $\{e^{t\Af_{\omega_h,\Ff_h}}\}_{t\ge 0}$ is uniformly stable, that is, there exist positive constants $M_S,\omega_S$ (both independent of $h$) such that
\begin{align*}
\|e^{t\Af_{\omega_h,\Ff_h}}\|_{\Lc(\Hf_h)} \le M_S e^{-\omega_S t} \text{ for all }t>0, \, h>0.
\end{align*}
For each $h>0,$ let $\wt\Ff_h=\Ff_h\Pi_h.$ Then for any given $\wh\omega_P\in(0,\omega_S),$ there exist $h_{\wh\omega_P}>0,$ $\phi_0'\in (\frac{\pi}{2},\pi)$ and $C>0$ independent of $h$ such that for all $0<h<h_{\wh\omega_P},$
\begin{enumerate}
\item[(a)] $\displaystyle \sup_{\mu\in \Sigma^c(-\wh\omega_P;\phi_0')}\|R(\mu,\Af_{\omega,\wt\Ff_h})-R(\mu,\Af_{\omega_h,\Ff_h})\Pi_h\|_{\Lc(\Hf)}\le Ch^2,$
\item[(b)] $\Sigma^c(-\wh\omega_P;\phi_0')\subset \rho(\Af_{\omega,\wt\Ff_h})$ and 
\begin{align*}
\|R(\mu,\Af_{\omega,\wt\Ff_h})\|_{\Lc(\Hf)} \le \frac{C}{|\mu+\wh\omega_P|} \text{ for all }\mu \in   \Sigma^c(-\wh\omega_P;\phi_0'),\, \mu \neq -\wh\omega_P,
\end{align*}
\item[(c)] the semigroup $\{e^{t\Af_{\omega,\wt\Ff_h}}\}_{t\ge 0}$ generated by $\Af_{\omega,\wt\Ff_h}$ satisfies
\begin{align*}
\|e^{t\Af_{\omega,\wt\Ff_h}}\|_{\Lc(\Hf)} \le C e^{-\wh\omega_Pt} \text{ for all }t>0. 
\end{align*}
\end{enumerate}
\end{Theorem}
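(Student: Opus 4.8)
The plan is to run the argument of Proposition \ref{pps-extensionPertb} together with Theorem \ref{th:ua-us}, but with the roles of the continuous and the discrete operators interchanged: here the uniform stability of $\{e^{t\Af_{\omega_h,\Ff_h}}\}_{t\ge0}$ plays the part that Assumption $(\mathcal{A}_5)$ played for $\Af_{\omega,\Ff}$. Accordingly, the first task is to establish the discrete counterpart of the spectral statement in $(\mathcal{A}_5)$. Combining the uniform analyticity of $\{\Af_{\omega_h,\Ff_h}\}_{h>0}$ from Lemma \ref{lem:uah} with the hypothesis $\|e^{t\Af_{\omega_h,\Ff_h}}\|_{\Lc(\Hf_h)}\le M_Se^{-\omega_S t}$, I would show that there exist $\theta_S\in(\tfrac{\pi}{2},\pi)$ and $C>0$, both independent of $h$, with
$$\Sigma^c(-\omega_S;\theta_S)\subset \bigcap_{h>0}\rho(\Af_{\omega_h,\Ff_h}),\qquad \|R(\mu,\Af_{\omega_h,\Ff_h})\|_{\Lc(\Hf_h)}\le \frac{C}{|\mu+\omega_S|}\ \text{ on }\ \Sigma^c(-\omega_S;\theta_S).$$
This is the uniform-in-$h$ analogue of Proposition \ref{propcontstab}; since the growth bound of an analytic semigroup equals its spectral bound, and all input data ($M_S,\omega_S$ and the analyticity constants of Lemma \ref{lem:uah}) are $h$-independent, tracking the constants in the argument behind Proposition \ref{propcontstab} delivers the claim. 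I expect this uniform sector to be the main obstacle, since it is exactly what guarantees that the compact region introduced below stays away from the discrete spectra uniformly in $h$.

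Next, for a fixed $\wh\omega_P\in(0,\omega_S)$ I would choose $\phi_0'\in(\tfrac{\pi}{2},\min\{\theta_0,\theta_S\})$, set $\wt\theta_0':=\min\{\theta_0,\theta_S\}$, and define the compact set $\mathcal{K}:=\overline{\Sigma(\omega'+\ell;\wt\theta_0')\cap\Sigma^c(-\wh\omega_P;\phi_0')}$ for some $\ell>0$, exactly as in \eqref{eqcompactset}. Because $\wh\omega_P<\omega_S$ and $\phi_0'<\theta_S$, the geometric inclusion $\overline{\Sigma^c(-\wh\omega_P;\phi_0')}\subset\Sigma^c(-\omega_S;\theta_S)$ holds (the same reasoning as in the discussion preceding Lemma \ref{lem:main_genconv}), so $\mathcal{K}$ lies inside the uniform discrete resolvent set of the previous step and $R(\mu,\Af_{\omega_h,\Ff_h})$ is uniformly bounded there; moreover $\Sigma^c(-\wh\omega_P;\phi_0')\subset\mathcal{K}\cup\Sigma^c(\omega'+\ell;\wt\theta_0')$. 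On the unbounded piece $\Sigma^c(\omega'+\ell;\wt\theta_0')\subset\Sigma^c(\omega';\theta_0)$, since $\wt\Ff_h=\Ff_h\Pi_h$ satisfies $(\mathcal{A}_2)$ with constant $C_3$, Lemma \ref{lem:uah-AwKh} gives $\Sigma^c(\omega';\theta_0)\subset\rho(\Af_{\omega,\wt\Ff_h})$ with a uniform bound $C/|\mu-\omega'|$, and Lemma \ref{lem:cvg-res-rc}(b) supplies the resolvent convergence of order $h^2$ there.

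On the compact piece I would invoke Lemma \ref{lem:main_genconv} as in Proposition \ref{pps-extensionPertb}, now with $\Sf$ built from the discrete operator and $\Tf$ from the continuous one. Setting $\Xf=\Hf\times D(\Af)'$ and fixing a real $\mu_0>\omega'+\ell$, I would put $\Sf_h(\xi_h,\zeta_h)=(\mu_0\If_h-\Af_{\omega_h,\Ff_h})\xi_h+\zeta_h$ on $\Hf_h\times\Hf_h^\perp$ and $\Tf(\xi,0)=(\mu_0\If-\Af_{\omega,\wt\Ff_h})\xi$ on $D(\Af)\times\{0\}$, with $S(\mu)(\xi,\zeta)=(\mu-\mu_0)\xi$, so that $\Sf_h^{-1}=(R(\mu_0,\Af_{\omega_h,\Ff_h})\Pi_h,\If-\Pi_h)$ and $\Tf^{-1}=(R(\mu_0,\Af_{\omega,\wt\Ff_h}),0)$. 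Lemma \ref{lem:cvg-res-rc}(b) at $\mu_0$ together with $\|\If-\Pi_h\|_{\Lc(\Hf,D(\Af)')}\le Ch^2$ (Lemma \ref{lem:prpty Pi_h}(d)) yields $\|\Tf^{-1}-\Sf_h^{-1}\|_{\Lc(\Hf,\Xf)}\le(C_0+1)h^2$, while the uniform bound on $R(\mu,\Af_{\omega_h,\Ff_h})$ over $\mathcal{K}$ makes $C(\mu)=2(1+\|S(\mu)\|)^2\|(\Sf_h+S(\mu))^{-1}\|$ uniformly bounded on $\mathcal{K}$. Choosing $h_{\wh\omega_P}$ small enough that $(C_0+1)h^2<C(\mu)^{-1}$ for all $\mu\in\mathcal{K}$, Lemma \ref{lem:main_genconv} then gives $\mathcal{K}\subset\rho(\Af_{\omega,\wt\Ff_h})$ and the $O(h^2)$ resolvent convergence on $\mathcal{K}$. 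Together with the unbounded piece this proves (a) and, in particular, $\Sigma^c(-\wh\omega_P;\phi_0')\subset\rho(\Af_{\omega,\wt\Ff_h})$.

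Finally, I would recast the resolvent bounds into the form $C/|\mu+\wh\omega_P|$ exactly as in Theorem \ref{th:ua-us}(a): on $\Sigma^c(\omega'+\ell;\wt\theta_0')$ one uses $\sup|\mu+\wh\omega_P|/|\mu-\omega'|<\infty$, and on the compact $\mathcal{K}$ one uses that $|\mu+\wh\omega_P|$ is bounded there together with the uniform boundedness of $R(\mu,\Af_{\omega,\wt\Ff_h})$ obtained from (a); combining the two gives (b). Statement (c) then follows from (b) and the uniform analyticity of $\{e^{t\Af_{\omega,\wt\Ff_h}}\}_{t\ge0}$ (Lemma \ref{lem:uah-AwKh}) via the contour-integral representation and the growth-estimate computation of Theorem \ref{thm:ua-sem}(c).
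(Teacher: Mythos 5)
Your proposal is correct and follows essentially the same route as the paper's proof: the same splitting of $\Sigma^c(-\wh\omega_P;\phi_0')$ into the compact set $\mathcal{K}_{\wh\omega_P}$ and the outer sector $\Sigma^c(\omega'+\ell;\wt\theta_0)$, with Lemma \ref{lem:uah-AwKh} and Lemma \ref{lem:cvg-res-rc}(b) handling the outer piece, Lemma \ref{lem:main_genconv} on the compact piece with the continuous and discrete operators interchanged, and the Theorem \ref{th:ua-us}/Theorem \ref{thm:ua-sem}(c) arguments yielding (b) and (c). Your explicit placement of the discrete operator in the ``known'' slot $\Sf$ and of $\Af_{\omega,\wt\Ff_h}$ in the ``target'' slot $\Tf$ of Lemma \ref{lem:main_genconv} is precisely the role interchange the paper's terse instruction intends, and the preliminary step you flag as the main obstacle --- a uniform-in-$h$ sector around the discrete spectra $\sigma(\Af_{\omega_h,\Ff_h})$ --- is exactly the assertion (existence of $\theta_P'$) with which the paper's proof also opens without further justification.
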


\begin{proof}
To prove (a) and (b), note that there exists $\theta_P'\in(\frac{\pi}{2}, \pi)$ such that $\sigma(\Af_{\omega_h,\Ff_h})\subset \Sigma(-\omega_S;\theta_P')$ for all $h>0$ and then $\phi_0'$ similar to $\phi_0$ is constructed as in \eqref{eqn:phi_0}. Then, fixing a $\ell>0,$ we observe $\Sigma^c(-\wh\omega_P;\phi_0')\subset \mathcal{K}_{\wh\omega_P}\cup \Sigma^c(\omega'+\ell;\wt\theta_0),$ where $\wt\theta_0:=\min\{\theta_0,\theta_P'\}$ and $\mathcal{K}_{\wh\omega_P}:=\overline{\Sigma(\omega'+\ell;\wt\theta_0)\cap \Sigma^c(-\wh\omega_P,\phi_0')}.$ An analogous proof to Proposition \ref{pps-extensionPertb} leads to the required estimates on  the compact set $\mathcal{K}_{\wh\omega_P}$ and then an analogous argument to Theorem \ref{th:ua-us} concludes the proof. The detailed proof is skipped and here, we just highlight the main changes needed.

\medskip\noindent Since, $\Ff_h$ satisfies $(\mathcal{A}_3)$ and $\wt\Ff_h=\Ff_h\Pi_h,$ $\wt\Ff_h$ satisfies $(\mathcal{A}_2)$. Thus, from Lemma \ref{lem:uah-AwKh}, we have
\begin{align*}
\|R(\mu,\Af_{\omega,\wt\Ff_h})\|_{\Lc(\Hf)} \le \frac{C}{|\mu-\omega'|} \text{ for all }\mu\in \Sigma^c(\omega'+\ell;\wt\theta_0)\subset \Sigma^c(\omega';\theta_0).
\end{align*}
Also, for such $\Ff_h$ and $\wt\Ff_h,$ Lemma \ref{lem:cvg-res-rc} implies 
\begin{align*}
\sup_{\mu \in \Sigma^c(\omega';\theta_0)}\|R(\mu,\Af_{\omega,\wt\Ff_h})-R(\mu,\Af_{\omega_h,\Ff_h})\Pi_h\|_{\Lc(\Hf)} \le Ch^2.
\end{align*} 
For each fixed $h,$ define $(\Sf_h,D(\Sf_h))$ with $D(\Sf_h)=D(\Af)\times \{0\}$ by 
\begin{align*}
\Sf_h(\xi,0)=(\mu_0\If-\Af_{\omega,\wt\Ff_h})\xi
\end{align*}
and $\Tf_h$ as in Proposition \ref{pps-extensionPertb}. Other parameters remain the same as in the proof of Proposition \ref{pps-extensionPertb}. Then for all $0<h<h_{\wh\omega_P},$  $\|\Sf_h^{-1}-\Tf_h^{-1}\|_{\Lc(\Hf,\Xf)}\le C(\mu)^{-1}$ can be obtained as in Proposition \ref{pps-extensionPertb}, where $C(\mu)$ is as in Lemma \ref{lem:main_genconv}. Now, proceed in the same line as in the proof of Proposition \ref{pps-extensionPertb} by replacing $\Sf$ by $\Sf_h$ and $\mathcal{K}_{\wh\gamma}$ by $\mathcal{K}_{\wh\omega_P}$ to obtain the required result.

\medskip\noindent An analogous proof to Theorem \ref{thm:ua-sem}(c) using (b) leads to (c). 
\end{proof}

\section{Stabilization of the Approximate system and discrete Riccati}\label{sec:disRiccati}	
\noindent Theorem \ref{th:stb cnt} shows that for any $\omega>0$, $(\Af_\omega, \Bf)$ is feedback stabilizable with exponential decay $-\gamma<0$ and the feedback control is obtained using the solution of the algebraic Riccati equation \eqref{eqn:ARE}. Let $(\Af_{\omega_h}, \Bf_h)$, the approximate operators be as introduced in Section \ref{sec:Appopp}.  In this section, we study the feedback stabilizability of $(\Af_{\omega_h}, \Bf_h)$ by solving the corresponding discrete algebraic Riccati equation. 

\medskip\noindent Consider the finite dimensional system:
\begin{align} \label{eqn:d-state-2_again}
			\Yt'{_h}(t)=\Af_{\omega_h}\Yt{_h}(t)+\Bf_h\ut_h(t)\text{ for all }t>0, \quad \Yt{_h}(0)=\Yf_{0_h},
		\end{align}
where $\Af_{\omega_h}$ and $\Bf_h$ are as defined in \eqref{eqn:d-state-2} and \eqref{eqn:app B}, respectively. Note that, $\Bf_h^*\in \Lc(\Hf_h,\Uf)$, the adjoint of $\Bf_h,$ is defined by
$\Bf_h^* \begin{pmatrix}
				\varphi_h \\ \varkappa_h
			\end{pmatrix}= \varphi_h \chi_{\mathcal{O}}$ for all $\begin{pmatrix}
				\varphi_h \\ \varkappa_h
			\end{pmatrix}\in \Hf_h$, and 
			\begin{equation} \label{B_h-B_h^*-norm}
			\|\Bf_h^*\|_{\Lc(\Hf_h,\Uf)}=\|\Bf_h\|_{\Lc(\Uf,\Hf)} \le \|\Bf\|_{\Lc(\Uf,\Hf)}.
			\end{equation}

\noindent
Note that for each $h>0,$ since the pair $(\Af_{\omega_h},\Bf_h)$ is finite-dimensional, Kalman rank condition or Hautus type of condition can be used to check the stabilizability of  $(\Af_{\omega_h},\Bf_h)$ on $\Hf_h$. But our aim is to obtain uniform stabilizability by finite dimensional feedback control that is stated in Theorem \ref{th:dro}. To prove that, we need the next lemma.
\begin{Lemma}[intermediate stability - II]\label{lem:temp-1}
Let $\{e^{t\Af_{\omega,\Pf}}\}_{t\ge 0}$, the semigroup generated by $\n \Af_{\omega,\Pf}=\Af_{\omega}-\Bf\Bf^*\Pf,$ be exponentially stable semigroup in $\n\Hf$ with decay $-\gamma<0$ as obtained in Theorem \ref{th:stb cnt}. Then for any $\widehat{\gamma} \in (0,\gamma),$ there exists a $h_{\wh\gamma}>0$ such that for all $0<h<h_{\wh\gamma}$, $\n\Af_{\omega_h,\Pf}:=\Af_{\omega_h}-\Bf_h\Bf^*\Pf$ generates a uniformly analytic semigroup $\n\{e^{t\Af_{\omega_h,\Pf}}\}_{t\ge 0}$  on $\n\Hf_h$ with the exponential decay $-\wh\gamma$, that is, 
$$\n\left\|e^{t\Af_{\omega_h,\Pf}}\right\|_{\Lc(\Hf_h)}\le \widehat{M}e^{-\wh\gamma t}  \text{ for all } t>0, \text{ for all } 0<h<h_{\wh\gamma},$$
for some positive constant $\widehat{M}$ independent of $h$. 
\end{Lemma}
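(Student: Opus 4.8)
The plan is to recognize this statement as a direct instance of the abstract perturbation theory of Section~\ref{sec:pert}, specialized to the case where the perturbing feedback is the \emph{exact} continuous feedback operator $-\Bf^*\Pf$. Concretely, I would set $\Ff:=-\Bf^*\Pf\in\Lc(\Hf,\Uf)$ and $\Ff_h:=-\Bf^*\Pf|_{\Hf_h}\in\Lc(\Hf_h,\Uf)$, the restriction of $\Ff$ to the finite-dimensional space $\Hf_h$. With these choices $\Af_{\omega,\Ff}=\Aw-\Bf\Bf^*\Pf=\Af_{\omega,\Pf}$ and $\Af_{\omega_h,\Ff_h}=\Af_{\omega_h}-\Bf_h\Bf^*\Pf=\Af_{\omega_h,\Pf}$ on $\Hf_h$, so the desired conclusion coincides exactly with parts (a)--(b) of Theorem~\ref{th:ua-us} for this pair of operators.

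It then remains to verify the hypotheses $(\mathcal{A}_1)$, $(\mathcal{A}_3)$--$(\mathcal{A}_5)$ required by Theorem~\ref{th:ua-us}. Property $(\mathcal{A}_1)$ is already in hand from Theorems~\ref{thm:ua-sem}, \ref{th-unianaly-approx} and Lemma~\ref{lem:1st compresest}. For $(\mathcal{A}_3)$ I would use $\|\Ff_h\|_{\Lc(\Hf_h,\Uf)}\le\|\Bf^*\Pf\|_{\Lc(\Hf,\Uf)}\le\|\Bf\|_{\Lc(\Uf,\Hf)}\|\Pf\|_{\Lc(\Hf)}$, a bound independent of $h$ since $\Bf$ and $\Pf$ are fixed. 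Assumption $(\mathcal{A}_5)$ is nothing but Theorem~\ref{th:stb cnt}(c) combined with Proposition~\ref{propcontstab}: $\Af_{\omega,\Pf}$ generates an exponentially stable analytic semigroup of decay $-\gamma$, $\Sigma^c(-\gamma;\theta_\Pf)\subset\rho(\Af_{\omega,\Pf})$ for some $\theta_\Pf\in(\frac{\pi}{2},\pi)$, and $\sup_{\Lambda\in\sigma(\Af_{\omega,\Pf})}\Re(\Lambda)\le-\gamma$.

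The one point needing care, and the reason this lemma is the cleanest case of the framework, is $(\mathcal{A}_4)$: here $\Ff_h$ is by construction the restriction of $\Ff$ to $\Hf_h$, so $\Ff\xi_h=\Ff_h\xi_h$ for all $\xi_h\in\Hf_h$ and therefore $\|\Ff-\Ff_h\|_{\Lc(\Hf_h,\Uf)}=0\le Ch^s$ for every $s\in(0,2]$. In contrast to the genuine numerical setting, where $(\mathcal{A}_4)$ carries a nontrivial convergence rate, the perturbation is exact on $\Hf_h$ and the hypothesis is free. I do not foresee a substantive obstacle: the whole content lies in the correct identification of $\Ff,\Ff_h$ and in this trivial verification of $(\mathcal{A}_4)$.

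Having checked all hypotheses, I would apply Theorem~\ref{th:ua-us} to conclude that for each $\wh\gamma\in(0,\gamma)$ there exist $h_{\wh\gamma}>0$ and $C>0$, both independent of $h$, with $\Sigma^c(-\wh\gamma;\phi_0)\subset\rho(\Af_{\omega_h,\Pf})$ and $\|e^{t\Af_{\omega_h,\Pf}}\|_{\Lc(\Hf_h)}\le Ce^{-\wh\gamma t}$ for all $t>0$ and $0<h<h_{\wh\gamma}$; taking $\widehat{M}=C$ yields the claimed decay. Finally, uniform analyticity of $\{e^{t\Af_{\omega_h,\Pf}}\}_{t\ge0}$ follows from Lemma~\ref{lem:uah}, whose hypotheses $(\mathcal{A}_1)$(a),(c) and $(\mathcal{A}_3)$ have already been verified, so no separate analyticity argument is required.
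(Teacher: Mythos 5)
Your proposal is correct and follows essentially the same route as the paper: the paper's proof likewise sets $\Ff=-\Bf^*\Pf$, $\Ff_h=-\Bf^*\Pf|_{\Hf_h}$, checks uniform boundedness for $(\mathcal{A}_3)$, observes that $(\mathcal{A}_4)$ holds trivially since $\Ff=\Ff_h$ on $\Hf_h$, obtains uniform analyticity from Lemma~\ref{lem:uah}, and concludes via Theorem~\ref{th:ua-us}. Your explicit verification of $(\mathcal{A}_5)$ through Theorem~\ref{th:stb cnt}(c) and Proposition~\ref{propcontstab} is a detail the paper leaves implicit, but it is the same argument.
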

\begin{proof}
Let $\n\Pf\in \mathcal{L}(\Hf)$ be the solution of \eqref{eqn:ARE}. Choose $\Ff=-\Bf^*\Pf$ defined on $\Hf$, $\Ff_h=-\Bf^*\Pf{|_{\Hf_h}}$ restricted onto $\Hf_h$ for all $h>0.$ Note that $\Ff\in \Lc(\Hf,\Uf)$ and $\Ff_h\in \Lc(\Hf_h,\Uf)$ are uniformly bounded as $\|\Ff_h\|_{\Lc(\Hf_h,\Uf)}\le \|\Bf^*\Pf\|_{\Lc(\Hf,\Uf)}\le C_3$ for some positive constant $C_3$ independent of $h.$ Therefore, Lemma \ref{lem:uah} yields the uniform analyticity of the semigroup generated by $\n\Af_{\omega_h,\Pf}:=\Af_{\omega_h}-\Bf_h\Bf^*\Pf$. Since $\Ff=\Ff_h$ on $\Hf_h,$ $(\mathcal{A}_4)$ is satisfied for such $\Ff$ and $\Ff_h.$ Finally, Theorem \ref{th:ua-us} concludes the proof.
\end{proof}


\noindent \textbf{Proof of Theorem \ref{th:dro}.} 
(a) and (b). Choosing any $\wh\gamma \in (0,\gamma),$ from Lemma \ref{lem:temp-1},  it follows that there exists $h_0:=h_{\wh\gamma}>0$ such that for all $0<h<h_0$, $\n(\Af_{\omega_h},\Bf_h)$ is exponentially stabilizable on $\Hf_h$. Hence (a) and (b) of Theorem \ref{th:dro} follow from \cite[Theorem 3.1, Remark 3.1, Corollary 4.2, part-V, Ch-1]{BDDM}.

\medskip\noindent
(c). First we show that there exists a positive constant $\widetilde{C}$ independent of $h$ such that 
\begin{equation}\label{equnifbddRiccati}
\normalfont \|\Pf_h\|_{\Lc(\Hf_h)} \leq \widetilde{C},\quad 
\normalfont\|\Bf^*_h\Pf_h\|_{\Lc(\Hf_h, U)}\le \widetilde{C} \text{ for all } 0<h<h_0. 
\end{equation}
Note that from \eqref{eq:minvalfun-d}, we have
\begin{align} \label{eq:comp J_h}
\langle \Pf_h\Yf_{0_h},\Yf_{0_h}\rangle =  J_h(\Yf^\sharp_h,u^\sharp_h) < J_h(\widehat{\Yf}_h,\widehat{u}_h),
\end{align}
where 
\begin{align}\label{eqn:whYwhu}
\widehat{\Yf}_h(t)=e^{t\Af_{\omega_h,\Pf}}\Yf_{0_h}  \text{ and }\widehat{u}_h(t)=-\Bf_h\Bf^*\Pf e^{t\Af_{\omega_h,\Pf}}\Yf_{0_h} \text{ for all }t>0, 
\end{align}
and $\Af_{\omega_h,\Pf}$ is as introduced in Lemma \ref{lem:temp-1}. 
Then from Lemma \ref{lem:temp-1} and \eqref{B_h-B_h^*-norm}, it follows that there exists a constant $\wt C>0$ independent of $h$ such that 
\begin{equation} \label{eqn:bdd Ph}
\begin{aligned}
 J_h(\widehat{\Yf}_h,\widehat{u}_h)=\int_0^\infty \Big(\|e^{t\Af_{\omega_h,\Pf}}\textbf{Y}_{0_h}\|^2+\|-\Bf_h\Bf^*\Pf e^{t\Af_{\omega_h,\Pf}}\textbf{Y}_{0_h}\|_{\Uf}^2\Big)\,dt \le \widetilde{C} \|\textbf{Y}_{0_h}\|^2.
\end{aligned}
\end{equation}
Since $\Pf_h\in\Lc(\Hf_h)$ is self adjoint, $\displaystyle \n \|\Pf_h\|_{\Lc(\Hf_h)}=\sup_{\{\Yf_{0_h}\in \Hf_h,\,\|\Yf_{0_h}\|=1\}} \langle \Pf_h\Yf_{0_h},\Yf_{0_h}\rangle\le \widetilde{C}.$ The second estimate in \eqref{equnifbddRiccati} follows from the above estimate and \eqref{B_h-B_h^*-norm}.\\[1mm]
Choose $\Ff_h=-\Bf_h^*\Pf_h$ in Lemma \ref{lem:uah} and use \eqref{equnifbddRiccati} to conclude that $\Af_{\omega_h,\Pf_h}$ generates a uniformly analytic semigroup $\{e^{t\Af_{\omega_h,\Pf_h}}\}_{t\ge 0}$ on $\Hf_h$. 

\noindent
The uniform exponential stability estimate 
is a version of a well-known theorem of Datko (\cite[Chap. 4, Theorem 4.1, p. 116]{Pazy}) for a family of semigroups depending on the parameter $h$.
Utilize \eqref{equnifbddRiccati} and apply Lemma \ref{lem:uah} with $\Ff_h:=-\Bf_h^*\Pf_h$ to obtain $$\|e^{t\Af{_{\omega_h,\Pf_h}}}\|_{\Lc(\Hf_h)} \le C_1 e^{\omega' t} \text{ for all }t>0, \text{ for all } 0<h< h_0,$$ 
for some $C_1>0$ and $\omega'>0$ independent of $h$. Furthermore, from \eqref{eqn:bdd Ph} and Theorem \ref{th:dro}(b), for any $\Yf_{0_h}\in \Hf_h$, there exists $\wt C$ independent of $h$ such that 
$$\int_0^\infty \|e^{t\Af{_{\omega_h,\Pf_h}}}\textbf{Y}_{0_h}\|^2\,dt\le \widetilde{C} \|\textbf{Y}_{0_h}\|^2 \text{ for all } 0<h<h_0.$$
Since, the positive constants $C_1, \omega'$ and $\wt C$ are independent of the parameter $h$ for all $0<h<h_0,$ the assumptions of \cite[Theorem 4A.2]{Lasiecka1} are verified and hence we obtain positive constants $M_p, \omega_P$ independent of $h$ such that 
\begin{align*}
\left\|e^{t\Af{_{\omega_h,\Pf_h}}}\right\|_{\Lc(\Hf_h)} \le M_Pe^{-\omega_P t} \text{ for all } t>0 \text{ and for all } 0<h<h_0.
\end{align*}
This concludes the proof of (c).
\qed

\section{Error estimates} \label{sec:results}
\noindent In this section, we prove the error estimates of solutions of algebraic Riccati equations, stabilized solutions, and stabilizing controls. 

\subsection{Proof of Theorems \ref{th:main-conv-P} - \ref{th:main-conv-new}} \label{subsec:Proof of main 3}

\begin{Lemma}[intermediate stability - III] \label{lem:fo}
Let $h_0>0$ and $\omega_P>0$ be as obtained in Theorem \ref{th:dro}. Let $\aph:=\Aw-\Bf\Bf_h^*\Pf_h\Pi_h,$ where $\Pf_h\in \Lc(\Hf_h)$ is the solution of \eqref{eqn:d-ARE_PCE}. Then for any $\wh{\omega}_P\in(0,\omega_P),$ there exists $h_{\wh{\omega}_P}\in (0,h_0)$ such that for all $0<h<h_{\wh{\omega}_P},$  $\aph$ generates a uniformly analytic semigroup $\{e^{\aph}\}_{t\ge 0}$ on $\Hf$ with exponential decay $-\wh{\omega}_P<0,$ that is, 
\begin{align*}
\|e^{t\aph}\|_{\Lc(\Hf)}\le C e^{-\wh{\omega}_P t} \text{ for all }t>0, \text{ for all }0<h<h_{\wh{\omega}_P},
\end{align*}
and for some $C>0$ independent of $ h.$
\end{Lemma}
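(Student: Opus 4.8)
The plan is to recognize $\aph$ as a special instance of the perturbed continuous operator $\Af_{\omega,\wt\Ff_h}=\Aw+\Bf\wt\Ff_h$ studied in Section \ref{sec:pert}, and then invoke the converse uniform-stability result Theorem \ref{th:unif_stab_cnvrs}. Concretely, I set $\Ff_h:=-\Bf_h^*\Pf_h\in\Lc(\Hf_h,\Uf)$ and $\wt\Ff_h:=\Ff_h\Pi_h=-\Bf_h^*\Pf_h\Pi_h$. With this choice one has the two identities $\Af_{\omega_h,\Ff_h}=\Af_{\omega_h}-\Bf_h\Bf_h^*\Pf_h=\Af_{\omega_h,\Pf_h}$ and $\Af_{\omega,\wt\Ff_h}=\Aw-\Bf\Bf_h^*\Pf_h\Pi_h=\aph$, so proving the lemma amounts to transferring the uniform stability of the discrete closed-loop operator $\Af_{\omega_h,\Pf_h}$ to its continuous counterpart $\aph$.

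The first step is to verify the hypotheses of Theorem \ref{th:unif_stab_cnvrs}. Assumption $(\mathcal{A}_3)$ for $\Ff_h=-\Bf_h^*\Pf_h$ is exactly the uniform bound $\|\Bf_h^*\Pf_h\|_{\Lc(\Hf_h,\Uf)}\le\widetilde C$ established in \eqref{equnifbddRiccati} during the proof of Theorem \ref{th:dro}(c), valid for all $0<h<h_0$; this gives $(\mathcal{A}_3)$ with $C_3=\widetilde C$ independent of $h$. The required uniform stability of $\{e^{t\Af_{\omega_h,\Ff_h}}\}_{t\ge 0}=\{e^{t\Af_{\omega_h,\Pf_h}}\}_{t\ge 0}$ with a decay rate $-\omega_P$ independent of $h$ is precisely the conclusion of Theorem \ref{th:dro}(c). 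Thus the hypotheses of Theorem \ref{th:unif_stab_cnvrs} hold with $\omega_S=\omega_P$.

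Next I would apply Theorem \ref{th:unif_stab_cnvrs} with this data. For the chosen $\wh\omega_P\in(0,\omega_P)$ it produces a threshold $h_{\wh\omega_P}>0$, an angle $\phi_0'\in(\frac\pi2,\pi)$, and a constant $C>0$ independent of $h$ such that, for all $0<h<h_{\wh\omega_P}$, the sector complement $\Sigma^c(-\wh\omega_P;\phi_0')$ lies in $\rho(\Af_{\omega,\wt\Ff_h})=\rho(\aph)$ together with the resolvent estimate of part (b), while part (c) gives directly $\|e^{t\aph}\|_{\Lc(\Hf)}\le Ce^{-\wh\omega_P t}$ for all $t>0$. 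The uniform analyticity of $\{e^{t\aph}\}_{t\ge 0}$ on $\Hf$ is furnished by Lemma \ref{lem:uah-AwKh}: since $\wt\Ff_h=\Ff_h\Pi_h$ and $\|\Pi_h\|_{\Lc(\Hf)}\le 1$ by Lemma \ref{lem:prpty Pi_h}(c), the operator $\wt\Ff_h$ satisfies $(\mathcal{A}_2)$ with the same constant $C_3$, so $\aph=\Af_{\omega,\wt\Ff_h}$ generates a uniformly analytic semigroup. Finally, replacing $h_{\wh\omega_P}$ by $\min\{h_{\wh\omega_P},h_0\}$ ensures that $\Pf_h$, hence $\aph$, is well defined on the whole range, yielding the stated estimate.

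There is no genuinely new obstacle here: the lemma is an instantiation of the general machinery already assembled in Sections \ref{sec:pert}--\ref{sec:disRiccati}. The substantive work was carried out earlier, namely the $h$-uniform boundedness of $\Pf_h$ and $\Bf_h^*\Pf_h$ in Theorem \ref{th:dro}(c) and the converse perturbation-stability statement of Theorem \ref{th:unif_stab_cnvrs}. The only point requiring care is the bookkeeping that $\Ff_h=-\Bf_h^*\Pf_h$ indeed satisfies $(\mathcal{A}_3)$ uniformly in $h$, and that the feedback used to perturb the continuous operator is $\wt\Ff_h=\Ff_h\Pi_h$, so that the identity $\aph=\Af_{\omega,\wt\Ff_h}$ is exact and the cited theorem applies verbatim.
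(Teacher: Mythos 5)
Your proposal is correct and follows essentially the same route as the paper's proof: both verify $(\mathcal{A}_2)$--$(\mathcal{A}_3)$ for $\Ff_h=-\Bf_h^*\Pf_h$ and $\wt\Ff_h=\Ff_h\Pi_h$ via the uniform bound \eqref{equnifbddRiccati}, invoke Lemma \ref{lem:uah-AwKh} for uniform analyticity of $e^{t\aph}$, and apply Theorem \ref{th:unif_stab_cnvrs} with $\omega_S=\omega_P$ to transfer the uniform stability of $\Af_{\omega_h,\Pf_h}$ from Theorem \ref{th:dro}(c) to $\aph$. Your extra bookkeeping (the identities $\Af_{\omega_h,\Ff_h}=\Af_{\omega_h,\Pf_h}$, $\Af_{\omega,\wt\Ff_h}=\aph$, and shrinking $h_{\wh\omega_P}$ below $h_0$) only makes explicit what the paper leaves implicit.
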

\begin{proof}
Due to \eqref{equnifbddRiccati} and Lemma \ref{lem:prpty Pi_h}(c), $(\mathcal{A}_2)$ is satisfied for $\wt\Ff_h=-\Bf_h^*\Pf_h\Pi_h.$ Therefore,  Lemma \ref{lem:uah-AwKh} implies that $\aph$ generates a uniformly analytic semigroup $\{e^{t\aph}\}_{t\ge 0}$ on $\Hf.$ Utilizing \eqref{equnifbddRiccati}, the assumptions in Theorem \ref{th:unif_stab_cnvrs} are satisfied with $\Ff_h=-\Bf_h^*\Pf_h.$ Thus for the choices $\omega_S=\omega_P$ and $\wt\Ff_h=\Ff_h\Pi_h=-\Bf_h^*\Pf_h\Pi_h,$ Theorem \ref{th:unif_stab_cnvrs}(c) leads to the desired result.
\end{proof}

\noindent Our next aim is to prove the estimate between the discrete Riccati and the continuous Riccati solution, that is, to prove Theorem \ref{th:main-conv-P}. We state and prove an auxiliary lemma needed for this.
\begin{Lemma}[intermediate estimate] \label{lem:fouropt}
Let $\ap,$ $\ahp,$ $\aph,$ and $\ahph$ be as in Theorem \ref{th:stb cnt}, Lemmas \ref{lem:temp-1}, \ref{lem:fo}, and Theorem \ref{th:dro}, respectively. Let $\gamma,$ $h_0,$ and $\omega_P$ be as in Theorems \ref{th:stb cnt} and \ref{th:dro}, respectively. Then for any $\wt\gamma$ satisfying $0<\wt\gamma<\min\{\gamma, \omega_P\},$ there exists $\wt h_0 \in (0,h_0)$  such that
\begin{align*}
\left\|e^{t\ahp}\Pi_h-e^{t\ap}\right\|_{\Lc(\Hf)} + \left\|e^{t\ahph}\Pi_h-e^{t\aph}\right\|_{\Lc(\Hf)} \le Ch^2 \frac{e^{-\wt\gamma t}}{t} \text{ for all }t>0,\, 0<h<\wt h_0,
\end{align*}
and for some $C>0$ independent of $h.$
\end{Lemma}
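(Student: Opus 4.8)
The plan is to handle both semigroup differences by the contour-integral technique already used in the proof of Theorem~\ref{pps-estSGest}(a). For each difference the two ingredients I need are a uniform $O(h^2)$ bound on the relevant resolvent difference over a right-opening sector, and the Dunford representation of the four semigroups (as in \eqref{eqrepsemi}--\eqref{eqrepsemiaopprox}) over a common contour lying in that sector. Since $0<\wt\gamma<\min\{\gamma,\omega_P\}$, I first fix auxiliary rates $\wh\gamma\in(\wt\gamma,\gamma)$ and $\wh\omega_P\in(\wt\gamma,\omega_P)$, whose strict separation from $\wt\gamma$ is what lets me push the contour into the resolvent set.

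For the first difference I place $\ap$ and $\ahp$ in the perturbation framework of Section~\ref{sec:pert} with $\Ff:=-\Bf^*\Pf\in\Lc(\Hf,\Uf)$ and $\Ff_h:=-\Bf^*\Pf|_{\Hf_h}$, so that $\ap=\Af_{\omega,\Ff}$ and $\ahp=\Af_{\omega_h,\Ff_h}$ as in Lemma~\ref{lem:temp-1}. These satisfy $(\mathcal{A}_3)$, and since $\Ff=\Ff_h$ on $\Hf_h$ also $(\mathcal{A}_4)$ with $s=2$, while $(\mathcal{A}_5)$ follows from Theorem~\ref{th:stb cnt}(c) and Proposition~\ref{propcontstab}. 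Lemma~\ref{lem:cvg-res-rc}(a) then gives the $O(h^2)$ resolvent difference on $\Sigma^c(\omega';\theta_0)$, Proposition~\ref{pps-extensionPertb} extends it to the compact set $\mathcal{K}_{\wh\gamma}$, and Theorem~\ref{th:ua-us} supplies $\Sigma^c(-\wh\gamma;\phi_0)\subset\rho(\ahp)$ with the uniform bounds for $\ap$ and $\ahp$. Because $\Sigma^c(-\wh\gamma;\phi_0)\subset\mathcal{K}_{\wh\gamma}\cup\Sigma^c(\omega'+\ell;\wt\theta_0)$, combining these yields
\[
\sup_{\mu\in\Sigma^c(-\wh\gamma;\phi_0)}\big\|R(\mu,\ap)-R(\mu,\ahp)\Pi_h\big\|_{\Lc(\Hf)}\le Ch^2,\qquad 0<h<h_{\wh\gamma}.
\]
For the second difference I argue symmetrically with $\Ff_h:=-\Bf_h^*\Pf_h$ (uniformly bounded by \eqref{equnifbddRiccati}, so $(\mathcal{A}_3)$ holds) and $\wt\Ff_h:=\Ff_h\Pi_h$, so that $\ahph=\Af_{\omega_h,\Ff_h}$ and $\aph=\Af_{\omega,\wt\Ff_h}$ as in Lemma~\ref{lem:fo}; Theorem~\ref{th:dro}(c) gives uniform stability of $\{e^{t\ahph}\}_{t\ge0}$ with decay $-\omega_P$, and Lemma~\ref{lem:cvg-res-rc}(b) together with Theorem~\ref{th:unif_stab_cnvrs} gives
\[
\sup_{\mu\in\Sigma^c(-\wh\omega_P;\phi_0')}\big\|R(\mu,\aph)-R(\mu,\ahph)\Pi_h\big\|_{\Lc(\Hf)}\le Ch^2,\qquad 0<h<h_{\wh\omega_P},
\]
along with $\Sigma^c(-\wh\omega_P;\phi_0')\subset\rho(\aph)$ and uniform resolvent bounds.

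Finally I run the contour estimate. Choose $\Gamma=\Gamma_\pm\cup\Gamma_0$ with vertex $-\wt\gamma$ and rays at angles $\pm\phi_0$ (resp. $\pm\phi_0'$); because $\wt\gamma<\wh\gamma$ (resp. $\wt\gamma<\wh\omega_P$), every point of $\Gamma$ satisfies $|\arg(\mu+\wh\gamma)|<\phi_0$, hence $\Gamma\subset\Sigma^c(-\wh\gamma;\phi_0)$ (resp. $\Sigma^c(-\wh\omega_P;\phi_0')$), so both operators admit the Dunford representation over $\Gamma$ and the resolvent-difference bound holds on $\Gamma$. Writing, e.g.,
\[
e^{t\ahp}\Pi_h-e^{t\ap}=\frac{1}{2\pi i}\int_{\Gamma}e^{\mu t}\big(R(\mu,\ahp)\Pi_h-R(\mu,\ap)\big)\,d\mu,
\]
substituting $\mu_1=(\mu+\wt\gamma)t$ to factor out $e^{-\wt\gamma t}/t$, and then using the analyticity of the integrand in $\mu_1$ throughout the shifted sector $\Sigma^c(-(\wh\gamma-\wt\gamma)t;\phi_0)$ to deform the transformed contour to one of fixed radius $r_0$ independent of $t$, I bound the remaining integral by $Ch^2\int_{\Gamma^1}|e^{\mu_1}|\,|d\mu_1|<\infty$ exactly as in Theorem~\ref{pps-estSGest}(a). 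This gives $\|e^{t\ahp}\Pi_h-e^{t\ap}\|_{\Lc(\Hf)}\le Ch^2e^{-\wt\gamma t}/t$, and the identical computation for $\aph,\ahph$ gives the companion bound; setting $\wt h_0:=\min\{h_{\wh\gamma},h_{\wh\omega_P}\}\in(0,h_0)$ and adding the two estimates completes the proof.

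The main obstacle is precisely that the $O(h^2)$ resolvent difference must remain valid all the way up to a contour whose vertex $-\wt\gamma$ lies strictly to the right of the spectral sectors of the limit operators, and that the positive offset $(\wh\gamma-\wt\gamma)t>0$ (respectively $(\wh\omega_P-\wt\gamma)t>0$) keeps the transformed contour strictly inside the analyticity region during the deformation to fixed radius. This is exactly what the extensions to the compact sets $\mathcal{K}_{\wh\gamma},\mathcal{K}_{\wh\omega_P}$ in Proposition~\ref{pps-extensionPertb} and Theorem~\ref{th:unif_stab_cnvrs} provide, and it is the reason the strict inequality $\wt\gamma<\min\{\gamma,\omega_P\}$ is imposed; once this is secured the contour estimate reduces to a verbatim repetition of Theorem~\ref{pps-estSGest}(a).
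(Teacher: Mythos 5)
Your proposal is correct and takes essentially the same route as the paper's proof: the same feedback choices $\Ff=-\Bf^*\Pf$ with $\Ff_h=-\Bf^*\Pf|_{\Hf_h}$ for the first difference and $\Ff_h=-\Bf_h^*\Pf_h$ with $\wt\Ff_h=\Ff_h\Pi_h$ for the second, verified against $(\mathcal{A}_3)$--$(\mathcal{A}_5)$ and fed through Lemma \ref{lem:cvg-res-rc}, Proposition \ref{pps-extensionPertb}, and Theorems \ref{th:ua-us}, \ref{th:unif_stab_cnvrs} to obtain the uniform $O(h^2)$ resolvent-difference bounds on the shifted sectors, followed by the same Dunford contour estimate with $\wt h_0=\min\{h_0,h_{\wh\gamma},h_{\wh\omega_P}\}$. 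The only cosmetic deviation is the contour placement: you put the vertex at $-\wt\gamma$ with rays at angles $\pm\phi_0$ and justify the deformation via the positive offset $(\wh\gamma-\wt\gamma)t$, whereas the paper places the vertex at $-\wh\gamma$ with rays at angle $\phi'<\phi_0$ and then simply relaxes the resulting decay $e^{-\wh\gamma t}$ to $e^{-\wt\gamma t}$; both variants are valid.
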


\begin{figure}[ht!] 
\begin{center}
\begin{tikzpicture}
			\path[fill=gray!30] (-6,4.5)--(-2.5,2.9)--(-1,0)--(-2.5,-2.9)--(-6,-4.5)--cycle;
			\draw[thick,<->,dashed] (-6,0)--(6,0);
			\draw[thick,<->,dashed] (0,-4)--(0,4);

			\draw[orange] (-2.5,-2.9)--(-2.5,2.9);

			\draw[thick,->] (-2.5,2.9)--(-4,3.5);
			\draw[thick,,->] (-2.5,-2.9)--(-4,-3.5);
			
			\draw[thick,dashed] (4.5,0)--(-2.5,2.9);
			\draw[thick,dashed] (4.5,0)--(-2.5,-2.9);
			
			\node[] at (4.5,-0.2){$\omega'$};
			\draw[] (5,0) arc (0:150:0.5cm);
			\node[] at (5.1,0.4){$\theta_0$};
			
			\node[red] at (-2.2,0.3){$-\gamma$};
			\node[red] at (-2.5,0){$\bullet$};
			\node[blue] at (-0.8,0.2){$-\wh\gamma$};
			\node[blue] at (-1,0){$\bullet$};
			
			\node[] at (0.2,0.2){O};
			
			\draw[red,>->] (-1.8,-4)--(-1.2,-1.2);
			\draw[red,<-<] (-1.8,4)--(-1.2,1.2);
			\draw[red] (-1.2,-1.2) arc (-110:110:1.26cm);
			\draw[red,dashed,<->] (1.5,0) arc (0:100:2.5cm);
			\node[red] at (1,2){$\phi'$};
			
			\node[red] at (-2,-3.5){$\Gamma_-$};
			\node[red] at (-2,3.5){$\Gamma_+$};
			\node[red] at (0.7,-0.2){$\Gamma_0$};
			
			\draw[blue,dashed] (-1,0)--(-2.5,-2.9);
			\draw[blue,dashed] (-1,0)--(-2.5,2.9);
			
			\draw[blue,dashed] (-1,0)--(-0.7,-1.2);
			\node[blue] at (-1,-0.8){$r_0$};

			
			\node[] at (-3.5,1.2){Spectrum};
			\node[] at (-3.5,0.85){region for $\Af{_{\omega_h,\Pf}}$};
\end{tikzpicture}
\end{center}
\caption{Spectrum region for $\Af{_{\omega_h,\Pf}}$ and $\color{red}{\Gamma=\Gamma_\pm\cup\Gamma_0}$} \label{fig:main-Gamma}
\end{figure}

\begin{proof}
Let $0<\wt\gamma<\min\{\gamma,\omega_P\}$ be any given number. Fix $(\wh\gamma,h_{\wh\gamma})$ and $(\wh\omega_P,h_{\wh\omega_P})$ as in Lemmas \ref{lem:temp-1} and \ref{lem:fo}, respectively, such that $0<\wt\gamma \le \min\{\wh\gamma, \wh\omega_P\}$. Let $\wt h_0:=\min\{ h_0, h_{\wh\gamma}, h_{\wh\omega_P}\}.$ We first estimate $\left\|e^{t\ahp}\Pi_h-e^{t\ap}\right\|_{\Lc(\Hf)}.$ Now, choosing $\Ff=-\Bf^*\Pf,$ $\Ff_h$ as restriction of $-\Bf^*\Pf$ on $\Hf_h,$ the assumptions of Proposition \ref{pps-extensionPertb}(a) and Theorem \ref{th:ua-us} are satisfied. Thus, Proposition \ref{pps-extensionPertb} leads to 
\begin{align*}
\sup_{\mu\in \Sigma^c(-\wh\gamma;\phi_0)}\|R(\mu,\ap)-R(\mu,\ahp)\Pi_h\|_{\Lc(\Hf)}\le Ch^2,
\end{align*}
where $\phi_0$ is as mentioned in \eqref{eqn:phi_0}. Utilize this, choose $\Gamma=\Gamma_\pm\cup \Gamma_0$ (see Figure \ref{fig:main-Gamma}), where $\Gamma_\pm=\{-\wh\gamma +re^{\pm i\phi'}\, |\, r\ge r_0\}$ and $\Gamma_0=\{-\wh\gamma+r_0e^{i\phi}\,|\, |\phi|\le \phi'\},$ for some $r_0>0$ and $\frac{\pi}{2}<\phi'<\phi_0,$ and proceed as in Theorem \ref{th:res-sg-est} to obtain
\begin{align*}
\left\|e^{t\ahp}\Pi_h-e^{t\ap}\right\|_{\Lc(\Hf)} \le \sup_{\mu\in\Gamma}\|R(\mu,\ap)-R(\mu,\ahp)\Pi_h\|_{\Lc(\Hf)}\int_\Gamma e^{\mu t}\,d\mu\le Ch^2\frac{e^{-\wh\gamma t}}{t}\le Ch^2\frac{e^{-\wt\gamma t}}{t},
\end{align*}
for all $0<h<\wt h_0.$\\[1mm]
To estimate the second term, that is, $\left\|e^{t\ahph}\Pi_h-e^{t\aph}\right\|_{\Lc(\Hf)},$ note that the assumptions of Theorem \ref{th:unif_stab_cnvrs} are satisfied with $\Ff_h=-\Bf_h^*\Pf_h$ thanks to \eqref{equnifbddRiccati}. Therefore, for  $\omega_S=\omega_P$, Theorem \ref{th:unif_stab_cnvrs}, and a similar argument as above lead to 
\begin{align*}
\left\|e^{t\aph} - e^{t\ahph}\Pi_h\right\|_{\Lc(\Hf)} \le \sup_{\mu\in\Gamma}\|R(\mu,\aph)-R(\mu,\ahph)\Pi_h\|_{\Lc(\Hf)}\int_\Gamma e^{\mu t}\,d\mu\le Ch^2\frac{e^{-\wh\omega_P t}}{t}\le Ch^2\frac{e^{-\wt{\gamma} t}}{t},
\end{align*}
for all $0<h<\wt h_0.$ The proof is complete.
\end{proof}

\noindent For our later analysis, set 
\begin{equation} \label{eqn:Yht-uht}
    \begin{array}{ll}
    \Yf^\sharp(t)=e^{t\ap}\Yf_0, & u^\sharp(t)=-\Bf^*\Pf e^{t\ap}\Yf_0, \\
    \Yf^\sharp_h(t)=e^{t\ahph}\Pi_h\Yf_0, & u^\sharp_h(t)=-\Bf_h^*\Pf_h e^{t\ahph}\Pi_h\Yf_0, \\
    \widehat{\Yf}_h(t)=e^{t\Af_{\omega_h,\Pf}}\Pi_h\Yf_0 , & \widehat{u}_h(t)=-\Bf_h\Bf^*\Pf e^{t\Af_{\omega_h,\Pf}}\Pi_h\Yf_0,\\
    \overline{\Yf}(t)=e^{t\Af_{\omega,\Pf_h}}\Yf_0, & \overline{u}(t)=-\Bf_h^*\Pf_he^{t\aph}\Yf_0.\\
     \end{array}
\end{equation}
Observe that $(\Yf^\sharp(t),u^\sharp(t)),$  $(\Yf^\sharp_h(t),u^\sharp_h(t)),$ and $(\wh\Yf_h(t),\wh u_h(t))$ are the same as in Theorems \ref{th:stb cnt} - \ref{th:dro}, and \eqref{eqn:whYwhu}, respectively.

\medskip
\noindent Since $(\Yf^\sharp(t),u^\sharp(t))$  is the unique minimizing pair for \eqref{eqoptinf_PCE}, for the pair $(\overline{\Yf}(t), \overline{u}(t))$ as in \eqref{eqn:Yht-uht}, we have 
\begin{align}  \label{eq:comp-J}
J(\Yf^\sharp,u^\sharp)<J(\overline{\Yf},\overline{u}).
\end{align}
Now, if $J(\Yf^\sharp,u^\sharp)>J_h(\Yf^\sharp_h,u^\sharp_h),$ \eqref{eq:comp-J} yields
\begin{equation}\label{fun-comp-11}
\begin{aligned} 
 0<J(\Yf^\sharp,u^\sharp)-J_h(\Yf^\sharp_h,u^\sharp_h)\le J(\overline{\Yf},\overline{u})-J_h(\Yf^\sharp_h,u^\sharp_h)=\vert J(\overline{\Yf},\overline{u})-J_h(\Yf^\sharp_h,u^\sharp_h)\vert ,
\end{aligned}
\end{equation}
and if $J(\Yf^\sharp,u^\sharp)<J_h(\Yf^\sharp_h,u^\sharp_h)$, then \eqref{eq:comp J_h} implies 
\begin{equation}\label{fun-comp-22}
\begin{aligned} 
0<J_h(\Yf^\sharp_h,u^\sharp_h)-J(\Yf^\sharp,u^\sharp)\le J_h(\widehat{\Yf}_h,\widehat{u}_h)-J(\Yf^\sharp,u^\sharp)=\vert J_h(\widehat{\Yf}_h,\widehat{u}_h)-J(\Yf^\sharp,u^\sharp) \vert.
\end{aligned}
\end{equation}

\noindent\textit{Proof of Theorem \ref{th:main-conv-P}.}
(a)-(b). From Theorems \ref{th:stb cnt} and \ref{th:dro} with $\Yf_{0_h}=\Pi_h\Yf_0,$ note that 
\begin{equation} \label{eqn:ph-p-3}
\begin{aligned}
\big\vert\langle (\Pf_h\Pi_h-\Pf)\Yf_0,\Yf_0\rangle\big\vert & =\big\vert \langle \Pf_h\Pi_h\Yf_0,\Pi_h\Yf_0\rangle-\langle \Pf\Yf_0,\Yf_0\rangle\big\vert  =\big\vert J_h(\Yf^\sharp_h,u^\sharp_h)-J(\Yf^\sharp,u^\sharp)\big\vert .
\end{aligned}
\end{equation}
Now, \eqref{fun-comp-11} and \eqref{fun-comp-22} imply  $\vert J_h(\Yf^\sharp_h,u^\sharp_h)-J(\Yf^\sharp,u^\sharp)\vert \le \vert J(\overline{\Yf},\overline{u})-J_h(\Yf^\sharp_h,u^\sharp_h)\vert+\vert J_h(\widehat{\Yf}_h,\widehat{u}_h)-J(\Yf^\sharp,u^\sharp) \vert,$ where $(\overline{\Yf},\overline{u})$ and $(\widehat{\Yf}_h,\widehat{u}_h)$ are as in \eqref{eqn:Yht-uht}. The expressions for $J(\cdot,\cdot)$ and $J_h(\cdot,\cdot),$ and a triangle inequality yield 
\begin{align*}
\big\vert\langle (\Pf_h\Pi_h-\Pf)\Yf_0,\Yf_0\rangle\big\vert & \le \int_0^\infty\Big\vert\|\overline{u}(t)\|_\Uf^2-\|u^\sharp_h(t)\|_\Uf^2\Big\vert\, dt  +\int_0^\infty\Big\vert\|\overline{\Yf}(t)\|^2-\|\Yf^\sharp_h(t)\|^2\Big\vert\, dt \\
 & \hspace{1cm} +\int_0^\infty\Big\vert\|\widehat{u}_h(t)\|_\Uf^2-\|u^\sharp(t)\|_\Uf^2\Big\vert \,dt  +\int_0^\infty\Big\vert\|\widehat{\Yf}_h(t)\|^2-\|\Yf^\sharp(t)\|^2\Big\vert\, dt \\
& \le C\left( \int_0^\infty\Big\vert\|\overline{\Yf}(t)\|^2-\|\Yf^\sharp_h(t)\|^2\Big\vert\, dt + \int_0^\infty\Big\vert\|\widehat{\Yf}_h(t)\|^2-\|\Yf^\sharp(t)\|^2\Big\vert \,dt\right),
\end{align*}
where the values of $u^\sharp(t)$, $u^\sharp_h(t),\;\overline{u}(t),$ and $\widehat{u}_h(t)$ (given in \eqref{eqn:Yht-uht}) along with \eqref{B_h-B_h^*-norm} and \eqref{equnifbddRiccati} are utilized in the last inequality. Substitute the values of $\Yf^\sharp(t),\;\Yf^\sharp_h(t),\;\overline{\Yf}(t),$ and $\widehat{\Yf}_h(t)$ from \eqref{eqn:Yht-uht} in the last expression, use the inequality $\big\vert\|a\|^2-\|b\|^2\big\vert \leq \|a-b\|(\|a\|+\|b\|) $ and the H\"{o}lder's inequality to obtain
\begin{equation} \label{eqn:ph-p-120}
\begin{aligned}
 \left\vert\left\langle (\Pf_h\Pi_h-\Pf)\Yf_0,\Yf_0\right\rangle\right\vert 
  &\le C \Bigg(\int_0^\infty\left(\|e^{t\aph}\Yf_0-e^{t\ahph}\Pi_h\Yf_0\|\right) \Big(\|e^{t\aph}\Yf_0\| \\
& \qquad +\|e^{t\ahph}\Pi_h\Yf_0\| \Big)\,dt 
+ \int_0^\infty\left(\|e^{t\ahp}\Pi_h\Yf_0-e^{t\ap}\Yf_0\|\right)\\ 
& \qquad\quad\times\Big(\|e^{t\ahp}\Pi_h\Yf_0\| +\|e^{t\ap}\Yf_0\| \Big)\,dt \Bigg).
\end{aligned}
\end{equation}

\noindent For any $0<\wt\gamma<\min\{\gamma, {\omega_P}\},$ fix $(\wh\gamma,h_{\wh\gamma}),$ $(\wh\omega_P, h_{\wh\omega_P}),$ and $\wt h_0=\min\{h_0, h_{\wh\gamma}, h_{\wh {\omega_P}}\}$ as in Lemma \ref{lem:fouropt}. This and the exponential stability in Lemma \ref{lem:fo}, Theorem \ref{th:dro}, Lemma \ref{lem:temp-1}, and Theorem \ref{th:stb cnt} imply that the right hand side of \eqref{eqn:ph-p-120} is bounded by 
\begin{align}\label{eqn:bound of ph-p-120}
C\|\Yf_0\|^2\Big(\int_0^\infty e^{-\wt\gamma t}\left\|e^{t\aph}-e^{t\ahph}\Pi_h\right\|_{\Lc(\Hf)} dt+\int_0^\infty e^{-\wt\gamma t} \left\|e^{t\ahp}\Pi_h-e^{t\ap}\right\|_{\Lc(\Hf)} dt\Big),
\end{align}
for all $0<h<\wt h_0.$

\noindent Lemmas \ref{lem:fo}, \ref{lem:fouropt}, and Theorem \ref{th:dro} lead to 
\begin{align}\label{eq:est4P-1}
\left\|e^{t\aph}-e^{t\ahph}\Pi_h\right\|_{\Lc(\Hf)}\le C e^{-\wt\gamma t} \text{ and }
\left\|e^{t\aph}-e^{t\ahph}\Pi_h\right\|_{\Lc(\Hf)} \le C h^2 \frac{e^{-\wt\gamma t}}{t}.
\end{align}
Let $0<\epsilon<1$ be arbitrary small number. An interpolation between the inequalities in \eqref{eq:est4P-1} leads to 
\begin{align}\label{eq:est4P-n2}
\left\|e^{t\aph}-e^{t\ahph}\Pi_h\right\|_{\Lc(\Hf)} \le C h^{2(1-\epsilon)} \frac{e^{-\wt\gamma t}}{t^{1-\epsilon}},
\end{align}
which further yields
\begin{align}\label{eq:est4P-n3}
\int_0^\infty e^{-\wt\gamma t} \left\|e^{t\ahp}\Pi_h-e^{t\ap}\right\|_{\Lc(\Hf)} \, dt  \le C h^{2(1-\epsilon)} \text{ for all }0<h<\wt h_0,
\end{align}
where the constant $C>0$ depends on $\wt\gamma$ and $\epsilon$ but is independent $h.$

\noindent To estimate the second term in \eqref{eqn:bound of ph-p-120}, proceed in a similar way utilizing Theorem \ref{th:stb cnt}, Lemmas \ref{lem:temp-1}, and \ref{lem:fouropt} to obtain
\begin{align*}
\int_0^\infty  e^{-\wt\gamma t} \left\|e^{t\ahp}\Pi_h-e^{t\ap}\right\|_{\Lc(\Hf)} \, dt  \le C h^{2(1-\epsilon)} \text{ for all }0<h<\wt h_0.
\end{align*}

\noindent Utilize above two inequalities in \eqref{eqn:ph-p-120} and \eqref{eqn:bound of ph-p-120} to obtain
\begin{equation*}
 \left\vert\left\langle (\Pf_h\Pi_h-\Pf)\Yf_0,\Yf_0\right\rangle\right\vert\le C h^{2(1-\epsilon)} \|\Yf_0\|^2,
\end{equation*}
and thus 
\begin{align*}
\|\Pf_h\Pi_h-\Pf\|_{\Lc(\Hf)}=\sup_{\substack{ \Yf_0\in \Hf \\ \|\Yf_0\|=1}}\left\vert\left\langle (\Pf_h\Pi_h-\Pf)\Yf_0,\Yf_0\right\rangle\right\vert\le C h^{2(1-\epsilon)} \text{ for all }0< h< \wt h_0.
\end{align*}
This completes the proof.

\medskip
\noindent (c) Note that the obtained feedback operators stabilizing \eqref{eqn: main shifted_PCE} and \eqref{eqn:d-state-2} are $\n-\Bf^*\Pf$ and $\n-\Bf_h^*\Pf_h\Pi_h$, respectively. Theorem \ref{th:main-conv-P}(a) and $\normalfont\Pi_h \Bf=\Bf_h$ lead to
\begin{equation*} \label{eqn:b*p}
\normalfont
\begin{aligned}
\normalfont\|\Bf_h^*\Pf_h\Pi_h-\Bf^*\Pf\|_{\Lc(\Hf,\Uf)}  =\|\Pf_h\Bf_h-\Pf\Bf\|_{\Lc(\Uf,\Hf)}  \le \|\Pf_h\Pi_h-\Pf\|_{\Lc(\Hf)}\|\Bf\|_{\Lc(\Uf,\Hf)}  \le C h^{2(1-\epsilon)} .
\end{aligned}
\end{equation*}
(d) Finally, (a) leads to 
\begin{align*}
\|\Bf^*\Pf-\Bf_h^*\Pf_h\|_{\Lc(\Hf_h,\Uf)} \le \| \Bf^*(\Pf-\Pf_h)\|_{\Lc(\Hf_h,\Uf)}+\| (\Bf^*-\Bf_h^*)\Pf_h\|_{\Lc(\Hf_h,\Uf)} \le Ch^{2(1-\epsilon)}.
\end{align*}\qed

\noindent Now, we prove that the discrete stabilized solution, that is, the solution $\Yf^\sharp_h(\cdot)$ of \eqref{eq-d-cl-loop} converges to the stabilized solution $\Yf^\sharp(\cdot)$ of \eqref{eqcl-loop} and their error estimate. We also establish an error estimate for the stabilizing control.
%

\medskip\noindent \textbf{Proof of Theorem \ref{th:main-conv-new}.}
For any $0<\wt\gamma<\min\{\gamma, {\omega_P}\},$ as in Lemma \ref{lem:fouropt}, fix $(\wh\gamma,h_{\wh\gamma}),$ $(\wh\omega_P, h_{\wh\omega_P})$ such that $0<\wt\gamma\le \min\{\wh\gamma, \wh\omega_P\}$ and $\wt h_0=\min\{h_0, h_{\wh\gamma}, h_{\wh {\omega_P}}\}$. Utilizing
\eqref{equnifbddRiccati} and Theorem \ref{th:main-conv-P}(c) for $\Ff=-\Bf^*\Pf$ and $\Ff_h=-\Bf_h\Pf_h$, the assumptions in Proposition \ref{pps-extensionPertb} hold. Let $\wt\phi_0:=\min\{\phi_0,\phi_0'\},$ where $\phi_0$ and $\phi_0'$ are as in Theorem \ref{th:ua-us} and Theorem \ref{th:unif_stab_cnvrs}, respectively. Therefore, for any $0<\wt\gamma<\min\{\gamma,{\omega_P}\}$  the spectrum of $\ap$ and $\ahph$ are contained in $\Sigma(-\wt\gamma;\wt\phi_0),$ and  $$\sup_{\mu\in \Sigma^c(-\wt\gamma;\wt\phi_0)}\|R(\mu,\ap)-R(\mu,\ahph)\Pi_h\|_{\Lc(\Hf)}\le C h^{2(1-\epsilon)}\text{ for all } 0<h<\wt h_0.$$   Let $\Gamma=\Gamma_\pm\cup\Gamma_0$ be a path in $\Sigma^c(-\wt\gamma;\wt\phi_0)$, where $\Gamma_\pm=\{-\wt\gamma+re^{\pm i\vartheta_0}\,|\, r\ge r_0\}$ and $\Gamma_0=\{-\wt\gamma+r_0e^{i\phi}\,|\,|\phi|\le \vartheta_0\},$ for some $r_0>0,$ $\frac{\pi}{2}<\vartheta_0<\wt\phi_0$ and utilize
\begin{align*}
\Yf^\sharp(t) - \Yf^\sharp_h(t) = e^{t\ap}\Yf_0 - e^{t\ahph}\Pi_h\Yf_0=\frac{1}{2\pi i}\int_\Gamma e^{\mu t}\left(R(\mu,\ap) -  R(\mu,\ahph)\Pi_h\right)\Yf_0 \,d\mu,
\end{align*}
and proceed as in Theorem \ref{th:res-sg-est} to obtain
\begin{align}\label{eqYh-Y-1}
\|\Yf^\sharp(t) - \Yf^\sharp_h(t)\|\le  C h^{2(1-\epsilon)}\frac{e^{-\wt\gamma t}}{t} \|\Yf_0\| \text{ for all }0< h<\wt h_0, \text{ for all }t>0. 
\end{align}
Hence (a) follows. Taking an interpolation between the above inequality and $\|\Yf^\sharp(t) - \Yf^\sharp_h(t)\|\le Ce^{-\wt\gamma}\|\Yf_0\|,$ followed by integration with respect to $t$ over $(0,\infty)$ leads to (b).\\
 Note that, we have $u^\sharp_h(t)=-\Bf_h^*\Pf_h\Yf^\sharp_h(t)$ and $u^\sharp(t)=-\Bf^*\Pf\Yf^\sharp(t).$ Thus 
\begin{align*}
u^\sharp_h(t)-u^\sharp(t)=-\Bf_h^*\Pf_h\Yf^\sharp_h(t)+\Bf^*\Pf\Yf^\sharp(t)=-(\Bf_h^*\Pf_h-\Bf^*\Pf)\Yf^\sharp_h(t)+\Bf^*\Pf(\Yf^\sharp(t)-\Yf^\sharp_h(t)).
\end{align*}
Utilize Theorem \ref{th:main-conv-P}(d) with Theorem \ref{th:dro}  and the fact that $\wt\gamma<\omega_P$ in the first term above and \eqref{eqYh-Y-1} in the second term to obtain
\begin{align*}
\|u^\sharp_h(t)-u^\sharp(t)\|_{\Uf}\le Ch^{2(1-\epsilon)}e^{-\omega_P t}\|\Yf_0\| +\|\Bf^*\Pf\|_{\Lc(\Hf)}Ch^{2(1-\epsilon)} \frac{e^{-\wt\gamma t}}{t}\|\Yf_0\| \text{ for all }0< h< \wt h_0.
\end{align*}
Noting that $\wt\gamma<\omega_P,$ (c) follows from the above inequality. Taking an interpolation between the above inequality and $\|u^\sharp(t) - u^\sharp_h(t)\|\le Ce^{-\wt\gamma}\|\Yf_0\|,$ followed by integration with respect to $t$ over $(0,\infty)$ leads to (d). \qed

\section{Numerical Results} \label{sec:NI}

\noindent 
We present a numerical example in this section. 
The first subsection  details the space and time discretizations and an implementation procedure for stabilization. Subsection \ref{subs:EOC} describes the error computation methodology, and Subsection~\ref{sub:5} presents the results of the numerical experiment that validate the theoretical results.  

\subsection{Discretization and stabilization} \label{sub:2}
Discretize the space $\Omega$ by triangulation $\mathcal{T}_h$ with discretization parameter $h$ and consider $V_h\subset \Hio$.  For all  $\phi_h\in V_h,$ the semi-discrete formulation that corresponds to \eqref{eqn:d-state-2} seeks $\widetilde{y}_h$, $\widetilde{z}_h$, and $\widetilde{u}_h$ such that 
\begin{align*}
\langle \widetilde{y}_h'(t), \phi_h\rangle & =-\eta_0 \langle\nabla \widetilde{y}_h(t),\nabla\phi_h\rangle -\eta_1\langle \widetilde{z}_h(t), \phi_h\rangle+(\omega-\nu_0)\langle \widetilde{y}_h(t),\phi_h\rangle + \langle \widetilde{u}_h , \phi_h \rangle,\\
\langle \widetilde{z}_h'(t),\phi_h \rangle & =-\beta_0\langle\nabla \widetilde{z}_h(t),\nabla\phi_h \rangle +(-\kappa+\omega-\nu_0)\langle \widetilde{z}_h(t),\phi_h\rangle + \langle \widetilde{y}_h(t),\phi_h \rangle, \\
\langle \wt{y}_h(0), \phi_h\rangle &  =\langle y_0, \phi_h \rangle, \quad \langle \wt{z}_h(0), \phi_h\rangle  = \langle z_0, \phi_h \rangle.
\end{align*}
Recall from Section \ref{sec:approx} that  $n_h$ denotes the  dimension of $V_h$. Let  $\widetilde{y}_h(t):=\sum_{i=1}^{n_h}y_i(t)\phi_h^i$ and  $\widetilde{z}_h(t):=\sum_{i=1}^{n_h}z_i(t)\phi_h^i$, where $\{\phi_h^i\}_{i=1}^{n_h}$ are 
the canonical basis functions   of $V_h$.   A substitution of this to the semi-discrete system above leads to the matrix system 
\begin{equation} \label{eqn:discrete}
\Mcl \Yc_h'(t)=\Ach \Yc_h(t)+\Bc_h\mathfrak{u}_h,\quad \Yc_h(0)= \begin{pmatrix} \left(\langle y_0 , \phi_h^i\rangle\right) \\  \left(\langle z_0 , \phi_h^i\rangle\right) \end{pmatrix} ,
\end{equation}
with 
 $\Mcl=\begin{pmatrix}
\mathcal{G}_h & O \\
O & \mathcal{G}_h
\end{pmatrix},$ $\Ach=\begin{pmatrix}
-\eta_0\mathcal{K}_h+(\omega-\nu_0)\mathcal{G}_h & -\eta_1\mathcal{G}_h\\
\mathcal{G}_h & -\beta_0\mathcal{K}_h+(-\kappa+\omega-\nu_0)\mathcal{G}_h
\end{pmatrix}$, $\Bc_h=\begin{pmatrix}
\mathcal{G}_h\\
O 
\end{pmatrix}$, \\$O$ being the zero matrix of size $n_h\times n_h$,  $\mathcal{K}_h=(\langle \nabla \phi_h^i,\nabla\phi_h^j\rangle)_{1\le i,j\le n_h},$  $\mathcal{G}_h=(\langle \phi_h^i,\phi_h^j\rangle)_{1\le i,j\le n_h},$ $\Yc_h:=(y_1,...,y_{n_h},$ $z_1,...,z_{n_h})^T$ and $\mathfrak{u}_h\in \mathbb{R}^{n_h}$ being the control we seek for stabilization (see Step 5 below). Note that for each $h$, $\mathcal{G}_h$ is a gram matrix, $\mathcal{K}_h$ is stiffness matrix and hence both are invertible. Thus the matrix $\Mcl$ is also invertible. This and Picard's existence theorem imply that for each $h,$ \eqref{eqn:discrete} has a unique global solution.\\[1mm]
\noindent  Note that  $\Ach$ with order $2n_h\times 2n_h$ is the matrix representation of $\Af_{\omega_h}$  and hence for each $h>0$ and $\omega\in \mathbb{R},$  $\Ach$ and $\Af_{\omega_h}$ have the same set of eigenvalues. Also, $\Bc_h$ with order $2n_h \times n_h$ is  the matrix representation of $\Bf_h$. \\[1mm]
\noindent \textbf{Implementation procedure for stabilization.}
{We closely follow steps described in \cite{CRRS,Thev-ths,JPR6-2017}.}
We start with an unstable discrete system \eqref{eqn:discrete} and describe the procedure for stabilization below.  For a fixed mesh-size $h,$ perform the Steps 1-5 and repeat for each refinement. 
 \begin{itemize}
\item[Step 1.] Calculate the matrices $\mathcal{G}_h$, $\mathcal{K}_h$ and then $\Ach$, $\Bc_h$, and $\Mcl$.

\item[Step 2.] Compute the eigenvalues and corresponding eigenvectors of $\Ach$. Denote the unstable eigenvalues as $\{\Lambda_{h,i},\overline{\Lambda_{h,i}}\}_{i=1}^{n_h^\mathfrak{u}}$ and then the corresponding eigenvectors as $\{w_{h,i}+iv_{h,i},w_{h,i}-iv_{h,i}\}_{i=1}^{n_h^\mathfrak{u}}$, where $2n_h^\mathfrak{u}$ 
is the total number of unstable eigenvalues of $\Ach$. 
Construct $$\mathcal{E}_h^{\mathfrak{u}}:=(w_{h,1} \quad v_{h,1} \quad w_{h,2}\quad v_{h,2}\quad \cdots \quad w_{h,n_h^\mathfrak{u}}\quad v_{h,n_h^\mathfrak{u}})_{2n_h\times 2n_h^\mathfrak{u}}.$$
For the case of real unstable eigenvalues and corresponding eigenvectors, we construct $\mathcal{E}_h^{\mathfrak{u}}$ by taking eigenvectors corresponding to the unstable eigenvalues. Repeat the same for the transpose $\Ach^T$  of $\Ach$ and construct $\mathbf{\Xi}_h^\mathfrak{u}$ similar to $\mathcal{E}_h^{\mathfrak{u}}$ by taking eigenvectors corresponding to the unstable eigenvalues of $\Ach^T$.

\item[Step 3.]
Compute the projected matrices $\Ac{_h^\mathfrak{u}}$, $\mathcal{B}_h^\mathfrak{u}$ and $\mathcal{Q}_h^\mathfrak{u}$ 
$$\Ac{_h^\mathfrak{u}}={(\mathbf{\Xi}_h^\mathfrak{u})}^T\Ach \mathcal{E}_h^{\mathfrak{u}},\quad \mathcal{B}_h^\mathfrak{u}={(\mathbf{\Xi}_h^\mathfrak{u})}^T \Bc_h, \text{ and } \mathcal{Q}_h^\mathfrak{u}={(\mathcal{E}_h^{\mathfrak{u}})}^T \Mcl \mathcal{E}_h^{\mathfrak{u}}.$$

\item[Step 4.]  Solve the Riccati equation 
$$\Ac{_h^\mathfrak{u}} \mathcal{P}_h^{\mathfrak{u}}+\mathcal{P}_h^\mathfrak{u}{(\Ac{_h^\mathfrak{u}})}^T-\mathcal{P}_h^{\mathfrak{u}}\mathcal{B}_h^\mathfrak{u}{(\mathcal{B}_h^\mathfrak{u})}^T\mathcal{P}_h^{\mathfrak{u}}+\mathcal{Q}_h^\mathfrak{u}=0$$
for $\mathcal{P}_h^{\mathfrak{u}}$ in MATLAB using the command {\it care}.
 
\item[Step 5.] Substitute the feedback matrix $-{(\mathcal{B}_h^\mathfrak{u})}^T\mathcal{P}_h^\mathfrak{u}{(\mathbf{\Xi}_h^\mathfrak{u})}^T$ in \eqref{eqn:discrete} to obtain 
$$\Mcl\Yc_h'(t)=\Ach \mathcal{Y}_h(t)-\Bc_h{(\mathcal{B}_h^\mathfrak{u})}^T\mathcal{P}_h^\mathfrak{u}{(\mathbf{\Xi}_h^\mathfrak{u})}^T\Yc_h(t).$$
\end{itemize}

\noindent \textbf{Time solver.} A time discretization using a backward Euler method leads to a system
\begin{align*}
\Mcl\frac{\Yc_h^1-\Yc_h^{0}}{\Delta t}=(\Ach-\Bc_h{(\mathcal{B}_h^\mathfrak{u})}^T\mathcal{P}_h^\mathfrak{u}{(\mathbf{\Xi}_h^\mathfrak{u})}^T) \Yc_h^{1},\quad \Yc_h^0=\Yf_{h,0}, 
\end{align*}
for the first time step. We choose a fixed step $\Delta t =0.001$. This is a linear system of equations and since $\Mcl$ is invertible, for each $h,$ the  system has a unique solution $\Yc_h^1$. {Starting from the second time step, apply the backward difference formula 2 (BDF2, \cite{MatESAIM})  below  with time step  $\Delta t=0.001.$}
\begin{align*}
\Mcl\frac{1.5\Yc_h^{n+2}-2\Yc_h^{n+1}+0.5\Yc_h^{n}}{\Delta t}=\left(\Ach-\Bc_h{(\mathcal{B}_h^\mathfrak{u})}^T\mathcal{P}_h^\mathfrak{u}{(\mathbf{\Xi}_h^\mathfrak{u})}^T\right) \Yc_h^{n+2},\quad n=0,1,2,...,. 
\end{align*}
Since $\Mcl$ is invertible, the above linear system has a unique solution $\Yc_h^{n+2}$ for each $h$ and $n=0,1,2,\ldots.$

\subsection{Error and order of convergence.} \label{subs:EOC} The computational errors and orders of  convergence of discrete solutions are calculated as follows. Let $\begin{pmatrix}\widetilde{y}_{h_i} \\ \widetilde{z}_{h_i}\end{pmatrix}$ and $\begin{pmatrix}\widetilde{y}_{h_{i+1}} \\ \widetilde{z}_{h_{i+1}} \end{pmatrix}$ be the computed solutions at $i$-th and $(i+1)$-th levels, and $\widetilde{u}_{h_i}$ and $\widetilde{u}_{h_{i+1}}$ denote the computed stabilizing control at $i$-th and $(i+1)$-th levels, for $i=1,2,\ldots$.  The errors in different norms are denoted as  
\begin{align*}
& \text{err}_{L^2}(\widetilde{y}_{h_i})=\|\widetilde{y}_{h_{i+1}}-\widetilde{y}_{h_i}\|,\quad  \text{err}_{H^1}(\widetilde{y}_{h_i})=\|\widetilde{y}_{h_{i+1}}-\widetilde{y}_{h_i}\|_{H^1(\Omega)}, \quad \text{err}_{L^2}(\widetilde{z}_{h_i})=\|\widetilde{z}_{h_{i+1}}-\widetilde{z}_{h_i}\|,  \\
&  \text{err}_{H^1}(\widetilde{z}_{h_i})=\|\widetilde{z}_{h_{i+1}}-\widetilde{z}_{h_i}\|_{H^1(\Omega)} \text{ and } \text{err}_{L^2}(\widetilde{u}_{h_i})=\|\widetilde{u}_{h_{i+1}}-\widetilde{u}_{h_i}\|.
\end{align*} 
\noindent Let $e_i$ and $h_i$ be the error and the discretization parameter at the $i$-th level, respectively. {Then the numerical order of convergence $\alpha_{i+1}$ at the $i$-th level is approximated  using
\begin{equation}\label{roc formula}
\alpha_{i+1}\approx \log(e_{i+1}/e_i)/\log(h_{i+1}/h_i) \text{ for }i=1,2,3,\ldots .
\end{equation}
}
\subsection{Numerical results} \label{sub:5}
\noindent  
Choose $\Omega=\mathcal{O}=(0,1) \times (0,1)$,  $\omega=25, \kappa=1,\eta_0=1, \eta_1=5,\,\beta_0=0.8$, and $\nu_0=0$ in \eqref{eqn:d-state-2}.  Choose the initial conditions as $y_0(x_1,x_2)=x_1(1-x_1)x_2(1-x_2)$ and $z_0(x_1,x_2)=\sin(\pi x_1)\sin(\pi x_2).$ Though the solution to the system \eqref{eqn:discrete} with $\mathfrak{u}_h=0$  converges in the energy and $L^2$ norms with the expected order of convergence, with the parameters chosen above,  we illustrate that the solution is unstable.  Furthermore, by applying a feedback control, the system is stabilized  and the stabilized solution converges with expected rate of convergence in the energy and $L^2$ norms.

\medskip \noindent 
Though the theoretical results are established for $C^2$-boundary, the proposed method works for domains with Lipschitz boundary as evident from the example.

\medskip
\noindent The eigenvalues of $-\Delta$ in $\Omega$ are  $\lambda_{m,n}:=(n^2+m^2)\pi^2$, $n,m=1,2,3,\cdots.$ Now, utilizing this, Proposition  \ref{pps:spec Af} yields the exact eigenvalues of $\Af$ and 
 $\Aw=\Af+\omega \If$ for any $\omega\in\mathbb{R}$. We compute the eigenvalues of $\Aw$ with $\omega=25$ which leads to two unstable eigenvalues.
 Next, we compute the eigenvalues of $\Ach$ in MATLAB using command {\it eigs}. 
Figure \ref{fig:spec-A6} shows the plot of a few exact (resp. approximate) eigenvalues of $\Aw$ (resp. $\Ach$)  for $n,m=1,\cdots,5$ and the choice of the mesh-size  $h=\frac{1}{2^6}$. The plots show that the computed eigenvalues indeed provide a good approximation of the exact eigenvalues.  Table \ref{tab:roc_eig_2} validates the convergence of the first two eigenvalues with quadratic rate of convergence as discussed in Lemma \ref{lem:eig-conv}.  The  errors and orders of convergences computed using \eqref{roc formula} for the two eigenvalues are presented in Table \ref{tab:roc_eig_2}. 

\begin{figure}[ht!]
            \includegraphics[width=.37\textwidth]{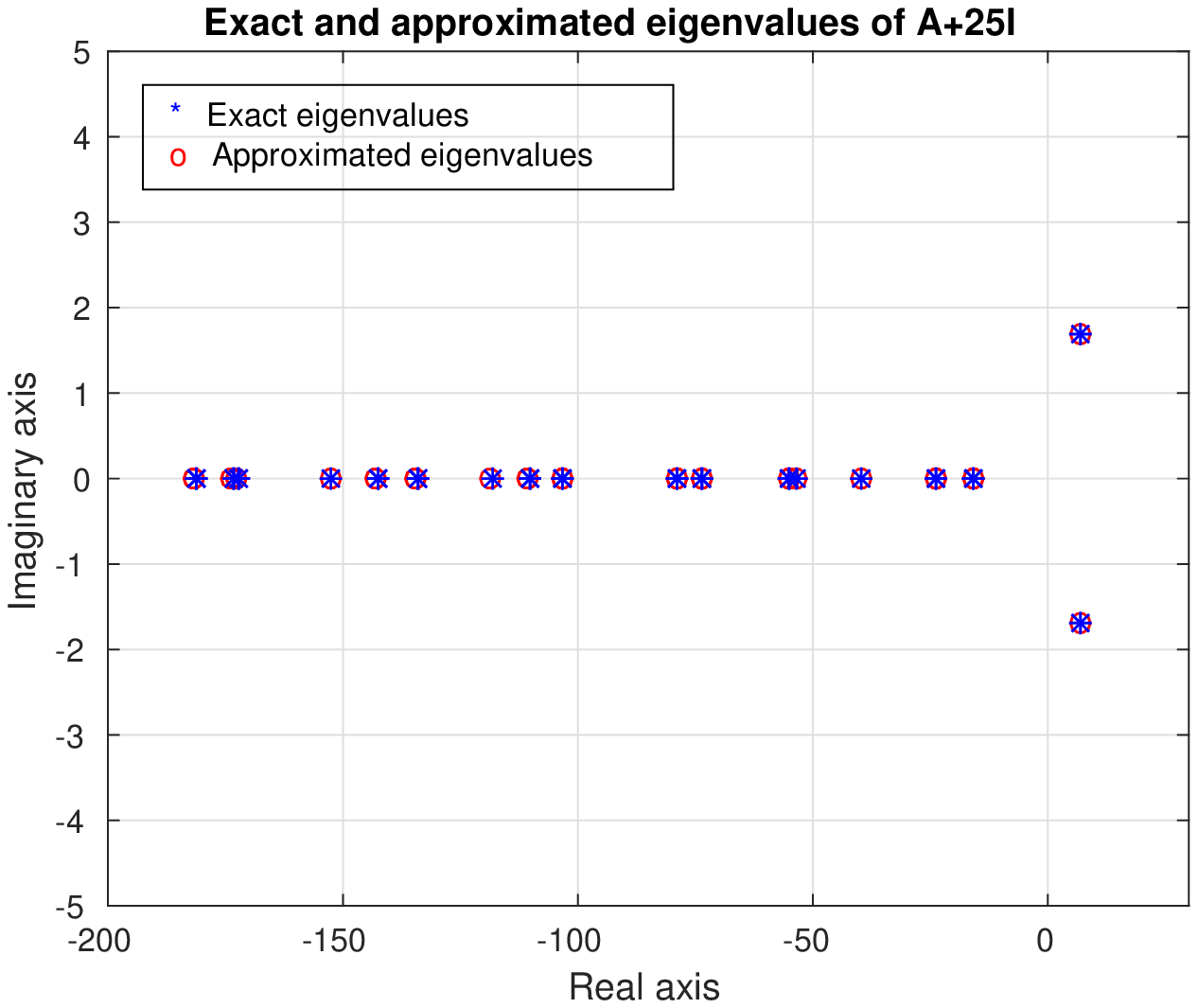}
            \includegraphics[width=.37\textwidth]{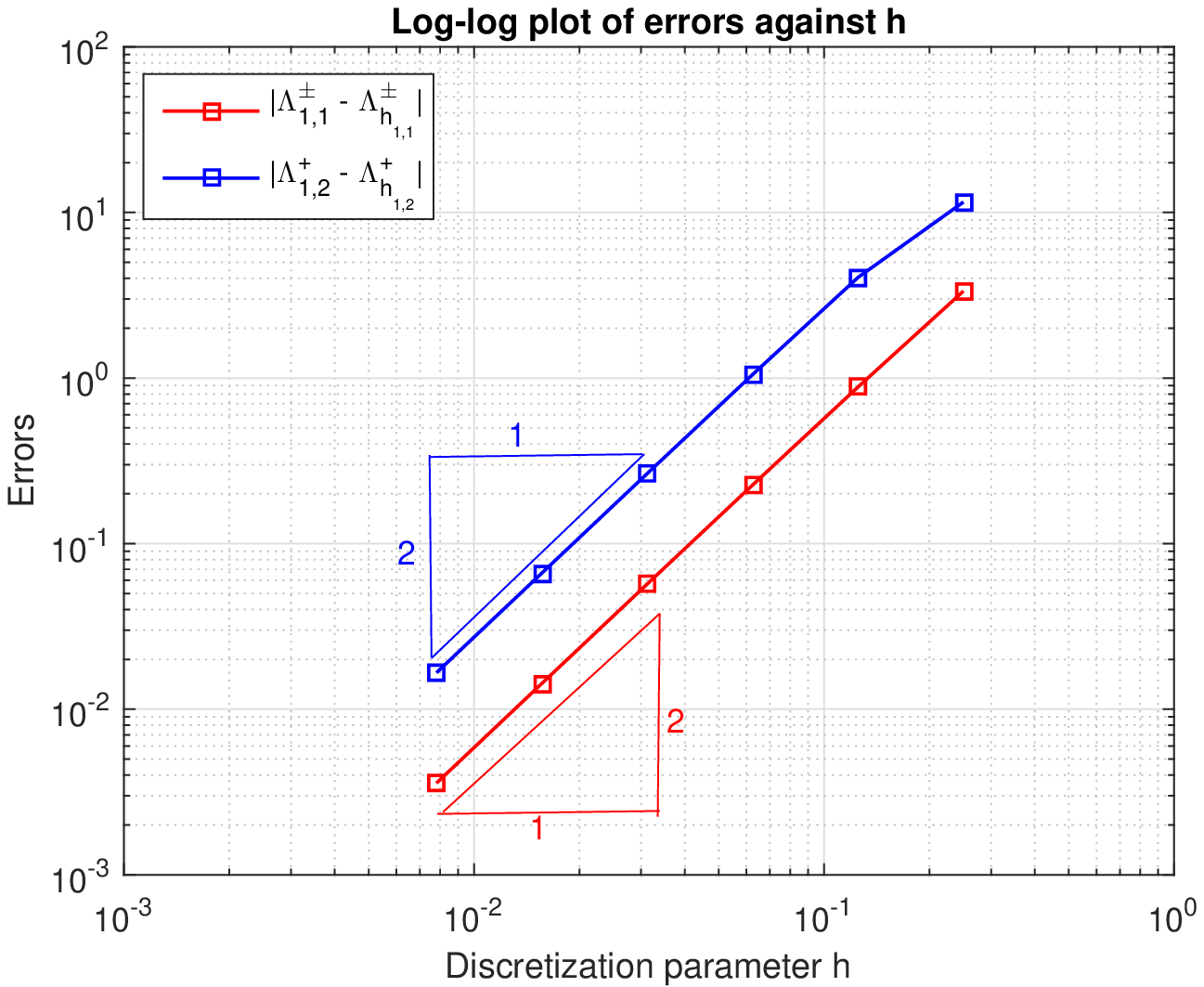}
            \caption{(a) Few exact and approximated eigenvalues (b) log-log plot of errors against discretization parameter $h$}\label{fig:spec-A6}       
\end{figure}

\begin{table}[ht!]
\footnotesize
\centering
\begin{tabular}{|c||c|c|c||c|c|c||}
\hline
$h$ & $\Lambda_{h_{1,1}}^\pm$ & $|\Lambda_{1,1}^\pm-\Lambda_{h_{1,1}}^\pm|$ & Order & $\Lambda_{h_{1,2}}^+$ & $|\Lambda_{1,2}^+-\Lambda_{h_{1,2}}^+|$ & Order   \\ 
\hline
\hline
$1/2^2 $ & 3.41226 $\pm$ 1.26611i & 3.34832 & --- & -27.64014 & 11.55674 & --- \\  
\hline
$1/2^3$ & 5.85591 $\pm$ 1.59065i & 0.88348 &  1.92215 & -20.13492 & 4.05151 & 1.51221\\
\hline
$1/2^4$ & 6.50970 $\pm$ 1.65928i & 0.22610 & 1.96619  & -17.13401 & 1.05060 & 1.94724  \\
\hline
$1/2^5$ & 6.67791 $\pm$ 1.67598i & 0.05707 & 1.98619 & -16.34851 & 0.26512 & 1.98655 \\
\hline
$1/2^6$ & 6.72046 $\pm$ 1.68014i & 0.01431 & 1.99522 & -16.14984 & 0.06644  & 1.99641 \\
\hline
$1/2^7$ & 6.73114 $\pm$ 1.68118i & 0.00358 & 1.99848 & -16.10002 & 0.01662  & 1.99907 \\
\hline
\hline
Exact & 6.73471 $\pm$ 1.68153i & --- & --- & -16.08341 & --- & ---\\
\hline
\end{tabular}
\caption{Computed errors and orders of convergence of the first two eigenvalues\hspace{2.5cm}$\,$}\label{tab:roc_eig_2}
\end{table}

\noindent First the solution to  \eqref{eqn:discrete} is computed with $\mathfrak{u}_h:=0.$ Figure \ref{fig:energy_un-ctrl}(a) indicates  that the solution  without control is unstable as the energy increases with time $t$ while Figure \ref{fig:energy_un-ctrl}(b) represents the evolution of the energy on log-log scale. {At time level $T=0.1,$ the computed errors are plotted on log-log scale against $h$ in Figure \ref{fig:energy_un-ctrl}(c) and here, we observe a quadratic order of convergence in $L^2$-norm and linear order of convergence in $H^1$-norm, even for the unstable solutions}.

\begin{figure}[ht!]
            \includegraphics[width=.33\textwidth]{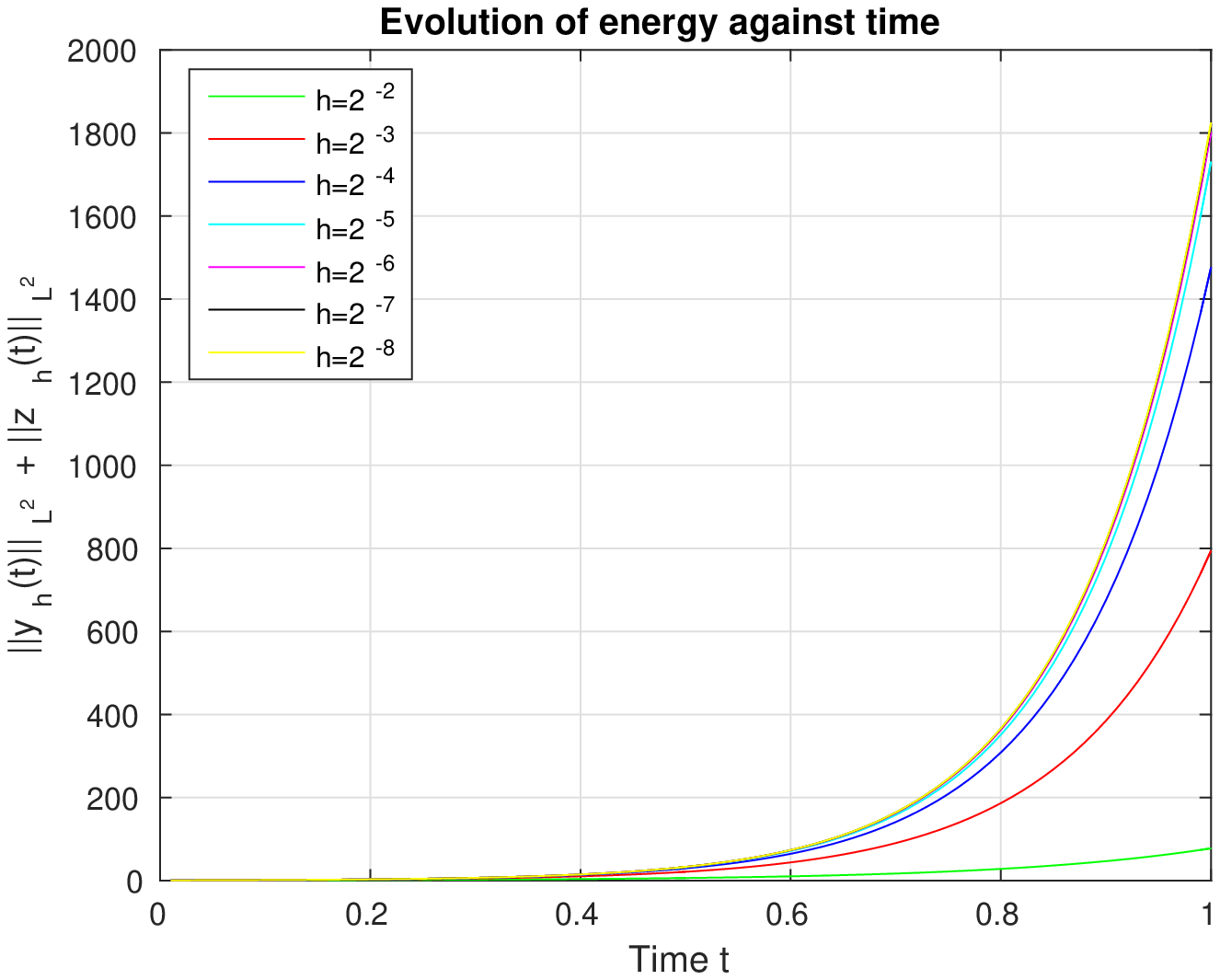}\hfill
            \includegraphics[width=.33\textwidth]{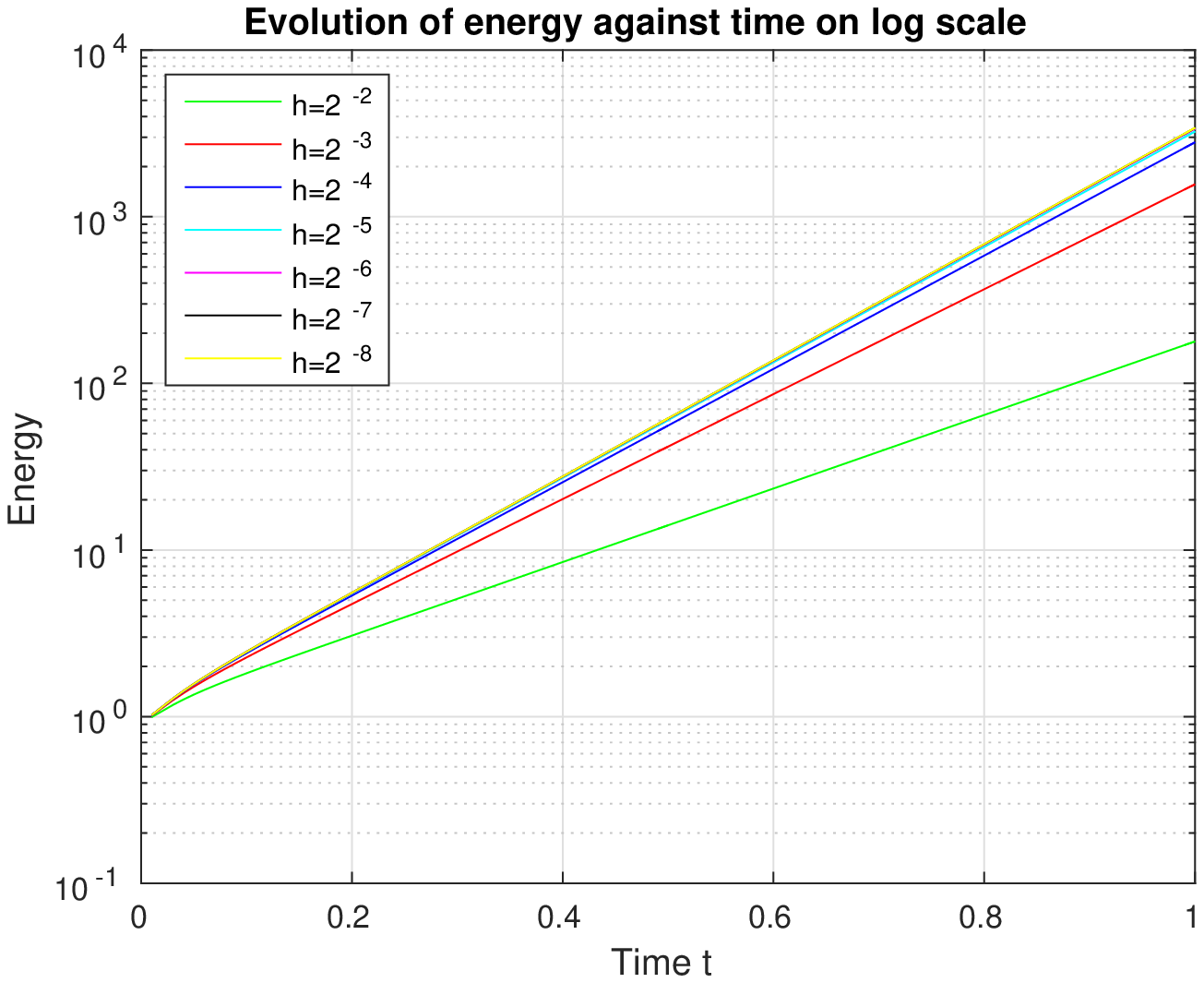}\hfill
            \includegraphics[width=.33\textwidth]{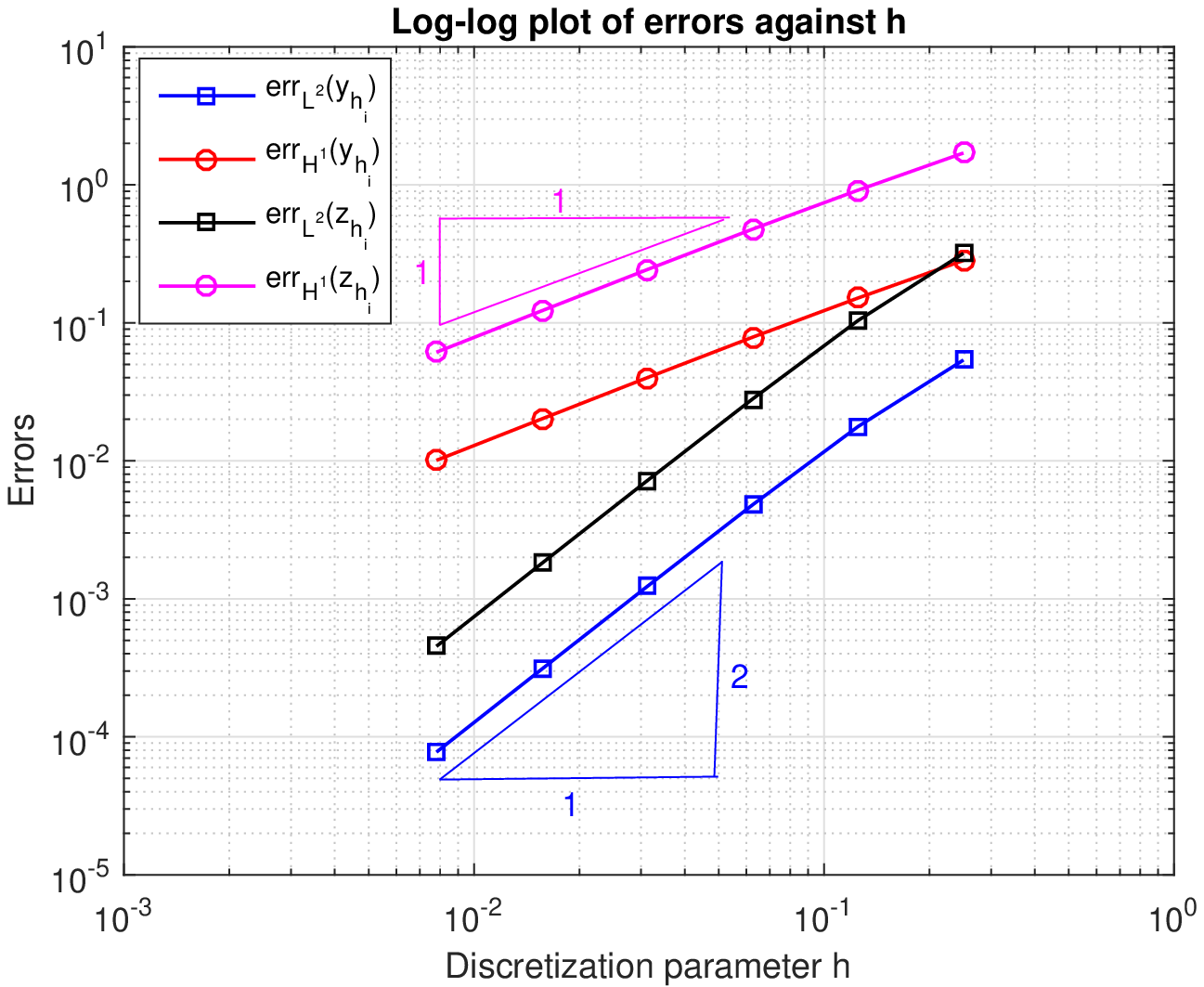}
            \caption{(a) Evolution of the solution in $L^2$- norm, (b) on log-log scale (c) log-log plot of errors against discretization parameter $h$}\label{fig:energy_un-ctrl}     
\end{figure}

\noindent \textbf{Solution with control.} Following the implementation  procedure outlined in Section \ref{sub:2}, we compute feedback control  and obtain the stabilized solution. Figure \ref{fig:stab data}(a) and Figure \ref{fig:stab data2}(a)  represent the evolution of $L^2$-norm of the stabilized solution $\begin{pmatrix} \widetilde{y_h} \\ \widetilde{z_h}\end{pmatrix}$ and the stabilizing control $\widetilde{u_h},$ respectively. Figure \ref{fig:stab data}(b) and Figure \ref{fig:stab data2}(b) represent the evolution of $L^2$-norm of the stabilized solution $\begin{pmatrix} \widetilde{y_h} \\ \widetilde{z_h}\end{pmatrix}$ and the stabilizing control $\widetilde{u_h}$ in log-log scale. Plots of errors against discretization parameter $h$ on log-log scale is presented in Figure \ref{fig:stab data2}(c). Table \ref{tab:stab_roc_y} presents the relative errors and orders of convergence for the computed stabilized solution  $\widetilde{y}_h,$ $\widetilde{z}_h$ and stabilizing control $\widetilde{u}_h$ at time level $T=0.1$. A few eigenvalues before and after stabilization are plotted in Figure \ref{fig:stab data}(c).

\medskip
\noindent The stabilizability stated in Theorem~\ref{th:dro} is verified by Figure~\ref{fig:stab data}(a)-(b). Figure \ref{fig:stab data2}(c) and Table~\ref{tab:stab_roc_y} endorse the error estimates obtained in Theorem~\ref{th:main-conv-new}.

\medskip
\noindent {Computationally, we observe a linear order of convergence in energy norm for the unstable and stabilized solutions as shown in Figures \ref{fig:energy_un-ctrl}(c), \ref{fig:stab data2}(c) and Table \ref{tab:stab_roc_y}.}

\begin{figure}[ht!]
            \includegraphics[width=.32\textwidth]{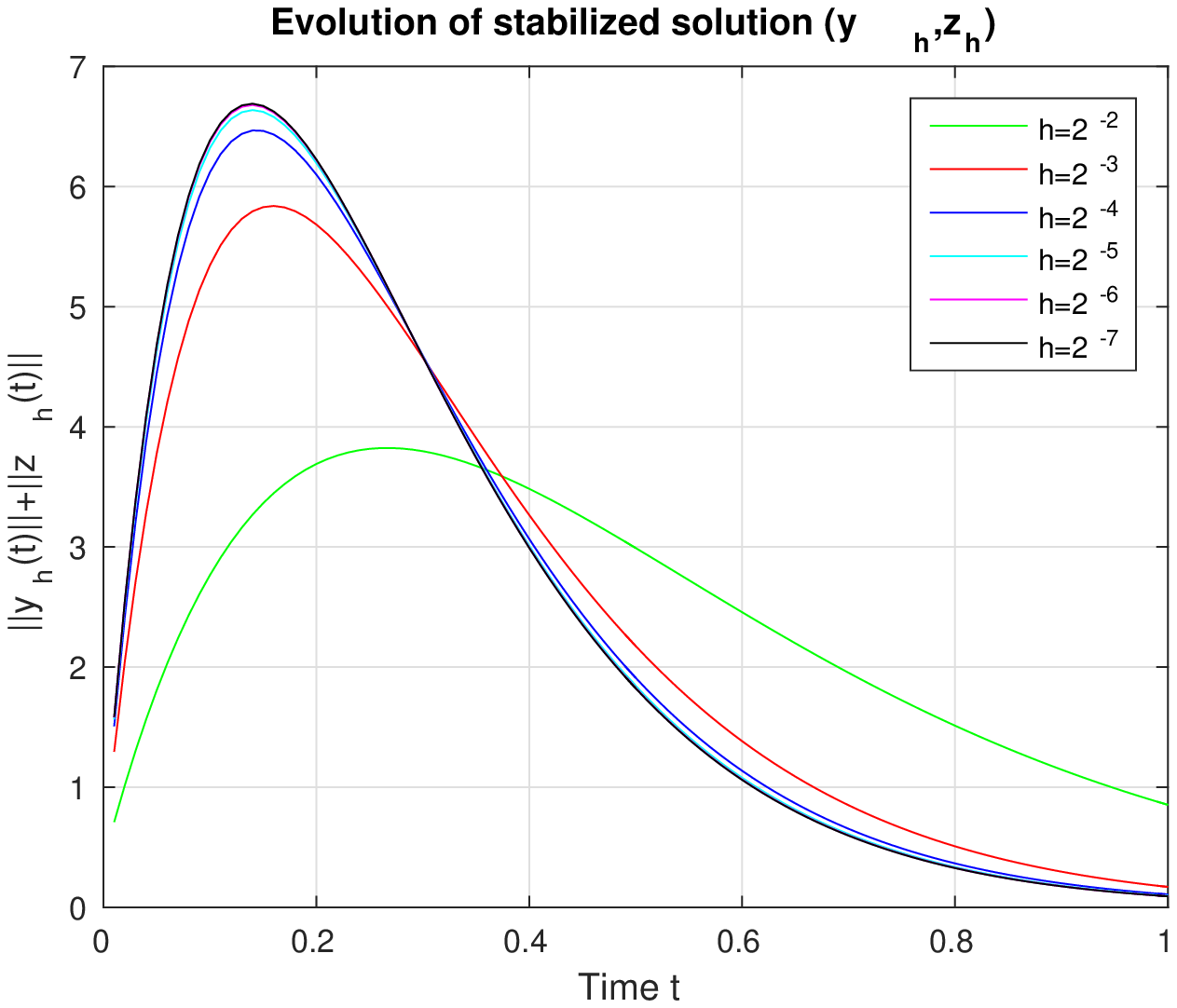}\hfill
            \includegraphics[width=.32\textwidth]{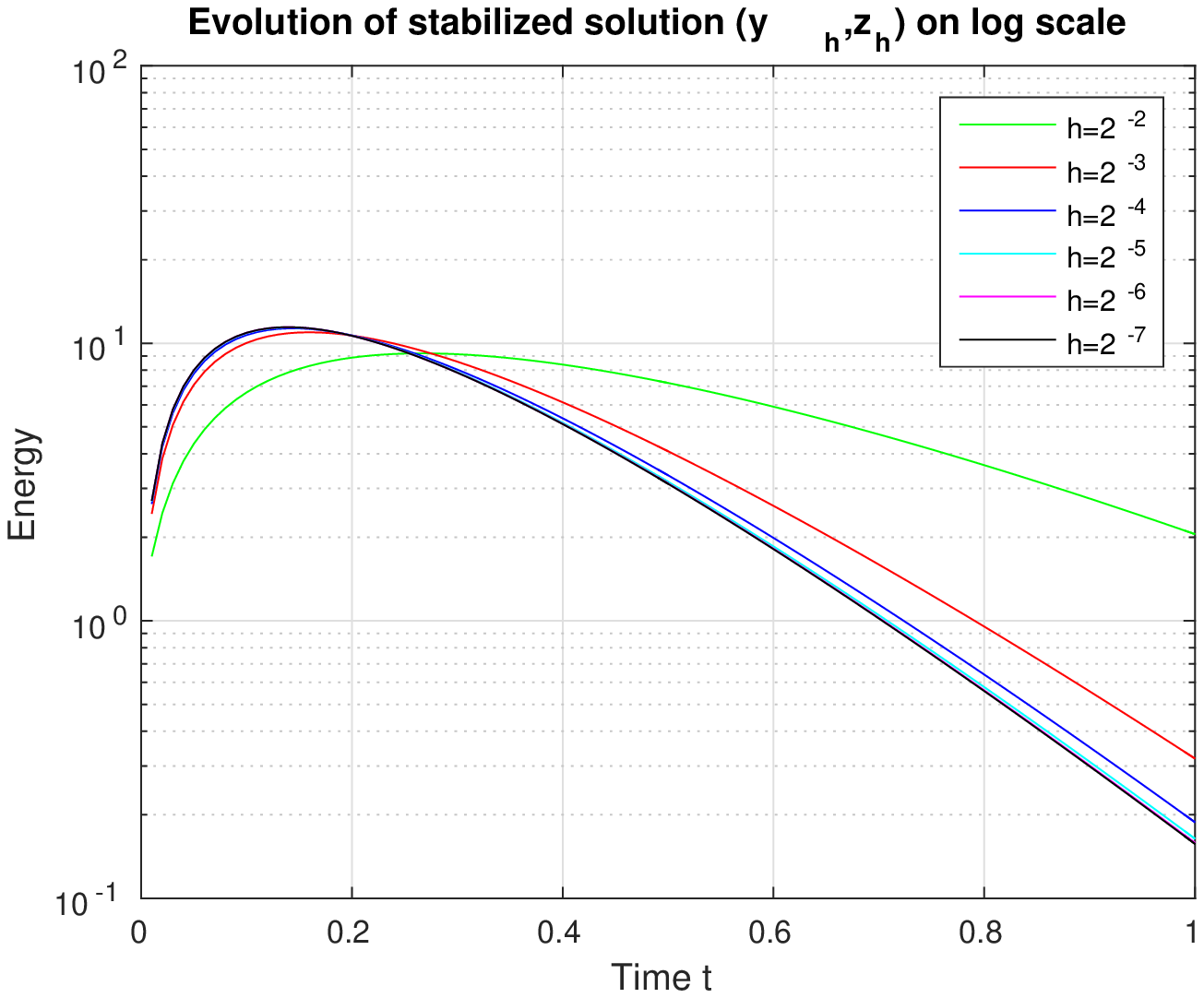}
            \hfill
            \includegraphics[width=.32\textwidth]{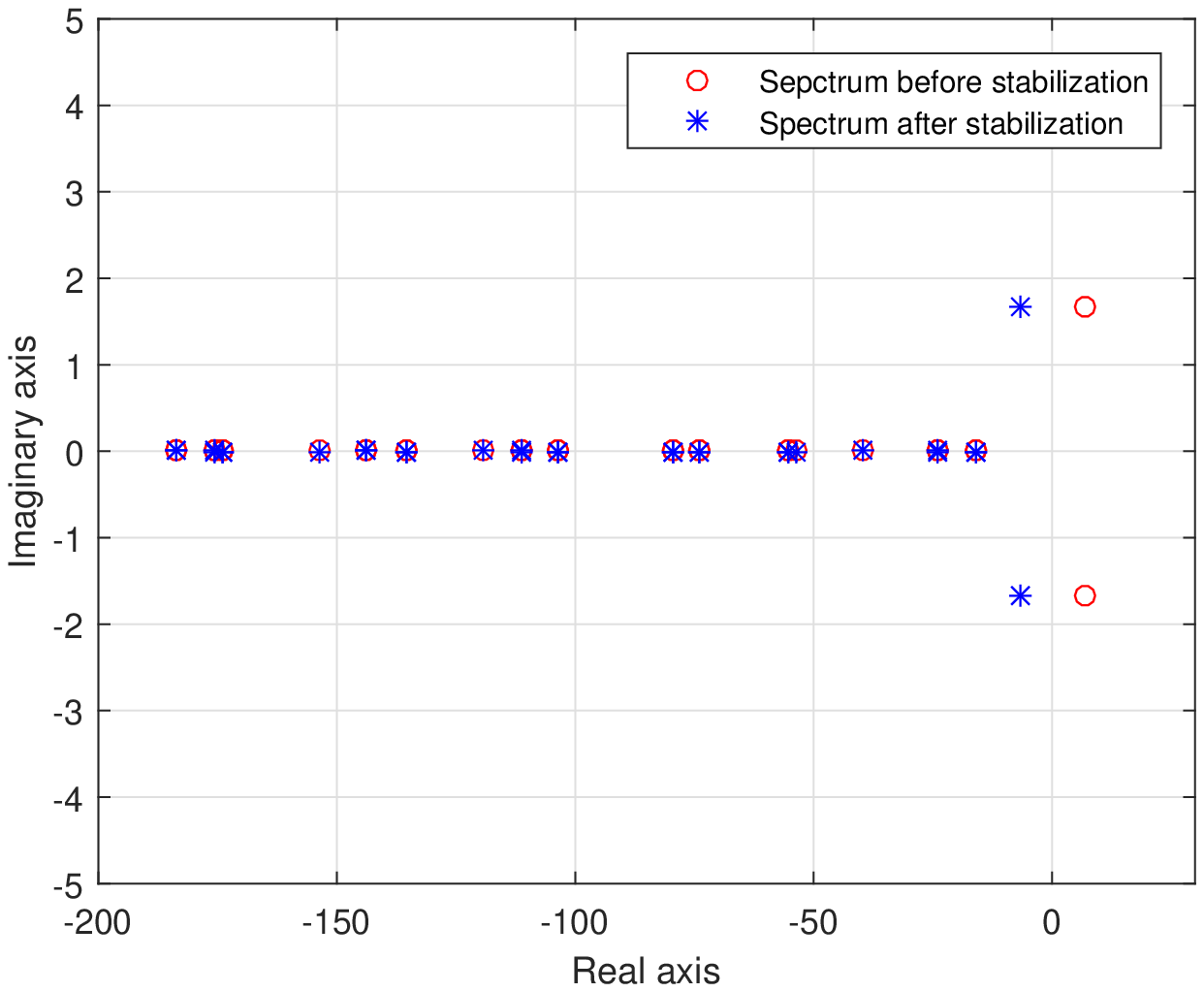}
            \caption{(a) Evolution of the stabilized solution in $L^2$-norm, (b) on log-log scale, (c) spectrum before and after stabilization}\label{fig:stab data}       
\end{figure}

\begin{figure}[ht!]
            \includegraphics[width=.32\textwidth]{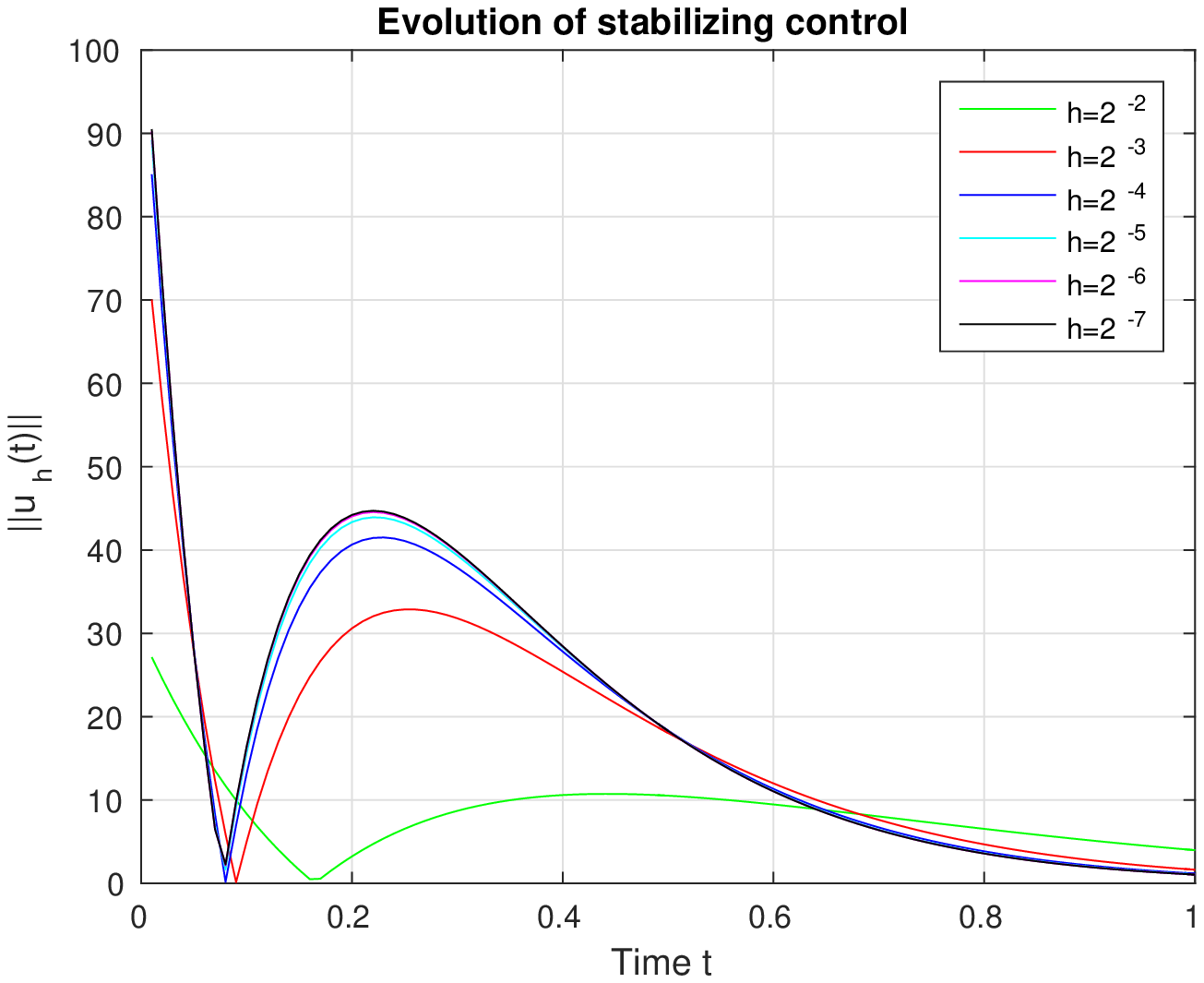}\hfill
            \includegraphics[width=.32\textwidth]{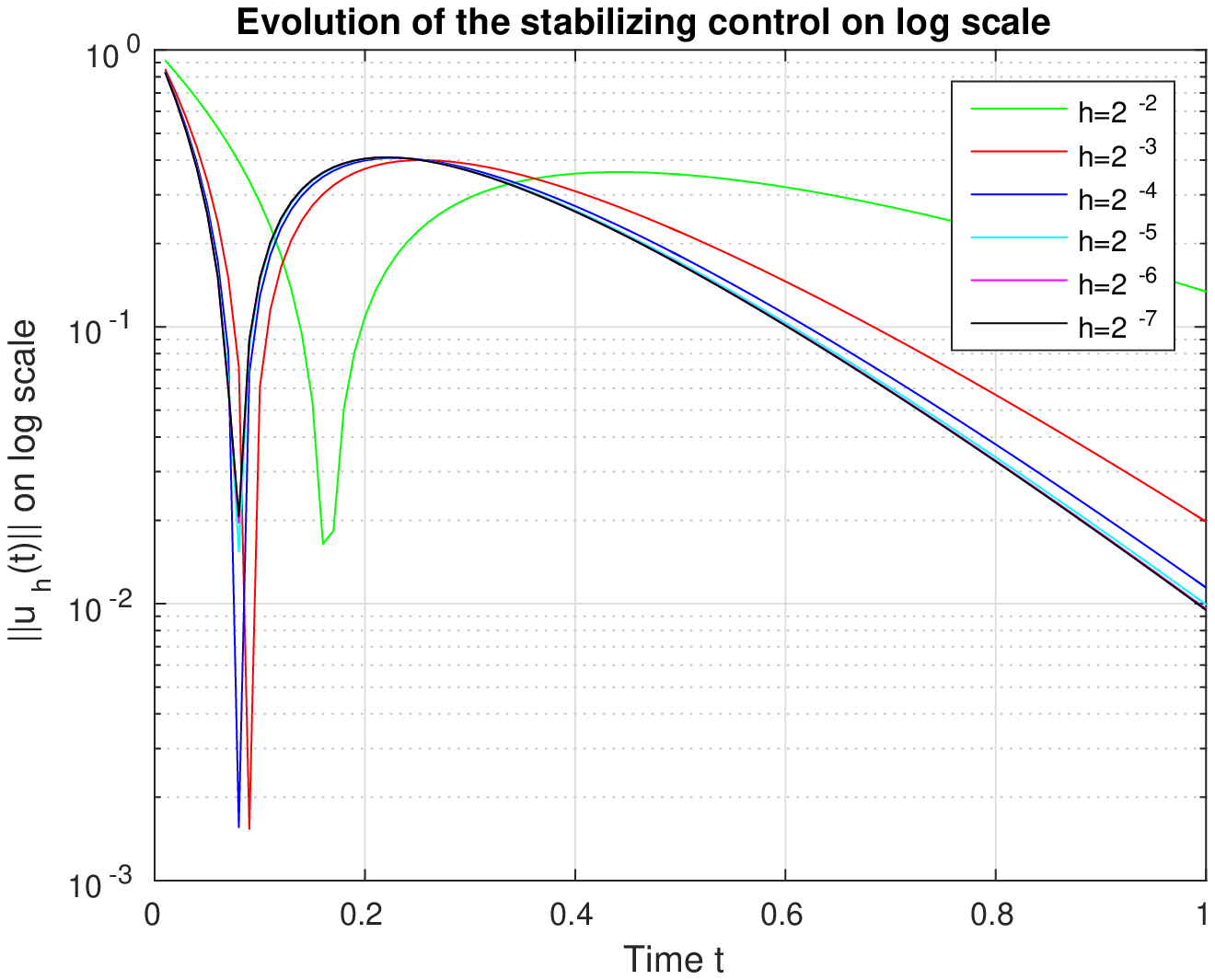}
            \hfill
            \includegraphics[width=.32\textwidth]{}
            \caption{(a) Evolution of the stabilizing control in $L^2$ - norm, (b) on log-log scale against time $t$ and (c) log-log plot of errors against discretization parameter $h$}\label{fig:stab data2}       
\end{figure}


\begin{table}[ht!]
\centering
\footnotesize
\begin{tabular}{|c||c|c||c|c||c|c||c|c||c|c||}
\hline
$h$ & $\text{err}_{L^2}(\widetilde{y}_{h_i})$ & Order  & $\text{err}_{H^1}(\widetilde{y}_{h_i})$ & Order & $\text{err}_{L^2}(\widetilde{z}_{h_i})$ & Order  & $\text{err}_{H^1}(\widetilde{z}_{h_i})$ & Order & $\text{err}_{L^2}(\widetilde{u}_{h_i})$ & Order   \\ 
\hline
\hline
$1/2^2 $ & 2.53411 & --- & 11.83988 & --- & 0.09431 & --- & 0.84806 & --- & 13.29115 & ---\\  
\hline
$1/2^3$ & 0.77118  &  1.71633 & 5.10696 & 1.21311 & 0.02127 &  2.14789 & 0.49699 & 0.77095 & 8.19389 & 0.69784\\
\hline
$1/2^4$ & 0.20213  & 1.93173  & 2.49708 & 1.03222 & 0.00525 & 2.01754  & 0.26509 & 0.90674 & 2.48418 & 1.72177\\
\hline
$1/2^5$ &  0.05128 & 1.97874 & 1.25199  & 0.99601 & 0.00132 & 1.99252 & 0.13551 & 0.96818 & 0.67589 & 1.87791\\
\hline
$1/2^6$ &  0.01287 & 1.99342 & 0.62745  & 0.99664 & 0.000331 & 1.99436 & 0.06822 & 0.98997 & 0.19243 & 1.81239\\
\hline 
\end{tabular}
\caption{Computed errors and orders of convergence of $y$ at time $T=0.1$ in $\Lt$ and $H^1(\Omega)$ norm for stabilized solutions and stabilizing control}\label{tab:stab_roc_y}
\end{table}

\end{document}